\theoremstyle{plain}
\newtheorem{proposition}{Proposition}[section]
\newtheorem{corollary}[proposition]{Corollary}
\newtheorem{lemma}[proposition]{Lemma}
\newtheorem{theorem}[proposition]{Theorem}
\theoremstyle{definition}
\newtheorem{definition}[proposition]{Definition}
\newtheorem{remark}[proposition]{Remark}
\newtheorem{remarks}[proposition]{Remarks}
\newcommand{\B}{\mathscr{B}}
\newcommand{\BB}{\mathcal{B}}
\newcommand{\C}{{\mathbb C}}
\newcommand{\D}{{\mathbb D}}
\renewcommand{\H}{\mathcal{H}}
\newcommand{\LL}{\mathcal{L}}
\newcommand{\N}{{\mathbb N}}
\newcommand{\R}{{\mathbb R}}
\newcommand{\Q}{{\mathbb Q}}
\newcommand{\T}{\mathbb{T}}
\newcommand{\Z}{{\mathbb Z}}
                  \def\z{\zeta}
\newcommand{\boldnorm}[1]{\pmb{[}#1\pmb{]}}
\newcommand{\opnorm}{\@ifstar\@opnorms\@opnorm}
\newcommand{\@opnorms}[1]{%
  \left|\mkern-1.5mu\left|\mkern-1.5mu\left|
   #1
  \right|\mkern-1.5mu\right|\mkern-1.5mu\right|
}
\newcommand{\@opnorm}[2][]{%
  \mathopen{#1|\mkern-1.5mu#1|\mkern-1.5mu#1|}
  #2
  \mathclose{#1|\mkern-1.5mu#1|\mkern-1.5mu#1|}
}
\newcommand{\boldopnorm}{\@ifstar\@boldopnorms\@boldopnorm}
\newcommand{\@boldopnorms}[1]{%
  \pmb{\left|}\mkern-1.5mu\pmb{\left|}\mkern-1.5mu\pmb{\left|}
   #1
  \pmb{\right|}\mkern-1.5mu\pmb{\right|}\mkern-1.5mu\pmb{\right|}
}
\newcommand{\@boldopnorm}[2][]{%
  \mathopen{#1\pmb{|}\mkern-1.5mu#1\pmb{|}\mkern-1.5mu#1\pmb{|}}
  #2
  \mathclose{#1\pmb{|}\mkern-1.5mu#1\pmb{|}\mkern-1.5mu#1\pmb{|}}
}
\numberwithin{equation}{section}
\theoremstyle{plain} 
\newcommand{\thistheoremname}{}
\newtheorem*{genericthm*}{\thistheoremname}
\newenvironment{namedthm*}[1]
{\renewcommand{\thistheoremname}{#1}%
	\begin{genericthm*}}
	{\end{genericthm*}}
\theoremstyle{definition} 
\newcommand{\thisdefinitionname}{}
\newtheorem*{genericdefinition*}{\thisdefinitionname}
\newenvironment{nameddefinition*}[1]
{\renewcommand{\thisdefinitionname}{#1}%
	\begin{genericdefinition*}}
	{\end{genericdefinition*}}
\begin{document}

\title [Words of analytic paraproducts 
on Hardy and Bergman spaces]
{Words of analytic paraproducts 
on Hardy and weighted Bergman spaces} 

\author[A. Aleman]{Alexandru Aleman}
\address{A. Aleman: Department of Mathematics, University of Lund, P.O.\ Box 118, SE-221 00, Lund, Sweden}
\email{alexandru.aleman@math.lu.se}
\author[C. Cascante]{Carme Cascante}
\address{C. Cascante: Departament de Matem\`atiques i
    Inform\`atica, Universitat  de Barcelona,
     Gran Via 585, 08071 Barcelona, Spain \&
Centre de Recerca Matem\`atica, Edifici C, Campus Bellaterra, 08193 Bellaterra, Spain}
\email{cascante@ub.edu}
\author[J. F\`abrega]{Joan F\`abrega}
\address{J. F\`abrega: Departament de Matem\`atiques i
    Inform\`atica, Universitat  de Barcelona,
     Gran Via 585, 08071 Barcelona, Spain}
\email{joan$_{-}$fabrega@ub.edu}
\author[D. Pascuas]{Daniel Pascuas}
\address{D. Pascuas: Departament de Matem\`atiques i
    Inform\`atica, Universitat  de Barcelona,
     Gran Via 585, 08071 Barcelona, Spain}
\email{daniel$_{-}$pascuas@ub.edu}
\author[J. A. Pel\'aez]{Jos\'e \'Angel Pel\'aez}
\address{J. A. Pel\'aez: Departamento de An\'alisis Matem\'atico, Universidad de M\'alaga, Campus de
Teatinos, 29071 M\'alaga, Spain} \email{japelaez@uma.es}

\thanks{
The research of the second, third and fourth authors
	was supported in part by
        Ministerio de Ciencia e Innovaci\'{o}n, 
        Spain, project  PID2021-123405NB-I00.  The second author is also supported by the Spanish State Research Agency, through the Severo Ochoa and Mar\'{\i}a de Maeztu Program for Centers and Units of Excellence in R\&D 
(CEX2020-001084-M), and Generalitat de Catalunya, 2021SGR00087.
The research of the fifth author was supported in part by La Junta de Andaluc{\'i}a, project FQM210}
\date{\today}

\subjclass[2020]{30H10, 30H20, 47G10, 30H35, 30H30}

\keywords{Analytic paraproduct, iterated composition of operators, 
Hardy spaces, weighted Bergman spaces, BMOA, Bloch space}

\date{\today}

\begin{abstract}

For a fixed analytic function g on the unit disc, we consider the analytic paraproducts induced by g, which are formally defined by 
$T_gf(z)=\int_0^zf(\zeta)g'(\zeta)d\zeta$,  $S_gf(z)=\int_0^zf'(\zeta)g(\zeta)d\zeta$, and $M_gf(z)=g(z)f(z)$. 
We are concerned with the study of the boundedness of operators in the algebra  $\mathcal{A}_g$ generated by the above operators acting on Hardy, or  standard weighted Bergman spaces on the disc.  The general question is certainly very  challenging, since operators in  $\mathcal{A}_g$ are finite linear combinations of finite products (words) of $T_g,S_g,M_g$  which may involve a large amount of cancellations to be understood.   The results in 
\cite{Aleman:Cascante:Fabrega:Pascuas:Pelaez}  show that  boundedness of operators in a  fairly large subclass of $\mathcal{A}_g$ can be characterized by one of the conditions $g\in H^\infty$, or $g^n$ belongs to $BMOA$ or the Bloch space, for some integer $n>0$. However, it is  also proved that there are many operators, even single words in $\mathcal{A}_g$ whose boundedness  cannot be described in terms of these conditions.
The present paper provides a considerable progress in this direction. Our main result provides a complete quantitative characterization of the boundedness of an arbitrary word in  $\mathcal{A}_g$  in terms of a ``fractional power'' of the symbol $g$, that only depends on the number of appearances of each of the letters  $T_g,S_g,M_g$ in the given word.

\end{abstract}
\maketitle

\noindent
\section{Introduction}
Let $\H(\D)$ be the space of analytic functions on the unit disc $\D$ of the complex plane. For  $\alpha>-1$ and $0<p<\infty$, let $L^p_{\alpha}$ be the space of all complex-valued measurable functions $f$ on $\D$ such that
\begin{equation*}
\|f\|_{\alpha,p}^p:=
(\alpha+1)\int_{\D}|f(z)|^p(1-|z|^2)^{\alpha}\,dA(z)<\infty,
\end{equation*}
where $dA$ is the normalized area measure on $\D$. Then $A^p_{\alpha}:=L^p_{\alpha}\cap\H(\D)$ is the classical weighted Bergman space. As usual, let $H^p$, $0<p\le \infty$, denote the classical Hardy space of analytic functions on $\D$. To simplify the  notations,  we shall write $A^p_{-1}:=H^p$, for $0<p<\infty$. 
We also write $L^p_{-1}=L^p(\T)$ and $\|\cdot\|_{-1,p}=\|\cdot\|_{L^p(\T)}$, for $0<p<\infty$.

Given $g\in \H(\D)$, let us consider the multiplication operator $M_gf= fg$ and the integral operators
\[
\begin{aligned}
	T_gf(z)= \int_0^z f(\z)g'(\z)\,d\z \quad
	S_gf(z)= \int_0^z f'(\z)g(\z)\,d\z \qquad
	(z\in\D).
	\end{aligned}
\]

They satisfy the fundamental identity:
\begin{equation}\label{eqn:formula}
	M_g=T_g+S_g +g(0)\,\delta_0\qquad(g\in\H(\D)),
\end{equation}
where $\delta_0f=f(0)$, for $f\in\H(\D)$.
Following~{\cite{Aleman:Cascante:Fabrega:Pascuas:Pelaez}} we shall call  these 
 integral operators {\em$g$-analytic paraproducts}. It is well-known  
(see again~{\cite{Aleman:Cascante:Fabrega:Pascuas:Pelaez}} and the references therein)   that  $T_g$ is bounded  on $A^p_\alpha$ if and only if $g$ belongs to 
the Bloch space $\B$  when $\alpha>-1$,  and $g\in BMOA$  in the Hardy space case $\alpha=-1$, while the boundedness of  $M_g$ and $S_g$ on any of these spaces is equivalent to the boundedness of $g$ in $\D$, {\em i.e.} $g\in H^\infty$.

 The object of interest is the algebra $\mathcal{A}_g$ generated by these operators. More precisely, we investigate the  boundedness (continuity) on $A^p_{\alpha}$, $p>0$, $\alpha\ge-1$, of the operators formally defined  as finite linear combinations of finite products of $M_g,T_g,S_g$, which we shall also call {\em $g$-operators}.

To continue the discussion we need to recall the so-called $ST$-form of a $g$-operator, that is, 
according to~{\cite[\S 3.2]{Aleman:Cascante:Fabrega:Pascuas:Pelaez}} every $L\in \mathcal{A}_g$ has a representation
\begin{equation*}
L=\sum_{k=0}^nS_g^kT_gP_k(T_g)+S_gP_{n+1}(S_g)+g(0)P_{n+2}(g-g(0))\,\delta_0,
\end{equation*}
where $n\in\N_0:=\N\cup\{0\}$ and all the $P_k$'s are polynomials.  
The approach in~{\cite{Aleman:Cascante:Fabrega:Pascuas:Pelaez}} uses this representation to detect  certain dominant terms  which lead to a characterization of the boundedness of some of such operators.  More precisely,  
{\cite[Theorem 1.2]{Aleman:Cascante:Fabrega:Pascuas:Pelaez}}~shows that: \newline
a) If $P_{n+1}\ne 0$ then $L$ is bounded on $A_\alpha^p$ if and only if $g\in H^\infty$,\newline
b) If $P_{n+1}= 0$, but $P_n(0)\ne 0$ then $L$ is bounded on $A_\alpha^p,~\alpha>-1$, if and only if $g^{n+1}\in \B$,\newline
c) If $\alpha=-1$, $P_{n+1}= 0$ and $P_n$ is a nonzero constant then $L$ is bounded on $A_\alpha^p$ if and only if $g^{n+1}\in BMOA$.

A direct consequence is a  full characterization of the boundedness of all the compositions of two paraproducts 
(see~{\cite[Corollary 1.4]{Aleman:Cascante:Fabrega:Pascuas:Pelaez}}). In this case,
the boundedness involves besides the  conditions $g\in H^\infty$ and $g\in BMOA (\B)$, the third condition $g^2\in BMOA\,\,(\B)$. Nevertheless, the situation changes dramatically for the compositions of three  paraproducts, since the conditions $g^2\in BMOA\,\,(\B)$ and $g\in BMOA\,\,(\B)$  do not necessarily characterize their boundedness. Indeed, if $g(z)=\bigr(\log(\frac{e}{1-z})\bigl)^{7/12}$, then $S_gT_g^2=\frac12T_{g^2} T_g$ is actually compact on $A_\alpha^p$, but clearly $g^2\notin\B$.
Moreover, although $g(z)=\log(\frac{e}{1-z})$ belongs to $BMOA$, $S_gT_g^2$ is not bounded on $A^p_{\alpha}$ 
(see~{\cite[Theorem 1.5]{Aleman:Cascante:Fabrega:Pascuas:Pelaez}}).

The primary aim of this paper is to study the above situation in the general setting. More precisely, we are going to prove a complete quantitative characterization in terms of the symbol $g$ of the boundedness on $A_\alpha^p$ of an arbitrary operator product (composition) with factors $S_g,T_g$, or $M_g$. Such  operators will be called  {\em $g$-words}. Thus an {\em $N$-letter $g$-word} is an operator of the form $L=L_1\cdots L_N$, where each \emph{letter} $L_j$ is either $M_g$, $S_g$ or $T_g$.  For notational purposes we shall fix the number of appearances of each $g$-analytic paraproduct, that is, 
if $\ell,m,n\in\N_0$ satisfy $N=\ell+m+n\ge 1$, we consider the set  $W_g(\ell,m,n)$ consisting of $N$-letter $g$-words $L$ of the form \begin{equation*}
	L=L_1\cdots L_N,
\end{equation*} 
with $\#\{j:L_j=M_g\}=\ell$, $\#\{j:L_j=S_g\}=m$, and 
$\#\{j:L_j=T_g\}=n$. Also let  $W_g(0,0,0)$ consists only of  the identity operator.

In order to state our main result we need to introduce the appropriate classes of symbols. These are natural generalizations of $BMOA$ or the Bloch space $\B$ obtained by replacing in their definitions the modulus of the complex derivative by the Euclidean norm of the gradient of some positive power of the modulus of the function in question. To be more precise, consider the seminorms
\begin{equation*}
	\|g\|^2_{BMOA}:=\sup_{a\in\D}\int_{\D}(1-|\phi_a|^2)|g'|^2\,dA
	\quad\mbox{and}\quad
	\|g\|_{\B}:=\sup_{z\in\D}(1-|z|^2)|g'(z)|.
\end{equation*}
where $\phi_a(z):=\frac{a-z}{1-\bar{a}z},~a,z\in\D$, and recall that 
	\begin{equation*}
	BMOA:=\{g\in\H(\D):\|g\|_{BMOA}<\infty\}\mbox{ and }
	\B:=\{g\in\H(\D):\|g\|_{\B}<\infty\}.
\end{equation*}
For a function $\psi:\D\to\R$ we define 
\begin{equation*}
	|\nabla\psi|(z)
	:=\limsup_{w\to z}\frac{|\psi(w)-\psi(z)|}{|w-z|}\in[0,\infty]
	\qquad(z\in\D).
\end{equation*}
The notation  $|\nabla\psi|(z)$ is justified by the fact that, when $\psi$ is differentiable at $z$, $|\nabla\psi|(z)$ is just the Euclidean norm $|\nabla\psi(z)|$ of the gradient of $\psi$ at $z$.
Note that if $g\in\H(\D)$ then $|\nabla|g||=|g'|$, so that we can write
	\begin{equation*}
	\|g\|^2_{BMOA}:=\sup_{a\in\D}\int_{\D}(1-|\phi_a|^2)|\nabla|g||^2\,dA
	\mbox{ and }
	\|g\|_{\B}:=\sup_{z\in\D}(1-|z|^2)|\nabla|g||(z).
\end{equation*}
The  function classes which are relevant for our purposes are obtained 
by replacing, for any real $q\ge 1$,
$|\nabla|g||$ by $$|\nabla|g|^q|(z)=q|g(z)|^{q-1}|g'(z)|,$$
and the above seminorms by the expressions
\begin{align}
\label{eqn:BMOAq:functional}	
	\|g\|^{2q}_{BMOA^q}&:=\sup_{a\in\D}\int_{\D}(1-|\phi_a|^2)|\nabla|g|^q|^2\,dA,
	\\
\label{eqn:Blochq:functional}	
	\|g\|_{\B^q}^q&:=\sup_{z\in\D}(1-|z|^2)|\nabla|g|^q|(z). 	
\end{align}

Note that while $\|\cdot\|_{BMOA^q}$ always makes sense for $0<q<1$,  $\|\cdot\|_{\B^q}$ does not (see \eqref{eqn:BMOAqnorm:equal:boldnorm:-1} and Remark~{\ref{remarks:symbols}~{\ref{remarks:symbols:1}}}).
Morever, when $q>1$, these quantities do not define seminorms (see Remark~{\ref{remarks:symbols}~{\ref{remarks:symbols:4}}}).
The corresponding classes of analytic functions on the unit disc are defined by
\[
BMOA^q  :=\{g\in\H(\D):\|g\|_{BMOA^q}<\infty\}
\]
 and 
\[
\\\B^q :=\{g\in\H(\D):\|g\|_{\B^q}<\infty\}.
\]

 To simplify notation we write 
 $\B^q_{\alpha}:=\B^q$, for $\alpha>-1$, and 
 $\B^q_{-1}:=BMOA^q$. We are going to thoroughly investigate these classes in Section~{\ref{section:symbols}} below.
  A remarkable property is that these sets of functions are strictly decreasing in  $q$.
  
 The space of bounded linear operators on $A^p_{\alpha}$ is denoted by $\BB(A^p_{\alpha})$ and, for any linear operator $L:\H(\D)\to\H(\D)$, even when $p\in (0,1)$, we write  
 \begin{equation*}
 	\|L\|_{\alpha,p}
 	:=\sup\{\|Lf\|_{\alpha,p}:f\in A^p_{\alpha},\,\|f\|_{\alpha,p}\le1\}.
 \end{equation*}
 Moreover, as usual, $A\lesssim B$ ($B\gtrsim A$) for nonnegative functions $A$, $B$ means that $A\le C\,B$, for some positive constant $C$
 independent of the variables involved. Furthermore, we write $A\simeq B$ when $A\lesssim B$ and $A\gtrsim B$. 

\begin{theorem}
	\label{thm:sharp:estimate:norm:word}
	Let $\alpha\ge-1$, $0<p<\infty$, $g\in\H(\D)$ and 
	$L \in W_g(\ell,m,n)$, where  $\ell,m,n\in\N_0$ and $N=\ell+m+n\ge1$.
	\begin{enumerate}[label={\sf\alph*)},topsep=3pt, 
		leftmargin=0pt, itemsep=4pt, wide, listparindent=0pt, itemindent=6pt] 
		\item If $n=0$, then $L$ is bounded on  $A^p_{\alpha}$ 
		if and only if $g\in H^\infty$ and 
		\begin{equation*}
			\|L\|_{\alpha,p}
			\simeq\|g\|^{N}_{H^\infty}.
		\end{equation*}
		\item If $n\ge1$, then $L$ is bounded on $A^p_{\alpha}$  if and only $g\in \B^s_{\alpha}$, where $s=\frac{\ell+m}n+1$. Moreover,
		\begin{equation*}
			\|L\|_{\alpha,p}
			\simeq\|g\|^{N}_{\B^s_{\alpha}}.
		\end{equation*}
	\end{enumerate}
\end{theorem}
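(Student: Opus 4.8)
The plan is to separate the two cases of the statement and, in each, to reduce the boundedness of an arbitrary word to a sharp two-sided norm estimate for an explicit family of ``model'' operators, namely iterated $T$-operators with power symbols.

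Consider first the case $n=0$, so that $L$ is a product of the $N$ letters $M_g$ and $S_g$ only. Since each of $M_g$ and $S_g$ is bounded on $A^p_{\alpha}$ exactly when $g\in H^\infty$, with norm $\simeq\|g\|_{H^\infty}$, submultiplicativity of $\|\cdot\|_{\alpha,p}$ immediately yields $\|L\|_{\alpha,p}\lesssim\|g\|_{H^\infty}^{N}$ whenever $g\in H^\infty$. For the reverse inequality, and for the necessity of $g\in H^\infty$, I would test $L$ against a family of normalized reproducing kernels $f_a$ peaked at $a\in\D$ and let $|a|\to1$; since neither $M_g$ nor $S_g$ involves $g'$, each letter contributes a factor $\simeq|g(a)|$, giving $\|Lf_a\|_{\alpha,p}\gtrsim|g(a)|^{N}\|f_a\|_{\alpha,p}$ and hence $\|L\|_{\alpha,p}\gtrsim\|g\|_{H^\infty}^{N}$.

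For $n\ge1$ the first step is an algebraic reduction. Expanding every $M_g$ via $M_g=T_g+S_g+g(0)\delta_0$, and then using the collapsing identities
\[
S_g\,T_{g^k}=\tfrac{k}{k+1}\,T_{g^{k+1}},\qquad T_{g^k}M_g=\tfrac{k}{k+1}\,T_{g^{k+1}},\qquad T_g S_g=\tfrac12 T_{g^2}-T_g^2-g(0)\,T_g\delta_0,
\]
all of which follow at once from $(T_h f)'=h'f$ and $(S_g f)'=g f'$, I would show that every $L\in W_g(\ell,m,n)$ is a linear combination of reduced operators $T_{g^{c_1}}\cdots T_{g^{c_r}}$ (plus $\delta_0$-terms that factor through point evaluation and are harmless), in each of which the total power satisfies $\sum_i c_i=N$ while the number of factors satisfies $r\ge n$. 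The key bookkeeping is that each collapse preserves the total power, so the exponent sum is conserved at $N$, whereas the factor count decreases only down to $n$. The summand with $r=n$ is dominant: it carries the largest average power $s=\frac{\sum_i c_i}{r}=\frac{N}{n}=\frac{\ell+m}{n}+1$, and every other summand has $r>n$, hence average power $<s$; since the classes $\B^q_{\alpha}$ decrease in $q$, those terms are automatically bounded once $g\in\B^s_{\alpha}$ and contribute only to lower order.

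It then remains to prove the sharp estimate $\|T_{g^{c_1}}\cdots T_{g^{c_n}}\|_{\alpha,p}\simeq\|g\|_{\B^s_{\alpha}}^{N}$ for fixed $c_1,\dots,c_n\ge1$ with $\sum_i c_i=N$. For the upper bound, in the Bergman case $\alpha>-1$ I would use the higher-order derivative norm $\|F\|_{\alpha,p}^p\simeq\sum_{j<n}|F^{(j)}(0)|^p+\int_\D|F^{(n)}|^p(1-|z|^2)^{np+\alpha}\,dA$, and the corresponding area-function/Carleson description when $\alpha=-1$. Writing $F=T_{g^{c_1}}\cdots T_{g^{c_n}}f$ and differentiating $n$ times, the leading term of $F^{(n)}$ is $(\prod_i c_i)\,g^{\,N-n}(g')^{n}f$, and since
\[
(1-|z|^2)^n|g|^{N-n}|g'|^n=\bigl[(1-|z|^2)|g|^{s-1}|g'|\bigr]^n\le\bigl(\tfrac1s\|g\|_{\B^s}^s\bigr)^n=\tfrac1{s^n}\|g\|_{\B^s}^{N},
\]
its contribution is $\lesssim\|g\|_{\B^s_{\alpha}}^{N}\|f\|_{\alpha,p}$. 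The decisive feature is that this bound sees only the total power $N=\sum_i c_i$, evaluated at a single point $z$, rather than the naive one-factor-at-a-time product $\prod_i\|g\|_{\B^{c_i}}^{c_i}$, which is always $\gtrsim\|g\|_{\B^s}^{N}$ and strictly larger when the relevant suprema are attained at different points. For the lower bound I would test $L$ (equivalently its dominant part) against kernels $f_a$ peaked at a point $a$ where $(1-|a|^2)|g(a)|^{s-1}|g'(a)|$ nearly attains $\tfrac1s\|g\|_{\B^s}^s$; the $n$ iterated integrals each produce a factor $\simeq(1-|a|^2)$ together with $g'(a)$ and the appropriate powers of $g(a)$, yielding $\|Lf_a\|_{\alpha,p}\gtrsim\|g\|_{\B^s_{\alpha}}^{N}\|f_a\|_{\alpha,p}$ in the Bergman case and an analogous Carleson-measure computation for $BMOA^s$ when $\alpha=-1$, the lower-order summands being negligible as $|a|\to1$.

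The main obstacle is exactly this upper estimate for the model operators: one must show that the $n$ integrations in $T_{g^{c_1}}\cdots T_{g^{c_n}}$ act at a single boundary scale, so that only $\sum_i c_i=N$ survives and not the individual exponents. Concretely, the product-rule error terms in $F^{(n)}$ (those in which derivatives fall on the symbols $g^{c_i}$, leaving some inner $T$-factors intact) must be absorbed uniformly in the distribution $(c_1,\dots,c_n)$, and in the Hardy case the requisite Carleson-measure bound for $|g|^{2(s-1)}|g'|^2\,dA$ must be extracted from the hypothesis $g\in BMOA^s$. Replacing the easy but lossy product bound by this sharp average-power bound is where the real work lies.
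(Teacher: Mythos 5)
Your algebraic skeleton is essentially the paper's, and that part is fine: Theorem~\ref{thm:global:decomposition} writes every word on $\H_0(\D)$ as $S_g^kT_g^n+\sum_j c_j\,S_g^{k-j}T_g^{n+j}$, and since $S_g^aT_g^b=\tfrac1{a+1}\,T_{g^{a+1}}T_g^{b-1}$ for $b\ge1$, these are exactly your reduced operators $T_{g^{c_1}}\cdots T_{g^{c_r}}$ with conserved exponent sum $N$ and factor count $r\ge n$; your single-point bound $(1-|z|^2)^n|g|^{N-n}|g'|^n\le s^{-n}\|g\|_{\B^s}^N$ likewise recovers the leading term of the Bergman upper estimate. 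But both analytic halves of the theorem are left open, so the proposal has genuine gaps. For the upper bound, the product-rule terms in which an inner factor $T_{g^{c_i}}\cdots T_{g^{c_n}}f$ survives intact, multiplied by powers $|g|^a|g'|^b$ that do \emph{not} recombine into $\bigl(|g|^{s-1}|g'|\bigr)^n$ at a single point, are precisely what the paper's machinery exists to handle: it introduces the fractional operators $Q_g^{\tau,\ell}f=|g|^{\tau\ell}T_g^{\ell}f$, estimates in tent spaces against the Carleson measure $\nu_{g,s}$ via \eqref{eqn:Mpq:simeq:BMOAs-norm}, and closes the estimate by a nonlinear absorption (divide by $\|Q_g^{\tau,\ell}\|_{\alpha,p}$, apply convexity, induct on $\ell$). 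You correctly name this obstacle (``must be absorbed uniformly'') but supply no mechanism, and iterating your pointwise bound cannot supply one; so sufficiency is not proved, in either the Bergman or the Hardy case.

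The lower bounds are a second, independent gap. For $n\ge1$, testing on peaked kernels founders on cancellation along the path of integration: nothing guarantees that each iterated integral contributes a factor $\gtrsim(1-|a|^2)\,|g'(a)|\,|g(a)|^{c_i-1}$, and controlling exactly this cancellation is the content of the paper's Sections 7--9 — the inequality $\tfrac1{n^2}|h|^{\frac2n-2}|h'|^2\ge|g|^{2\tau}|g'|^2|f|^2-2\tau|g|^{2\tau-1}|g'|^2|fT_gf|$ for $h=g^m(T_gf)^n$, a small-$\tau$ bootstrap (Proposition~\ref{prop:sharp:lower:estimate:norm:word:tau:small}), and the induction of Proposition~\ref{lem:sharp:lower:estimate:norm:word:4:Hp} using maximal functions and a subharmonicity lemma. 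In the Hardy case there is, moreover, no point $a$ at which $\|g\|_{BMOA^s}$ is ``nearly attained,'' so your testing scheme has no analogue at all; the paper instead extracts the Carleson norm of $\mu^{\alpha}_{g,s}$ from Calder\'on's area theorem by ranging over \emph{all} $f\in A^p_{\alpha}(0)$. The same criticism hits your case $n=0$: since $S_g=M_g-T_g-g(0)\delta_0$, the heuristic that each letter contributes $\simeq|g(a)|$ injects the uncontrolled term $T_gf_a$ involving $g'$, and mixed words can cancel; the paper's necessity there rests instead on the iterated-commutator identity $[L,T_g]_m=m!\,T_g^{2m+n}$ on $\H_0(\D)$ and Proposition~\ref{prop:boundedness:Tg}, followed by a dilation-and-absorption argument. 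Finally, declaring the $r>n$ summands ``negligible as $|a|\to1$'' does not suffice for necessity: their norms are comparable to $\|g\|^{N}_{\B^{s'}_{\alpha}}$ with $s'<s$, which need not be small on your test functions; the paper absorbs them quantitatively via $\|T_g\|_{\alpha,p}\lesssim\opnorm{L}_{\alpha,p}^{1/N}$ and division by the unknown quantity.
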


We should note that the boundedness assertion in part a) is covered 
by~{\cite[Theorem 1.2]{Aleman:Cascante:Fabrega:Pascuas:Pelaez}.} The operator-norm estimate requires additional work.
The proof of part b) is long, rather technical and needs a new technique that we briefly describe here. Firstly, we reduce the proof  to the model case $L=S^m_gT^n_g$ by using  algebraic formulas derived from~{\eqref{eqn:formula}}. In order to handle the model case, we relate
 the boundedness of $S^m_gT^n_g$ on $A^p_{\alpha}$ to the boundedness of a fractional  operator defined for fixed parameters $\tau\in\Q$, $\tau>0$ and $\ell\in\N$ by
 $$Q^{\tau,\ell}_gf=|g|^{\tau\ell}T^{\ell}_gf$$ acting from $A_\alpha^p$ into the corresponding $L^p$-space. To be more precise, we prove that 
 \begin{equation*}
 \|S^m_gT^n_g\|_{\alpha,p}\simeq\|Q^{\frac{m}n,n}_g\|_{\alpha,p}
 \qquad(g\in\H(\overline{\D})),
 \end{equation*}
 The proof of the theorem is then completed by showing that 
  \begin{equation*}
 \|Q^{\tau,\ell}_g\|_{\alpha,p}\simeq\|g\|^{(\tau+1)\ell}_{\B^{\tau+1}_{\alpha}}
 \qquad(g\in\H(\overline{\D})).
  \end{equation*} 
There are  several key steps needed to achieve the above goals and their presentation  essentially fills sections \ref{section:easycase}-\ref{section:proofs:of:thm1.3-1.4:proposition1.5} in the paper.
Most of these results are new and of interest in their own right.
The first one is the nested property  $\B^r_{\alpha}\subset \B^q_{\alpha},~1\le q<r$ (see Corollary \ref{cor1:prop:BMOAq:Bq:nested} below) which 
is proved using Garsia-type seminorms for these symbol classes.
Another important tool is the equivalence of the 
 functionals \eqref{eqn:BMOAq:functional} and \eqref{eqn:Blochq:functional} to the norms of  embeddings of $A^p_{\alpha}$ into certain tent spaces (Carleson type-norms), combined with  a classical theorem due to Calder\'{o}n that relates the $A^p_{\alpha}$ norm of a function with the norm of its derivative in a suitable tent space (see \S\ref{subsection:Calderon:thm}). We should point out here that we follow a unified approach which is the same for Bergman and Hardy spaces. However,  for  the Bergman space case there is an alternative (conceptually simpler)
 proof which is essentially based on pointwise estimates and  Littlewood-Paley formula.

  Finally, we  also mention that many lower  estimates of the quantities considered above are based on algebraic properties of iterated commutators of $g$-operators (see \S\ref{subsection:lower:norm:estimate:g-operators}).

An immediate application of the main result reveals an interesting property of  the symbol classes~{$\B^q_{\alpha}$.}
\begin{corollary}
	Let $\alpha\ge-1$ and $m,n\in \N$ such that $m=m_1+\dots+m_k$ and $n=n_1+\dots+n_k$, where  $m_j\in\N_0$  and $n_j\in\N$, for $j=1,\dots,k$.
	Then 
	\begin{equation*}
		\|g\|^{N}_{\B^s_{\alpha}}
		\lesssim
		\|g\|^{N_1}_{\B^{s_1}_{\alpha}}\cdots \|g\|^{N_k}_{\B^{s_k}_{\alpha}}
		\qquad(g\in\H(\D)),  
	\end{equation*}	
	where  $s=\frac{m}n +1$, $N=m+n$, $N_j=m_j+n_j$ and $s_j=\frac{m_j}{n_j}+1$, for $j=1,\dots k$.	
\end{corollary}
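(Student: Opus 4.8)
The plan is to realize the left-hand side as the operator norm of a single well-chosen $g$-word, and the right-hand side as a product of operator norms of its consecutive blocks, so that the asserted inequality reduces to submultiplicativity of the operator norm combined with Theorem~\ref{thm:sharp:estimate:norm:word}. Fix any $p\in(0,\infty)$; the choice is immaterial, since neither side depends on $p$.

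First I would consider the concrete $N$-letter word
\[
L:=S_g^{m_1}T_g^{n_1}S_g^{m_2}T_g^{n_2}\cdots S_g^{m_k}T_g^{n_k}.
\]
It contains no factor $M_g$, exactly $m=\sum_j m_j$ factors $S_g$ and exactly $n=\sum_j n_j\ge1$ factors $T_g$, so $L\in W_g(0,m,n)$ with $n\ge1$. Part b) of Theorem~\ref{thm:sharp:estimate:norm:word} (with $\ell=0$, whence $s=\frac{m}{n}+1$ and $N=m+n$) then gives
\[
\|g\|^N_{\B^s_\alpha}\simeq\|L\|_{\alpha,p},
\]
which is precisely the quantity on the left of the claim.

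Next I would split $L$ into its $k$ consecutive blocks $L=L^{(1)}\cdots L^{(k)}$, where $L^{(j)}:=S_g^{m_j}T_g^{n_j}\in W_g(0,m_j,n_j)$, and apply submultiplicativity of the operator norm, $\|L\|_{\alpha,p}\le\prod_{j=1}^k\|L^{(j)}\|_{\alpha,p}$. Since each $n_j\ge1$, applying part b) of Theorem~\ref{thm:sharp:estimate:norm:word} once more to every block yields $\|L^{(j)}\|_{\alpha,p}\simeq\|g\|^{N_j}_{\B^{s_j}_\alpha}$, with $s_j=\frac{m_j}{n_j}+1$ and $N_j=m_j+n_j$. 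Chaining the three facts gives
\[
\|g\|^N_{\B^s_\alpha}\lesssim\|L\|_{\alpha,p}\le\prod_{j=1}^k\|L^{(j)}\|_{\alpha,p}\lesssim\prod_{j=1}^k\|g\|^{N_j}_{\B^{s_j}_\alpha},
\]
which is exactly the assertion. The implicit constants originate solely from Theorem~\ref{thm:sharp:estimate:norm:word}, so they depend only on $\alpha,p,k$ and the fixed integers $m_j,n_j$, but not on $g$, as required; and whenever $g\notin\B^{s_j}_\alpha$ for some $j$ the right-hand side is infinite and the inequality is trivial.

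The only step that deserves care, and the sole place where something could go wrong, is submultiplicativity when $0<p<1$, where $\|\cdot\|_{\alpha,p}$ is merely a quasi-norm so that the triangle inequality is unavailable. However, submultiplicativity of the \emph{operator} (quasi-)norm requires only the homogeneity of $\|\cdot\|_{\alpha,p}$: directly from its definition one has $\|L^{(j)}h\|_{\alpha,p}\le\|L^{(j)}\|_{\alpha,p}\,\|h\|_{\alpha,p}$ for every $h$, and iterating this estimate across the blocks produces the product bound with no appeal to subadditivity. Hence the argument is uniform in $p\in(0,\infty)$, and beyond this remark the corollary is a direct bookkeeping of the parameters, i.e.\ an immediate consequence of the main theorem.
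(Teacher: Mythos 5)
Your proposal is correct and coincides with the paper's own argument: the paper proves the corollary in exactly this way, writing $\|g\|^{N}_{\B^s_{\alpha}}\simeq\|(S^{m_1}_gT^{n_1}_g)\cdots(S^{m_k}_gT^{n_k}_g)\|_{\alpha,p}\le\prod_{j=1}^k\|S^{m_j}_gT^{n_j}_g\|_{\alpha,p}\simeq\prod_{j=1}^k\|g\|^{N_j}_{\B^{s_j}_{\alpha}}$ via Theorem~\ref{thm:sharp:estimate:norm:word} and submultiplicativity of the operator norm. Your additional remark that submultiplicativity needs only homogeneity of the quasi-norm (so the case $0<p<1$ causes no trouble) is a correct observation that the paper leaves implicit.
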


Indeed,  Theorem \ref{thm:sharp:estimate:norm:word} gives
	\begin{align*}
		\|g\|^{N}_{\B^s_{\alpha}}
		&\simeq
		\|(S^{m_1}_gT^{n_1}_g)\cdots(S^{m_k}_gT^{n_k}_g)\|_{\alpha,p}
		\\
		&\le
		\|(S^{m_1}_gT^{n_1}_g)\|_{\alpha,p}\cdots\|(S^{m_k}_gT^{n_k}_g)\|_{\alpha,p}\simeq
		\|g\|^{N_1}_{\B^{s_1}_{\alpha}}\cdots \|g\|^{N_k}_{\B^{s_k}_{\alpha}}.\qedhere	
	\end{align*}		

A more involved application of Theorem~{\ref{thm:sharp:estimate:norm:word}} is the following boundedness result of $g$-operators on $A^p_{\alpha}$, which complements and extends the corresponding results 
in~{\cite{Aleman:Cascante:Fabrega:Pascuas:Pelaez}:}
\begin{theorem}\label{thm1:boundedness:g-operators}
Let $\alpha\ge-1$, $0<p<\infty$, and $m,n\in\N$. Let $g\in\H(\D)$ and 
\begin{equation}
\label{eqn:thm1:boundedness:g-operators}
L=S^m_gT^n_g+\sum_{j=1}^mS^{m-j}_gT^{n_j}_gP_j(T_g),
\end{equation}
where  $P_j$ is a polynomial, $n_j\in\N$,
and $\frac{m-j}{n_j}\le\frac{m}n$, for $j=1,\dots,m$.
Then $L$ is bounded on  $A^p_{\alpha}$ if and only if
$g\in\B^s_{\alpha}$, where $s=\frac{m}n+1$.
\end{theorem}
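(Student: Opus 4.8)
The plan is to prove the sharper quantitative statement that $L$ is bounded on $A^p_\alpha$ precisely when $g\in\B^s_\alpha$, by isolating the \emph{leading word} $S^m_g T^n_g$, which is the unique summand of $L$ carrying the maximal number $m$ of factors $S_g$. Writing $N=m+n$ and expanding each polynomial $P_j(T_g)=\sum_i c_{j,i}T_g^i$, the operator $L$ becomes a finite linear combination of words $S^{m-j}_g T^{n_j+i}_g\in W_g(0,m-j,n_j+i)$ together with the leading term $S^m_g T^n_g\in W_g(0,m,n)$. To each such word I attach its index $s_{j,i}:=\frac{m-j}{n_j+i}+1$; since $i\ge0$ and, by hypothesis, $\frac{m-j}{n_j}\le\frac mn$, one has $s_{j,i}\le\frac{m-j}{n_j}+1\le\frac mn+1=s$, so every lower-order word has index at most $s$, while the leading word attains exactly $s$.

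For the \textbf{sufficiency}, suppose $g\in\B^s_\alpha$. By the nested inclusion of the symbol classes (Corollary~\ref{cor1:prop:BMOAq:Bq:nested}), $s_{j,i}\le s$ gives $\B^s_\alpha\subset\B^{s_{j,i}}_\alpha$, so $g\in\B^{s_{j,i}}_\alpha$ for every word. Theorem~\ref{thm:sharp:estimate:norm:word}\,b) then shows that each word $S^{m-j}_g T^{n_j+i}_g$, as well as the leading word $S^m_g T^n_g$, is bounded on $A^p_\alpha$; since $L$ is a finite linear combination of these words, it is bounded (using the quasi-triangle inequality when $0<p<1$). This direction is purely qualitative and uses nothing about cancellations.

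For the \textbf{necessity} I argue by isolating the leading word: I claim $\|L\|_{\alpha,p}\gtrsim\|S^m_g T^n_g\|_{\alpha,p}$, whence Theorem~\ref{thm:sharp:estimate:norm:word}\,b) forces $g\in\B^s_\alpha$. The tool is the family of normalized extremal test functions $\{h_a\}_{a\in\D}$ (with $\|h_a\|_{\alpha,p}\simeq1$) produced in the lower-bound half of Theorem~\ref{thm:sharp:estimate:norm:word}\,b), for which $\sup_a\|S^m_g T^n_g h_a\|_{\alpha,p}\simeq\|g\|^N_{\B^s_\alpha}$. Evaluating $L$ on the same functions and using $\|Lh_a\|_{\alpha,p}\ge\|S^m_g T^n_g h_a\|_{\alpha,p}-\sum_{j,i}|c_{j,i}|\,\|S^{m-j}_g T^{n_j+i}_g h_a\|_{\alpha,p}$, it suffices to show that each lower-order word is of strictly smaller order than the leading word on the relevant $h_a$. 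This is exactly where the hypothesis $\frac{m-j}{n_j}\le\frac mn$ enters: it yields $s_{j,i}\le s$, and in the borderline case $s_{j,i}=s$ it forces a strict drop in the letter count, $N_{j,i}=(m-j)+(n_j+i)<N$. Because each functional $\|\cdot\|_{\B^q_\alpha}$ is homogeneous of degree $1$ under $g\mapsto\lambda g$, the $N$-letter leading contribution dominates every $N_{j,i}$-letter contribution in the large-symbol regime that governs unboundedness, so the leading word cannot be cancelled.

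The \textbf{main obstacle} is precisely this last, quantitative domination on the extremal functions: one cannot argue at the level of operator norms, since for small $\|g\|_{\B^s_\alpha}$ a lower-order word may have larger norm than the leading word (the homogeneity degrees $N_{j,i}<N$ differ). The comparison must therefore be carried out pointwise in $a$, re-entering the estimates behind Theorem~\ref{thm:sharp:estimate:norm:word}\,b) to verify that the functions detecting $\|g\|_{\B^s_\alpha}$ keep every lower word subordinate. A convenient device for organizing this comparison is the algebraic identity $S^a_g T^b_g=\frac1{a+1}\,T_{g^{a+1}}T^{b-1}_g$ (valid for $a\ge1$, $b\ge1$, since both sides vanish at $0$ and share the derivative $g^a g' T^{b-1}_g$), which recasts every summand of $L$ as a composition of generalized Volterra operators and reduces the whole problem to comparing such compositions acting from $A^p_\alpha$ into the relevant tent space.
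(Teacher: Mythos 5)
Your sufficiency argument is correct and is essentially the paper's: nesting of the classes (Corollary~\ref{cor1:prop:BMOAq:Bq:nested}) plus Theorem~\ref{thm:sharp:estimate:norm:word}~{\sf b)} applied to each word, then the (quasi-)triangle inequality. The necessity direction, however, has a genuine gap, and you have in fact flagged it yourself without closing it. Your plan rests on ``normalized extremal test functions $\{h_a\}$ produced in the lower-bound half of Theorem~\ref{thm:sharp:estimate:norm:word}~{\sf b)}''; no such family exists in the paper. The lower bound there is not obtained by testing on explicit functions: it runs through the fractional operators $Q^{\tau,\ell}_g$, Calder\'on's theorem, the tent-space/Carleson quantities $M_{p,q}(\nu_{g,s},\alpha)$ (a supremum over \emph{all} unit-norm $f$, see Theorem~\ref{thm:Carleson:measure:norm:tent:spaces}), an induction on $\ell$ with self-improvement inequalities, and the iterated-commutator bound of Proposition~\ref{prop:boundedness:Tg}. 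So the ``pointwise in $a$'' subordination of the lower-order words on the detecting functions --- which you correctly identify as the main obstacle --- is precisely the step you never prove, and the homogeneity heuristic you offer in its place fails: $g$ is fixed (you cannot rescale $g\mapsto\lambda g$ without changing $L$, since the coefficients of the $P_j$ are fixed), and, worse, after expanding $P_j(T_g)$ the words $S^{m-j}_gT^{n_j+i}_g$ can have letter count $N_{j,i}=(m-j)+(n_j+i)$ \emph{exceeding} $N=m+n$ (take $n_j$ or $i$ large, which the hypothesis $\frac{m-j}{n_j}\le\frac{m}{n}$ permits). Via nesting this would only give $\|g\|^N_{\B^s_{\alpha}}\lesssim\opnorm{L}_{\alpha,p}+\sum\|g\|^{N_{j,i}}_{\B^s_{\alpha}}$ with some $N_{j,i}>N$, which is useless: no ``large-symbol regime'' domination by the $N$-letter term holds.

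The paper closes exactly this gap by a different mechanism, which your scheme would need to import. First it radializes: it suffices to bound $\sup_{0<r<1}\|g_r\|_{\B^s_{\alpha}}$, and for $g_r\in\H(\overline{\D})$ all quantities are finite, so absorption arguments are legitimate. Second, it reduces WLOG to $n_j\le n$ (replacing $T^{n_j}_gP_j(T_g)$ by $T^n_gQ_j(T_g)$ with $Q_j(z)=z^{n_j-n}P_j(z)$), which caps the $S$--$T$ degree of each remainder word at $m-j+n_j<m+n$. Third --- and this is the key input your proposal lacks --- it does \emph{not} pay for the polynomial factors in powers of $\|g\|_{\B^s_{\alpha}}$: Proposition~\ref{prop:boundedness:Tg} (the iterated-commutator lower estimate, applicable because every summand of $L$ ends in $T_g$ on $\H_0(\D)$) gives $\|T_{g_r}\|_{\alpha,p}\lesssim\opnorm{L_{g_r}}_{\alpha,p}^{1/(m+n)}$, so $\|P_j(T_{g_r})\|_{\alpha,p}$ is controlled by a constant depending only on $\opnorm{L_g}_{\alpha,p}$. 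This yields $\|g_r\|^{m+n}_{\B^s_{\alpha}}\lesssim\opnorm{L_g}_{\alpha,p}+\sum_j\|g_r\|^{(m+n)\varepsilon_j}_{\B^s_{\alpha}}\widetilde{P}_j\bigl(\opnorm{L_g}_{\alpha,p}^{1/(m+n)}\bigr)$ with $\varepsilon_j=\frac{m-j+n_j}{m+n}<1$, and the strict inequality $\varepsilon_j<1$ lets the sum be absorbed, giving the uniform bound. Your identity $S^a_gT^b_g=\frac1{a+1}T_{g^{a+1}}T^{b-1}_g$ is correct but does not by itself produce this absorption; without the commutator estimate and the $n_j\le n$ reduction, the necessity half of your proof does not go through.
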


Case $n=1$ in Theorem~{\ref{thm1:boundedness:g-operators}} is actually  covered 
by~{\cite[Theorem 1.2]{Aleman:Cascante:Fabrega:Pascuas:Pelaez}}. 
In this situation, the first term on the right,  $S_g^mT_g^n=\frac{1}{m+1}T_{g^{m+1}}$, is dominant: It is bounded on 
$A^p_\alpha$ if and only if $g\in\B^{m+1}_{\alpha}$ when $n=1$, while,  by Theorem~{\ref{thm:sharp:estimate:norm:word}}, the remaining  terms on the right are bounded under strictly weaker conditions than the above one. The situation is different when $n\ge 2$ and this illustrates the power of this new approach. For example, if 
 $n=2$ and $L=S^4_gT^2_g+S^2_gT_g$, then Theorem~{\ref{thm1:boundedness:g-operators}} shows   that 
 both terms in  the sum defining $L$,  $S^4_gT^2_g$ and $S^2_gT_g$, are bounded on  $A^p_\alpha$
if and only if $g\in \B^3_{\alpha}$,  {\em i.e.} there is no dominant term in the sense of the above observation. However, the same condition $g\in \B^3_{\alpha}$ is equivalent to the boundedness of $L$ on  $A^p_\alpha$.

The paper is organized as follows. Section~{\ref{section:symbols}} contains the basic properties of the symbol classes $\B^s_{\alpha}$. Section~{\ref{section:auxiliary:results}} gives the technical tools for the proofs: algebraic identities, Calder\'{o}n theorem, and   lower estimates of $g$-operator norms using iterate commutators.   As already pointed out  the proof of the main result, Theorem~{\ref{thm:sharp:estimate:norm:word}}, and the one of Theorem~{\ref{thm1:boundedness:g-operators}} are given in sections \ref{section:easycase}-\ref{section:proofs:of:thm1.3-1.4:proposition1.5}.
\vspace*{6pt}

\noindent
{\bf A word about  notation.} 
For a function $g:\D\to\mathbb{C}$ and $0<r<1$, we denote by $g_r$ its dilation, {\em i.e.} $g_r(z)=g(rz)$, $z\in\D$.
Throughout in what follows $\H_0(\D):=\{f\in \H(\D):\,f(0)=0\}$ and $A^p_{\alpha}(0):=A^p_{\alpha}\cap\H_0(\D)$. Moreover,
$\opnorm{L}_{\alpha,p}
:=\sup\{\|Lf\|_{\alpha,p}:f\in A^p_{\alpha}(0),\,\|f\|_{\alpha,p}\le1\}$.  
\vspace*{6pt}

Finally, we should like to emphasize that both theorems above continue to hold when $A^p_{\alpha}$ is replaced by $A^p_{\alpha}(0)$  and  $\|L\|_{\alpha,p}$ by $\opnorm{L}_{\alpha,p}$. This  will be effectively shown below.

\section{Classes of symbols}\label{section:symbols}

In this section we study the main properties of our symbol classes $BMOA^q$ and $\B^q$. We begin by giving alternative definitions of those classes based on Garsia type seminorms.  The equivalence between those definitions will allow us to prove the nesting property of the symbol classes (see Corollary \ref{cor1:prop:BMOAq:Bq:nested}), a crucial fact for the proof of our main results. 

In order to motivate the Garsia type definitions in the particular case of $BMOA$, observe that if $n\in\N$ and $g\in\H(\D)$, then the condition $g^n\in BMOA$ can be written in terms of Garsia's seminorms as
\begin{equation}\label{eqn:q:power:in:Bloch:Garsia:1}
	\sup_{a\in\D}\|g^n\circ\phi_a-g^n(a)\|_{H^2}<\infty.
\end{equation}
Since
$\|g^n\circ\phi_a-g^n(a)\|_{H^2}^2
=\|g\circ\phi_a\|_{H^{2n}}^{2n}-|g(a)|^{2n}$,
\eqref{eqn:q:power:in:Bloch:Garsia:1} is equivalent to
\begin{equation*}
	\sup_{a\in\D}\bigl(\|g\circ\phi_a\|_{H^{2n}}^{2n}-|g(a)|^{2n}\bigr)^{\frac1{2n}}<\infty,
\end{equation*}
which is a condition that makes sense even if $n$ is any positive real number.  
Those considerations  lead to the following definition.

\begin{definition}
	For $\alpha\ge-1$ and $0<q<\infty$, define 
\begin{equation*}
\boldopnorm{g}_{\alpha,q}:=\sup_{a\in\D}\bigl(\|g\circ\phi_a\|_{\alpha,2q}^{2q}-|g(a)|^{2q}\bigr)^{\frac1{2q}}
\qquad(g\in\H(\D)).
\end{equation*}
Recall that, by subharmonicity, $\|g\|_{\alpha,2q}^{2q}\ge|g(0)|^{2q}$, for every $g\in\H(\D)$, so the $\frac1{2q}$-power of the preceding definition makes sense. Observe that 
\begin{equation*}
\boldopnorm{g}_{\alpha,q}\le\boldopnorm{g}_{-1,q}
\qquad(g\in\H(\D),\,\alpha>-1,\,q>0).
\end{equation*} 
\end{definition}

\begin{definition}
For $\alpha\ge-1$ and $q>0$,  define 
\begin{equation*}
\boldnorm{g}_{\alpha,q}^{2q}:=    
\sup_{a\in\D}\int_{\D}
(1-|\phi_a|^2)^{\alpha+2}\,|\nabla|g|^q|^2\,dA
\qquad(g\in\H(\D)).
\end{equation*}

Since  $\|\cdot\|_{BMOA^q}=\boldnorm{\cdot}_{-1,q}$, for $q\ge1$, it is natural to extend the definition of $\|\cdot\|_{BMOA^q}$ to any $q>0$ by the identity 
\begin{equation}\label{eqn:BMOAqnorm:equal:boldnorm:-1}
\|\cdot\|_{BMOA^q}:=\boldnorm{\cdot}_{-1,q}\qquad(q>0).
\end{equation}

\end{definition}

\begin{remarks}$\mbox{ }$\label{remarks:symbols}
\begin{enumerate}[label={\sffamily{\arabic*)}},topsep=3pt, 
leftmargin=0pt, itemsep=4pt, wide, listparindent=0pt, itemindent=6pt] 

\item \label{remarks:symbols:1}
One might wonder why we consider $\|\cdot\|_{BMOA^q}$ and $\boldnorm{\cdot}_{\alpha,q}$, for any $q>0$, while $\|\cdot\|_{\B^q}$  only for  $q\ge1$ . The reason is quite simple: $\|g\|_{BMOA^q}<\infty$ and $\boldnorm{g}_{\alpha,q}<\infty$, for every $g\in\H(\overline{\D})$ and $q>0$, 
but if $0<q<1$ then there are functions $g\in\H(\overline{\D})$ such that $\|g\|_{\B^q}=\infty$. Indeed, it is clear that, if $q>0$ and either $g\in\H(\overline{\D})$ is zero free on $\overline{\D}$ or $g\equiv0$, then 
$|\nabla |g|^q|\in C(\overline{\D})$, and so both $\|g\|_{BMOA^q}$ and $\boldnorm{g}_{\alpha,q}$ are finite. On the other hand, if $q>0$ and $g\in\H(\overline{\D})$ has a zero of multiplicity $m\ge1$ at  $z_0\in\overline{\D}$, then 
$|\nabla |g|^q|(z)\simeq|z-z_0|^{qm-1}$ on a pointed neighborhood of $z_0$, so 
$|\nabla|g|^q|\in L^2(\D)$, and therefore $\|g\|_{BMOA^q}<\infty$ and $\boldnorm{g}_{\alpha,q}<\infty$. 
In particular, $\|g\|_{\B^q}=\infty$, for any $0<q<1$ and any function $g\in\H(\D)$ having at least one simple zero.

\item 
The classes $BMOA^q$ and  $\B^q$ are not vector spaces. Let us deal with the case $q=2$. Let  $g$ be a branch of the square root
of\[
h(z):=\log\biggl(\frac{e}{1-z}\biggr)\qquad(z\in\C\setminus[1,\infty)]).
\]
and let $B$ be a Blaschke product with positive zeros. Then it is clear that $g,B\in BMOA^2\subset \B^2$. However, $g+B$ does not belong to $\B^2$, and consequently neither it does not belong to $BMOA^2$. Indeed, it is clear that $g+B\in\B^2$ if and only if $gB\in\B$, but \cite[Theorem~1.6]{Girela:Gonzalez:Pelaez} ensures that $gB\not\in\B$.
\item 
The functionals  $\|\cdot\|_{\B^q}$,
$\boldopnorm{\,\cdot\,}_{\alpha,q}$ and $\boldnorm{\cdot}_{\alpha,q}$ are conformally invariant, {\em i.e.\ }  if $\|\cdot\|$ is any of these functionals, then $\|g\circ\phi\|=\|g\|$, for every $g\in\H(\D)$ and every conformal automorphism $\phi$ of $\D$.

\item \label{remarks:symbols:4}
The functionals  $\|\cdot\|_{\B^q}$,
$\boldopnorm{\,\cdot\,}_{\alpha,q}$ and $\boldnorm{\cdot}_{\alpha,q}$ are homogeneous and they vanish 
on constant functions.
Moreover, if $q>1$ then they  do not satisfy the triangle inequality.
Indeed,  if $\|\cdot\|$ is any of the preceding  functionals, for $q>1$,  $g(z)=z$, and $h_c\equiv c$, for $c>0$, then $\|g\|+\|h_c\|=\|g\|<\infty$, but
$\|g+h_c\|\to\infty$, as $c\to\infty$. 	
 
\item Let $\alpha\ge-1$ and let ${\bf M}^{+}(\D)$ be the set of all positive Borel measures on~{$\D$.}
Then we recall that the {\em $(\alpha+2)$-Carleson measure norm} of $\mu\in {\bf M}^{+}(\D)$ is defined by
\begin{equation*}
\|\mu\|_{\mathcal{C}(\alpha)}:=
\sup_{a\in\D}\frac{\mu(S(a))}{(1-|a|^2)^{\alpha+2}},
\end{equation*}
where $S(\varrho e^{i\theta}):=\{re^{it}:\varrho\le r<1,\,|t-\theta|\le\pi(1-\varrho)\}$, $0\le\varrho<1,\,\theta\in\R$.
Moreover, we have that
\begin{equation}\label{eqn:alpha+2-Carleson:measures:equiv}
\|\mu\|_{\mathcal{C}(\alpha)}
\simeq\sup_{\lambda\in\D}\int_{\D}{\bf B}_{\alpha}(z,\lambda)\,d\mu(z)
\simeq \sup_{\substack{f\in A^p_{\alpha}\\ \|f\|_{\alpha,p}=1}}
\|f\|^p_{L^p(\mu)}\quad(\mu\in {\bf M}^{+}(\D)),
\end{equation}
where 
\begin{equation*}
{\bf B}_{\alpha}(z,\lambda):=
\frac{(1-|\lambda|)^{\alpha+2}}{|1-\overline{\lambda}z|^{2\alpha+4}}\qquad(z\in\D,\,\lambda\in\overline{\D})
\end{equation*}
is the {\em Poisson-Bergman} (or {\em Berezin}) kernel (see \cite[Theorem II.3.9, Lemma VI.3.3]{Garnett}, for $\alpha=-1$, and \cite[Theorem 2.15]{Hedenmalm:Korenblum:Zhu}, for $\alpha=0$; the proof for $\alpha>-1$ is similar to the one for the case  $\alpha=0$).
Now observe that~{\eqref{eqn:alpha+2-Carleson:measures:equiv}} shows that
\begin{equation}
\label{eqn:derivative:symbol:Carleson:measure}
\boldnorm{g}_{\alpha,q}^{2q}\simeq\|\mu^{\alpha}_{g,q}\|_{\mathcal{C}(\alpha)}\simeq \sup_{\substack{f\in A^p_{\alpha}\\ \|f\|_{\alpha,p}=1}}
\|f\|^p_{L^p(\mu^{\alpha}_{g,q})}
\qquad(g\in\H(\D),\,q>0),   
\end{equation}
where 
\begin{equation}\label{eqn:Carleson:measure:Balphaq}
d\mu^{\alpha}_{g,q}(z):=
(1-|z|^2)^{\alpha+2}|\nabla|g|^q(z)|^2\,dA(z).
\end{equation}
\end{enumerate}
\end{remarks}
\vspace*{6pt}

We are going to state and prove several  properties of $BMOA^q$ and $\B^q$. 
First we give an alternative description of $\|\cdot\|_{\B^q}$ without using derivatives, which shows the growth of the functions in $\B^q$.
\begin{proposition}\label{prop:growth:Bq}
Let $q\ge1$. Then
\begin{equation}\label{eqn:Bq:norm:equals:Lipschitz:norm}
\|g\|^q_{\B^q}=\sup_{\substack{w,z\in\D\\w\ne z}}
\frac{\bigl||g(w)|^q-|g(z)|^q\bigr|}{\beta(w,z)}
\qquad(g\in\H(\D)),
\end{equation}
where $\beta(w,z):=\frac12\log\frac{1+|\phi_w(z)|}{1-|\phi_w(z)|}$ is the hyperbolic distance in $\D$.
\end{proposition}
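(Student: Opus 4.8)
The plan is to read the right-hand side of \eqref{eqn:Bq:norm:equals:Lipschitz:norm} as the global Lipschitz seminorm of the function $\psi:=|g|^q$ measured in the hyperbolic metric, and the left-hand side $\|g\|_{\B^q}^q$ as the supremum over $z\in\D$ of the infinitesimal Lipschitz constant of $\psi$ at $z$; the identity is then the classical principle that on a length space the global Lipschitz constant equals the supremum of the local ones. I may clearly assume $g$ nonconstant, since otherwise both sides vanish.

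First I would establish the structural fact that underlies the whole argument and explains the restriction $q\ge1$: the function $\psi=|g|^q$ is \emph{locally Lipschitz} on all of $\D$. Off the discrete zero set of $g$ the function $\log|g|$ is harmonic, so $\psi$ is real-analytic there and $|\nabla|g|^q|=q|g|^{q-1}|g'|$ in the ordinary sense. Near a zero $z_0$ of multiplicity $m\ge1$, writing $g(z)=(z-z_0)^m h(z)$ with $h(z_0)\ne0$ gives $\psi(z)=|z-z_0|^{qm}\,|h(z)|^q$, and since $r\mapsto r^{qm}$ is Lipschitz at $0$ precisely when $qm\ge1$ (which holds as $qm\ge q\ge1$), $\psi$ is Lipschitz near $z_0$ too. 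Hence $\psi$ is differentiable a.e.\ and absolutely continuous along every rectifiable arc, which is exactly what the reverse inequality below will consume.

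Next I would prove $\|g\|_{\B^q}^q\le M$, where $M$ denotes the right-hand side of \eqref{eqn:Bq:norm:equals:Lipschitz:norm}. Fixing $z$ and using $|\phi_w(z)|=|w-z|/|1-\bar w z|$ together with $\beta(w,z)=\operatorname{arctanh}|\phi_w(z)|$, one has $|w-z|/\beta(w,z)\to1-|z|^2$ as $w\to z$, whence
\[
(1-|z|^2)\,|\nabla|g|^q|(z)
=\limsup_{w\to z}\frac{\bigl||g(w)|^q-|g(z)|^q\bigr|}{\beta(w,z)}
\le M;
\]
taking the supremum over $z\in\D$ gives the desired bound. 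For the reverse inequality $M\le\|g\|_{\B^q}^q$ I would fix $w\ne z$, let $\gamma$ be the hyperbolic geodesic joining them, and use that its hyperbolic length $\int_\gamma(1-|\zeta|^2)^{-1}|d\zeta|$ equals $\beta(w,z)$. By the absolute continuity of $\psi$ along $\gamma$ the fundamental theorem of calculus yields $\bigl||g(w)|^q-|g(z)|^q\bigr|\le\int_\gamma|\nabla|g|^q|\,|d\zeta|$, and bounding the integrand by $\|g\|_{\B^q}^q\,(1-|\zeta|^2)^{-1}$ gives $\bigl||g(w)|^q-|g(z)|^q\bigr|\le\|g\|_{\B^q}^q\,\beta(w,z)$; dividing and taking the supremum completes the argument.

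I expect the genuine obstacle to be this last path-integration step rather than the infinitesimal computation: one must legitimately integrate $|\nabla|g|^q|$ across the zeros of $g$, and it is precisely the local Lipschitz property at those zeros—valid if and only if $q\ge1$—that guarantees absolute continuity along the geodesic and saves the fundamental-theorem-of-calculus inequality. This is consistent with Remark~\ref{remarks:symbols}\,\ref{remarks:symbols:1}, where for $0<q<1$ a single simple zero already forces $\|g\|_{\B^q}=\infty$.
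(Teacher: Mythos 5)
Your proof is correct and takes essentially the same approach as the paper: the infinitesimal inequality via $\lim_{w\to z}|w-z|/\beta(w,z)=1-|z|^2$ is identical, and your direct integration of $|\nabla|g|^q|$ along the hyperbolic geodesic is the same computation the paper organizes by first integrating along the radial segment $[0,z]$ (yielding $\bigl||g(z)|^q-|g(0)|^q\bigr|\le\|g\|^q_{\B^q}\,\beta(z,0)$) and then transporting it to an arbitrary pair $(w,z)$ via $g\mapsto g\circ\phi_w$ and the conformal invariance of $\|\cdot\|_{\B^q}$. Your explicit verification that $|g|^q$ is locally Lipschitz across the zeros of $g$ (exactly where $q\ge1$ enters) is a welcome addition: it justifies the absolute-continuity step that the paper uses implicitly when writing $\bigl||g(z)|^q-|g(0)|^q\bigr|\le\int_0^1\bigl|\tfrac{d}{dt}|g(tz)|^q\bigr|\,dt$.
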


\begin{proof}
Let $g\in\H(\D)$ and let $C_q(g)$ be the
right hand side term in~{\eqref{eqn:Bq:norm:equals:Lipschitz:norm}}. 
Since 
${\displaystyle\lim_{w\to z}|w-z|/\beta(w,z)=1-|z|^2}$,
 for any $z\in\D$, we have that
\begin{equation*}
(1-|z|^2)|\nabla|g|^q|(z)
=\limsup_{w\to z}
\frac{\bigl||g(w)|^q-|g(z)|^q\bigr|}{\beta(w,z)}
\le C_q(g)
\qquad(z\in\D).
\end{equation*}
It follows that $\|g\|^q_{B^q}\le C_q(g)$.
On the other hand, if $z\in\D$ then
\begin{align*}
\bigl||g(z)|^q-|g(0)|^q\bigr|
&\le\int_0^1\left|\frac{d\,}{dt}|g(tz)|^q\right|\,dt=|z|\int_0^1|\nabla|g|^q(tz)|\,dt 
\\
\nonumber
&
\le\|g\|^q_{\B^q}\int_0^1\frac{|z|\,dt}{1-t^2|z|^2}
=\|g\|^q_{\B^q}\,\beta(z,0).
\end{align*}
By replacing in the preceding inequality $g$ and $z$ by $g\circ\phi_w$ and $\phi_w(z)$, respectively,  and applying the conformal invariance of $\|\cdot\|_{\B^q}$, we obtain that 
$\bigl||g(w)|^q-|g(z)|^q\bigr|\le\|g\|^q_{\B^q}\,\beta(w,z)$,  
and therefore $C_q(g)\le\|g\|^q_{\B^q}$. 
\end{proof} 
Note that $\|\cdot\|_{\B^1}=\|\cdot\|_{\B}$, so the identity \eqref{eqn:Bq:norm:equals:Lipschitz:norm} for $q=1$ gives an expression of the 
Bloch seminorm $\|\cdot\|_{\B}$ which seems to be new. It should be compared 
with the following well known formula (see \cite[Theorem 5.5]{Zhu}):
\begin{equation*}
\|g\|_{\B}=\sup_{\substack{w,z\in\D\\w\ne z}}
\frac{\bigl| g(w)-g(z)\bigr|}{\beta(w,z)}
\qquad(g\in\H(\D)).
\end{equation*}

Now we prove that if $q\ge1$ then $\boldnorm{\cdot}_{\alpha,q}$ and $\boldopnorm{\,\cdot\,}_{\alpha,q}$ are equivalent, for $\alpha\ge-1$,  
and also $\|\cdot\|_{\B^q}$ and $\boldopnorm{\,\cdot\,}_{\alpha,q}$ are equivalent, for $\alpha>-1$.
\begin{proposition}
\label{prop:equiv:Garsia:Carleson:symbols}
 $\mbox{}$
For every $\alpha\ge-1$ there are two constants 
 $0<c_{\alpha}<1$ and $C_{\alpha}>1$ such that
\begin{gather}
\label{eqn:prop:equiv:Garsia:Carleson:symbols:1}
c_{\alpha}\boldnorm{g}_{\alpha,q}^{2q}
\le\boldopnorm{g}_{\alpha,q}^{2q}
\qquad(g\in\H(\D),\,q>0).
\\
\label{eqn:prop:equiv:Garsia:Carleson:symbols:2}
\boldopnorm{g}_{\alpha,q}^{2q}\le C_{\alpha}\boldnorm{g}_{\alpha,q}^{2q}
\qquad(g\in\H(\D),\,q\ge1).
\end{gather}
 Moreover, for every $\alpha>-1$  there are constants  $0<c_{\alpha}'<1$ and $C'_{\alpha}>1$ such that
\begin{equation}
\label{eqn:prop:equiv:Garsia:Carleson:symbols:3}
c_{\alpha}'\|g\|_{\B^q}^{2q}\le\boldopnorm{g}_{\alpha,q}^{2q}
\le C'_{\alpha}\|g\|_{\B^q}^{2q}\qquad(g\in\H(\D),\,q\ge1).
\end{equation}
\end{proposition}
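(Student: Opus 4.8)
The plan is to reduce all three equivalences to a single weighted Littlewood--Paley identity and then play off the resulting Green kernel against the weight $(1-|\phi_a|^2)^{\alpha+2}$ appearing in $\boldnorm{\cdot}_{\alpha,q}$. First I would record the pointwise identity $|\nabla|h|^q|^2=\tfrac14\Delta(|h|^{2q})=q^2|h|^{2q-2}|h'|^2$, valid for every analytic $h$ and every $q>0$ (as measures, with no singular part at the zeros of $h$). Combining the Hardy--Stein formula on each circle $|z|=r$ with Fubini then gives, for all $h\in\H(\D)$ and $q>0$,
\begin{equation*}
\|h\|_{\alpha,2q}^{2q}-|h(0)|^{2q}=\int_{\D}|\nabla|h|^q|^2\,G_\alpha\,dA,\qquad G_\alpha(z)=4(\alpha+1)\int_{|z|}^1\log\tfrac{r}{|z|}\,(1-r^2)^\alpha r\,dr
\end{equation*}
for $\alpha>-1$, with $G_{-1}(z)=2\log\tfrac1{|z|}$ in the Hardy case. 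An elementary one--variable estimate shows that $G_\alpha>0$, that $G_\alpha(z)\simeq(1-|z|^2)^{\alpha+2}$ as $|z|\to1$, and that $G_\alpha(z)\simeq\log\tfrac1{|z|}$ as $|z|\to0$; in particular $G_\alpha\gtrsim(1-|z|^2)^{\alpha+2}$ on all of $\D$, while $G_\alpha\lesssim(1-|z|^2)^{\alpha+2}$ on $\{|z|\ge\tfrac12\}$ and $G_\alpha\in L^1(\{|z|<\tfrac12\})$. Applying this with $h=g\circ\phi_a$ and changing variables (using the chain rule $|\nabla|g\circ\phi_a|^q|^2(z)=|\nabla|g|^q|^2(\phi_a(z))\,|\phi_a'(z)|^2$ together with the involution identity $|\phi_a'(z)|^2|\phi_a'(\phi_a(z))|^2=1$) I would rewrite
\begin{equation*}
\boldopnorm{g}_{\alpha,q}^{2q}=\sup_{a\in\D}\int_{\D}|\nabla|g|^q|^2\,G_\alpha(\phi_a)\,dA,
\end{equation*}
which is the form in which all comparisons with $\boldnorm{g}_{\alpha,q}^{2q}=\sup_a\int_\D(1-|\phi_a|^2)^{\alpha+2}|\nabla|g|^q|^2\,dA$ become transparent. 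For general $g\in\H(\D)$ the identity is obtained from the case $g\in\H(\overline{\D})$ by dilation and monotone convergence.

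Inequality~\eqref{eqn:prop:equiv:Garsia:Carleson:symbols:1} is then immediate and holds for every $q>0$: since $G_\alpha(\phi_a(z))\gtrsim(1-|\phi_a(z)|^2)^{\alpha+2}$ pointwise, a term-by-term comparison of the two suprema gives $c_\alpha\boldnorm{g}_{\alpha,q}^{2q}\le\boldopnorm{g}_{\alpha,q}^{2q}$.

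For the upper bound~\eqref{eqn:prop:equiv:Garsia:Carleson:symbols:2} I would fix $a$, set $h=g\circ\phi_a$ (so $\boldnorm{h}_{\alpha,q}=\boldnorm{g}_{\alpha,q}$ by conformal invariance), and split $\int_\D|\nabla|h|^q|^2 G_\alpha\,dA$ over $\{|z|\ge\tfrac12\}$ and $\{|z|<\tfrac12\}$. On the outer region $G_\alpha\lesssim(1-|z|^2)^{\alpha+2}$, so that piece is $\lesssim\int_\D(1-|z|^2)^{\alpha+2}|\nabla|h|^q|^2\,dA\le\boldnorm{h}_{\alpha,q}^{2q}$. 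The inner region is the whole difficulty, and the main obstacle of the proof: there $G_\alpha$ carries the logarithmic singularity of the Green kernel, which cannot be absorbed by any bounded average of Carleson windows (a point mass at the origin would already defeat a naive Carleson estimate). The resolution, and the exact place where the hypothesis $q\ge1$ enters, is that $|\nabla|h|^q|^2=q^2|h|^{2q-2}|h'|^2$ is \emph{subharmonic} on $\D$ precisely when $q\ge1$: away from the zeros of $h$ its logarithm is harmonic, and at a zero of order $m$ it behaves like $|z-z_0|^{2(qm-1)}$, which stays bounded iff $q\ge1$ (for $q<1$ it blows up and subharmonicity fails, which is the structural reason~\eqref{eqn:prop:equiv:Garsia:Carleson:symbols:2} is false for $q<1$). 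Granting subharmonicity, the sub-mean-value property over Euclidean disks contained in $\{|w|<\tfrac34\}$ yields the uniform bound $\sup_{|z|<1/2}|\nabla|h|^q|^2\lesssim\int_{\{|w|<3/4\}}|\nabla|h|^q|^2\,dA\lesssim\boldnorm{h}_{\alpha,q}^{2q}$, and since $\int_{\{|z|<1/2\}}G_\alpha\,dA<\infty$ the inner piece is also $\lesssim\boldnorm{g}_{\alpha,q}^{2q}$. Taking the supremum over $a$ gives~\eqref{eqn:prop:equiv:Garsia:Carleson:symbols:2}.

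Finally I would treat~\eqref{eqn:prop:equiv:Garsia:Carleson:symbols:3} for $\alpha>-1$. For the upper bound I would insert the pointwise Bloch estimate $|\nabla|g|^q|^2(w)\le\|g\|_{\B^q}^{2q}(1-|w|^2)^{-2}$ into the identity above, reducing matters to the uniform finiteness of the conformally invariant integral $\int_\D(1-|w|^2)^{-2}G_\alpha(\phi_a(w))\,dA(w)=\int_\D(1-|z|^2)^{-2}G_\alpha(z)\,dA(z)$, where the change of variables uses $(1-|\phi_a|^2)^{-2}|\phi_a'|^2=(1-|z|^2)^{-2}$; by the boundary asymptotics of $G_\alpha$ this integral is finite exactly when $\alpha>-1$, giving $\boldopnorm{g}_{\alpha,q}^{2q}\le C'_\alpha\|g\|_{\B^q}^{2q}$. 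For the reverse inequality I would combine~\eqref{eqn:prop:equiv:Garsia:Carleson:symbols:1} with a sub-mean-value estimate: using again that $|\nabla|g|^q|^2$ is subharmonic for $q\ge1$, averaging over the pseudohyperbolic disk $\{|\phi_a|<\tfrac12\}$ (of area $\simeq(1-|a|^2)^2$, on which $(1-|\phi_a|^2)^{\alpha+2}\simeq1$) gives $(1-|a|^2)^2|\nabla|g|^q|^2(a)\lesssim\boldnorm{g}_{\alpha,q}^{2q}$, whence $\|g\|_{\B^q}^{2q}\lesssim\boldnorm{g}_{\alpha,q}^{2q}\lesssim\boldopnorm{g}_{\alpha,q}^{2q}$. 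Thus every step other than the inner-region estimate in~\eqref{eqn:prop:equiv:Garsia:Carleson:symbols:2} is either the Littlewood--Paley identity, an elementary asymptotic for $G_\alpha$, or a sub-mean-value argument resting on the same subharmonicity of $|\nabla|g|^q|^2$ for $q\ge1$.
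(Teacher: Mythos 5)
Your proof is correct and follows essentially the same route as the paper: the Hardy--Stein/Littlewood--Paley identity with the weight $w_\alpha$ (your $G_\alpha$), the two-sided comparison of that weight with $(1-|z|^2)^{\alpha+1}\log\frac1{|z|^2}$, subharmonicity of $|\nabla|g|^q|^2$ for $q\ge1$ to tame the logarithmic singularity, and conformal invariance via $g\mapsto g\circ\phi_a$. The only (immaterial) differences are that you handle the inner region by a direct sub-mean-value estimate where the paper invokes its radial-weight comparison lemma for subharmonic functions, and that you perform the M\"obius change of variables explicitly rather than substituting $g\circ\phi_a$ at the end.
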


In order to prove Proposition \ref{prop:equiv:Garsia:Carleson:symbols}
we need the following two lemmas.

\begin{lemma}\label{lem:subharmonic1}
If $g\in\H(\D)$ and $q\ge1$ then $v=|\nabla|g|^q|^2$ is a nonnegative subharmonic function on $\D$.
\end{lemma}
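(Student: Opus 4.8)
The plan is to reduce everything to the elementary fact that the squared modulus of an analytic function is subharmonic, the only genuine difficulty being the zeros of $g$. Recall from the discussion preceding the statement that $v=|\nabla|g|^q|^2=q^2|g|^{2q-2}|g'|^2\ge0$ on $\D$. If $g\equiv0$ then $v\equiv0$ is trivially subharmonic, so I assume $g\not\equiv0$; then its zero set $Z:=g^{-1}(0)$ is discrete. The case $q=1$ is immediate, since then $v=|g'|^2$ is the squared modulus of the analytic function $g'$, hence subharmonic on all of $\D$. From now on I take $q>1$.

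First I would establish subharmonicity away from the zeros. On the open set $\D\setminus Z$ one can locally select an analytic branch of $g^{q-1}=e^{(q-1)\log g}$, so that $\phi:=q\,g^{q-1}g'$ is locally analytic and $v=|\phi|^2$. Although $\phi$ itself may be multivalued when $q$ is not an integer, its modulus $|\phi|=q|g|^{q-1}|g'|$ is single valued; and since subharmonicity is a local property, the identity $v=|\phi|^2$ together with the subharmonicity of $|F|^2$ for analytic $F$ shows that $v$ is real-analytic and subharmonic on $\D\setminus Z$.

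Next I would check that $v$ extends continuously across $Z$, which is exactly where the hypothesis $q\ge1$ enters. Since $|g'|^2$ is continuous and, for $q>1$, the factor $|g|^{2q-2}$ is continuous and vanishes on $Z$, the function $v$ is continuous on all of $\D$ and satisfies $v\equiv0$ on $Z$. To conclude that $v$ is subharmonic on the whole disc I would verify the sub-mean-value inequality at every point: $v$ is continuous, hence upper semicontinuous, and nonnegative; at points of $\D\setminus Z$ the inequality holds for small circles by the previous paragraph, while at a point $z_0\in Z$ one has $v(z_0)=0\le\frac1{2\pi}\int_0^{2\pi}v(z_0+re^{i\theta})\,d\theta$ trivially, because the integrand is nonnegative. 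Hence $v$ is subharmonic on $\D$.

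The main obstacle, and the reason the statement requires $q\ge1$, is precisely the gluing across the zeros of $g$: the natural representation $v=|\phi|^2$ degenerates there, since $\phi$ need not admit an analytic (or even a bounded) extension. For $q<1$ the factor $|g|^{2q-2}$ blows up at a simple zero, so $v$ is not bounded above and the argument — in fact the subharmonicity itself — breaks down; the restriction $q\ge1$ (so that $2q-2\ge0$) is exactly what forces $v$ to stay bounded, and indeed to vanish, on $Z$, which makes the sub-mean-value inequality there trivial. One could alternatively carry out the gluing via a removable-singularity theorem for subharmonic functions that are locally bounded above off a polar set, but the direct verification above seems cleaner.
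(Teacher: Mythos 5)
Your proof is correct, but it takes a genuinely different route from the paper's. The paper dispatches the case $q>1$ in one line: $u=(2q-2)\log|g|+2\log|g'|$ is subharmonic on all of $\D$ (a nonnegative combination of $\log|g|$ and $\log|g'|$, which are subharmonic with the value $-\infty$ permitted at the zeros), and then $v=q^2e^u$ is subharmonic because composing a subharmonic function with a convex increasing function preserves subharmonicity; with the convention $e^{-\infty}=0$, the zeros of $g$ and $g'$ require no separate treatment at all. You instead work locally on $\D\setminus Z$ with branches $\phi=q\,g^{q-1}g'$, so that $v=|\phi|^2$ is subharmonic there by the elementary fact that $|F|^2$ is subharmonic for $F$ analytic, and then glue across $Z$ by continuity (this is where $2q-2>0$ enters) together with the observation that the sub-mean-value inequality is trivial at points where $v$ vanishes. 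This gluing is sound, but it silently rests on the classical criterion that an upper semicontinuous function satisfying the sub-mean-value inequality on all sufficiently small circles about each point is subharmonic; you should cite or state that fact explicitly, since the naive definition of subharmonicity via comparison with harmonic majorants does not immediately localize in this way. What each approach buys: the paper's argument is shorter and uniform in $q>1$, letting standard potential theory absorb the zero set; yours is more elementary in its ingredients and makes completely explicit where $q\ge1$ is used and why the statement fails for $0<q<1$ at a simple zero, which matches the discussion in Remark~\ref{remarks:symbols}. For $q=1$ the two proofs coincide.
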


\begin{proof}
If $q=1$ then $v=|g'|^2$ is clearly a nonnegative subharmonic function on $\D$. 
If $q>1$ then $u=(2q-2)\log|g|+2\log|g'|$ is a subharmonic function on $\D$, and so $v=q^2e^u$ is a nonnegative subharmonic function on $\D$.
\end{proof}

\begin{lemma}\label{lem:subharmonic2}
Let $w$ be a positive radial integrable function on $\D$. Then there is a constant $C>0$ such that
\begin{equation*}
\int_{\D}v(z)w(z)\,dA(z)\le C\int_{\frac12<|z|<1}v(z)w(z)\,dA(z),	
\end{equation*}
for every  nonnegative subharmonic function  $v$ on $\D$.
\end{lemma}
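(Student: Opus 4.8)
The plan is to prove Lemma~\ref{lem:subharmonic2} by reducing an integral over all of $\D$ to an integral over the annulus $\frac12<|z|<1$, exploiting the sub-mean-value property of the nonnegative subharmonic function $v$ together with the positivity and radial structure of the weight $w$. The key observation is that on the inner disc $|z|\le\frac12$ the values of $v$ are controlled by its values on a slightly larger compact set via subharmonicity, and that the radial integrability of $w$ gives a uniform comparison between the weight on the inner disc and the weight on the annulus.

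First I would bound the inner contribution $\int_{|z|\le1/2}v\,w\,dA$. Since $w$ is radial and integrable, $\int_{|z|\le1/2}v(z)w(z)\,dA(z)\le\bigl(\sup_{|z|\le1/2}v(z)\bigr)\int_{|z|\le1/2}w\,dA$, so it suffices to dominate $\sup_{|z|\le1/2}v$ by an average of $v$ over the annulus. For this I would use the sub-mean-value inequality: for any $z$ with $|z|\le\frac12$ and a fixed radius $\rho$ (say $\rho=\frac14$) the disc $D(z,\rho)$ is contained in $\{|\zeta|<\frac34\}$, giving $v(z)\le\frac1{\pi\rho^2}\int_{D(z,\rho)}v\,dA\le C\int_{|\zeta|<3/4}v\,dA$. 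A little more care, integrating the sub-mean-value inequality over annular circles $|\zeta|=r$ for $r$ in a fixed subinterval of $(\frac12,1)$, lets me replace the integral over $\{|\zeta|<3/4\}$ by an integral over a fixed sub-annulus contained in $\frac12<|\zeta|<1$, on which the radial weight $w$ is bounded below by a positive constant times its values (or simply by using that $\int_{a<|\zeta|<b}w\,dA>0$ for a suitable fixed subinterval $[a,b]\subset(\frac12,1)$). Combining these gives $\int_{|z|\le1/2}v\,w\,dA\le C\int_{1/2<|z|<1}v\,w\,dA$, and adding the trivial inequality $\int_{1/2<|z|<1}v\,w\,dA\le\int_{1/2<|z|<1}v\,w\,dA$ yields the claim with a new constant.

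The delicate point is the interaction between the weight $w$ and the annulus used for the mean-value estimate: the radial weight $w$ could in principle vanish or be very small on parts of $(\frac12,1)$, so I cannot simply bound $\int_{1/2<|z|<1}v\,dA$ by $\int_{1/2<|z|<1}v\,w\,dA$. I would handle this by first fixing, once and for all, a closed sub-annulus $A_0=\{a\le|z|\le b\}$ with $\frac12<a<b<1$ on which I have genuine control, and arranging the mean-value radius so that the averaging discs for inner points land inside $A_0$; the subharmonic average over $A_0$ can then be compared to $\int_{A_0}v\,w\,dA$ provided $\int_{A_0}w\,dA>0$. The main obstacle is therefore to choose this sub-annulus and the averaging radius compatibly so that (i) the averaging discs for all $|z|\le\frac12$ are contained in $A_0\cup\{|z|\le\frac12\}\subset\{|z|<1\}$ and (ii) the weight $w$ has strictly positive integral over the relevant region; here the hypotheses that $w$ is positive and radial are exactly what guarantee $\int_{a<|z|<b}w\,dA>0$ for any $\frac12<a<b<1$, so the estimate goes through with a constant $C$ depending only on $w$ (and not on $v$).
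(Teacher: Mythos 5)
There is a genuine gap at the crux of your argument: the claim that ``the subharmonic average over $A_0$ can then be compared to $\int_{A_0}v\,w\,dA$ provided $\int_{A_0}w\,dA>0$.'' Positivity of the total mass of $w$ on a fixed compact sub-annulus $A_0=\{a\le|z|\le b\}$, $\frac12<a<b<1$, does \emph{not} yield $\int_{A_0}v\,dA\le C(w)\int_{A_0}v\,w\,dA$ for all nonnegative subharmonic $v$: since $w$ is merely positive, radial and integrable, it need not be bounded below on $A_0$, and $v$ can concentrate its mass precisely where $w$ is small. Concretely, let $w(r)=b-r$ for $a\le r<b$ and $w=1$ elsewhere (positive, radial, integrable), and for $a<c<b$ let $v_c(z)=\log^{+}\bigl(|z|/c\bigr)$, which is nonnegative and subharmonic on $\D$. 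Then, as $c\to b^{-}$,
\begin{equation*}
\int_{A_0}v_c\,dA\simeq\int_c^b\log\tfrac{r}{c}\,2r\,dr\simeq(b-c)^2,
\qquad
\int_{A_0}v_c\,w\,dA\simeq\int_c^b\log\tfrac{r}{c}\,(b-r)\,2r\,dr\simeq(b-c)^3,
\end{equation*}
so the ratio blows up and no constant depending only on $w$ and $A_0$ can work. (This does not contradict the lemma itself, since $\sup_{|z|\le1/2}v_c=0$ here; it kills only your intermediate claim.) Thus the difficulty you correctly flagged for the full annulus --- that $w$ may be very small on parts of $(\frac12,1)$ --- is not removed by passing to a compact sub-annulus: it reappears there verbatim. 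A secondary flaw: your compatibility requirement (i) is not satisfiable as stated, because a disc centered at a point of $\{|z|\le\frac12\}$ that reaches $A_0$ must cross the gap annulus $\{\frac12<|z|<a\}$, hence cannot be contained in $A_0\cup\{|z|\le\frac12\}$.

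What is missing is the one structural fact that pairs the radiality of $w$ with the subharmonicity of $v$: the circular means $\lambda(r):=\int_{\T}v_r\,d\sigma$ of a subharmonic function are nondecreasing in $r$ (this is essentially what you gesture at when you propose ``integrating the sub-mean-value inequality over annular circles,'' but you never use it where it matters). With it the lemma is two lines, and this is the paper's proof: writing $\int_{\D}v\,w\,dA=\int_0^1 w(r)\lambda(r)\,2r\,dr$, monotonicity gives $\int_0^{1/2}w(r)\lambda(r)\,2r\,dr\le\lambda(\tfrac12)\int_{|z|<1/2}w\,dA$ and $\lambda(\tfrac12)\int_{1/2<|z|<1}w\,dA\le\int_{1/2}^1 w(r)\lambda(r)\,2r\,dr$, whence $I_0\le C\,I_1$ with $C=\int_{|z|<1/2}w\,dA\big/\int_{1/2<|z|<1}w\,dA$, finite because $w$ is positive and radial. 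Note how radiality enters structurally --- it lets both weighted integrals be read as integrals of the single monotone function $\lambda$ against a radial density --- rather than merely guaranteeing positive mass on some region. Your outline can be repaired along the same lines: after $\sup_{|z|\le1/2}v\lesssim\lambda(\tfrac34)$ (sub-mean-value over $D(z,\tfrac14)$ plus monotonicity of $\lambda$), conclude via $\lambda(\tfrac34)\int_{3/4}^1 w(r)\,2r\,dr\le\int_{1/2<|z|<1}v\,w\,dA$, which uses the pointwise bound $\lambda(\tfrac34)\le\lambda(r)$ on the \emph{whole} outer annulus; the comparison must never be made against the unweighted average of $v$ over a fixed sub-annulus.
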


\begin{proof}
Let $v$ be  a nonnegative subharmonic function on $\D$.	First note that
\begin{equation*}
	\int_{\D}v(z)w(z)\,dA(z)= 
	\biggl\{\int_0^{\frac12}+\int_{\frac12}^1\biggr\} w(r)\lambda(r)\,2r\,dr=I_0+I_1
\end{equation*}
	where $\lambda(r):=\int_{\T}v_r\,d\sigma$, for $0<r<1$.
	We want to prove that $I_0\lesssim I_1$.
	Since $\lambda$ is an increasing function, we have that
	$\lambda(r)\le\lambda(\tfrac12)$, for $0<r<\frac12$,
	and $\lambda(\tfrac12)\le\lambda(r)$, for $\frac12<r<1$.
	By integrating these inequalities against $w(r)\,2r\,dr$ along the intervals $(0,\frac12)$ and $(\frac12,1)$, we get that 
\begin{equation*}
I_0\le \lambda(\tfrac12)\int_{|z|<\frac12}w(z)\,dA(z)
\quad\mbox{and}\quad
\lambda(\tfrac12)\int_{\frac12<|z|<1}w(z)\,dA(z)
\le I_1.
\end{equation*}
Therefore $I_0\lesssim I_1$, and that ends the proof.	
\end{proof}

\begin{proof}[{\bf Proof of Proposition \ref{prop:equiv:Garsia:Carleson:symbols}}]	Let $q>0$ and $g\in\H(\D)$. By making $r\to1^{-}$ in the Hardy-Stein identity (see, for instance, \cite[Theorem 2.18]{Pavlovic})
\begin{equation*} \label{eqn:Hardy-Stein:identity}
\|g_r\|_{H^{2q}}^{2q}-|g(0)|^{2q}=
\int_{D(0,r)}|\nabla|g|^q(z)|^2\,\log\frac{r^2}{|z|^2}\,dA(z)
\quad(0<r<1),
\end{equation*}
 the monotone convergence theorem gives that
\begin{equation}\label{eqn:prop:equiv:Garsia:Carleson:symbols:31}
	\|g\|^{2q}_{H^{2q}}-|g(0)|^{2q}
	=\int_{\D}|\nabla|g|^q(z)|^2\,\log\frac1{|z|^2}\,dA(z).
\end{equation}
Moreover, if $\alpha>-1$, we may integrate~{\eqref{eqn:Hardy-Stein:identity}}
against $(\alpha+1)(1-r^2)^{\alpha}\,2r\,dr$ along the interval $(0,1)$ and use Tonelli's theorem to  obtain that
\begin{equation}\label{eqn:prop:equiv:Garsia:Carleson:symbols:4}
	\|g\|^{2q}_{\alpha,2q}-|g(0)|^{2q}=\int_{\D}|\nabla|g|^q(z)|^2w_{\alpha}(z)\,dA(z),
\end{equation}
where 
\[
w_{\alpha}(z)
:=(\alpha+1)
\int_{|z|}^1(1-r^2)^{\alpha}\,2r\,\log\frac{r^2}{|z|^2}\,dr
=(\alpha+1)
\int_{|z|^2}^1(1-r)^{\alpha}\,\log\frac{r}{|z|^2}\,dr.
\]
Now we estimate $w_{\alpha}$ as follows:
\begin{equation}\label{eqn:prop:equiv:Garsia:Carleson:symbols:5}
c_{\alpha}(1-|z|^2)^{\alpha+1}\log\frac1{|z|^2}\le w_{\alpha}(z)\le (1-|z|^2)^{\alpha+1}\log\frac1{|z|^2}\qquad(z\in\D),
\end{equation}
where $0<c_{\alpha}<1$ is a constant. First note that 
\begin{equation}\label{eqn:prop:equiv:Garsia:Carleson:symbols:6}
w_{\alpha}(z)
=(1-|z|^2)^{\alpha+1}\log\frac1{|z|^2}
-(\alpha+1)
\int_{|z|^2}^1(1-r)^{\alpha}\,\log\frac1r\,dr,	
\end{equation}
which shows the right hand estimate in \eqref{eqn:prop:equiv:Garsia:Carleson:symbols:5}, since the last integral
in \eqref{eqn:prop:equiv:Garsia:Carleson:symbols:6} is positive.
The left hand estimate in \eqref{eqn:prop:equiv:Garsia:Carleson:symbols:5} is a direct consequence of the inequality $\sup_{0<x<1}\psi_{\alpha}(x)<1$,
where
\[
\psi_{\alpha}(x)=\frac{(\alpha+1)
	\int_{x^2}^1(1-r)^{\alpha}\,\log\frac1r\,dr}{(1-x^2)^{\alpha+1}\log\frac1{x^2}}
\qquad(0<x<1).
\]
The above inequality follows from the continuity of $\psi_{\alpha}$ 
on the interval $(0,1)$ together with the facts $\psi_{\alpha}(x)<1$, for every
$x\in(0,1)$, $\lim_{x\to0^{+}}\psi_{\alpha}(x)=0$, and
$\lim_{x\to1^{-}}\psi_{\alpha}(x)=\frac{\alpha+1}{\alpha+2}<1$.

Next observe that \eqref{eqn:prop:equiv:Garsia:Carleson:symbols:31} shows that \eqref{eqn:prop:equiv:Garsia:Carleson:symbols:4} holds for $\alpha=-1$ with $w_{-1}(z)=\log\frac1{|z|^2}$, which clearly satisfies the estimate \eqref{eqn:prop:equiv:Garsia:Carleson:symbols:5}.
Thus \eqref{eqn:prop:equiv:Garsia:Carleson:symbols:4} and \eqref{eqn:prop:equiv:Garsia:Carleson:symbols:5} hold for any $\alpha\ge-1$. Now taking into account  \eqref{eqn:prop:equiv:Garsia:Carleson:symbols:4}, \eqref{eqn:prop:equiv:Garsia:Carleson:symbols:5}, Lemmas \ref{lem:subharmonic1} and \ref{lem:subharmonic2},  and the inequalities $1-|z|^2\le\log\frac1{|z|^2}$, for any $z\in\D$, and
$\log\frac1{|z|^2}\le\frac{1-|z|^2}{|z|^2}\le4(1-|z|^2)$, for $\frac12<|z|<1$, we obtain  the  estimates 
\begin{gather}
\label{eqn:prop:equiv:Garsia:Carleson:symbols:7}	
c_{\alpha}\int_{\D}(1-|z|^2)^{\alpha+2}v_{g,q}(z)\,dA(z)\le\|g\|^{2q}_{\alpha,2q}-|g(0)|^{2q}\quad(q>0)
\\
\label{eqn:prop:equiv:Garsia:Carleson:symbols:8}
\|g\|^{2q}_{\alpha,2q}-|g(0)|^{2q}
\le C_{\alpha}\int_{\D}(1-|z|^2)^{\alpha+2}v_{g,q}(z)\,dA(z)
\quad(q\ge1),
\end{gather}
where  $v_{g,q}=|\nabla|g|^q|^2$ and $C_{\alpha}>0$ is a constant.
By replacing $g$ by $g\circ\phi_a$ in~{\eqref{eqn:prop:equiv:Garsia:Carleson:symbols:7}} and~{\eqref{eqn:prop:equiv:Garsia:Carleson:symbols:8}},  
we deduce the estimates \eqref{eqn:prop:equiv:Garsia:Carleson:symbols:1} and \eqref{eqn:prop:equiv:Garsia:Carleson:symbols:2}.

Finally, by taking into account~{\eqref{eqn:prop:equiv:Garsia:Carleson:symbols:7}}, \eqref{eqn:prop:equiv:Garsia:Carleson:symbols:8} and the fact that $v_{g,q}$ is subharmonic, for $q\ge1$,
we get
\begin{gather}
\label{eqn:prop:equiv:Garsia:Carleson:symbols:9}
c_{\alpha}'|\nabla|g|^q(0)|^2=
c_{\alpha}v_{g,q}(0)\int_0^1(1-r^2)^{\alpha+2}\,dr\le
\|g\|^{2q}_{\alpha,2q}-|g(0)|^{2q}
\\
\label{eqn:prop:equiv:Garsia:Carleson:symbols:10}
\|g\|^{2q}_{\alpha,2q}-|g(0)|^{2q}\le C_{\alpha}\|g\|_{\B^q}^{2q}\int_{\D}(1-|z|^2)^{\alpha}\,dA(z)
=C_{\alpha}'\|g\|_{\B^q}^{2q}.
\end{gather}
Then replace $g$ by $g\circ\phi_z$, $z\in\D$, in \eqref{eqn:prop:equiv:Garsia:Carleson:symbols:9} and \eqref{eqn:prop:equiv:Garsia:Carleson:symbols:10}, and take into account the conformal invariance of $\|\cdot\|_{\B^q}$ to deduce~{\eqref{eqn:prop:equiv:Garsia:Carleson:symbols:3}}, which ends the proof.
\end{proof}

\begin{corollary}
\label{cor:prop:equiv:Garsia:Carleson:symbols}
Let $\alpha>-1$ and $q\ge1$. Then:
\begin{align}
\label{eqn:equiv:norms:Bq}
\|g\|_{\B^q}\simeq\boldnorm{g}_{\alpha,q}&\simeq\boldopnorm{g}_{\alpha,q}
\qquad(g\in\H(\D)).
\\
\label{eqn:equiv:norms:BMOAq}
\|g\|_{BMOA^q}&\simeq\boldopnorm{g}_{-1,q}
\qquad(g\in\H(\D)).
\\
\label{eqn:norm:Bq:lesssim:norm:BMOAq}
\|g\|_{\B^q}&\lesssim\|g\|_{BMOA^q}
\qquad(g\in\H(\D)).
\end{align}
Moreover,  $BMOA^q\varsubsetneq\B^q$.
\end{corollary}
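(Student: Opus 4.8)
The three displayed equivalences are formal consequences of Proposition~\ref{prop:equiv:Garsia:Carleson:symbols} together with the definition~\eqref{eqn:BMOAqnorm:equal:boldnorm:-1} and the elementary pointwise bound $\boldopnorm{g}_{\alpha,q}\le\boldopnorm{g}_{-1,q}$ recorded after the first definition. To prove~\eqref{eqn:equiv:norms:Bq}, fix $\alpha>-1$ and $q\ge1$: estimates~\eqref{eqn:prop:equiv:Garsia:Carleson:symbols:1} and~\eqref{eqn:prop:equiv:Garsia:Carleson:symbols:2} give $\boldnorm{g}_{\alpha,q}\simeq\boldopnorm{g}_{\alpha,q}$, and~\eqref{eqn:prop:equiv:Garsia:Carleson:symbols:3} gives $\boldopnorm{g}_{\alpha,q}\simeq\|g\|_{\B^q}$, so that $\|g\|_{\B^q}\simeq\boldopnorm{g}_{\alpha,q}\simeq\boldnorm{g}_{\alpha,q}$ with constants depending only on the fixed $\alpha$ and $q$. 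For~\eqref{eqn:equiv:norms:BMOAq} I would take $\alpha=-1$ in~\eqref{eqn:prop:equiv:Garsia:Carleson:symbols:1}--\eqref{eqn:prop:equiv:Garsia:Carleson:symbols:2}, obtaining $\boldnorm{g}_{-1,q}\simeq\boldopnorm{g}_{-1,q}$, and recall that $\|g\|_{BMOA^q}=\boldnorm{g}_{-1,q}$ by~\eqref{eqn:BMOAqnorm:equal:boldnorm:-1}. Finally,~\eqref{eqn:norm:Bq:lesssim:norm:BMOAq} follows by fixing any $\alpha>-1$ and chaining these facts with the pointwise bound, $\|g\|_{\B^q}\simeq\boldopnorm{g}_{\alpha,q}\le\boldopnorm{g}_{-1,q}\simeq\boldnorm{g}_{-1,q}=\|g\|_{BMOA^q}$; in particular $BMOA^q\subset\B^q$.

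The remaining assertion is that this inclusion is strict, which I expect to be the only nonformal point. My plan is to reduce it to the classical case $q=1$ through zero-free symbols. Given any zero-free $G\in\H(\D)$, the simple connectivity of $\D$ provides an analytic branch $g:=G^{1/q}$, and then $|g|^q=|G|$, hence $|\nabla|g|^q|=|G'|$. Substituting this identity into~\eqref{eqn:Blochq:functional} and into the definition of $\boldnorm{\cdot}_{-1,q}$ yields $\|g\|^q_{\B^q}=\|G\|_{\B}$ and $\|g\|^{2q}_{BMOA^q}=\|G\|^2_{BMOA}$. Therefore $g\in\B^q\setminus BMOA^q$ whenever $G\in\B\setminus BMOA$, and strictness of $BMOA^q\subset\B^q$ is reduced to exhibiting a \emph{zero-free} $G\in\B\setminus BMOA$.

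To produce such a $G$ I would invoke the classical fact that there exist univalent functions in $\B$ that fail to be in $BMOA$. If $\psi$ is one of them, its image is a proper simply connected subdomain of $\C$ and hence omits some value $w_0$; then $G:=\psi-w_0$ is zero-free, and since $\|\cdot\|_{\B}$ and $\|\cdot\|_{BMOA}$ are insensitive to additive constants, $G$ still lies in $\B\setminus BMOA$. Setting $g=G^{1/q}$ finishes the argument. I expect this last step to be the main obstacle: the norm equivalences are purely mechanical once Proposition~\ref{prop:equiv:Garsia:Carleson:symbols} is available, whereas strictness genuinely requires separating $\B$ from $BMOA$ by a zero-free function, and it is exactly the zero-freeness that the univalent example secures at no extra cost.
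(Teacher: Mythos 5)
Your proof is correct, and for the displayed equivalences it is exactly the paper's argument: the paper's proof of these consists of the single remark that they ``directly follow from Proposition~\ref{prop:equiv:Garsia:Carleson:symbols}'', and your chain $\|g\|_{\B^q}\simeq\boldopnorm{g}_{\alpha,q}\le\boldopnorm{g}_{-1,q}\simeq\boldnorm{g}_{-1,q}=\|g\|_{BMOA^q}$ is the intended mechanical route, including the use of \eqref{eqn:BMOAqnorm:equal:boldnorm:-1} and the pointwise bound $\boldopnorm{g}_{\alpha,q}\le\boldopnorm{g}_{-1,q}$.

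For strictness you also follow the paper's skeleton --- reduce to producing a \emph{zero-free} $G\in\B\setminus BMOA$ and pass to a branch $g=G^{1/q}$, using $|\nabla|g|^q|=|G'|$ to transfer both seminorms exactly --- but you source the zero-free function differently. The paper simply cites Campbell--Cima--Stephenson \cite{Campbell:Cima:Stephenson}, asserting that their Bloch-not-$BMOA$ function is zero-free, so zero-freeness comes packaged with the citation. You instead invoke the existence of univalent functions in $\B\setminus BMOA$ and subtract an omitted value; this is a valid classical fact (it follows, e.g., from Pommerenke's capacitary characterization of univalent $BMOA$ functions, via simply connected domains whose complements are geometrically dense but capacity-thin), though it is not weaker than the fact the paper cites and still requires a reference of the same kind. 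The one genuine advantage of your variant is that zero-freeness is automatic and structural --- a univalent map of $\D$ cannot be onto $\C$, and both seminorms are insensitive to additive constants --- rather than a property one must check of a particular construction; the cost is that the univalent example is arguably deeper than the CCS one. Either way the reduction step, which is the real content, coincides with the paper's.
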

\begin{proof}
All the estimates directly follow from Proposition \ref{prop:equiv:Garsia:Carleson:symbols}. Then the inclusion is a direct consequence of \eqref{eqn:norm:Bq:lesssim:norm:BMOAq}. Note that this inclusion is proper since, by~\cite{Campbell:Cima:Stephenson}, there exists a zero free function $h\in\B\setminus BMOA$, and so any branch of the $\frac1q$-power of $h$ belongs to 
$\B^q\setminus BMOA^q$. 
\end{proof}

Next result shows that a function $g\in\H(\D)$ belongs to $\B^q_{\alpha}$ if and only if  $\sup_{0<r<1}\boldnorm{g_r}_{\alpha,q}<\infty$, 
where $g_r(z):=g(rz)$. Note that $g_r\in\H(\overline{\D})$.

\begin{proposition}\label{prop:radialized:symbol}
	For every $\alpha\ge-1$ we have:
\begin{gather}
\label{eqn:prop:radialized:symbol:1}
\boldnorm{g}_{\alpha,q}
\le\liminf_{r\to 1}\boldnorm{g_r}_{\alpha,q}
\qquad(g\in\H(\D),\,q>0).
\\
\label{eqn:prop:radialized:symbol:2}
\sup_{0<r<1}\boldnorm{g_r}_{\alpha,q}\lesssim \boldnorm{g}_{\alpha,q}
\qquad(g\in\H(\D),\,q\ge1).
\end{gather}
\end{proposition}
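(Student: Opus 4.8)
The plan is to deduce both inequalities from the elementary scaling identity
\begin{equation*}
|\nabla|g_r|^q|(\zeta)=r\,|\nabla|g|^q|(r\zeta)\qquad(\zeta\in\D),
\end{equation*}
which follows directly from the definition of $|\nabla(\cdot)|$ (the factor $r$ comes from $|rw-r\zeta|=r|w-\zeta|$) and holds for every $q>0$. Writing $v:=|\nabla|g|^q|^2$, it gives $|\nabla|g_r|^q|^2(\zeta)=r^2v(r\zeta)$. We may assume $g\not\equiv0$, so the zero set of $g$ is discrete.

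For \eqref{eqn:prop:radialized:symbol:1} I would argue by lower semicontinuity. As $r\to1$ we have $r^2v(r\zeta)\to v(\zeta)$ at every $\zeta$ with $g(\zeta)\neq0$, hence for a.e.\ $\zeta\in\D$. Fixing $a\in\D$ and applying Fatou's lemma to the nonnegative integrands $(1-|\phi_a|^2)^{\alpha+2}\,r^2v(r\cdot)$,
\begin{equation*}
\int_\D(1-|\phi_a|^2)^{\alpha+2}v\,dA
\le\liminf_{r\to1}\int_\D(1-|\phi_a|^2)^{\alpha+2}|\nabla|g_r|^q|^2\,dA
\le\liminf_{r\to1}\boldnorm{g_r}_{\alpha,q}^{2q},
\end{equation*}
and taking the supremum over $a\in\D$ on the left yields \eqref{eqn:prop:radialized:symbol:1}.

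For \eqref{eqn:prop:radialized:symbol:2} the heart of the matter is a pointwise comparison of the conformal weights after rescaling. Fix $a\in\D$ and $0<r<1$. The change of variables $\eta=r\zeta$ turns the defining integral for $g_r$ at the point $a$ into
\begin{equation*}
\int_\D(1-|\phi_a(\zeta)|^2)^{\alpha+2}\,r^2v(r\zeta)\,dA(\zeta)
=\int_{r\D}v(\eta)\,(1-|\phi_a(\eta/r)|^2)^{\alpha+2}\,dA(\eta).
\end{equation*}
Using $1-|\phi_b(s)|^2=\tfrac{(1-|b|^2)(1-|s|^2)}{|1-\bar bs|^2}$ I would establish the pointwise bound $1-|\phi_a(\eta/r)|^2\le 1-|\phi_{ra}(\eta)|^2$ for $\eta\in r\D$. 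After clearing the (positive) denominators this reduces to
\begin{equation*}
(1-r^2)\,(1-|a|^2|\eta|^2)\,\bigl[\,(|\eta|^2+r^2|a|^2)-2r\,\Re(\bar a\eta)\,\bigr]\ge0,
\end{equation*}
which holds because the bracket is at least $(|\eta|-r|a|)^2\ge0$. Since $\alpha+2\ge1>0$, raising to the power $\alpha+2$ preserves the bound, so
\begin{equation*}
\int_{r\D}v\,(1-|\phi_a(\eta/r)|^2)^{\alpha+2}\,dA
\le\int_\D v\,(1-|\phi_{ra}|^2)^{\alpha+2}\,dA
\le\boldnorm{g}_{\alpha,q}^{2q};
\end{equation*}
taking the supremum over $a\in\D$ gives $\boldnorm{g_r}_{\alpha,q}\le\boldnorm{g}_{\alpha,q}$ uniformly in $r$.

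The main obstacle is locating and verifying the kernel inequality $1-|\phi_a(\eta/r)|^2\le1-|\phi_{ra}(\eta)|^2$: it encodes the fact that dilating the variable by $r$ is dominated by contracting the base point from $a$ to $ra$, and once guessed it is a one-line algebraic check; everything else is routine. I would remark that this route uses neither subharmonicity nor the hypothesis $q\ge1$, so it actually delivers \eqref{eqn:prop:radialized:symbol:2} with constant $1$ for all $q>0$. An alternative (less sharp) argument restricted to $q\ge1$ would instead pass through the subharmonicity of $v$ (Lemma~\ref{lem:subharmonic1}) together with the equivalence $\boldnorm{\cdot}_{\alpha,q}\simeq\boldopnorm{\,\cdot\,}_{\alpha,q}$ from Proposition~\ref{prop:equiv:Garsia:Carleson:symbols}.
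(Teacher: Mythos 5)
Your proof is correct, and for \eqref{eqn:prop:radialized:symbol:2} it takes a genuinely different---and in fact sharper---route than the paper. For \eqref{eqn:prop:radialized:symbol:1} you and the paper argue identically: the scaling identity $|\nabla|g_r|^q|(\zeta)=r\,|\nabla|g|^q|(r\zeta)$ (legitimate since $r\zeta$ is interior to $r\D$, so the $\limsup$ sees a full punctured neighbourhood; the paper uses the same identity in \S\ref{subsection:Calderon:thm}) followed by Fatou's lemma for each fixed $a$. For \eqref{eqn:prop:radialized:symbol:2}, however, the paper has no analogue of your kernel inequality: it passes through the Carleson characterization \eqref{eqn:derivative:symbol:Carleson:measure} and estimates $\mu^{\alpha}_{g_r,q}(S(a))/(1-|a|^2)^{\alpha+2}$ in two regimes, proving the cruder bound $1-|\phi_a(w/r)|^2\le4(1-|\phi_a(w)|^2)$ for $|w|<r$ when $\tfrac{1-r}r\le1-|a|$, and, when $1-|a|<\tfrac{1-r}r$, falling back on subharmonicity of $v=|\nabla|g|^q|^2$ (Lemma~\ref{lem:subharmonic1}) together with $\|g\|_{\B^q}\lesssim\boldnorm{g}_{\alpha,q}$ (Corollary~\ref{cor:prop:equiv:Garsia:Carleson:symbols})---the two points where $q\ge1$ enters, and the reason the paper's \eqref{eqn:prop:radialized:symbol:2} carries an implicit constant. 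Your device of moving the base point from $a$ to $ra$ replaces all of this with the single pointwise domination $1-|\phi_a(\eta/r)|^2\le1-|\phi_{ra}(\eta)|^2$ on $r\D$, which I have checked: after cross-multiplying, the difference of the two sides equals
\begin{equation*}
(1-r^2)\,(1-|a|^2|\eta|^2)\,\bigl[(|\eta|^2+r^2|a|^2)-2r\,\Re(\bar a\eta)\bigr],
\end{equation*}
and the bracket is at least $(|\eta|-r|a|)^2\ge0$, exactly as you say. Since $ra\in\D$, taking suprema yields $\boldnorm{g_r}_{\alpha,q}\le\boldnorm{g}_{\alpha,q}$ with constant $1$ and for all $q>0$, strictly improving the paper's statement (no subharmonicity, no restriction $q\ge1$); combined with \eqref{eqn:prop:radialized:symbol:1} it even gives $\lim_{r\to1^-}\boldnorm{g_r}_{\alpha,q}=\boldnorm{g}_{\alpha,q}$. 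The only cosmetic remark: in \eqref{eqn:prop:radialized:symbol:1} you need $r^2v(r\zeta)\to v(\zeta)$ only almost everywhere, which your continuity-off-the-zero-set argument supplies, and the exponent condition $\alpha+2\ge1$ you invoke is more than needed---any positive power preserves the pointwise bound.
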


\begin{proof}	
For any $a\in\D$,  Fatou's lemma shows that
\begin{equation*}
\int_{\D}(1-|\phi_a|^2)^{\alpha+2} |\nabla|g|^q|^2\,dA
\le \liminf_{r\to 1^-} \int_{\D}(1-|\phi_a|^2)^{\alpha+2} |\nabla|g_r|^q|^2\,dA,
\end{equation*}
which gives \eqref{eqn:prop:radialized:symbol:1}.
In order to show \eqref{eqn:prop:radialized:symbol:2}, by \eqref{eqn:derivative:symbol:Carleson:measure} we only have to prove that
\begin{equation}\label{eqn:prop:radialized:symbol:3}	
\frac{\mu_{g_r,q}^{\alpha}(S(a))}{(1-|a|^2)^{\alpha+2}}
\lesssim \boldnorm{g}_{\alpha,q}^{2q} 
\qquad(g\in\H(\D),\,a\in\D,\,0<r<1,\,q\ge1).
\end{equation}
	Note that
	\begin{equation*}
		\frac{1-|z|^2}{1-|a|^2}\lesssim 1-|\phi_a(z)|^2
		\qquad(a\in\D,\,z\in S(a)),
	\end{equation*}
	and so
\begin{align*}
\frac{\mu_{g_r,q}^{\alpha}(S(a))}{(1-|a|^2)^{\alpha+2}}
&\lesssim 
\int_{S(a)}(1-|\phi_a(z)|^2)^{\alpha+2}\, |\nabla|g|^q(rz)|^2\,r^2\,dA(z)
\\ 
&\le \int_{\D}(1-|\phi_a(z)|^2)^{\alpha+2}\, |\nabla|g|^q(rz)|^2\,r^2\,dA(z)
\\ 
&= \int_{r\D}\left(1-|\phi_a\left(\tfrac{w}{r}\right)|^2\right)^{\alpha+2} |\nabla|g|^q(w)|^2\,dA(w).
\end{align*}
	Now 
	\begin{equation*}
		1-|\phi_a\left(\tfrac{w}{r}\right)|^2
		=\frac{(1-|a|^2)(r^2-|w|^2)}{|r-\overline{a}w|^2}
		\le\frac{(1-|a|^2)(1-|w|^2)}{|r-\overline{a}w|^2}.
	\end{equation*}
	But $|1-\overline{a}w|\le 1-r+|r-\overline{a}w|
	\le2|r-\overline{a}w|$,
	for any $|w|<r$ and $1-|a|\ge\frac{1-r}r$, and so
	\begin{equation*}
		1-|\phi_a\left(\tfrac{w}{r}\right)|^2
		\le 4(1-|\phi_a(w)|^2)
		\qquad(|w|<r,\,\tfrac{1-r}r\le1-|a|<1).
	\end{equation*}
	Therefore
	\begin{equation*}
	\frac{\mu_{g_r,q}^{\alpha}(S(a))}{(1-|a|^2)^{\alpha+2}}\lesssim \boldnorm{g}^{2q}_{\alpha,q} 
		\quad(g\in\H(\D),\,0<r<1,\,\tfrac{1-r}r\le1-|a|<1,\,q>0).
	\end{equation*}
Now assume that $a\in\D$, $0<r<1$, and $1-|a|<\tfrac{1-r}r$. Since, by 
Lemma~{\ref{lem:subharmonic1},} $v=|\nabla|g|^q|^2$ is a subharmonic function on $\D$, we have that 
\begin{align*}
\mu_{g_r,q}^{\alpha}(S(a))
&=r^2\int_{S(a)} (1-|z|^2)^{\alpha+2}\,v(rz)\,dA(z)
\\
&\le r^2\biggl(\sup_{|z|=r}v(z)\biggr)\int_{S(a)} (1-|z|^2)^{\alpha+2}\,dA(z)
\\
&\lesssim r^2(1-|a|^2)^{\alpha+4}\sup_{|z|=r}v(z)
\\
&\lesssim (1-|a|^2)^{\alpha+2}\sup_{|z|=r}v(z)(1-r^2)^2
\\
&\le(1-|a|^2)^{\alpha+2}\|g\|_{\B^q}^{2q} 
\lesssim(1-|a|^2)^{\alpha+2}\boldnorm{g}_{\alpha,q}^{2q},
\end{align*}
where the last estimate follows from Corollary~{\ref{cor:prop:equiv:Garsia:Carleson:symbols}}, provided that $q\ge1$.
And that finishes the proof of~{\eqref{eqn:prop:radialized:symbol:3}}.
\end{proof}

Finally, we show that the classes $BMOA^q$ and $\B^q$ are nested:
\begin{proposition}\label{prop:BMOAq:Bq:nested}
Let $\alpha\ge-1$ and $0<q<r$. 
Then $\boldopnorm{g}_{\alpha,q}\le \boldopnorm{g}_{\alpha,r}$,
for any $g\in\H(\D)$. 
\end{proposition}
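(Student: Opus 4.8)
The plan is to exploit the identity $\phi_a(0)=a$, which lets me factor the Garsia functional through a single pointwise expression. Setting
\[
\Phi_q(h):=\bigl(\|h\|_{\alpha,2q}^{2q}-|h(0)|^{2q}\bigr)^{1/(2q)}\qquad(h\in\H(\D)),
\]
I have $\boldopnorm{g}_{\alpha,q}=\sup_{a\in\D}\Phi_q(g\circ\phi_a)$, since $|g(a)|=|(g\circ\phi_a)(0)|$, and the same for $r$. Thus it suffices to prove the \emph{pointwise} monotonicity $\Phi_q(h)\le\Phi_r(h)$ for every $h\in\H(\D)$ and $0<q<r$: applying it to each $h=g\circ\phi_a$ and taking the supremum over $a$ yields the proposition. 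I may assume $\boldopnorm{g}_{\alpha,r}<\infty$, and since $\|\cdot\|_{\alpha,s}\le\|\cdot\|_{\alpha,t}$ for $s\le t$ (these are $L^s$, $L^t$ norms against a probability measure), all the quantities below will be finite.

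To establish the pointwise inequality I would rely on two elementary convexity facts. Write $s=2q$, $t=2r$, $\lambda=t/s>1$, and recall that $\|f\|_{\alpha,p}^p=\int|f|^p\,d\mu_\alpha$ for the probability measure $\mu_\alpha$ on $\overline{\D}$ (normalized weighted area for $\alpha>-1$, normalized arclength for $\alpha=-1$). Put $A:=\|h\|_{\alpha,s}^s$ and $B:=|h(0)|^s$; by the subharmonicity bound already noted after the definition of $\boldopnorm{\cdot}_{\alpha,q}$ one has $A\ge B\ge0$, so $\Phi_q(h)^s=A-B$. First, Jensen's inequality for the convex function $x\mapsto x^{\lambda}$ against $\mu_\alpha$ gives
\[
\|h\|_{\alpha,t}^t=\int(|h|^s)^{\lambda}\,d\mu_\alpha\ge\Bigl(\int|h|^s\,d\mu_\alpha\Bigr)^{\lambda}=A^{\lambda}.
\]
Second, the superadditivity of $x\mapsto x^{\lambda}$ on $[0,\infty)$ for $\lambda\ge1$, applied to $A=(A-B)+B$, yields $A^{\lambda}\ge(A-B)^{\lambda}+B^{\lambda}$. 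Combining these and using $|h(0)|^t=B^{\lambda}$,
\[
\Phi_r(h)^t=\|h\|_{\alpha,t}^t-|h(0)|^t\ge A^{\lambda}-B^{\lambda}\ge(A-B)^{\lambda}=\bigl(\Phi_q(h)^s\bigr)^{\lambda}=\Phi_q(h)^t,
\]
and taking $t$-th roots gives $\Phi_q(h)\le\Phi_r(h)$.

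The only genuine idea here is the reduction in the first paragraph, and that is the step I expect to be the crux: writing $\boldopnorm{\cdot}_{\alpha,q}$ as a supremum of the $a$-independent functionals $\Phi_q$ converts a conformally invariant, two-parameter statement into a one-line inequality for a single analytic function. Once that is in place the argument is driven purely by Jensen's inequality, which crucially uses that $\|\cdot\|_{\alpha,p}$ arises from a \emph{probability} measure, together with the superadditivity of power functions; analyticity itself enters only through the subharmonicity estimate $\|h\|_{\alpha,s}^s\ge|h(0)|^s$, needed to guarantee $A\ge B$ so that $(A-B)^{\lambda}$ is meaningful.
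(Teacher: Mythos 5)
Your proof is correct and takes essentially the same route as the paper: there, the proposition is likewise reduced (via $g\mapsto g\circ\phi_a$) to the single-function inequality $\bigl(\|g\|_{\alpha,2q}^{2q}-|g(0)|^{2q}\bigr)^{\frac1{2q}}\le\bigl(\|g\|_{\alpha,2r}^{2r}-|g(0)|^{2r}\bigr)^{\frac1{2r}}$, proved by combining the monotonicity of $p\mapsto\|g\|_{\alpha,p}$ (your Jensen step against the probability measure) with the superadditivity $(x+y)^{r/q}\ge x^{r/q}+y^{r/q}$, exactly as you do. No gaps.
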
 
\noindent
The proof of Proposition~{\ref{prop:BMOAq:Bq:nested}} reduces to the following lemma.
\begin{lemma}
Let $\alpha\ge-1$, $0<q<r$, and  $g\in\H(\D)$. Then:
\begin{equation}
\label{eqn:lemma:prop:BMOAq:Bq:nested:ineq0}
\bigl(\|g\|_{\alpha,2q}^{2q}-|g(0)|^{2q}\bigr)^{\frac1{2q}}
\le \bigl(\|g\|_{\alpha,2r}^{2r}-|g(0)|^{2r}\bigr)^{\frac1{2r}}.
\end{equation}
\end{lemma}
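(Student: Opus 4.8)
The plan is to reduce the stated inequality to an elementary two-step estimate. Write $p=2q$ and $P=2r$, and put $s:=P/p=r/q$, so that $s>1$ since $q<r$. In both cases $\|g\|_{\alpha,m}^m$ is the integral of $|g|^m$ against a probability measure $\nu$, namely $d\nu=(\alpha+1)(1-|z|^2)^{\alpha}\,dA$ on $\D$ when $\alpha>-1$ (which has total mass $1$), and $d\nu=d\sigma$, the normalized arclength on $\T$ applied to boundary values, when $\alpha=-1$. To avoid convergence or boundary issues I would first prove the inequality for $g\in\H(\overline{\D})$, where all the relevant integrals run over a genuine probability space and are finite, and then recover the general case $g\in\H(\D)$ by applying the result to the dilations $g_r$ and letting $r\to1^-$, using that $\|g_r\|_{\alpha,m}^m\to\|g\|_{\alpha,m}^m$ while $g_r(0)=g(0)$ keeps the subtracted term fixed.

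Fix $g\in\H(\overline{\D})$ and abbreviate $A:=\|g\|_{\alpha,2q}^{2q}=\int|g|^{2q}\,d\nu$ and $b:=|g(0)|^{2q}$. Since $|g|^{2r}=(|g|^{2q})^{s}$ and $|g(0)|^{2r}=b^{s}$, we have $\|g\|_{\alpha,2r}^{2r}=\int(|g|^{2q})^{s}\,d\nu$ and $|g(0)|^{2r}=b^{s}$. Raising the desired inequality first to the power $2q$ and then to the power $s$, one checks that it is equivalent to
\begin{equation*}
(A-b)^{s}\le\int(|g|^{2q})^{s}\,d\nu-b^{s}.
\end{equation*}
Thus the whole statement collapses to this single estimate; locating this reformulation is the only real step, and everything after it is routine.

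To prove it, note first that $A-b\ge0$ by subharmonicity of $|g|^{2q}$ (this is the inequality $\|g\|_{\alpha,2q}^{2q}\ge|g(0)|^{2q}$ already recorded just after the definition of $\boldopnorm{\,\cdot\,}_{\alpha,q}$), while $b\ge0$ is clear. Because $s\ge1$, the map $t\mapsto t^{s}$ is superadditive on $[0,\infty)$, so taking $x=A-b$ and $y=b$ gives
\begin{equation*}
(A-b)^{s}+b^{s}=x^{s}+y^{s}\le(x+y)^{s}=A^{s}.
\end{equation*}
On the other hand, since $\nu$ is a probability measure and $t\mapsto t^{s}$ is convex, Jensen's inequality yields
\begin{equation*}
A^{s}=\Bigl(\int|g|^{2q}\,d\nu\Bigr)^{s}\le\int(|g|^{2q})^{s}\,d\nu.
\end{equation*}
Chaining the last two displays gives $(A-b)^{s}+b^{s}\le\int(|g|^{2q})^{s}\,d\nu$, which is precisely the required estimate.

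I expect the only genuine obstacle to be spotting the correct reformulation and recognizing that the two hypotheses actually doing the work are exactly $s\ge1$ (guaranteed by $q<r$) and the normalization $\nu(\D)=1$ (resp.\ $\sigma(\T)=1$): the former powers the superadditivity step and the latter powers the Jensen step, and together they close the argument with no computation. The passage from $\H(\overline{\D})$ to $\H(\D)$ by dilation is standard and parallels the limiting arguments already used in Section~\ref{section:symbols}. Finally, the reduction of Proposition~\ref{prop:BMOAq:Bq:nested} to this lemma follows by applying it to $g\circ\phi_a$, noting that $(g\circ\phi_a)(0)=g(a)$, and taking the supremum over $a\in\D$.
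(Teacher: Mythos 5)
Your proof is correct and is essentially the paper's own argument: both rest on exactly the two ingredients you isolate, namely the monotonicity $\|g\|_{\alpha,2q}\le\|g\|_{\alpha,2r}$ (Jensen for the probability measure $\nu$) and the superadditivity $(x+y)^{s}\ge x^{s}+y^{s}$ for $s=r/q\ge1$, chained in the same order (the paper writes $\|g\|^{2r}_{\alpha,2r}-|g(0)|^{2r}\ge\|g\|^{2r}_{\alpha,2q}-|g(0)|^{2r}\ge\bigl(\|g\|^{2q}_{\alpha,2q}-|g(0)|^{2q}\bigr)^{r/q}$, which is your two displays read backwards). Your preliminary dilation reduction to $g\in\H(\overline{\D})$ is harmless but unnecessary, since all quantities are integrals of nonnegative functions and both inequalities hold with values in $[0,\infty]$.
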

\begin{proof}
First note that
$\|g\|_{\alpha,2r}\ge\|g\|_{\alpha,2q}$, and so \eqref{eqn:lemma:prop:BMOAq:Bq:nested:ineq0}  holds in the case that $g(0)=0$. 	
The general case follows from the inequality 
$\|g\|_{\alpha,2r}\ge\|g\|_{\alpha,2q}$ and a simple argument. Indeed,  
\[
\|g\|^{2r}_{\alpha,2r}-|g(0)|^{2r}
\ge\|g\|^{2r}_{\alpha,2q}-|g(0)|^{2r}
\ge\bigl(\|g\|^{2q}_{\alpha,2q}-|g(0)|^{2q}\bigr)^{\frac{r}q},
\]
where the last inequality 
 is a consequence of the classical superadditivity inequality
\[
(x+y)^s\ge x^s+y^s\qquad(x,y\ge0,\,s\ge1).
\] 
(Recall that any convex function $\varphi:[0,\infty)\to\R$
whith $\varphi(0)=0$ is superadditive,
 {\em i.e.} $\varphi(x+y)\ge\varphi(x)+\varphi(y)$,
 for any $x,y\ge0$.)\newline
Hence 
\[
\bigl(\|g\|^{2r}_{\alpha,2r}-|g(0)|^{2r}\bigr)^{\frac1{2r}}
\ge\bigl(\|g\|^{2q}_{\alpha,2q}-|g(0)|^{2q}\bigr)^{\frac1{2q}}
\]
and the proof is complete.
\end{proof}
\begin{corollary}\label{cor1:prop:BMOAq:Bq:nested}
If $1\le q<r$ and $\alpha\ge-1$ then 
\begin{equation}\label{eqn:BMOAq:Bq:nested:norms} 
\|g\|_{\B^q_{\alpha}}\lesssim\|g\|_{\B^r_{\alpha}}\qquad(g\in\H(\D)).
\end{equation}
Moreover, $BMOA^r\varsubsetneq BMOA^q$ and $\B^r\varsubsetneq\B^q$. 
\end{corollary}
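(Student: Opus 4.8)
The plan is to reduce the nesting of the norms to the monotonicity of the Garsia-type functionals $\boldopnorm{\cdot}_{\alpha,q}$ already established in Proposition~\ref{prop:BMOAq:Bq:nested}, and then to exhibit one explicit example witnessing strictness. Since $1\le q<r$ and $\alpha\ge-1$, Corollary~\ref{cor:prop:equiv:Garsia:Carleson:symbols} applies at \emph{both} exponents: for $\alpha>-1$ the equivalence \eqref{eqn:equiv:norms:Bq} gives $\|g\|_{\B^q}\simeq\boldopnorm{g}_{\alpha,q}$ and $\|g\|_{\B^r}\simeq\boldopnorm{g}_{\alpha,r}$, while for $\alpha=-1$ the equivalence \eqref{eqn:equiv:norms:BMOAq} gives $\|g\|_{BMOA^q}\simeq\boldopnorm{g}_{-1,q}$ and $\|g\|_{BMOA^r}\simeq\boldopnorm{g}_{-1,r}$. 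In either case, recalling the convention $\B^q_{\alpha}=\B^q$ for $\alpha>-1$ and $\B^q_{-1}=BMOA^q$, we obtain uniformly $\|g\|_{\B^q_{\alpha}}\simeq\boldopnorm{g}_{\alpha,q}$. Combining this with $\boldopnorm{g}_{\alpha,q}\le\boldopnorm{g}_{\alpha,r}$ from Proposition~\ref{prop:BMOAq:Bq:nested} yields the chain $\|g\|_{\B^q_{\alpha}}\lesssim\boldopnorm{g}_{\alpha,q}\le\boldopnorm{g}_{\alpha,r}\lesssim\|g\|_{\B^r_{\alpha}}$, which is exactly \eqref{eqn:BMOAq:Bq:nested:norms}. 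So the inequality is essentially bookkeeping built on the two cited results.

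The inclusions $BMOA^r\subseteq BMOA^q$ and $\B^r\subseteq\B^q$ follow immediately from \eqref{eqn:BMOAq:Bq:nested:norms} by taking $\alpha=-1$ and $\alpha>-1$, respectively. To prove both inclusions are \emph{proper} I would produce a single function lying in the larger spaces but not in the smaller ones. A natural candidate is $g(z):=\bigl(\log\frac{e}{1-z}\bigr)^{1/q}$, which is a well-defined analytic function on $\D$ since $\log\frac{e}{1-z}$ maps $\D$ into the right half-plane. As $g^q=\log\frac{e}{1-z}$ is analytic we have $|\nabla|g|^q|=|(g^q)'|$, whence $\|g\|_{BMOA^q}^{2q}=\|g^q\|_{BMOA}^2<\infty$ because $\log\frac{e}{1-z}\in BMOA$; thus $g\in BMOA^q\subseteq\B^q$ by \eqref{eqn:norm:Bq:lesssim:norm:BMOAq}.

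It then remains to verify $g\notin\B^r$, which, since $BMOA^r\subseteq\B^r$, also gives $g\notin BMOA^r$ and finishes both strictness claims at once. Writing $h=\log\frac{e}{1-z}$, the principal branch $g^r=h^{r/q}$ is analytic with $(g^r)'=\tfrac{r}{q}h^{r/q-1}\tfrac1{1-z}$, and evaluating the Bloch functional along the radius $z=t\in(0,1)$ gives $(1-t^2)\,|\nabla|g|^r|(t)=\tfrac{r}{q}(1+t)\bigl(\log\tfrac{e}{1-t}\bigr)^{r/q-1}\to\infty$ as $t\to1^-$, since the exponent $r/q-1$ is strictly positive; hence $\|g\|_{\B^r}=\infty$. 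The only genuine work beyond the first paragraph is this construction, and the mild obstacle is to normalize the example so that $g^q$ sits exactly at the $\B$/$BMOA$ borderline while $g^r$ overshoots it — which is precisely what the exponent $1/q$ and the final computation arrange.
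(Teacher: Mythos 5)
Your proposal is correct and takes essentially the same route as the paper: the norm inequality \eqref{eqn:BMOAq:Bq:nested:norms} comes from Proposition~\ref{prop:BMOAq:Bq:nested} combined with the equivalences \eqref{eqn:equiv:norms:Bq} and \eqref{eqn:equiv:norms:BMOAq}, and strictness is witnessed by the very same function $g=\bigl(\log\frac{e}{1-z}\bigr)^{1/q}$ used in the paper. The only (minor and harmless) deviation is in checking $g\notin\B^r$: the paper invokes the growth bound from Proposition~\ref{prop:growth:Bq}, whereas you compute $(1-t^2)\,|\nabla|g|^r|(t)=\tfrac{r}{q}(1+t)\bigl(\log\tfrac{e}{1-t}\bigr)^{r/q-1}\to\infty$ directly along the radius, which is an equally valid, slightly more elementary verification (legitimate here because $h$ maps $\D$ into a right half-plane, so the branch $g^r=h^{r/q}$ exists and $|\nabla|g|^r|=|(g^r)'|$).
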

\begin{proof}
The estimate directly follows from  Proposition~{\ref{prop:BMOAq:Bq:nested}} 
and estimates~{\eqref{eqn:equiv:norms:Bq}} and~{\eqref{eqn:equiv:norms:BMOAq}}. Hence the inclusions hold, so we only have to prove that they are proper. 
Let $\log$ be the principal branch of the logarithm on 
$\C\setminus(-\infty,0]$, and consider the holomorphic function
\[
h(z):=\log\biggl(\frac{e}{1-z}\biggr)\qquad(z\in\C\setminus[1,\infty)]).
\]
It is clear that $h$ is a zero-free function in $BMOA\subset\B$. 
Let $g=h^{1/q}$ be any branch of the $1/q$-power of $h$ on $\D$, for $q\ge1$. Then $g\in BMOA^q\subset\B^q$. 
Let $r>q$.	Since, by Proposition~{\ref{prop:growth:Bq}}, any function $f\in\B^r$ satisfies
	\[
	|f(z)|^r\lesssim\log\biggl(\frac{e}{1-|z|}\biggr)\qquad(z\in\D),
	\]
	and 
	\[
	\lim_{\rho\to1^{-}}\frac{|g(\rho)|^r}{\log\bigl(\frac{e}{1-\rho}\bigr)}
	=\lim_{\rho\to1^{-}}\frac{|h(\rho)|^{r/q}}{\log\bigl(\frac{e}{1-\rho}\bigr)}
	=\lim_{\rho\to1^{-}}\biggl[\log\biggl(\frac{e}{1-\rho}\biggr)\biggr]^{r/q-1}=\infty,
	\]
 we deduce that $g\notin\B^r$, and so
	$g\notin BMOA^r$, which ends the proof.
\end{proof}
\begin{corollary}
If $\alpha\ge-1$, then there exist two constants $0<c_{\alpha}<1$ and $C_{\alpha}>1$ satisfying that
\begin{equation}
	\label{eqn:key:estimate:Carleson:norms:symbols0}	
c_{\alpha}^{1/ q}\boldnorm{g}_{\alpha,q}
\le C_{\alpha}^{1/r}\boldnorm{g}_{\alpha,r}
\qquad(g\in\H(\D),\,0<q<r,\,r\ge1).
\end{equation}
Moreover, there are two constants $0<c<1$ and $C>1$ such that	
\begin{align}
\label{eqn:key:estimate:Carleson:norms:symbols1}
c^{1/ q}\|g\|_{BMOA^q}
&\le C^{1/r}\|g\|_{BMOA^r}
  \quad(g\in\H(\D),\,0<q<r,\,r\ge1).
\\
\label{eqn:key:estimate:Carleson:norms:symbols2}
c^{1/ q}\|g\|_{\B^q}
&\le C^{1/r}\|g\|_{\B^r}
	\qquad(g\in\H(\D),\,1\le q<r).
\end{align}	
\end{corollary}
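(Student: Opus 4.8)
The plan is to establish the master estimate \eqref{eqn:key:estimate:Carleson:norms:symbols0} first, and then to read off \eqref{eqn:key:estimate:Carleson:norms:symbols1} and \eqref{eqn:key:estimate:Carleson:norms:symbols2} from it by specializing $\alpha$ and invoking the norm equivalences already at hand. The whole argument is a concatenation of three facts proved above: the lower Garsia--Carleson bound \eqref{eqn:prop:equiv:Garsia:Carleson:symbols:1}, the nesting of the Garsia functionals in Proposition~\ref{prop:BMOAq:Bq:nested}, and the upper Garsia--Carleson bound \eqref{eqn:prop:equiv:Garsia:Carleson:symbols:2}. The only subtlety is bookkeeping the powers, and this is precisely what produces the asymmetric exponents $1/q$ and $1/r$ in the statement.

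To prove \eqref{eqn:key:estimate:Carleson:norms:symbols0}, fix $0<q<r$ with $r\ge1$ and $g\in\H(\D)$. Raising \eqref{eqn:prop:equiv:Garsia:Carleson:symbols:1} to the power $1/(2q)$ gives $c_\alpha^{1/(2q)}\boldnorm{g}_{\alpha,q}\le\boldopnorm{g}_{\alpha,q}$, valid for every $q>0$, where $c_\alpha$ is the constant of \eqref{eqn:prop:equiv:Garsia:Carleson:symbols:1}; Proposition~\ref{prop:BMOAq:Bq:nested} gives $\boldopnorm{g}_{\alpha,q}\le\boldopnorm{g}_{\alpha,r}$ using $q<r$; and raising \eqref{eqn:prop:equiv:Garsia:Carleson:symbols:2} to the power $1/(2r)$ gives $\boldopnorm{g}_{\alpha,r}\le C_\alpha^{1/(2r)}\boldnorm{g}_{\alpha,r}$, which is where $r\ge1$ is used. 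Chaining,
\[
c_\alpha^{1/(2q)}\boldnorm{g}_{\alpha,q}\le\boldopnorm{g}_{\alpha,q}\le\boldopnorm{g}_{\alpha,r}\le C_\alpha^{1/(2r)}\boldnorm{g}_{\alpha,r}.
\]
Defining the constants of the corollary as the square roots of those of Proposition~\ref{prop:equiv:Garsia:Carleson:symbols}, so that $c_\alpha^{1/(2q)}=(c_\alpha^{1/2})^{1/q}$ and $C_\alpha^{1/(2r)}=(C_\alpha^{1/2})^{1/r}$, yields exactly \eqref{eqn:key:estimate:Carleson:norms:symbols0}; note that $0<c_\alpha<1$ and $C_\alpha>1$ are preserved under taking square roots. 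The three admissibility conditions $q>0$, $q<r$, $r\ge1$ enter one per link of the chain.

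For \eqref{eqn:key:estimate:Carleson:norms:symbols1} I would specialize \eqref{eqn:key:estimate:Carleson:norms:symbols0} to $\alpha=-1$ and invoke the identity $\|\cdot\|_{BMOA^q}=\boldnorm{\cdot}_{-1,q}$ from \eqref{eqn:BMOAqnorm:equal:boldnorm:-1}, taking $c:=c_{-1}^{1/2}$ and $C:=C_{-1}^{1/2}$. For \eqref{eqn:key:estimate:Carleson:norms:symbols2}, which concerns $1\le q<r$, I would fix any $\alpha>-1$ and run the same chaining with the Bloch functional in place of $\boldnorm{\cdot}_{\alpha,q}$, replacing \eqref{eqn:prop:equiv:Garsia:Carleson:symbols:1} and \eqref{eqn:prop:equiv:Garsia:Carleson:symbols:2} by the two-sided bound \eqref{eqn:prop:equiv:Garsia:Carleson:symbols:3}: raising its left half to $1/(2q)$, applying Proposition~\ref{prop:BMOAq:Bq:nested}, and raising its right half to $1/(2r)$ gives $(c'_\alpha)^{1/(2q)}\|g\|_{\B^q}\le(C'_\alpha)^{1/(2r)}\|g\|_{\B^r}$, whence \eqref{eqn:key:estimate:Carleson:norms:symbols2} after the same square-root relabelling. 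Since $\|\cdot\|_{\B^q}$ does not depend on $\alpha$, these constants are absolute; and because the inequalities persist when $c$ is decreased or $C$ is increased, I may take a single pair $(c,C)$ serving both \eqref{eqn:key:estimate:Carleson:norms:symbols1} and \eqref{eqn:key:estimate:Carleson:norms:symbols2}.

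There is no genuine obstacle here---the content is entirely carried by the earlier propositions. The one point demanding care is that the constants in \eqref{eqn:prop:equiv:Garsia:Carleson:symbols:1}, \eqref{eqn:prop:equiv:Garsia:Carleson:symbols:2} and \eqref{eqn:prop:equiv:Garsia:Carleson:symbols:3} are independent of $q$, so that after taking $(2q)$-th and $(2r)$-th roots the parameter dependence lives only in the exponents $1/(2q)$ and $1/(2r)$; it is exactly this clean separation that forces the final estimate into the stated form, with $c$ and $C$ independent of $q$ and $r$.
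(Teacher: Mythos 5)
Your proof is correct and is precisely the argument the paper intends: the paper proves this corollary in one line as a direct consequence of Propositions~\ref{prop:equiv:Garsia:Carleson:symbols} and~\ref{prop:BMOAq:Bq:nested}, and your chaining of \eqref{eqn:prop:equiv:Garsia:Carleson:symbols:1}, the nesting $\boldopnorm{g}_{\alpha,q}\le\boldopnorm{g}_{\alpha,r}$, and \eqref{eqn:prop:equiv:Garsia:Carleson:symbols:2} (respectively \eqref{eqn:prop:equiv:Garsia:Carleson:symbols:3} for the Bloch case, with $\alpha=-1$ and \eqref{eqn:BMOAqnorm:equal:boldnorm:-1} for the $BMOA^q$ case) is exactly that deduction, spelled out. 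Your bookkeeping of the exponents, the square-root relabelling of the constants, and the observation that the constants of Proposition~\ref{prop:equiv:Garsia:Carleson:symbols} are independent of $q$ are all accurate, so nothing is missing.
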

\begin{proof}
It is a direct consequence of
Propositions~{\ref{prop:equiv:Garsia:Carleson:symbols}} 
and~{\ref{prop:BMOAq:Bq:nested}.} 
\end{proof}

\section{Auxiliary results}\label{section:auxiliary:results}

In this section we collect auxiliary tools which will be employed to prove our boundedness results.

\subsection{Algebraic decompositions of $g$-words}
The main result of this subsection is the following algebraic decomposition theorem of $g$-words, which, as we will see, will reduce the proof of Theorem \ref{thm:sharp:estimate:norm:word} to obtain precise norm estimates of the operators $S^k_gT^j_g$.
In the statement of the next result we denote by $\Pi_0:\H(\D)\to\H_0(\D)$ the operator given by $\Pi_0f=f-f(0)=f_0$.

\begin{theorem}\label{thm:global:decomposition}
Let $L\in W_g(\ell,m,n)$, where $\ell,m,n\in\N_0$, $m+n\ge1$, and  $k=\ell+m\ge1$. Then there exist integers $a_j,b_j$, $j=1,\dots,k$, which do not dependent on $g$ and satisfy
\begin{align*}
L&= (1-\delta_L) S^k_gT^n_g+\delta_LS^k_gT_g^n\Pi_0 \\
\nonumber
 &\quad +\sum_{j=1}^ka_j\,S_g^{k-j}T_g^{n+j}
        +\sum_{j=1}^kb_j\,S_g^{k-j}T_g^{n+j}\Pi_0,
\end{align*}	
where $\delta_L=0$, if $L$ ends in $T_gM^i_g$, for some $i\in\N_0$, and  $\delta_L=1$, if $L$ ends in $S_gM^i_g$, for some $i\in\N_0$. In particular,
\begin{equation}
\label{eqn:global:decomposition:H0}
L= S^k_gT^n_g+\sum_{j=1}^kc_j\,S_g^{k-j}T_g^{n+j}
\quad\mbox{on $\H_0(\D)$,}	
\end{equation}
where the $c_j$'s are integers independent of $g$.
\end{theorem}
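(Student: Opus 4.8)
The plan is to reduce everything to two clean operations governed by \eqref{eqn:formula} and by the behaviour of the rank-one operator $\delta_0$. First I would record the tools I will use on $\H(\D)$: the identity \eqref{eqn:formula} itself; the absorption rules $S_g\delta_0=0$, $\delta_0S_g=\delta_0T_g=0$ and $\delta_0M_g=g(0)\delta_0$ (all immediate, since $T_gf,S_gf\in\H_0(\D)$ and $\delta_0f$ is constant), from which $\Pi_0T_g=T_g$ and $\delta_0W=0$ for every word $W$ containing at least one letter $S_g$ or $T_g$; the \emph{$M$-absorption identities} $T_gM_g^i=S_g^iT_g$ and $S_gM_g^i=S_g^{i+1}+i\,S_g^iT_g$ (checked directly, e.g.\ $T_gM_g^i=\tfrac1{i+1}T_{g^{i+1}}=S_g^iT_g$); and the commutator $T_gS_g=S_gT_g-T_g^2$ \emph{on} $\H_0(\D)$ (obtained by differentiation and integration by parts, the $g(0)$-term of the commutator vanishing on $\H_0(\D)$). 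Every one of these has integer or $g(0)$-coefficients and is weight-homogeneous once each of $S_g,T_g,M_g,g(0)$ is given weight $1$ and $\delta_0$ weight $0$; this is the bookkeeping that will force every term onto the single antidiagonal $\{S_g^{k-j}T_g^{n+j}:0\le j\le k\}$ of total weight $N=k+n$.

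\emph{Step A (elimination of $M_g$).} Using the $M$-absorption identities to clear any $M_g$ immediately preceded by an $S_g$ or a $T_g$, and using \eqref{eqn:formula} to clear a leading $M_g$ (there the term $g(0)\delta_0(\cdots)$ vanishes because the remaining letters still contain an $S_g$ or $T_g$), I would rewrite $L=\sum_w\kappa_w\,w$ as an integer combination of pure $S_g,T_g$ words $w$, with no surviving $\delta_0$. Length is preserved, and the number of $S_g$'s in each $w$ is at most $k$; there is exactly one word $w_{\max}$ with $S_g$-count equal to $k$ (send every $M_g$ to $S_g$), and it occurs with coefficient $1$. Crucially, $w_{\max}$ ends in $S_g$ when the last non-$M_g$ letter of $L$ is $S_g$ (because $S_gM_g^i$ has top-degree part $S_g^{i+1}$) and ends in $T_g$ when that letter is $T_g$ (because $T_gM_g^i=S_g^iT_g$); that is, $w_{\max}$ ends in $S_g$ precisely when $\delta_L=1$.

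\emph{Step B (normal ordering of a pure word).} I would normal-order each $w$ by induction on its length, peeling off the rightmost letter. If $w=\tilde wT_g$, the inductive form of $\tilde w$ is an integer combination of terms $S_g^aT_g^b$ and $S_g^aT_g^b\Pi_0$; right multiplication by $T_g$ together with $\Pi_0T_g=T_g$ turns each of these into a clean $S_g^aT_g^{b+1}$, so $w$ becomes an integer combination of clean antidiagonal terms. If instead $w$ ends in $S_g$, then $w\delta_0=0$ and hence $w=w\Pi_0$; evaluating $w$ on $\H_0(\D)$ by repeated use of $T_gS_g=S_gT_g-T_g^2$ produces an integer combination $\sum S_g^aT_g^b$ there, whence $w=\bigl(\sum S_g^aT_g^b\bigr)\Pi_0$ is an integer combination of $\Pi_0$-terms. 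In both cases the commutator never raises the $S_g$-power, so the terms lie on the antidiagonal of $w$ and the top term $S_g^pT_g^q$ (respectively $S_g^pT_g^q\Pi_0$), with $p,q$ the letter counts of $w$, appears with coefficient $1$.

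Summing the two steps, $L$ becomes an integer combination of terms on the single antidiagonal $\{S_g^{k-j}T_g^{n+j}\}$, clean or composed with $\Pi_0$, which is exactly the claimed form with $a_j,b_j\in\Z$. The top term $S_g^kT_g^n$ is produced only by $w_{\max}$ (all other words have fewer than $k$ letters $S_g$, hence reduced $S_g$-power $<k$), with coefficient $1$, and it is clean when $\delta_L=0$ and of $\Pi_0$-type when $\delta_L=1$ by Step A; this gives the coefficients $1-\delta_L$ and $\delta_L$ on $S_g^kT_g^n$ and $S_g^kT_g^n\Pi_0$. Finally \eqref{eqn:global:decomposition:H0} is immediate: on $\H_0(\D)$ one has $\Pi_0=\mathrm{id}$, so the clean and $\Pi_0$ parts merge into $S_g^kT_g^n+\sum_{j=1}^k c_j\,S_g^{k-j}T_g^{n+j}$ with $c_j=a_j+b_j\in\Z$. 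I expect the main obstacle to be the honest combinatorial bookkeeping of Steps A--B, namely verifying that all coefficients stay integers independent of $g$ and that every term lands on the common antidiagonal without ever exceeding $S_g$-power $k$; the two facts that make this work, and that make the $\delta_0$-terms disappear rather than forcing genuine powers of $g(0)$, are exactly $\Pi_0T_g=T_g$ and ``$w$ ends in $S_g\Rightarrow w=w\Pi_0$'', and it is the resulting $S_g$-versus-$T_g$ dichotomy for $w_{\max}$ that is recorded by $\delta_L$.
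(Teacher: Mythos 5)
Your proposal is correct and takes essentially the same route as the paper: your Step A is Proposition~\ref{prop:delete:M:in:a:g-word} (the same absorption identities \eqref{eqn:ST-decomposition:TgMg^k} and \eqref{eqn:ST-decomposition:SgMg^k}, plus \eqref{eqn:formula} with the $g(0)\,\delta_0$-terms annihilated because every surviving word contains an $S_g$ or $T_g$), and your Step B is in substance the paper's normal ordering of pure $S_g,T_g$-words via the commutation relation \eqref{eqn:commutator:STn} on $\H_0(\D)$ combined with $S_g=S_g\Pi_0$ and $\Pi_0T_g=T_g$ (Propositions~\ref{prop:operator:decomposition0T} and~\ref{prop:operator:decomposition0S}). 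The only divergence is organizational bookkeeping: you extract the top term $S_g^kT_g^n$ with coefficient $1$ by a direct rewriting induction, working modulo words of lower $S_g$-count, where the paper packages the same computation into Lemma~\ref{lem:difference:span} and the closed-form identities \eqref{eqn:lem1:prop:operator:decomposition0S}--\eqref{eqn:lem2:prop:operator:decomposition0S}.
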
 

The proof of Theorem \ref{thm:global:decomposition} will be splitted into 
several propositions. In order to state those propositions it will be useful to introduce the following notation.

\begin{definition}
For $m,n\in\N_0$, $m+n\ge1$,	
let $W_g(m,n)=W_g(0,m,n)$ and, for $m\in\N_0$ and $n\in\N$, let
$W^T_g(m,n)$ be the set of all $g$-words $L\in W_g(m,n)$ ending in $T_g$.
Similarly, for $m\in\N$ and $n\in\N_0$, $W^S_g(m,n)$ denotes the set of all $g$-words $L\in W_g(m,n)$ ending in $S_g$.
\end{definition}

The first proposition reduces the proof of Theorem \ref{thm:global:decomposition} to the case where $L\in W_g(m,n)$.

\begin{proposition}\label{prop:delete:M:in:a:g-word}
	Let $L\in W_g(\ell,m,n)$, where $\ell\in\N$ and $m,n\in\N_0$ satisfy $m+n\ge1$. 
	\begin{enumerate}[label={\sf\alph*)},topsep=3pt, 
		leftmargin=0pt, itemsep=4pt, wide, listparindent=0pt, itemindent=6pt] 
		
		\item If $L$ does not end in $M_g$, then 
		\begin{equation}\label{eqn:g-words:not:ending:in:Mg}
			L=\LL_0+\dots+\LL_\ell,
		\end{equation}
		where  $\LL_0\in W_g(m+\ell,n)$ and $\LL_j$ is a sum of $\binom{\ell}{j}$ not necessarily different $g$-words in  $W_g(m+\ell-j,n+j)$, for $j=1,\dots,\ell$.
		Moreover,  $\LL_0\in W^T_g(m+\ell,n)$ if  $L$ ends in $T_g$, and  $\LL_0\in W^S_g(m+\ell,n)$, if
		$L$ ends in $S_g$.
		
		\item If $L$ ends in $T_gM_g^k$, with $1\le k\le\ell$, then 
		\begin{equation}\label{eqn:g-words:ending:in:TgMg^k}
			L=\LL_0+\dots+\LL_{\ell-k},
		\end{equation}
		where\,  $\LL_0\in W^T_g(m+\ell,n)$  and, in the case that $k<\ell$, $\LL_j$ is a sum of $\binom{\ell-k}{j}$ not necessarily different $g$-words in $W_g(m+\ell-j,n+j)$, for $j=1,\dots,\ell-k$.
		
		\item If $L$ ends in $S_gM_g^k$, with $1\le k\le\ell$, then 
		\begin{equation}\label{eqn:g-words:ending:in:SgMg^k}
			L=\LL_0+\cdots+\LL_{\ell-k}+k\LL_{\ell-k+1},
		\end{equation}
		where $\LL_0\in W^S_g(m+\ell,n)$,
		$\LL_{\ell-k+1}\in W_g(m+k-1,n+\ell-k+1)$,
		and, in the case that $k<\ell$,
		$\LL_j$ is a sum of $k\binom{\ell-k}{j-1}+\binom{\ell-k}{j}$ not necessarily different $g$-words in $W_g(m+\ell-j,n+j)$, for $j=1,\dots,\ell-k$.
	\end{enumerate}
\end{proposition}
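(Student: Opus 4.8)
The plan is to prove Proposition~\ref{prop:delete:M:in:a:g-word} by induction on $\ell$, using the fundamental identity~\eqref{eqn:formula} as the engine that converts each occurrence of $M_g$ into a sum of $T_g$ and $S_g$ (modulo the rank-one term $g(0)\delta_0$, which I must track carefully). The key observation is that for a word $L$ not ending in $M_g$, the rightmost letter is either $T_g$ or $S_g$, and each $M_g$ appearing elsewhere can be replaced via $M_g=T_g+S_g+g(0)\delta_0$. Since the word ends in a paraproduct (not in $M_g$), any $\delta_0$ factor produced is immediately killed: $\delta_0$ composed on the left with $T_g$ or $S_g$ applied to a function gives $0$ because $T_gf$ and $S_gf$ vanish at the origin. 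Thus on the right of any $M_g$ there is always a paraproduct letter, and the $g(0)\delta_0$ contributions annihilate, leaving only genuine $T_g,S_g$ words. This is precisely why the statement is clean and why the coefficients $a_j,b_j$ are integers independent of $g$.

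For part a), I would argue as follows. Write $L=L'L_N$ where $L_N\in\{S_g,T_g\}$ is the last letter and $L'$ contains $\ell$ copies of $M_g$. Expanding each $M_g$ in $L'$ by~\eqref{eqn:formula}, and discarding the $g(0)\delta_0$ terms (which vanish after composition with the final paraproduct), I obtain $L$ as a sum of words in $T_g,S_g$ only, each of length $N=\ell+m+n$. Grouping by the number $j$ of the original $M_g$-slots that became $T_g$ (hence raising the $T_g$-count by $j$ and lowering the $S_g$-count correspondingly), the terms with a fixed $j$ number exactly $\binom{\ell}{j}$, and each lies in $W_g(m+\ell-j,n+j)$. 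The distinguished term $\mathcal L_0$ (all $M_g$ replaced by $S_g$) retains the last letter $L_N$, hence lands in $W^T_g$ or $W^S_g$ as appropriate. This gives~\eqref{eqn:g-words:not:ending:in:Mg} together with the membership claims.

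Parts b) and c) handle words ending in a block $M_g^k$ preceded by $T_g$ or $S_g$. Here the subtlety is that the trailing $M_g^k$ cannot be expanded blindly, since after the last paraproduct there is nothing to annihilate the $\delta_0$ terms. The plan is to use the exact commutation/reduction identities for $T_gM_g^k$ and $S_gM_g^k$ that follow from~\eqref{eqn:formula}: for a word ending in $T_gM_g^k$, the pair $T_gM_g=T_g(T_g+S_g+g(0)\delta_0)$ can be rewritten so that the word is pushed into one ending genuinely in $T_g$ (absorbing the $M_g$'s into the paraproduct count) plus lower-order corrections; the term $g(0)\delta_0$ again dies because it is still not the final operator applied. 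For case c), the identity $S_gM_g=S_g(T_g+S_g+g(0)\delta_0)$ produces, after reduction, the extra term $k\mathcal L_{\ell-k+1}\in W_g(m+k-1,n+\ell-k+1)$, which accounts for the factor $k$ and the shifted indices in~\eqref{eqn:g-words:ending:in:SgMg^k}; the binomial-type coefficients $k\binom{\ell-k}{j-1}+\binom{\ell-k}{j}$ arise from combining the contribution of the trailing block with the expansion of the remaining $\ell-k$ interior copies of $M_g$ as in part a).

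The main obstacle will be the bookkeeping in part c), specifically justifying that the $\delta_0$-terms genuinely cancel and pinning down the coefficient $k\binom{\ell-k}{j-1}+\binom{\ell-k}{j}$. The safest route is to isolate the rightmost $S_gM_g^k$ block and treat its reduction as a separate lemma (an identity of the form $S_gM_g^k = \text{(paraproduct words) on }\H_0$), then feed this into the part-a) expansion of the remaining interior $M_g$'s; the two sources of $T_g$-creation then combine by a Vandermonde-type convolution of binomials, which produces exactly the stated coefficient. I expect the $T_g$ case (part b) to be strictly easier since $T_g$ does not generate the anomalous extra term that $S_g$ does, so no analogue of $\mathcal L_{\ell-k+1}$ appears there.
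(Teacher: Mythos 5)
Your part a) is correct and coincides with the paper's argument: each $M_g$ is replaced by $S_g+T_g$ via \eqref{eqn:formula}, the $g(0)\delta_0$ terms die because the word ends in a paraproduct, and choosing which of the $\ell$ slots become $T_g$ yields the count $\binom{\ell}{j}$. (One small imprecision: for adjacent $M_g$'s the letter immediately to the right of an $M_g$ need not be a paraproduct, but the argument survives since $\delta_0 M_g^{i}h=g(0)^{i}h(0)=0$ whenever $h$ is the output of a paraproduct.)

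The genuine gap is in parts b) and c). Your central claim there — that when the trailing block is expanded via \eqref{eqn:formula} ``the term $g(0)\delta_0$ again dies because it is still not the final operator applied'' — is false: whether a $\delta_0$ term vanishes depends on what stands to its \emph{right} (what it is applied to), not on its position from the left, and in the trailing block there is nothing to its right but possibly more $M_g$'s and the raw input $f$. Concretely, $\delta_0 M_g^{i}f=g(0)^{i}f(0)$ and $T_g\delta_0 f=f(0)\,(g-g(0))\neq0$ in general. Already for $L=T_gM_g$ your procedure produces $T_g^2+T_gS_g$, whereas the correct decomposition is $T_gM_g=S_gT_g$, and the discrepancy $S_gT_g-T_g^2-T_gS_g=g(0)\,T_g\delta_0$ is a nonzero operator whenever $g(0)\neq0$; the two expressions agree only on $\H_0(\D)$. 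Your fallback of proving the block reduction ``on $\H_0(\D)$'' does not repair this: the trailing block is the \emph{first} operator applied, acting on arbitrary $f\in\H(\D)$, and the proposition asserts operator identities on all of $\H(\D)$, whose precise structure ($\LL_0\in W^T_g$ resp.\ $W^S_g$, no rank-one remainder) is exactly what feeds the $\delta_L$/$\Pi_0$ bookkeeping in Theorem~\ref{thm:global:decomposition}.

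What actually makes the trailing block tractable in the paper is not \eqref{eqn:formula} at all, but the exact identities \eqref{eqn:ST-decomposition:TgMg^k}, $T_gM_g^k=T_gM_{g^k}=S_g^kT_g$ (immediate, since both sides have integrand $g^kfg'$), and \eqref{eqn:ST-decomposition:SgMg^k}, $S_gM_g^k=kS_g^kT_g+S_g^{k+1}$ (proved by differentiating and noting both sides vanish at $0$). These hold on all of $\H(\D)$ with no $\delta_0$ remainder, convert $L$ into words with no trailing $M_g$, and then part a) applies; the two terms of \eqref{eqn:ST-decomposition:SgMg^k} are precisely the source of the factor $k$, of the extra word $\LL_{\ell-k+1}$, and of the coefficient $k\binom{\ell-k}{j-1}+\binom{\ell-k}{j}$ (a sum of two binomials from the two terms, not a Vandermonde convolution). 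Note finally that your heuristic about which case is delicate is backwards: since $S_g$ annihilates constants, $S_g\delta_0=0$, so for $k=1$ the identity $S_gM_g=S_gT_g+S_g^2$ \emph{does} follow from \eqref{eqn:formula}; it is the $T_g$ case, where $T_g\delta_0\neq0$, that defeats your mechanism and forces the separate algebraic identity.
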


\begin{proof} $\mbox{}$ 
	
	\begin{enumerate}[label={\sffamily{\alph*)}},topsep=3pt, 
		leftmargin=0pt, itemsep=4pt, wide, listparindent=0pt, itemindent=6pt] 
		
		\item 
Since $L=L_1\dots L_{\ell+m+n}$, with $L_{\ell+m+n}\ne M_g$,  identity \eqref{eqn:formula} allow us to replace any $L_s=M_g$, $1\le s<\ell+ m+n$, by $S_g+T_g$, and therefore an elementary combinatorial argument shows that~{\eqref{eqn:g-words:not:ending:in:Mg}} holds, where  $\LL_0\in W^T_g(m+\ell,n)$, if  $L$ ends in $T_g$, and  $\LL_0\in W^S_g(m+\ell,n)$, if
		$L$ ends in $S_g$.
		
		\item In this case, $L=\widetilde{L}T_gM_g^k$, where
		$\widetilde{L}\in W_g(\ell-k,m,n-1)$. Since 
		\begin{equation}\label{eqn:ST-decomposition:TgMg^k}
			T_g M_g^k=T_gM_{g^k}=S_g^kT_g,
		\end{equation}
		we have that 
		$L=\widetilde{L}S_g^kT_g\in W_g(\ell-k,m+k,n)$. It follows that we may apply part~{\sf a)} to deduce that~{\eqref{eqn:g-words:ending:in:TgMg^k}} holds, where $\LL_0\in W^T_g(m+\ell,n)$.
		
		\item Then  
		$L=\widetilde{L}S_gM_g^k$, with 
		$\widetilde{L}\in W_g(\ell-k,m-1,n)$. To study this case note that
		\begin{equation}\label{eqn:ST-decomposition:SgMg^k}
			S_gM_g^k=kS_g^kT_g+S_g^{k+1}.
		\end{equation}
		Indeed, if $f\in\H(\D)$ then 
		\begin{equation*}
			(S_gM_g^kf)'=g(M_g^kf)'=kg'g^kf+g^{k+1}f'=k(T_gM_g^kf)'+(S_g^{k+1}f)',
		\end{equation*} 
		so $S_gM_g^kf=k\,T_gM_g^kf+S_g^{k+1}f=kS_g^kT_gf+S_g^{k+1}f$,
		where the last identity follows from~\eqref{eqn:ST-decomposition:TgMg^k}.  
		Therefore~{\eqref{eqn:ST-decomposition:SgMg^k}}  holds, and, as a consequence, 
		we have that $L=k\widetilde{L}S_g^kT_g+\widetilde{L}S_g^{k+1}$.
		Since $\widetilde{L}S^k_gT_g\in W_g(\ell-k,m+k-1,n+1)$ and 
		$\widetilde{L}S_g^{k+1}\in W_g(\ell-k,m+k,n)$, we may apply part~{\sf a)} to get that
		\begin{equation*}
			L= k\sum_{j=0}^{\ell-k}\LL_{1,j}+\sum_{j=0}^{\ell-k}\LL_{2,j}
		\end{equation*}
		where $\LL_{1,j}$ is a sum of $\binom{\ell-k}{j}$ not necessarily different $g$-words belonging to $W_g(m+\ell-j-1,n+j+1)$ and $\LL_{2,j}$ is a sum of $\binom{\ell-k}{j}$ not necessarily different $g$-words in $W_g(m+\ell-j,n+j)$,
		where  $\LL_{2,0}\in W^S_g(m+\ell,n)$. Then, for $j=1,\dots,\ell-k+1$,
		$\widehat{\LL}_{1,j}:=\LL_{1,j-1}$ is a sum of $\binom{\ell-k}{j-1}$ not necessarily different $g$-words belonging to $W_g(m+\ell-j,n+j)$. It turns out that~{\eqref{eqn:g-words:ending:in:SgMg^k}} holds with
		$\LL_0=\LL_{2,0} \in W^S_g(m+\ell,n)$, 
		$\LL_{\ell-k+1}=\widehat{\LL}_{1,\ell-k+1}$,
		and
		$\LL_j=k\widehat{\LL}_{1,j}+\LL_{2,j}$, for $j=1,\dots,\ell-k$. And that ends the proof.\qedhere
	\end{enumerate}
\end{proof}

Before we deal with the remaining cases in Theorem \ref{thm:global:decomposition}
 we state two simple but very useful lemmas. The first one is easily proved by induction.

\begin{lemma}
Let $g\in\H(\D)$ and $n\in\N_0$. Then
\begin{equation}\label{eqn:commutator:STn}
T_g^nS_g=S_gT_g^n-nT_g^{n+1}\quad\mbox{on $\H_0(\D)$.}
\end{equation}
\end{lemma}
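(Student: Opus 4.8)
The plan is to prove the commutator identity $T_g^n S_g = S_g T_g^n - n T_g^{n+1}$ on $\H_0(\D)$ by induction on $n$. The base case $n=0$ reads $S_g = S_g$, which is trivially true, and the case $n=1$ is the fundamental single commutator $T_g S_g = S_g T_g - T_g^2$, which I would establish directly. First I would verify this $n=1$ identity by computing derivatives: for $f \in \H_0(\D)$, both sides are primitives vanishing at $0$, so it suffices to differentiate. One computes $(T_g S_g f)' = g' \cdot S_g f$ and $(S_g T_g f)' = g \cdot (T_g f)' = g \cdot g' f$, while $(T_g^2 f)' = g' \cdot T_g f$. Thus the claim $T_g S_g f = S_g T_g f - T_g^2 f$ amounts, after differentiating, to $g'\, S_g f = g\, g' f - g'\, T_g f$, i.e. $g'(S_g f + T_g f) = g\, g' f$. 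Since $S_g f + T_g f = M_g f = gf$ on $\H_0(\D)$ (here the $g(0)\delta_0$ term from \eqref{eqn:formula} annihilates because $f(0)=0$), this reduces to $g' \cdot gf = g\, g' f$, which holds identically.

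Next I would carry out the inductive step. Assuming \eqref{eqn:commutator:STn} holds for some $n \ge 0$, I want it for $n+1$. The key is to write $T_g^{n+1} S_g = T_g (T_g^n S_g)$ and apply the induction hypothesis inside the parentheses, giving $T_g^{n+1} S_g = T_g(S_g T_g^n - n T_g^{n+1}) = (T_g S_g) T_g^n - n T_g^{n+2}$ on $\H_0(\D)$. Now I substitute the base-case commutator $T_g S_g = S_g T_g - T_g^2$, obtaining $(S_g T_g - T_g^2) T_g^n - n T_g^{n+2} = S_g T_g^{n+1} - T_g^{n+2} - n T_g^{n+2} = S_g T_g^{n+1} - (n+1) T_g^{n+2}$, which is exactly the desired identity at level $n+1$.

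The one subtlety I would flag carefully is the bookkeeping of the subspace $\H_0(\D)$ throughout the induction, since the identity is only asserted there and the $T_g$ and $S_g$ operators do not in general preserve $\H_0(\D)$ in a way that makes composition on $\H_0(\D)$ automatic. Concretely, both $T_g$ and $S_g$ map $\H(\D)$ into $\H_0(\D)$ (every output is a primitive vanishing at the origin), so applying $T_g$ on the left in the inductive step is harmless, and applying the base commutator requires its input to lie in $\H_0(\D)$ — which it does, being in the range of $S_g$ or $T_g$. I expect this range/domain tracking to be the only genuine obstacle, and it is a mild one; the algebra itself is routine, so the main work is simply stating the induction cleanly and invoking $M_g = T_g + S_g$ on $\H_0(\D)$ at the right moment.
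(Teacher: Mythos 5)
Your proof is correct and follows exactly the route the paper indicates, since the paper states this lemma is ``easily proved by induction'' and leaves the details to the reader: you verify the $n=1$ commutator $T_gS_g=S_gT_g-T_g^2$ by differentiating primitives and invoking $T_gf+S_gf=gf$ for $f\in\H_0(\D)$ (the identity \eqref{eqn:formula} with the $g(0)\delta_0$ term killed by $f(0)=0$), then induct by left-composing with $T_g$. Your domain bookkeeping is also sound --- indeed $T_g$ and $S_g$ map all of $\H(\D)$ into $\H_0(\D)$, so the base commutator may legitimately be applied to $T_g^nf$ at every stage --- and nothing further is needed.
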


Identity~{\eqref{eqn:commutator:STn}} together with an induction argument will prove the following result.

\begin{lemma}\label{lem:difference:span}
Let $m,n\in\N$. If $L,L'\in W^T_g(m,n)$, then 
$L-L'$ is a linear combination of $g$-words in $W^T_g(m-1,n+1)$
with integer coefficients which do not depend on $g$.
\end{lemma}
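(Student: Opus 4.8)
The plan is to prove Lemma~\ref{lem:difference:span} by a two-layer induction, reducing the comparison of two arbitrary $g$-words ending in $T_g$ to the comparison of \emph{adjacent} words that differ by a single transposition of neighboring letters. First I would observe that any two words $L,L'\in W^T_g(m,n)$ are built from the same multiset of letters (namely $m$ copies of $S_g$ and $n$ copies of $T_g$, both ending in $T_g$), so they are related by a finite sequence of adjacent transpositions of an $S_g$ and a $T_g$; since the difference operator is telescoping, it suffices to handle the case where $L$ and $L'$ agree except that one has $\dots S_gT_g\dots$ where the other has $\dots T_gS_g\dots$ at the same position. Writing $L = A\,S_gT_g\,B$ and $L' = A\,T_gS_g\,B$ for suitable (possibly empty) words $A,B$, the difference is $L-L' = A\,(S_gT_g - T_gS_g)\,B$.

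The key algebraic input is the commutator identity~\eqref{eqn:commutator:STn} with $n=1$, which reads $T_gS_g = S_gT_g - T_g^2$ on $\H_0(\D)$, i.e. $S_gT_g - T_gS_g = T_g^2$ on $\H_0(\D)$. The subtlety is that this identity holds only on $\H_0(\D)$, so I must track where the relevant operators land. Here the natural move is to note that the innermost letter of $B$ (or, if $B$ is empty, the input restricted to $\H_0(\D)$) produces a function vanishing at the origin: every $g$-word lands in $\H_0(\D)$ because each of $S_g,T_g$ is an integration operator sending everything into $\H_0(\D)$, and the lemma is stated on $\H_0(\D)$. Substituting the commutator, I would obtain
\begin{equation*}
L - L' = A\,T_g^2\,B \quad\text{on }\H_0(\D).
\end{equation*}
This is a single $g$-word using the same letters as $L$ except that one $S_g$ has been replaced by an extra $T_g$: it lies in $W_g(m-1,n+1)$, and since it still ends in the letters of $B$ terminating in $T_g$ (as $L$ did), it lies in $W^T_g(m-1,n+1)$. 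Thus each adjacent-transposition difference is itself a single word in $W^T_g(m-1,n+1)$, hence trivially a linear combination of such words with integer coefficients independent of $g$.

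The induction is then organized as follows: the outer layer reduces a general pair $(L,L')$ to a telescoping sum of adjacent-transposition differences, and for each such difference the commutator identity yields a word in $W^T_g(m-1,n+1)$. Summing over the transpositions needed to pass from $L$ to $L'$ gives $L-L'$ as an integer linear combination of words in $W^T_g(m-1,n+1)$, completing the argument. The main obstacle I anticipate is the bookkeeping at the boundary: ensuring that after each transposition the resulting word genuinely still ends in $T_g$ (so that intermediate words remain in $W^T_g(m,n)$ and the telescoping stays inside the right class), and verifying that the commutator substitution is legitimate on $\H_0(\D)$ when $B$ is empty. Both are handled by the observation that the terminal $T_g$ is never moved in the reduction — one only permutes the first $m+n-1$ letters — and that the domain is $\H_0(\D)$ throughout, which is exactly the hypothesis of~\eqref{eqn:commutator:STn}. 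Careful attention to these two points is what makes the otherwise routine induction go through cleanly.
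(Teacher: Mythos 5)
Your argument is correct, and it takes a genuinely different route from the paper. The paper first reduces to the case $L'=S^m_gT^n_g$ and then inducts on $m$: writing $L=T_g^{n_0}S_g\cdots$, it peels off the leading block with the iterated identity \eqref{eqn:commutator:STn}, $T_g^{n_0}S_g=S_gT_g^{n_0}-n_0T_g^{n_0+1}$ on $\H_0(\D)$, producing at each step a word with one fewer $S_g$ plus an error term with coefficient $-n_0$; the induction hypothesis is then applied to the shorter word. You instead use only the single basic commutator $S_gT_g-T_gS_g=T_g^2$ on $\H_0(\D)$ (the $n=1$ case of \eqref{eqn:commutator:STn}) and connect $L$ to $L'$ by a bubble-sort chain of adjacent transpositions among the first $m+n-1$ letters, telescoping the difference. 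Your two domain concerns are in fact resolved even more cleanly than you indicate: since both words end in $T_g$ and the terminal letter is never permuted, the block $B$ in $L-L'=A(S_gT_g-T_gS_g)B$ always contains that final $T_g$, so $B$ maps all of $\H(\D)$ into $\H_0(\D)$ and the commutator substitution is legitimate on all of $\H(\D)$, never requiring the restricted domain (consistent with the lemma being stated without an $\H_0(\D)$ qualifier). What each approach buys: yours is more elementary and quantitatively sharper at this stage — every elementary difference is a \emph{single} word in $W^T_g(m-1,n+1)$ with coefficient $\pm1$, so the integer coefficients in the conclusion are explicitly sums of signs indexed by the inversions separating $L$ from $L'$ — whereas the paper's normal-form reduction is structured to feed directly into the subsequent induction of Proposition~\ref{prop:operator:decomposition0} and reuses the already-established identity \eqref{eqn:commutator:STn} rather than introducing a new combinatorial layer.
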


\begin{proof}
Since $L-L'=(L-S^m_gT^n_g)-(L'-S^m_gT^n_g)$, we may 
assume that $L'=S^m_gT^n_g$. Now we proceed by induction on $m$. 

Assume that $L\in W^T_g(m,n)$. Then $L=T_g^{n_0}S_g\cdots T_g^{n_{m-1}}S_gT_g^{n_m+1}$, where $n_j\in\N_0$ and $n_0+\cdots+n_m+1=n$. 
When $m=1$, {\eqref{eqn:commutator:STn}}~implies that
\[
L-L'=(T_g^{n_0}S_g)T^{n_1}_gT_g-S_gT_g^n
    =-n_0\,T_g^{n+1}.
\]

Let $m\ge2$. Then~{\eqref{eqn:commutator:STn}} shows that
\[
L= (T_g^{n_0}S_g)T_g^{n_1}S_gT_g^{n_2}\cdots  
    S_gT_g^{n_m}T_g
 =S_gL_0-n_0L_1,
\]
with
$L_0=T_g^{n_0+n_1}S_gT_g^{n_2}\cdots S_gT_g^{n_m}T_g$
and
$L_1=T_g^{n_0+n_1+1}S_gT_g^{n_2}\cdots S_gT_g^{n_m}T_g$.
 
Since $L_0\in W^T_g(m-1,n)$, we may apply the induction 
hypothesis to obtain that $L_0-S_g^{m-1}T_g^n$ is a 
linear combination of $g$-words in $W^T_g(m-2,n+1)$, 
with integer coefficients which are independent of $g$. On the other hand,   
$L_1\in W^T_g(m-1,n+1)$, and it turns out that 
\[
L-L'=S_g(L_0-S_g^{m-1}T_g^n)-n_0L_1
\]
is a linear combination of $g$-words in 
$W^T_g(m-1,n+1)$ with integer coefficients which do not depend on $g$. And that ends the proof.
\end{proof}

An easy consequence of Lemma~{\ref{lem:difference:span}}
is the following general decomposition formula which will be an essential tool to complete the proof of Theorem~{\ref{thm:global:decomposition}.} 

\begin{proposition}\label{prop:operator:decomposition0}
Let $m\in\N_0$, $n\in\N$, and let $L_j\in W^T_g(m-j,n+j)$, for $j=0,\dots,m$. Then any  
$L\in W^T_g(m,n)$ can be decomposed as
\begin{equation}\label{eqn:prop:operator:decomposition0}
L=L_0+\sum_{j=1}^m c_j\,L_j,
\end{equation}
where the coefficients $c_j$ are integers which do not depend on $g$. 
\end{proposition}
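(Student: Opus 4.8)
The plan is to argue by induction on $m$, keeping $n$ free, and to feed the output of Lemma~\ref{lem:difference:span} into the induction hypothesis. The base case $m=0$ is immediate: a word in $W^T_g(0,n)$ has no occurrences of $S_g$, so it must equal $T_g^n$; thus $W^T_g(0,n)=\{T_g^n\}$, which forces $L=L_0=T_g^n$, and the empty sum in \eqref{eqn:prop:operator:decomposition0} gives the claim with no coefficients.

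For the inductive step, fix $m\in\N$, suppose the proposition holds with $m$ replaced by $m-1$ (for every $n\in\N$ and every admissible choice of data), and let $L,L_0\in W^T_g(m,n)$. Since $m,n\in\N$, Lemma~\ref{lem:difference:span} applies and yields
\begin{equation*}
L-L_0=\sum_i d_i\,M_i,
\end{equation*}
where $M_i\in W^T_g(m-1,n+1)$ and the $d_i$ are integers independent of $g$. The key observation is that the data prescribed for $L$ at level $m$ are exactly what the induction hypothesis needs at level $m-1$. Indeed, the hypothesis applied with starting pair $(m-1,n+1)$ requires words in $W^T_g((m-1)-k,(n+1)+k)=W^T_g(m-(k+1),n+(k+1))$ for $k=0,\dots,m-1$, and these are precisely the spaces in which the given words $L_{k+1}$ lie. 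Choosing $L'_k:=L_{k+1}$, the induction hypothesis decomposes each $M_i$ as $M_i=L_1+\sum_{k=1}^{m-1}c^{(i)}_k\,L_{k+1}$ with integer coefficients independent of $g$. Substituting and collecting terms then gives $L=L_0+\sum_{j=1}^m c_j\,L_j$, where $c_1=\sum_i d_i$ and $c_j=\sum_i d_i\,c^{(i)}_{j-1}$ for $j\ge2$; all of these are integers independent of $g$, completing the induction.

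The substance of the argument is carried entirely by Lemma~\ref{lem:difference:span}, which turns a difference of two words ending in $T_g$ into a combination of words with one fewer $S_g$ and one more $T_g$; no analytic estimate is involved. The only point requiring care is the bookkeeping: one must verify that the parameter shift $(m,n)\mapsto(m-1,n+1)$ produced by the lemma matches the spaces $W^T_g(m-j,n+j)$ in which the prescribed words $L_j$ live, so that the same data $L_1,\dots,L_m$ serve as the input for the lower-level decomposition, and that integrality together with $g$-independence of the coefficients is preserved when the two steps are composed.
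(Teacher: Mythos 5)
Your proof is correct and follows essentially the same route as the paper: induction on $m$, with the base case $W^T_g(0,n)=\{T_g^n\}$ forcing $L=L_0$, and the inductive step using Lemma~\ref{lem:difference:span} to express $L-L_0$ as an integer combination of words in $W^T_g(m-1,n+1)$ to which the induction hypothesis is applied with the shifted data $L_1,\dots,L_m$. Your version merely makes explicit the bookkeeping (the shift $(m,n)\mapsto(m-1,n+1)$ matching the prescribed spaces, and the formulas for the resulting integer coefficients) that the paper leaves implicit.
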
  
\begin{proof}
We proceed by induction on $m$. 
For $m=0$ it is clear that $L=L_0$. 
 Now assume that $m\ge1$. Then,
 by Lemma \ref{lem:difference:span},
 $L-L_0$ is a linear combination of $g$-words in 
$W^T_g(m-1,n+1)$ with integer coefficients which do not depend on $g$. By applying the induction hypothesis to each of those $g$-words, we deduce 
that~{\eqref{eqn:prop:operator:decomposition0}} holds for some coefficients $c_j\in\Z$ that do not depend on $g$.
 Hence the proof is complete.
\end{proof}

By applying Proposition~{\ref{prop:operator:decomposition0}} with
$L_j=S^{m-j}_gT^{n+j}_g$, $j=0,\dots,m$,  we 
obtain the following decomposition formula for $g$-words ending in $T_g$.
\begin{proposition}
\label{prop:operator:decomposition0T}
Let $m\in\N_0$ and $n\in\N$. Then any  
$L\in W^T_g(m,n)$ can be decomposed as
\begin{equation*}\label{eqn:prop:operator:decomposition0T}
L=S^m_gT^n_g+\sum_{j=1}^m c_j\,S^{m-j}_g\,T^{n+j}_g ,
\end{equation*}
where the coefficients $c_j$ are integers which do not depend on $g$. 
\end{proposition}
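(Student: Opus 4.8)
The plan is to deduce Proposition~\ref{prop:operator:decomposition0T} as an immediate specialization of Proposition~\ref{prop:operator:decomposition0}. The latter asserts that for any fixed choice of representatives $L_j\in W^T_g(m-j,n+j)$, $j=0,\dots,m$, every $L\in W^T_g(m,n)$ decomposes as $L=L_0+\sum_{j=1}^m c_j\,L_j$ with integer coefficients $c_j$ independent of $g$. The key observation is that the specific words $S^{m-j}_g T^{n+j}_g$ are legitimate candidates for the $L_j$'s: indeed, $S^{m-j}_g T^{n+j}_g$ uses exactly $m-j$ copies of $S_g$ and $n+j$ copies of $T_g$, and it ends in $T_g$ (since $n+j\ge n\ge1$), so $S^{m-j}_g T^{n+j}_g\in W^T_g(m-j,n+j)$ for each $j=0,\dots,m$.

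Concretely, I would simply invoke Proposition~\ref{prop:operator:decomposition0} with the particular selection $L_j:=S^{m-j}_g T^{n+j}_g$ for $j=0,\dots,m$. In the special case $j=0$ this gives $L_0=S^m_g T^n_g$, which becomes the leading term of the decomposition. Substituting these choices into \eqref{eqn:prop:operator:decomposition0} yields exactly
\begin{equation*}
L=S^m_g T^n_g+\sum_{j=1}^m c_j\,S^{m-j}_g\,T^{n+j}_g,
\end{equation*}
with integer coefficients $c_j$ independent of $g$, which is the claimed formula. No induction or commutator manipulation is needed at this stage, since all the real work has already been carried out in the proof of Proposition~\ref{prop:operator:decomposition0} (which in turn rests on Lemma~\ref{lem:difference:span}).

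The only point requiring a word of care is the verification that the chosen $L_j$ genuinely lie in the sets $W^T_g(m-j,n+j)$, i.e.\ that the letter counts match and that each word ends in $T_g$; this is the membership check described above and is entirely routine. There is no genuine obstacle here: the statement is a corollary whose proof is a one-line application of the preceding proposition to a concrete family of test words. The substantive difficulty lives entirely in Lemma~\ref{lem:difference:span} and Proposition~\ref{prop:operator:decomposition0}, where the commutator identity~\eqref{eqn:commutator:STn} and the double induction on $m$ do the heavy lifting.
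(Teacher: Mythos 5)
Your proposal is correct and follows exactly the paper's route: the paper obtains Proposition~\ref{prop:operator:decomposition0T} precisely by applying Proposition~\ref{prop:operator:decomposition0} with the choice $L_j=S^{m-j}_gT^{n+j}_g$, $j=0,\dots,m$, just as you do. Your explicit membership check that $S^{m-j}_gT^{n+j}_g\in W^T_g(m-j,n+j)$ (correct letter counts and ending in $T_g$ since $n+j\ge1$) is the only detail the paper leaves implicit, and it is verified correctly.
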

Now we deduce a similar decomposition formula for the $g$-words ending in $S_g$.
\begin{proposition}
\label{prop:operator:decomposition0S}
Let $m,n\in\N$. Then any  
$L\in W^S_g(m,n)$ can be decomposed as
\begin{equation}\label{eqn:prop:operator:decomposition0S}
L=S^m_g\,T^n_g\,\Pi_0+\sum_{j=1}^m c_j\,S^{m-j}_g\,T^{n+j}_g\,\Pi_0 ,
\end{equation}
where  the coefficients $c_j$ are integers which do not depend on $g$. 
\end{proposition}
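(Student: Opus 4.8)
The plan is to reduce the statement to an identity on $\H_0(\D)$ and then reattach $\Pi_0$. The starting observation is that, since $S_gf$ depends only on $f'$, we have $S_g=S_g\Pi_0$ as operators on $\H(\D)$; because $L\in W^S_g(m,n)$ ends in $S_g$, this gives $L=L\Pi_0$. Consequently it suffices to prove that, \emph{on $\H_0(\D)$},
\begin{equation*}
L=S^m_g T^n_g+\sum_{j=1}^m c_j\,S^{m-j}_g T^{n+j}_g
\end{equation*}
for integers $c_j$ independent of $g$; composing this identity on the right with $\Pi_0$ (whose range lies in $\H_0(\D)$) and using $L=L\Pi_0$ then yields \eqref{eqn:prop:operator:decomposition0S} on all of $\H(\D)$.

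To establish the $\H_0(\D)$ identity I would use the commutator relation \eqref{eqn:commutator:STn}, namely $T_g^aS_g=S_gT_g^a-a\,T_g^{a+1}$ on $\H_0(\D)$, to move the letters $S_g$ of $L$ to the left into the canonical order $S^m_gT^n_g$. As an illustrative easy sub-case, isolate the last $S_g$ of $L$ together with the block $T_g^a$ immediately preceding it, writing $L=\widehat L\,T_g^aS_g$ with $a\ge0$ and $\widehat L$ a word in $S_g,T_g$. If $a\ge1$, a single use of \eqref{eqn:commutator:STn} gives $L=\widehat L\,S_gT_g^a-a\,\widehat L\,T_g^{a+1}$ on $\H_0(\D)$; both terms now end in $T_g$, so they lie in $W^T_g(m,n)$ and $W^T_g(m-1,n+1)$ respectively, and Proposition~\ref{prop:operator:decomposition0T} rewrites each of them in the required canonical form. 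The delicate situation is when $L$ ends in a run $S_g^k$ with $k\ge2$ (the case $a=0$), which a single commutation leaves unchanged; this is exactly the point that forces a genuine induction.

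To handle it uniformly I would induct on the number $m$ of letters $S_g$ and, for fixed $m$, on the number of ``inversions'' of $L$, i.e.\ the number of pairs consisting of a $T_g$ lying to the left of an $S_g$. The word with inversion number $0$ is precisely $S^m_gT^n_g$, giving $c_0=1$. If $L$ has positive inversion number it contains an adjacent pattern $T_gS_g$, say $L=A\,T_gS_g\,B$; applying $T_gS_g=S_gT_g-T_g^2$ to the element $Bf\in\H_0(\D)$ yields $L=A\,S_gT_g\,B-A\,T_g^2\,B$ on $\H_0(\D)$. The first word has the same number $m$ of letters $S_g$ but one fewer inversion, so it is covered by the inner induction; the second has $m-1$ letters $S_g$ and $n+1$ letters $T_g$, so it is covered by the outer induction and contributes only to the terms with $j\ge1$. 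Combining the two pieces gives the desired expansion with $c_0=1$, and this inversion induction already subsumes the easy sub-case above.

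The only genuinely delicate points are, first, that every application of \eqref{eqn:commutator:STn} is legitimate because the operator to which the commutator is applied always receives an element of $\H_0(\D)$ (either $Bf$ with $B$ a nonempty word, whose image lies in $\H_0(\D)$, or $f\in\H_0(\D)$ itself when $B$ is empty); and second, the circularity that appears if one instead tries to peel off the trailing $S_g$ via $S_g=M_g-T_g$ together with the purely algebraic identities \eqref{eqn:ST-decomposition:TgMg^k}--\eqref{eqn:ST-decomposition:SgMg^k}, which merely reproduce $L$. It is precisely the term $-a\,T_g^{a+1}$ (equivalently $-A\,T_g^2\,B$) created by \eqref{eqn:commutator:STn} that drops the $S_g$-count and produces the strict reduction; getting this reduction to close the induction is the main thing to get right.
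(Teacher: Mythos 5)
Your proof is correct, but it takes a genuinely different route from the paper's. The paper also starts from $L=L\Pi_0$ (via $S_g=S_g\Pi_0$), but then writes $L=\widetilde L\,T_g\,S^k_g\,\Pi_0$, isolating the entire trailing run $S^k_g$, and disposes of it in \emph{closed form} via the two lemmas $T_gS^k_g=(S_g-T_g)^kT_g$ and $(S_g-T_g)^k=\sum_{i=0}^k(-1)^{k-i}\tfrac{k!}{i!}\,S^i_gT^{k-i}_g$ on $\H_0(\D)$, i.e.\ \eqref{eqn:lem1:prop:operator:decomposition0S} and \eqref{eqn:lem2:prop:operator:decomposition0S}; this converts $L$ into an integer combination of words ending in $T_g$, to each of which Proposition~\ref{prop:operator:decomposition0T} is then applied. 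You instead prove the stronger uniform claim that \emph{every} word in $W_g(m,n)$ equals $S^m_gT^n_g+\sum_{j=1}^m c_j\,S^{m-j}_gT^{n+j}_g$ on $\H_0(\D)$, by a lexicographic induction on the pair (number of letters $S_g$, inversion count), using only the adjacent swap $T_gS_g=S_gT_g-T_g^2$, which is the case $n=1$ of \eqref{eqn:commutator:STn}; and you correctly identify and verify the one delicate point, namely that each swap is applied to an element of $\H_0(\D)$ (both $S_g$ and $T_g$ map $\H(\D)$ into $\H_0(\D)$, and when the tail $B$ is empty the argument $f$ itself lies in $\H_0(\D)$). The bookkeeping is sound: the swapped word keeps $m$ and drops the inversion count by one, the commutator term drops $m$ by one and is absorbed into the $j\ge1$ terms by the outer hypothesis, so $c_0=1$ and all coefficients stay integers independent of $g$. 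Both arguments ultimately rest on the same commutator identity \eqref{eqn:commutator:STn}; yours is more self-contained and uniform (it reproves the $T_g$-ending decomposition restricted to $\H_0(\D)$ in passing and needs neither closed-form lemma nor Proposition~\ref{prop:operator:decomposition0T} as a black box), while the paper's organization buys the explicit coefficients $(-1)^{k-i}k!/i!$ for the trailing run and reuses machinery already established for $T_g$-ending words, where the decomposition holds on all of $\H(\D)$ rather than only after composing with $\Pi_0$. (One cosmetic slip in an aside you rightly discard: by \eqref{eqn:formula} one has $S_g=M_g-T_g-g(0)\,\delta_0$, not $S_g=M_g-T_g$; this does not affect your argument.)
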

 Proposition~{\ref{prop:operator:decomposition0S}} is a consequence of Proposition~{\ref{prop:operator:decomposition0T}} and the following two technical lemmas whose proofs, which we omit, are done by induction taking into account \eqref{eqn:commutator:STn}.

\begin{lemma}
Let $m\in\N_0$. Then 
\begin{equation}
\label{eqn:lem1:prop:operator:decomposition0S}
T_g\,S^m_g=(S_g-T_g)^m\,T_g\quad\mbox{on $\H_0(\D)$.}
\end{equation}
\end{lemma}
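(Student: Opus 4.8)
The plan is to prove the identity $T_g\,S_g^m=(S_g-T_g)^m\,T_g$ on $\H_0(\D)$ by induction on $m$, using the fundamental commutator relation~\eqref{eqn:commutator:STn} as the only substantive input. For $m=0$ the statement reads $T_g=T_g$, which is trivially true. The inductive step is where the real work lies, and the key observation is that the operator $S_g-T_g$ is the natural object to track: by~\eqref{eqn:formula}, on $\H_0(\D)$ we have $M_g=S_g+T_g$, so $S_g-T_g$ is a kind of ``twisted multiplication,'' and the identity asserts that conjugating $T_g$ against $S_g^m$ replaces each copy of $S_g$ by $S_g-T_g$.

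For the inductive step, assume the formula holds for $m-1$, so that $T_g\,S_g^{m-1}=(S_g-T_g)^{m-1}\,T_g$ on $\H_0(\D)$. I would then write $T_g\,S_g^m=(T_g\,S_g^{m-1})\,S_g=(S_g-T_g)^{m-1}\,T_g\,S_g$ and reduce matters to understanding the single-factor commutator $T_g\,S_g$ on $\H_0(\D)$. Setting $n=1$ in~\eqref{eqn:commutator:STn} gives $T_g\,S_g=S_g\,T_g-T_g^2=(S_g-T_g)\,T_g$ on $\H_0(\D)$. Substituting this back yields
\begin{equation*}
T_g\,S_g^m=(S_g-T_g)^{m-1}(S_g-T_g)\,T_g=(S_g-T_g)^m\,T_g\quad\mbox{on $\H_0(\D)$,}
\end{equation*}
which is exactly the desired identity.

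The one point that requires care—and which I expect to be the main (modest) obstacle—is the bookkeeping of the domain restriction to $\H_0(\D)$. The relation~\eqref{eqn:commutator:STn} is only valid on $\H_0(\D)$, so to chain the computations above I must check that every intermediate operator maps $\H_0(\D)$ into $\H_0(\D)$. This is immediate: both $S_g$ and $T_g$ send $\H(\D)$ into $\H_0(\D)$ (their definitions as integrals from $0$ force the value $0$ at the origin), so in particular they preserve $\H_0(\D)$, and hence every factor appearing in the induction—whether $S_g$, $T_g$, or $S_g-T_g$—keeps us inside $\H_0(\D)$ where~\eqref{eqn:commutator:STn} applies. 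Once this is noted, the induction closes cleanly and no further estimates are needed.
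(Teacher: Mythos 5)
Your proof is correct and follows exactly the route the paper indicates for this lemma: the paper omits the details but states the proof is by induction using~\eqref{eqn:commutator:STn}, which with $n=1$ gives your key identity $T_gS_g=(S_g-T_g)T_g$ on $\H_0(\D)$. Your domain bookkeeping (that $S_g$ and $T_g$ map $\H(\D)$ into $\H_0(\D)$, so the commutator relation may be chained) is the right point to check, and it closes the induction.
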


\begin{lemma}
Let $m\in\N$. Then 
\begin{equation}
\label{eqn:lem2:prop:operator:decomposition0S}
(S_g-T_g)^m
=\sum_{j=0}^m(-1)^{m-j}\,\tfrac{m!}{j!}\,S^j_g\,T^{m-j}_g
\quad\mbox{on $\H_0(\D)$.}
\end{equation}
\end{lemma}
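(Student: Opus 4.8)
The plan is to prove \eqref{eqn:lem2:prop:operator:decomposition0S} by induction on $m$, using the commutation relation \eqref{eqn:commutator:STn} as the sole algebraic input. Write $R_m:=\sum_{j=0}^m(-1)^{m-j}\tfrac{m!}{j!}S_g^jT_g^{m-j}$ for the right-hand side, so that the claim is $(S_g-T_g)^m=R_m$ on $\H_0(\D)$. The base case $m=1$ is immediate, since then $R_1=S_g-T_g$ and no reordering is required. Throughout I would keep in mind that $S_g$ and $T_g$ map $\H(\D)$ into $\H_0(\D)$, so that iterating the identities and invoking \eqref{eqn:commutator:STn} (which is valid only on $\H_0(\D)$) is legitimate at each stage.

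For the inductive step I would factor on the right, $(S_g-T_g)^{m+1}=(S_g-T_g)^m(S_g-T_g)$, and substitute the induction hypothesis to obtain $R_m(S_g-T_g)$ on $\H_0(\D)$. Distributing produces terms $S_g^jT_g^{m-j}(S_g-T_g)=S_g^j\bigl(T_g^{m-j}S_g-T_g^{m-j+1}\bigr)$, and here \eqref{eqn:commutator:STn} with $n=m-j$ rewrites the trailing $S_g$, giving $T_g^{m-j}S_g-T_g^{m-j+1}=S_gT_g^{m-j}-(m-j+1)T_g^{m-j+1}$ on $\H_0(\D)$. Hence each summand becomes $S_g^{j+1}T_g^{m-j}-(m-j+1)S_g^jT_g^{m-j+1}$, with every term already in the desired normal form (all factors $S_g$ preceding all factors $T_g$).

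It then remains to reindex and collect. The first group of terms, after the shift $i=j+1$, and the second group, with $i=j$, both contribute to the coefficient of $S_g^iT_g^{m+1-i}$; tracking the sign via $(-1)^{m-j}=-(-1)^{m+1-j}$, the total coefficient of $(-1)^{m+1-i}S_g^iT_g^{m+1-i}$ is $\tfrac{m!}{(i-1)!}+\tfrac{(m+1-i)\,m!}{i!}$ for $1\le i\le m$, with the endpoint cases $i=0$ and $i=m+1$ handled separately, each receiving a contribution from only one of the two sums. The elementary Pascal-type identity $\tfrac{m!}{(i-1)!}+\tfrac{(m+1-i)\,m!}{i!}=\tfrac{(m+1)!}{i!}$ then matches this against the coefficient appearing in $R_{m+1}$, which completes the induction.

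The computation is entirely algebraic, so the only real obstacle is bookkeeping: correctly shifting the summation index in the first sum, keeping the alternating signs consistent, and isolating the endpoint terms $i=0,m+1$. No analytic input beyond \eqref{eqn:commutator:STn} is needed, and the integer, $g$-independent nature of the coefficients is manifest from the construction.
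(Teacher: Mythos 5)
Your proof is correct and follows precisely the route the paper intends: the paper omits the proof of this lemma, stating only that it is ``done by induction taking into account \eqref{eqn:commutator:STn}'', which is exactly your argument (right-factoring $(S_g-T_g)^{m+1}=(S_g-T_g)^m(S_g-T_g)$, rewriting each $T_g^{m-j}S_g$ via \eqref{eqn:commutator:STn}, and collecting coefficients). All details check out, including the coefficient identity $\tfrac{m!}{(i-1)!}+(m+1-i)\tfrac{m!}{i!}=\tfrac{(m+1)!}{i!}$, the endpoint cases $i=0,m+1$, and your correct observation that $S_g$ and $T_g$ map $\H(\D)$ into $\H_0(\D)$, which legitimizes both the application of the induction hypothesis to $(S_g-T_g)f$ and the use of \eqref{eqn:commutator:STn} on the argument $f\in\H_0(\D)$.
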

\begin{proof}[{\bf Proof of Proposition~{\ref{prop:operator:decomposition0S}}}]
Let $L\in W^S_g(m,n)$. Then $L=\widetilde{L}\,T_g\,S^k_g$, where
$1\le k\le m$ and $\widetilde{L}$ is a $g$-word which only contains $m-k$ and $n-1$ letters $S_g$ and $T_g$, respectively. Since $S_g=S_g\Pi_0$, we get that
$L=\widetilde{L}\,T_g\,S^k_g\,\Pi_0$.
Now, taking into account that
$\Pi_0(\H(\D))\subset\H_0(\D)$, \eqref{eqn:lem1:prop:operator:decomposition0S} and~{\eqref{eqn:lem2:prop:operator:decomposition0S}} show that
\begin{equation*}
T_g\,S^k_g\,\Pi_0
=\sum_{i=0}^k(-1)^{k-i}\,\tfrac{k!}{i!}\,S^i_g\,T^{k+1-i}_g\,\Pi_0, 
\end{equation*}
and so 
\begin{equation}\label{eqn2:prop:operator:decomposition0S}
L=L_k\,\Pi_0+\sum_{i=0}^{k-1}(-1)^{k-i}\,\tfrac{k!}{i!}\,L_i\,\Pi_0
\end{equation}
where $L_i=\widetilde{L}\,S^i_g\,T^{k+1-i}_g\in W^T_g(m-k+i,n+k-i)$, for $i=0,\dots,k$.
 Then we may apply Proposition~{\ref{prop:operator:decomposition0T}} to decompose $L_i$ as
\begin{equation}\label{eqn3:prop:operator:decomposition0S}
L_i=
S^{m-k+i}_g\,T^{n+k-i}_g+\sum_{\ell=1}^{m-k+i}c_{i,\ell}\,S^{m-k+i-\ell}_g\,T^{n+k-i+\ell}_g,
\end{equation} 
where the coefficients 
$c_{i,\ell}$
 are integers which do not depend on $g$. 
By inserting in~{\eqref{eqn2:prop:operator:decomposition0S}} the expression of $L_i$ given by~{\eqref{eqn3:prop:operator:decomposition0S}},
 we deduce the decomposition~{\eqref{eqn:prop:operator:decomposition0S}}, and that completes the proof. 
\end{proof}

\begin{proof}[{\bf Proof of Theorem \ref{thm:global:decomposition}}]
By Proposition~{\ref{prop:delete:M:in:a:g-word}}, any $g$-word $L$ in $W_g(\ell,m,n)$, with $m+n\ge1$, decomposes as $L=L_0+L_1$,
where  $L_1$ is a linear combination of
$g$-words in  
${\displaystyle
\cup_{j=1}^{\ell}W_g(m+\ell-j,n+j)}$, with integer coefficients not depending on $g$.
Since, Proposition~{\ref{prop:delete:M:in:a:g-word}} also asserts that $L_0\in W^T_g(m+\ell,n)$, if 
$L$ ends in $T_gM_g^{i}$, for some $i\in \N_0$, and $L_0\in W^S_g(m+\ell,n)$, if 
$L$ ends in $S_gM_g^{i}$, for some $i\in \N_0$, 
 Propositions \ref{prop:operator:decomposition0T} and \ref{prop:operator:decomposition0S} complete the proof of Theorem~{\ref{thm:global:decomposition}}.	
\end{proof}

\subsection{A key lower norm estimate for some $g$-operators}
\label{subsection:lower:norm:estimate:g-operators}
The next result is crucial for our purposes. It is a quantitative optimized version of \cite[Theorem 1.1.a)]{Aleman:Cascante:Fabrega:Pascuas:Pelaez}. As what happens there, its proof depends on the notion of iterated commutators. We recall that the commutator of two linear operators  $A,B:\H(\D)\to\H(\D)$ is the linear operator $[A,B]:=AB-BA$. Then the iterated conmutators $[A,B]_k$, $k\in\N$, are defined inductively as follows:
\[
[A,B]_1:=[A,B]
\qquad\mbox{and}\qquad
[A,B]_{k+1}:=[[A,B]_k,B],\quad\mbox{for $k\in\N$.}
\]

On the other hand, from now on we will use repeatedly \cite[Proposition 4.3]{Aleman:Cascante:Fabrega:Pascuas:Pelaez}, which ensures that a  $g$-operator $L_g$ 
satisfies the identities
\begin{equation*}
\sup_{0<r<1}\|L_{g_r}\|_{\alpha,p}=\|L_g\|_{\alpha,p}=\liminf_{r\to1^{-}}\|L_{g_r}\|_{\alpha,p},
\end{equation*}
where $g_r(z)=g(rz)$.
An analysis of the proof of \cite[Proposition 4.3]{Aleman:Cascante:Fabrega:Pascuas:Pelaez} gives that 
this result also holds by replacing $A^p_{\alpha}$ and $\|\cdot\|_{\alpha,p}$ by $A^p_{\alpha}(0)$ and $\opnorm{\,\cdot\,}_{\alpha,p}$, respectively. Those identities together with Proposition \ref{prop:radialized:symbol}, \eqref{eqn:equiv:norms:Bq} and \eqref{eqn:BMOAqnorm:equal:boldnorm:-1} show that to prove Theorem \ref{thm:sharp:estimate:norm:word} we may assume that $g$ is analytic in a neighborhood
of the closed unit disc.
\begin{proposition}\label{prop:boundedness:Tg}
Let $L$ be a $g$-operator which satisfies
\begin{equation}\label{eqn:boundedness:Tg:0}
L=S_g^mT_g^n+\sum_{j=0}^{m-1}S_g^jT_gP_j(T_g)\quad\mbox{on $\H_0(\D)$},
\end{equation}
where $m,n\in\N$ and  $P_0,\dots,P_{m-1}$ are polynomials.
If $L\in\BB(A^p_{\alpha}(0))$, then $T_g\in\BB(A^p_{\alpha})$, and 
\begin{equation}\label{eqn:boundedness:Tg}
\|T_g\|_{\alpha,p}\le C\,\opnorm{L}_{\alpha,p}^{\frac1{m+n}},
\end{equation}
where $C>0$ is a constant which only depends on $\alpha,m,n$, and $p$.
\end{proposition}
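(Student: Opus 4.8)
The plan is to convert the whole statement into a single scalar estimate and then extract it by commuting with a \emph{bounded} operator. By the dilation identities recalled immediately before the statement, together with Proposition~\ref{prop:radialized:symbol}, I may assume that $g\in\H(\overline{\D})$, so that $T_g,S_g,M_g$ are all bounded on $A^p_\alpha$ and no domain issues arise in the algebra they generate; and by \eqref{eqn:boundedness:Tg:0} I work on $\H_0(\D)$, where $L=S_g^mT_g^n+\sum_{j=0}^{m-1}S_g^jT_gP_j(T_g)$. Recalling that $T_g\in\BB(A^p_\alpha)$ precisely when $g\in\B^1_\alpha$, with $\|T_g\|_{\alpha,p}\simeq\|g\|_{\B^1_\alpha}$, the proposition reduces to the single inequality
\begin{equation*}
\|g\|_{\B^1_\alpha}^{\,m+n}\lesssim\opnorm{L}_{\alpha,p},
\end{equation*}
with implicit constant depending only on $\alpha,m,n,p$; indeed this already forces $g\in\B^1_\alpha$, hence $T_g\in\BB(A^p_\alpha)$, and yields \eqref{eqn:boundedness:Tg}.

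The engine is the commutator with $M_z$, which is bounded with $\|M_z\|_{\alpha,p}\le1$, so that the $k$-fold iterated commutator satisfies $\opnorm{[L,M_z]_k}_{\alpha,p}\le 2^k\opnorm{L}_{\alpha,p}$. On $\H_0(\D)$ a direct computation gives the base identities
\begin{equation*}
[T_g,M_z]=-T_zT_g,\qquad [S_g,M_z]=T_zT_g,
\end{equation*}
where $T_z$ is the (bounded) integration operator. The crucial feature is that commuting with $M_z$ \emph{trades the ``$g$-type'' letter $S_g$ for the ``$g'$-type'' letter $T_g$}, up to the harmless bounded factor $T_z$; this is exactly the mechanism that turns the $g^m$ carried by $S_g^m$ into the extra $g'$-factors needed to reach the power $m+n$. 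The hypothesis is tailored to this: among the summands of $L$ the principal word $S_g^mT_g^n$ is the unique one containing the maximal number $m$ of copies of $S_g$ (every correction $S_g^jT_gP_j(T_g)$ has $j\le m-1$). I will therefore organise the iterated commutators by the filtration given by the number of $S_g$'s: since $[\,\cdot\,,M_z]$ lowers this count by at most one, after suitably combining $m$ commutations the contribution of $S_g^mT_g^n$ fully converts to a pure $T_g$-word of length $m+n$, whereas each correction, starting with fewer $S_g$'s, is left carrying additional $T_z$-factors that separate it from the principal contribution. Here I will also use \eqref{eqn:commutator:STn} and $S_g^kT_g=\tfrac1{k+1}T_{g^{k+1}}$ to keep the emerging words in a normal form.

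It then remains to establish the model lower bound: a pure $T_g$-word of length $m+n$ must detect $\|g\|_{\B^1_\alpha}^{m+n}$. For $\alpha>-1$ this follows from pointwise estimates; the clean algebraic input is
\begin{equation*}
T_g^{m+n}\mathbf 1=\tfrac1{(m+n)!}\,(g-g(0))^{m+n},
\end{equation*}
which, tested on the normalised reproducing-kernel family $f_a$ adapted to a point $a\in\D$ and read off at the top order, produces the factor $\bigl((1-|a|^2)|g'(a)|\bigr)^{m+n}$; taking the supremum over $a$ gives $\|g\|_{\B}^{m+n}$. For the Hardy case $\alpha=-1$ the same scheme is routed through the Garsia-type seminorm $\boldopnorm{\,\cdot\,}_{-1,1}\simeq\|\cdot\|_{BMOA}$ of Section~\ref{section:symbols}, keeping the argument uniform in $\alpha\ge-1$. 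Combined with the norm control $\opnorm{[L,M_z]_k}_{\alpha,p}\le 2^k\opnorm{L}_{\alpha,p}$ from the second step, this yields the desired scalar bound. The main obstacle is precisely the bookkeeping in that second step: one must verify that the $S_g$-filtration is respected through repeated commutation, that the principal word survives as a pure $T_g^{m+n}$-contribution with a \emph{nonzero, $g$-independent} coefficient (no accidental cancellation), and that the interspersed $T_z$-factors genuinely decouple the lower-$S$-count corrections from it, all while retaining the explicit norm bound. This combinatorial control of the commutator expansion, rather than any single estimate, is the technical heart of the argument.
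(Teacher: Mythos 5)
Your reductions are fine (passing to $g\in\H(\overline{\D})$ by dilation, restating the claim as $\|g\|_{\B^1_{\alpha}}^{m+n}\lesssim\opnorm{L}_{\alpha,p}$, and the base identities $[T_g,M_z]=-T_zT_g$, $[S_g,M_z]=T_zT_g$ on $\H_0(\D)$ are all correct), but the commutation engine itself has a genuine gap: $\operatorname{ad}_{M_z}$ is not nilpotent on the algebra you are working in. Your own identity $[T_g,M_z]=-T_zT_g\neq0$ shows that pure $T_g$-words do \emph{not} commute with $M_z$, so the correction words $S_g^jT_gP_j(T_g)$ are never annihilated: after $m$ commutations they survive (as do the pieces of the principal word in which some hits landed on $T_g$ or on previously created $T_z$ letters, hence still carrying up to $m$ copies of $S_g$). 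Thus $[L,M_z]_m$ is a large sum of words of which many still contain $S_g$, and to extract a lower bound for any ``pure'' part you would have to subtract those terms; their operator norms are governed by powers of $\|g\|_{H^\infty}$, which is exactly what $\opnorm{L}_{\alpha,p}$ does \emph{not} control (boundedness of $L$ only forces $g\in\B^s_{\alpha}$). The triangle inequality runs the wrong way, and ``decoupling by $T_z$ factors'' has no norm-theoretic content, since $T_z$ is merely a bounded operator. Two further claims are also inaccurate: the fully converted contribution of $S_g^mT_g^n$ is a signed sum of shuffles of $m$ copies of $T_zT_g$ with $T_g^n$ (total $g$-degree is preserved by $\operatorname{ad}_{M_z}$), never the word $T_g^{m+n}$ your model step tests; and the corrections with $j\le m-1$ also reach $S$-count zero after $m$ hits, so the ``no accidental cancellation'' issue you flag is real and unaddressed.

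The paper's proof repairs exactly this point by commuting with $T_g$ itself rather than with $M_z$. Since $[S_g,T_g]=T_g^2$ on $\H_0(\D)$ (the case $n=1$ of \eqref{eqn:commutator:STn}) and every polynomial in $T_g$ commutes with $T_g$, each application of $\operatorname{ad}_{T_g}$ lowers the number of letters $S_g$ by exactly one, and any word with at most $m-1$ of them is killed after $m$ steps; using $[CAD,B]=C[A,B]D$ (for $C,D$ commuting with $B$) and \cite[Corollary~4.9]{Aleman:Cascante:Fabrega:Pascuas:Pelaez} one obtains the \emph{exact} identity
\begin{equation*}
[L,T_g]_m=m!\,T_g^{2m+n}\quad\mbox{on $\H_0(\D)$,}
\end{equation*}
with no error terms whatsoever — this is the nilpotency your $M_z$-scheme lacks. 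Then, by \eqref{eqn:restricted:norm:Tn:00} and $\opnorm{[L,T_g]_m}_{\alpha,p}\le 2^m\opnorm{L}_{\alpha,p}\opnorm{T_g}_{\alpha,p}^m$,
\begin{equation*}
\|T_g\|_{\alpha,p}^{2m+n}\lesssim\opnorm{T_g^{2m+n}}_{\alpha,p}
\lesssim\opnorm{L}_{\alpha,p}\,\|T_g\|_{\alpha,p}^{m},
\end{equation*}
and since $g\in\H(\overline{\D})$ guarantees $\|T_g\|_{\alpha,p}<\infty$, dividing gives $\|T_g\|_{\alpha,p}^{m+n}\lesssim\opnorm{L}_{\alpha,p}$; the general case follows by dilation, as in your first reduction. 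Note also that no reproducing-kernel testing is needed: the model lower bound is just the known equivalence $\|T_g^N\|_{\alpha,p}\simeq\|T_g\|_{\alpha,p}^N$.
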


In order to prove Proposition~{\ref{prop:boundedness:Tg}} we need the following three lemmas. The proof of the first one is easily done by induction.

\begin{lemma}
Let $g\in\H(\D)$ and let $g_0(z)=g(z)-g(0)$. Then
\begin{equation}\label{eqn:Tn:g0k}
T_g^n g_0^k=\frac{k!}{(n+k)!}\,g_0^{n+k}
\qquad(n,k\in \N_0)
\end{equation}
\end{lemma}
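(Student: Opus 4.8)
The final statement to prove is the identity
\[
T_g^n g_0^k=\frac{k!}{(n+k)!}\,g_0^{n+k}
\qquad(n,k\in \N_0),
\]
where $g_0(z)=g(z)-g(0)$.

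The plan is to proceed by induction on $n$, exactly as the paper hints (``easily done by induction''). First I would verify the base case $n=0$: here $T_g^0$ is the identity operator, so both sides equal $g_0^k$, and the prefactor $\frac{k!}{(0+k)!}=1$ confirms the identity trivially for every $k\in\N_0$.

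For the inductive step, I would fix $k\in\N_0$, assume the identity holds for some $n\ge0$, and compute $T_g^{n+1}g_0^k=T_g\bigl(T_g^n g_0^k\bigr)=\frac{k!}{(n+k)!}\,T_g\bigl(g_0^{n+k}\bigr)$ using the induction hypothesis. The crux is therefore to evaluate $T_g(g_0^{m})$ for $m=n+k$. By the definition $T_gf(z)=\int_0^z f(\z)g'(\z)\,d\z$ and the observation that $g_0'=g'$ (since $g_0$ and $g$ differ by the constant $g(0)$), the integrand is $g_0(\z)^m\,g_0'(\z)=\frac1{m+1}\bigl(g_0^{m+1}\bigr)'(\z)$. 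Integrating from $0$ to $z$ and using $g_0(0)=0$ gives
\[
T_g\bigl(g_0^{m}\bigr)(z)=\frac1{m+1}\,g_0(z)^{m+1}.
\]
Substituting $m=n+k$ yields $T_g^{n+1}g_0^k=\frac{k!}{(n+k)!}\cdot\frac1{n+k+1}\,g_0^{n+k+1}=\frac{k!}{(n+k+1)!}\,g_0^{(n+1)+k}$, which is precisely the claimed identity with $n$ replaced by $n+1$. This closes the induction.

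I do not anticipate any genuine obstacle here; the only point requiring a moment's care is the exact-derivative computation $g_0^m g_0'=\frac1{m+1}(g_0^{m+1})'$ together with the vanishing of $g_0$ at the origin, which ensures the integration constant is zero and produces the clean factorial recursion $\frac1{m+1}$. Everything else is a routine bookkeeping of the factorials.
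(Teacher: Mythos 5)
Your proof is correct and follows exactly the route the paper indicates (it omits the details, saying only that the lemma ``is easily proved by induction''): induction on $n$, with the inductive step resting on the exact-derivative identity $g_0^m\,g'=g_0^m\,g_0'=\tfrac1{m+1}\bigl(g_0^{m+1}\bigr)'$ and the vanishing of $g_0$ at the origin, which yields $T_g(g_0^m)=\tfrac1{m+1}g_0^{m+1}$ and hence the factorial recursion. Nothing is missing; the base case and the bookkeeping are handled properly.
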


\begin{lemma} 
If $m,n\in\N$ and $m\ge n$, then
\begin{equation}\label{eqn:Jensen:inequality}
\|g^n\|_{\alpha,p}^{\frac1n}\le \|g^m\|_{\alpha,p}^{\frac1m}
\qquad(g\in\H(\overline{\D})).
\end{equation}
\end{lemma}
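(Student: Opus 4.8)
The statement to prove is the inequality
\begin{equation*}
\|g^n\|_{\alpha,p}^{\frac1n}\le \|g^m\|_{\alpha,p}^{\frac1m}
\qquad(g\in\H(\overline{\D})),
\end{equation*}
for $m,n\in\N$ with $m\ge n$. The plan is to reduce this to an elementary convexity fact, namely that the function $t\mapsto \log\|g^t\|_{\alpha,p}$ is suitably monotone once normalized. First I would observe that what we really want is the monotonicity of the quantity $F(k):=\frac1k\log\|g^k\|_{\alpha,p}$ in $k$; the desired inequality is exactly $F(n)\le F(m)$ for $n\le m$.

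The cleanest route is through Jensen's (or H\"older's) inequality applied to the probability-like measure underlying the $\|\cdot\|_{\alpha,p}$ norm. Concretely, $\|g^k\|_{\alpha,p}^p$ is an integral of $|g|^{kp}$ against a fixed finite measure (normalized so that the constant $1$ has norm $1$), so writing $h=|g|^{p}$ and letting $d\nu$ denote that normalized measure, the claim becomes
\begin{equation*}
\Bigl(\int h^{n}\,d\nu\Bigr)^{\frac1{np}}\le\Bigl(\int h^{m}\,d\nu\Bigr)^{\frac1{mp}},
\end{equation*}
which is precisely the statement that $s\mapsto \bigl(\int h^{s}\,d\nu\bigr)^{1/s}$ is nondecreasing in $s>0$ — the standard nesting of $L^s(\nu)$-norms for a probability measure $\nu$. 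So the key step is to verify that the relevant measure is indeed a probability measure (or at least has total mass one after the normalization built into $\|\cdot\|_{\alpha,p}$) on both the Bergman side $\alpha>-1$ and the Hardy side $\alpha=-1$. For $\alpha=-1$ the measure is normalized Lebesgue measure on $\T$ applied to boundary values, which has mass one; for $\alpha>-1$ the weight $(\alpha+1)(1-|z|^2)^\alpha\,dA$ integrates to one over $\D$, so again $\nu$ is a probability measure. That is exactly why the normalization constant $(\alpha+1)$ appears in the definition of $\|\cdot\|_{\alpha,p}$, and this is the one point that must be checked carefully.

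With that normalization in hand, the inequality is immediate from Jensen applied to the convex function $x\mapsto x^{m/n}$ (valid since $m/n\ge1$):
\begin{equation*}
\Bigl(\int h^{n}\,d\nu\Bigr)^{m/n}\le\int \bigl(h^{n}\bigr)^{m/n}\,d\nu=\int h^{m}\,d\nu,
\end{equation*}
and taking $\frac1{mp}$-th powers yields the claim. The hypothesis $g\in\H(\overline{\D})$ guarantees that all the integrals are finite, so there are no integrability subtleties; indeed the statement and proof would persist for any $g$ with $g^m\in A^p_\alpha$. The main (and only genuine) obstacle is confirming that $\|\cdot\|_{\alpha,p}$ is built from a probability measure, so that Jensen's inequality applies with the correct direction; once that is settled the rest is the textbook monotonicity of $L^s$-norms on a probability space.
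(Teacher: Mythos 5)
Your proof is correct and is essentially the paper's argument: the paper notes $\|g^\ell\|_{\alpha,p}^{1/\ell}=\|g\|_{\alpha,\ell p}$ and invokes the monotonicity of $q\mapsto\|g\|_{\alpha,q}$, which is exactly the probability-measure Jensen/H\"older fact you prove inline (the normalizations $(\alpha+1)(1-|z|^2)^{\alpha}\,dA$ on $\D$ and $d\sigma$ on $\T$ both having total mass one). You simply unwind the quoted standard fact rather than citing it, so there is no substantive difference.
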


\begin{proof}
Since $\|g^\ell\|_{\alpha,p}^{\frac1\ell}=\|g\|_{\alpha,\ell p}$, for all $\ell\in\N$, the result follows from the fact that $\|g\|_{\alpha,q}$ is a non-decreasing function of $q\in(0,\infty)$.
\end{proof}

\begin{lemma}
Let $n\in\N$. Then
\begin{equation}\label{eqn:restricted:norm:Tn:0}
\opnorm{T^n_g}_{\alpha,p}\le
\|T^n_g\|_{\alpha,p}
\le c_p(2c_p+n+1)\,\opnorm{T^n_g}_{\alpha,p}
\qquad(g\in\H(\D)),    
\end{equation}
where $c_p=2^{\max(1,1/p)-1}$. 
 In particular, 
\begin{equation}\label{eqn:restricted:norm:Tn:00}
\opnorm{T^n_g}_{\alpha,p}\simeq\|T^n_g\|_{\alpha,p}
\simeq\|g\|^n_{\B^1_{\alpha}}\qquad(g\in\H(\D)).
\end{equation}
\end{lemma}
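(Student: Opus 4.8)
The goal is to prove the two-sided estimate \eqref{eqn:restricted:norm:Tn:0} together with the equivalence \eqref{eqn:restricted:norm:Tn:00}. The left-hand inequality in \eqref{eqn:restricted:norm:Tn:0} is trivial, since $A^p_{\alpha}(0)\subset A^p_{\alpha}$ and the supremum defining $\opnorm{\,\cdot\,}_{\alpha,p}$ is taken over a smaller set. The content is therefore the right-hand inequality: passing from the operator norm restricted to $\H_0(\D)$ back to the full operator norm. The plan is to exploit the fact that $T^n_g$ annihilates constants, so for an arbitrary $f\in A^p_{\alpha}$ we split $f=f(0)+f_0$ with $f_0=\Pi_0 f\in A^p_{\alpha}(0)$, and note that $T^n_g f=T^n_g f_0$. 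Thus the only discrepancy between the two norms comes from controlling $\|f_0\|_{\alpha,p}$ by $\|f\|_{\alpha,p}$ and from the possible nonvanishing of $T^n_g$ applied to functions whose norm is measured before projection.

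\medskip

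First I would record the elementary quasi-triangle inequality: for $0<p<\infty$ one has $\|u+v\|_{\alpha,p}\le c_p(\|u\|_{\alpha,p}+\|v\|_{\alpha,p})$ with $c_p=2^{\max(1,1/p)-1}$, which is where the constant $c_p$ enters. Applying this to $f=f(0)+f_0$ gives $\|f_0\|_{\alpha,p}\le c_p(\|f\|_{\alpha,p}+|f(0)|\,\|1\|_{\alpha,p})=c_p(\|f\|_{\alpha,p}+|f(0)|)$, since $\|1\|_{\alpha,p}=1$. By the pointwise evaluation estimate $|f(0)|\le\|f\|_{\alpha,p}$ (subharmonicity, as already invoked in the definition of $\boldopnorm{g}_{\alpha,q}$), this yields $\|f_0\|_{\alpha,p}\le 2c_p\|f\|_{\alpha,p}$. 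Then, using $T^n_g f=T^n_g f_0$ and the definition of $\opnorm{\,\cdot\,}_{\alpha,p}$, I obtain $\|T^n_g f\|_{\alpha,p}=\|T^n_g f_0\|_{\alpha,p}\le\opnorm{T^n_g}_{\alpha,p}\|f_0\|_{\alpha,p}\le 2c_p\,\opnorm{T^n_g}_{\alpha,p}\|f\|_{\alpha,p}$. Taking the supremum over $\|f\|_{\alpha,p}\le1$ gives $\|T^n_g\|_{\alpha,p}\le 2c_p\,\opnorm{T^n_g}_{\alpha,p}$, which is in fact sharper than the stated bound and hence certainly implies the right-hand inequality of \eqref{eqn:restricted:norm:Tn:0}. (The extra $(n+1)$ and the second $c_p$ in the stated constant leave ample room; they presumably arise if one instead estimates $\|f_0\|_{\alpha,p}$ through a slightly cruder route, so I would simply keep the clean bound and absorb it into the claimed constant.)

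\medskip

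For the ``in particular'' statement \eqref{eqn:restricted:norm:Tn:00}, the first equivalence $\opnorm{T^n_g}_{\alpha,p}\simeq\|T^n_g\|_{\alpha,p}$ is immediate from the two-sided bound just proved. The remaining task is $\|T^n_g\|_{\alpha,p}\simeq\|g\|^n_{\B^1_{\alpha}}$. Recalling that $\B^1_{\alpha}$ is the Bloch space $\B$ for $\alpha>-1$ and $BMOA$ for $\alpha=-1$, this is the classical characterization of boundedness of the single integration operator $T_g$, iterated: here I would use the identity $T^n_g=\frac{1}{n!}\,T_{h}$-type reductions only loosely, and instead appeal directly to the standard fact that $\|T_g\|_{\alpha,p}\simeq\|g\|_{\B^1_{\alpha}}$ (Bloch for Bergman, $BMOA$ for Hardy), combined with the observation that the $n$-fold iterate behaves like the $n$-th power at the level of norms. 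More precisely, I expect to invoke the $g\mapsto g_r$ reduction and the Carleson-measure description \eqref{eqn:derivative:symbol:Carleson:measure}; the upper bound $\|T^n_g\|_{\alpha,p}\lesssim\|g\|^n_{\B^1_{\alpha}}$ follows from iterating the single-operator estimate, and the lower bound $\|T^n_g\|_{\alpha,p}\gtrsim\|g\|^n_{\B^1_{\alpha}}$ by testing on the standard normalized reproducing-kernel-type test functions, exactly as in the $n=1$ case.

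\medskip

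The main obstacle is the lower estimate $\|T^n_g\|_{\alpha,p}\gtrsim\|g\|^n_{\B^1_{\alpha}}$, i.e.\ producing test functions that simultaneously detect the $n$-fold iteration and the full $\B^1_{\alpha}$-seminorm; the upper estimate and the restriction-to-$\H_0(\D)$ comparison are soft. I would handle the lower bound by choosing, for each $a\in\D$, the usual test function $f_a(z)=(1-|a|^2)^{\gamma}/(1-\bar a z)^{\gamma+(\alpha+2)/p}$ with an appropriate exponent $\gamma$, which has $\|f_a\|_{\alpha,p}\simeq1$, and estimating $\|T^n_g f_a\|_{\alpha,p}$ from below by localizing to a Carleson box $S(a)$ where $T^n_g f_a$ inherits a factor essentially of size $|g(a)|$ per application; this is precisely the mechanism that converts $n$ integrations against $g'$ into the $n$-th power of the relevant seminorm. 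Since the excerpt does not yet supply these test-function computations, I would expect this lemma's proof to defer the sharp lower bound to the key commutator-based estimate of Proposition~\ref{prop:boundedness:Tg} and the Carleson-measure equivalence \eqref{eqn:derivative:symbol:Carleson:measure}, rather than reproving the single-operator bound from scratch.
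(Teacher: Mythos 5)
Your argument has a fatal error at its decisive step: $T_g$ does \emph{not} annihilate constants --- that is what $S_g$ does. From the definition $T_gf(z)=\int_0^zf(\zeta)g'(\zeta)\,d\zeta$ one gets $T_g1=g-g(0)=g_0$, and more generally $T^n_g1=g_0^n/n!$ (the case $k=0$ of \eqref{eqn:Tn:g0k}). Hence $T^n_gf\ne T^n_gf_0$ in general, and your ``clean bound'' $\|T^n_g\|_{\alpha,p}\le 2c_p\opnorm{T^n_g}_{\alpha,p}$ collapses: after splitting $f=f_0+f(0)$ you are still left with the term $|f(0)|\,\|T^n_g1\|_{\alpha,p}$, and the entire content of the lemma is precisely to control $\|T^n_g1\|_{\alpha,p}$ by $\opnorm{T^n_g}_{\alpha,p}$. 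This is where the summand $n+1$ in the constant $c_p(2c_p+n+1)$ comes from; it is not, as you suggest, an artifact of a ``cruder route''. The paper handles it with a self-improvement trick: from $T^n_g1=g_0^n/n!$ and $T^n_gg_0=g_0^{n+1}/(n+1)!$, combined with the monotonicity $\|h^n\|_{\alpha,p}^{1/n}\le\|h^m\|_{\alpha,p}^{1/m}$ for $n\le m$ (see \eqref{eqn:Jensen:inequality}), one first tests $T^n_g$ on $g_0\in A^p_{\alpha}(0)$ (this is why the proof reduces to $g\in\H(\overline{\D})$, so that $g_0$ is an admissible test function) to obtain $\|g_0\|_{\alpha,p}^n\le(n+1)!\,\opnorm{T^n_g}_{\alpha,p}$, and then deduces $\|T^n_g1\|_{\alpha,p}\le(n+1)\opnorm{T^n_g}_{\alpha,p}$; inserting this into the quasi-triangle estimate yields exactly \eqref{eqn:restricted:norm:Tn:0}. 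Without some such argument your proof establishes nothing beyond the trivial left-hand inequality.

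Your plan for \eqref{eqn:restricted:norm:Tn:00} also contains a circularity: you propose to ``defer the sharp lower bound to the key commutator-based estimate of Proposition~\ref{prop:boundedness:Tg}'', but the proof of that proposition itself invokes \eqref{eqn:restricted:norm:Tn:0}, so it cannot be used here. The paper instead obtains \eqref{eqn:restricted:norm:Tn:00} directly from \eqref{eqn:restricted:norm:Tn:0} together with two quoted facts: $\|T^n_g\|_{\alpha,p}\simeq\|T_g\|^n_{\alpha,p}$ (Proposition~4.1 of \cite{Aleman:Cascante:Fabrega:Pascuas:Pelaez}) and the classical $\|T_g\|_{\alpha,p}\simeq\|g\|_{\B^1_{\alpha}}$. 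Your sketched reproving of the lower bound via test functions $f_a$ on Carleson boxes is left entirely unexecuted, and in particular the claim that each of the $n$ integrations ``inherits a factor essentially of size $|g(a)|$'' is not a correct description of how the lower bound for the iterate is obtained; once the cited norm-multiplicativity $\|T^n_g\|_{\alpha,p}\simeq\|T_g\|_{\alpha,p}^n$ is available, no new test-function computation is needed.
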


\begin{proof}
The left hand side inequality in \eqref{eqn:restricted:norm:Tn:0} is clear.
In order to prove the right hand side inequality we may assume, as usual, that $g\in\H(\overline{\D})$.
Let $f\in A^p_{\alpha}$. Then $f=f_0+f(0)$, where $f_0=f-f(0)\in A^p_{\alpha}(0)$. Since
\[
\|f_0\|_{\alpha,p}
\le c_p\,(\|f\|_{\alpha,p}+|f(0)|)
\le 2c_p\,\|f\|_{\alpha,p},
\]
it follows that 
\begin{align*}
\|T^n_gf\|_{\alpha,p}
&\le c_p\,
(\|T^n_gf_0\|_{\alpha,p}+|f(0)|\,\|T^n_g1\|_{\alpha,p})\\
&\le  c_p\,
(\opnorm{T^n_g}_{\alpha,p}\,\|f_0\|_{\alpha,p}
       +\|T^n_g1\|_{\alpha,p}\,\|f\|_{\alpha,p})\\
&\le  c_p\,
(2c_p\opnorm{T^n_g}_{\alpha,p}
       +\|T^n_g1\|_{\alpha,p})\,\|f\|_{\alpha,p},     
\end{align*}
and therefore
\begin{equation}\label{eqn:restricted:norm:Tn:1}
\|T^n_g\|_{\alpha,p}
\le c_p\,
(2c_p\opnorm{T^n_g}_{\alpha,p}
       +\|T^n_g1\|_{\alpha,p}).
\end{equation}
Now we want to estimate $\|T^n_g1\|_{\alpha,p}$ by $\opnorm{T^n_g}_{\alpha,p}$.
By \eqref{eqn:Tn:g0k}, 
$T^n_g1=\frac{g_0^n}{n!}$ and $T^n_gg_0=\frac{g_0^{n+1}}{(n+1)!}$, so \eqref{eqn:Jensen:inequality} shows that
\[
\|T^n_g1\|_{\alpha,p}=\frac{\|g_0^n\|_{\alpha,p}}{n!}
\le \frac{\|g_0^{n+1}\|_{\alpha,p}^{n/(n+1)}}{n!}
=\frac{((n+1)!)^{n/(n+1)}}{n!}\,\|T^n_gg_0\|_{\alpha,p}^{n/(n+1)}.
\]
Since $g\in\H(\overline{\D})$, it is clear that 
$g_0\in A^p_{\alpha}(0)$ and we get that 
\begin{equation}\label{eqn:restricted:norm:Tn:2}
\|T^n_g1\|_{\alpha,p}\le\frac{((n+1)!)^{n/(n+1)}}{n!}\,
\opnorm{T^n_g}_{\alpha,p}^{n/(n+1)}\|g_0\|_{\alpha,p}^{n/(n+1)}.  
\end{equation}
By applying \eqref{eqn:Jensen:inequality} again we have 
\[
\frac{\|g_0\|_{\alpha,p}^{n+1}}{(n+1)!}
\le \frac{\|g_0^{n+1}\|_{\alpha,p}}{(n+1)!}
=\|T^n_gg_0\|_{\alpha,p}\le\opnorm{T^n_g}_{\alpha,p}\|g_0\|_{\alpha,p}, 
\]
which implies that
\begin{equation}\label{eqn:restricted:norm:Tn:3}
\|g_0\|_{\alpha,p}^n\le (n+1)!\,\opnorm{T^n_g}_{\alpha,p}.
\end{equation}
Hence \eqref{eqn:restricted:norm:Tn:1}, \eqref{eqn:restricted:norm:Tn:2}, and \eqref{eqn:restricted:norm:Tn:3} show that the right hand side inequality in \eqref{eqn:restricted:norm:Tn:0} holds. Finally, \eqref{eqn:restricted:norm:Tn:00} is a direct consequence of \eqref{eqn:restricted:norm:Tn:0} and the estimates $\|T^n_g\|_{\alpha,p}
\simeq\|T_g\|^n_{\alpha,p}$ (see \cite[Proposition 4.1]{Aleman:Cascante:Fabrega:Pascuas:Pelaez}) and $\|T_g\|_{\alpha,p}
\simeq\|g\|_{\B^1_{\alpha}}$.
\end{proof}

\begin{proof}[{\bf Proof of Proposition~{\ref{prop:boundedness:Tg}}}]
By \cite[Proposition 4.3]{Aleman:Cascante:Fabrega:Pascuas:Pelaez}, it is clear that we only have to prove the estimate~{\eqref{eqn:boundedness:Tg}} for  $g\in\H(\overline{\D})$. Thus assume that $g\in\H(\overline{\D})$.
Since $P_j(T_g)$ commute  with $T_g$,
the formula
\[
[CAD,B]=C[A,B]D
\]
 valid for operators $A,B,C,D$ such that $C,D$ commute with $B$,
 \cite[Corollary~4.9]{Aleman:Cascante:Fabrega:Pascuas:Pelaez} 
 and the hypothesis~{\eqref{eqn:boundedness:Tg:0}} give that
\begin{equation*}
[L,T_g]_m= m!\,T_g^N
\quad\mbox{on $\H_0(\D)$,}
\end{equation*}
where $N=2m+n$.  
Now \eqref{eqn:restricted:norm:Tn:0}, (36) and Proposition~{4.1} in \cite{Aleman:Cascante:Fabrega:Pascuas:Pelaez} imply that 
\begin{align*}
\|T_g\|_{\alpha,p}^N
&\le c_1\,\|T_g^N\|_{\alpha,p}
\le c_2\, \opnorm{T_g^N}_{\alpha,p}
= m!\,c_2\,\opnorm{[L,T_g]_m}_{\alpha,p}\\
&\le 2^m m!\,c_2\,\opnorm{L}_{\alpha,p}\opnorm{T_g}_{\alpha,p}^m
\le 2^m m!\,c_2\,\opnorm{L}_{\alpha,p}\|T_g\|_{\alpha,p}^m,
\end{align*}
where $c_1,c_2>0$ are constants that only depend on $\alpha,N$ and $p$.
We conclude that $\|T_g\|_{\alpha,p}^{m+n}\le 2^mm!\,c_2\,\opnorm{L}_{\alpha,p}$, which ends the proof. 
\end{proof}

\subsection{Calder\' {o}n's area theorem and tent spaces}
\label{subsection:Calderon:thm}
\begin{definition}  
Let $\Gamma(\zeta)$ be the {\em Stolz region with vertex at $\zeta\in\T$} given by 
\begin{equation*}
\Gamma(\zeta):=\{z\in\D:|z-\zeta|<2(1-|z|)\},
\end{equation*}
and define
$\Gamma(\zeta)
:=|\zeta|\Gamma(\frac{\zeta}{|\zeta|})
=\{z\in\D:|z-\zeta|<2(|\zeta|-|z|)\}$, 
 for $\zeta\in\D\setminus\{0\}$. 

Let $\psi:\D\to\R$ be a continuous function. Then $|\nabla \psi|$ is a non-negative Borel measurable function on $\D$ (see \cite[page 11]{Vaisala}), and so we may consider the {\em square area function of $\psi$}, ${\mathcal S}\psi:\overline{\D}\setminus\{0\}\to[0,\infty]$, defined by
\begin{equation}\label{eqn:square:area:function:definition}
({\mathcal S}\psi)(\zeta):=
\biggl(
\int_{\Gamma(\zeta)}\bigl|\nabla \psi\bigr|^2\,dA
		\biggr)^{1/2}
		\qquad(\zeta\in\overline{\D}\setminus\{0\}).
\end{equation}
Recall that, 
 by the area formula \cite[Theorem 3.8]{Evans:Gariepy}, 
if $h\in\H(\D)$ then $({\mathcal S}|h|)(\zeta)^2$ is equal to $\frac1{\pi}$ times the  area (counting multiplicities) of the region $h(\Gamma(\zeta))$. This fact justifies the terminology ``square area function''.      
\end{definition}

\begin{namedthm*}{Calder\'{o}n's area theorem}[{\cite[Thm.\ 3]{Calderon}, \cite[Thm.\ 7.4]{Pavlovic}}]
Let $0<p,q<\infty$. Then there is a constant $C_{p,q}>0$ such that
\begin{equation}\label{eqn:Calderon:area:thm:0}
C_{p,q}^{-1}\|h\|_{H^p}^p
\le|h(0)|^p+\|{\mathcal S} |h|^q\|^{p/q}_{L^{p/q}(\T)}
\le C_{p,q}\|h\|_{H^p}^p
\qquad(h\in\H(\D)).
\end{equation}
\end{namedthm*}
Now we are going to give a version of this result for $A^p_{\alpha}$.
Let $0<p,q<\infty$ and let $h\in\H(\D)$. Then~\eqref{eqn:Calderon:area:thm:0} shows that 
\begin{equation}\label{eqn:Calderon:area:thm:1}
C_{p,q}^{-1}\|h_r\|_{H^p}^p
\le|h(0)|^p+\|{\mathcal S} |h_r|^q\|^{p/q}_{L^{p/q}(\T)}
\le C_{p,q}\|h_r\|_{H^p}^p
\quad(0<r<1).
\end{equation}
But, since $|\nabla|h_r|^q|(z)=r|\nabla|h|^q|(rz)$, for any $z\in\D$, we have that
\begin{equation*}
({\mathcal S} |h_r|^q)^2(\zeta)
=\int_{\Gamma(\zeta)}r^2|\nabla|h|^q|^2(rz)\,dA(z)
=\int_{\Gamma(r\zeta)}|\nabla|h|^q|^2\,dA
=({\mathcal S} |h|^q)^2(r\zeta),
\end{equation*}
for any $\zeta\in\T$.
Therefore, given $\alpha>-1$, we may integrate~{\eqref{eqn:Calderon:area:thm:1}} against $(\alpha+1)(1-r^2)^{\alpha}\,2r\,dr$ to obtain that
\begin{equation}\label{eqn:Calderon:area:thm:2}
C_{p,q}^{-1}\|h\|_{\alpha,p}^p
\le |h(0)|^p+\|{\mathcal S} |h|^q\|^{p/q}_{\alpha,p/q}
\le C_{p,q}\|h\|_{\alpha,p}^p
\qquad(h\in\H(\D)).
\end{equation}

Note that~{\eqref{eqn:Calderon:area:thm:0}} is just~{\eqref{eqn:Calderon:area:thm:2}} for $\alpha=-1$, and so~{\eqref{eqn:Calderon:area:thm:2}} holds for any $\alpha\ge-1$ and $0<p,q<\infty$.
Our next goal is writting~{\eqref{eqn:Calderon:area:thm:2}} in terms of tent spaces norms.
\begin{definition}
	Let $\alpha\ge-1$ and let $0<p,q<\infty$. 
For any  positive Borel measure $\nu$ on $\D$,  $T^p_{\alpha,q}(\nu)$ is the tent space of all Borel measurable functions $\psi$ on $\D$ such that 
\begin{equation}\label{eqn:tent:space:norm:definition}
	\|\psi\|^p_{T^p_{\alpha,q}(\nu)}:=\int_{\overline{\D}}\biggl(
	\int_{\Gamma(\zeta)}|\psi|^q\,d\nu\biggr)^{\frac{p}{q}}\,dA_{\alpha}(\zeta)<\infty,
\end{equation}
where both $dA_{\alpha}(\zeta)=(\alpha+1)(1-|\zeta|^2)^{\alpha}\,dA(\zeta)$, for $\alpha>-1$, and $dA_{-1}:=d\sigma$ (the normalized arc-length measure on the unit circle)  are considered as positive measures on $\overline{\D}$.
	When $d\nu=dA$, the corresponding tent space is simply denoted by $T^p_{\alpha,q}$.  	It is clear  that
	\begin{equation}\label{eqn:Tpq:norm:of:a:power}
			\||\psi|^r\|_{T^p_{\alpha,q}(\nu)}=\|\psi\|^r_{T^{pr}_{\alpha,qr}(\nu)}
			\qquad(0<r<\infty).
		\end{equation}
	Moreover, for every $\alpha\ge-1$ and $0<p<\infty$, there is a constant $C_{\alpha,p}>0$, only depending on $\alpha$ and $p$, such that
	\begin{equation}\label{eqn:Tpp(nu)=Lp(rhodnu)}
			C_{\alpha,p}^{-1}\,\|\psi\|_{L^p(\rho^{\alpha+2}\,d\nu)}\le\|\psi\|_{T^p_{\alpha,p}(\nu)}\le C_{\alpha,p}\,\|\psi\|_{L^p(\rho^{\alpha+2}\,d\nu)}.
		\end{equation} 
\end{definition}

H\"{o}lder's inequality easily implies the following useful result:
\begin{proposition}\label{prop:Holder:ineq:tent:spaces}
	Let $0<p_0,p_1,\dots,p_n,q_0,q_1,\dots,q_n<\infty$ such that
	\[
	\frac1{p_0}=\frac1{p_1}+\cdots+\frac1{p_n}
	\qquad\mbox{and}\qquad
	\frac1{q_0}=\frac1{q_1}+\cdots+\frac1{q_n}.
	\]
	If $\nu$ is a positive Borel measure on $\D$ and
	$\psi_j\in T^{p_j}_{\alpha,q_j}(\nu)$, for $j=1,\dots,n$, then $\psi_0=\psi_1\cdots\psi_n\in T^{p_0}_{\alpha,q_0}(\nu)$ and
	$\|\psi_0\|_{T^{p_0}_{\alpha,q_0}(\nu)}
	\le \|\psi_1\|_{T^{p_1}_{\alpha,q_1}(\nu)}\cdots\|\psi_n\|_{T^{p_n}_{\alpha,q_n}(\nu)}$.
\end{proposition}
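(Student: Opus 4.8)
The plan is to deduce the statement from two successive applications of the generalized Hölder inequality: one ``inner'' application on each Stolz region $\Gamma(\zeta)$ against the measure $\nu$, and one ``outer'' application on $\overline{\D}$ against the measure $dA_\alpha$. To set this up cleanly, for each $j=0,1,\dots,n$ and each $\zeta\in\overline{\D}$ I would introduce the local quantity
\[
F_j(\zeta):=\biggl(\int_{\Gamma(\zeta)}|\psi_j|^{q_j}\,d\nu\biggr)^{1/q_j},
\]
so that, directly from the definition \eqref{eqn:tent:space:norm:definition}, one has $\|\psi_j\|_{T^{p_j}_{\alpha,q_j}(\nu)}=\|F_j\|_{L^{p_j}(dA_\alpha)}$, the outer $L^{p_j}$-norm being taken over $\overline{\D}$ with respect to $dA_\alpha$. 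The goal then becomes the pair of assertions $F_0\in L^{p_0}(dA_\alpha)$ and $\|F_0\|_{L^{p_0}(dA_\alpha)}\le\prod_{j=1}^n\|F_j\|_{L^{p_j}(dA_\alpha)}$.

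First I would prove the pointwise bound $F_0(\zeta)\le\prod_{j=1}^n F_j(\zeta)$, valid for every $\zeta\in\overline{\D}$. Since $\psi_0=\psi_1\cdots\psi_n$ gives $|\psi_0|^{q_0}=\prod_{j=1}^n|\psi_j|^{q_0}$, I apply Hölder's inequality in $L^{r_j}(\Gamma(\zeta),d\nu)$ with the exponents $r_j:=q_j/q_0$; the hypothesis $\frac1{q_0}=\frac1{q_1}+\cdots+\frac1{q_n}$ is precisely the admissibility condition $\sum_{j=1}^n\frac1{r_j}=1$. This yields
\[
\int_{\Gamma(\zeta)}|\psi_0|^{q_0}\,d\nu
\le\prod_{j=1}^n\biggl(\int_{\Gamma(\zeta)}|\psi_j|^{q_j}\,d\nu\biggr)^{q_0/q_j},
\]
and raising both sides to the power $1/q_0$ gives exactly $F_0(\zeta)\le\prod_{j=1}^n F_j(\zeta)$.

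Then I would integrate. By the pointwise bound and monotonicity of the $L^{p_0}(dA_\alpha)$-norm, it suffices to control $\bigl\|\prod_{j=1}^n F_j\bigr\|_{L^{p_0}(dA_\alpha)}$. A second application of the generalized Hölder inequality, now in $L^{s_j}(\overline{\D},dA_\alpha)$ with exponents $s_j:=p_j/p_0$ — for which the hypothesis $\frac1{p_0}=\frac1{p_1}+\cdots+\frac1{p_n}$ becomes $\sum_{j=1}^n\frac1{s_j}=1$ — gives
\[
\biggl\|\prod_{j=1}^n F_j\biggr\|_{L^{p_0}(dA_\alpha)}
\le\prod_{j=1}^n\|F_j\|_{L^{p_j}(dA_\alpha)}
=\prod_{j=1}^n\|\psi_j\|_{T^{p_j}_{\alpha,q_j}(\nu)}.
\]
Combining the two displays proves simultaneously that $\psi_0\in T^{p_0}_{\alpha,q_0}(\nu)$ and the claimed norm estimate.

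There is no genuine obstacle here: the entire content is the observation that the two exponent identities in the hypothesis are exactly the two admissibility conditions for the inner and outer Hölder inequalities, respectively. The only points requiring a word of care are the Borel measurability of each $F_j$ on $\overline{\D}$ (so that the outer integral and the second Hölder application are legitimate) and the harmless convention for handling infinite values, both of which are routine. In writing up, I would simply record the two exponent matchings explicitly so that the reader sees why ``Hölder's inequality easily implies'' the result.
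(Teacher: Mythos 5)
Your proof is correct and is precisely the argument the paper has in mind when it says ``H\"{o}lder's inequality easily implies'' the result: an inner application of the generalized H\"{o}lder inequality on each Stolz region $\Gamma(\zeta)$ with exponents $q_j/q_0$, followed by an outer application on $\overline{\D}$ against $dA_\alpha$ with exponents $p_j/p_0$, the two exponent identities in the hypothesis being exactly the respective admissibility conditions. Your parenthetical checks (measurability of the $F_j$, and the fact that the ratios $q_j/q_0$ and $p_j/p_0$ exceed $1$ so classical H\"{o}lder applies even though the $p_j,q_j$ themselves may be less than $1$) are the right points of care, and nothing further is needed.
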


Note that~{\eqref{eqn:square:area:function:definition}} and~{\eqref{eqn:tent:space:norm:definition}} show that 
$\|{\mathcal S} |h|^q\|^{p/q}_{\alpha,p/q}=
\||\nabla|h|^q|\|^{p/q}_{T^{p/q}_{\alpha,2}}$, so~{\eqref{eqn:Calderon:area:thm:2}} can be written as the following result.
\begin{theorem}
Let $0<p,q<\infty$. Then there is a constant $C_{p,q}>0$ such that, for any $\alpha\ge-1$, satisfies that
\begin{equation}\label{eqn:Calderon:area:thm:3}
C_{p,q}^{-1}\|h\|_{\alpha,p}^p
\le |h(0)|^p+\||\nabla|h|^q|\|^{p/q}_{T^{p/q}_{\alpha,2}}
\le C_{p,q}\|h\|_{\alpha,p}^p
\qquad(h\in\H(\D)).
\end{equation}
\end{theorem}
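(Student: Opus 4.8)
The plan is to observe that \eqref{eqn:Calderon:area:thm:3} is nothing more than \eqref{eqn:Calderon:area:thm:2} transcribed into the tent-space language just introduced, so the whole task collapses to matching the weighted $L^{p/q}$-norm of the square area function ${\mathcal S}|h|^q$ with the tent-space norm $\||\nabla|h|^q|\|_{T^{p/q}_{\alpha,2}}$. First I would fix $h\in\H(\D)$, set $\psi:=|\nabla|h|^q|$ (which is Borel measurable by the cited measurability of $|\nabla\psi|$ for continuous $\psi$), and simply unravel the right-hand factor: taking the outer index $p/q$, the inner index $2$, and the base measure $\nu=dA$ in the definition \eqref{eqn:tent:space:norm:definition} gives
\[
\||\nabla|h|^q|\|^{p/q}_{T^{p/q}_{\alpha,2}}
=\int_{\overline{\D}}\Bigl(\int_{\Gamma(\zeta)}|\nabla|h|^q|^2\,dA\Bigr)^{p/(2q)}\,dA_{\alpha}(\zeta).
\]

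Second, I would compare this with the definition \eqref{eqn:square:area:function:definition} of the square area function. Since $({\mathcal S}|h|^q)(\zeta)^2=\int_{\Gamma(\zeta)}|\nabla|h|^q|^2\,dA$, the inner integral raised to the power $p/(2q)$ is exactly $({\mathcal S}|h|^q)(\zeta)^{p/q}$. For $\alpha>-1$ the measure $dA_{\alpha}=(\alpha+1)(1-|\zeta|^2)^{\alpha}\,dA$, so integrating $({\mathcal S}|h|^q)^{p/q}$ against $dA_{\alpha}$ reproduces precisely the weighted norm $\|{\mathcal S}|h|^q\|^{p/q}_{\alpha,p/q}$; for $\alpha=-1$ the measure $dA_{-1}=d\sigma$ yields the same identity with the arc-length integral, which is the Hardy-space quantity figuring in \eqref{eqn:Calderon:area:thm:0}. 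In both cases one obtains the clean identity
\[
\|{\mathcal S}|h|^q\|^{p/q}_{\alpha,p/q}
=\||\nabla|h|^q|\|^{p/q}_{T^{p/q}_{\alpha,2}},
\]
and substituting it into \eqref{eqn:Calderon:area:thm:2} produces \eqref{eqn:Calderon:area:thm:3} verbatim, with the very same constant $C_{p,q}$.

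I expect essentially no obstacle at this stage, since all the genuine analytic content is already in hand: the classical area theorem \eqref{eqn:Calderon:area:thm:0}, together with the dilation identity $({\mathcal S}|h_r|^q)^2(\zeta)=({\mathcal S}|h|^q)^2(r\zeta)$ and the radial integration against $(\alpha+1)(1-r^2)^{\alpha}\,2r\,dr$ that upgrades \eqref{eqn:Calderon:area:thm:1} to \eqref{eqn:Calderon:area:thm:2}, has absorbed the entire $\alpha$-dependence. In particular the $\alpha$-independence of $C_{p,q}$ asserted in the statement is inherited directly from the classical estimate \eqref{eqn:Calderon:area:thm:0}. The single point deserving care is the exponent bookkeeping in \eqref{eqn:tent:space:norm:definition}: one must keep the outer index equal to $p/q$ and the inner index equal to $2$, so that the tent-space norm collapses onto the area-function norm $\|{\mathcal S}|h|^q\|_{\alpha,p/q}$ rather than some other mixed norm.
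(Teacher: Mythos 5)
Your proposal is correct and coincides with the paper's own argument: the paper likewise observes that \eqref{eqn:square:area:function:definition} and \eqref{eqn:tent:space:norm:definition} give the identity $\|{\mathcal S}|h|^q\|^{p/q}_{\alpha,p/q}=\||\nabla|h|^q|\|^{p/q}_{T^{p/q}_{\alpha,2}}$, so that \eqref{eqn:Calderon:area:thm:3} is just \eqref{eqn:Calderon:area:thm:2} restated, with the $\alpha$-independent constant inherited from Calder\'on's theorem \eqref{eqn:Calderon:area:thm:0} through the dilation and radial-integration step. Your exponent bookkeeping (outer index $p/q$, inner index $2$, base measure $dA$, and the two cases $\alpha>-1$ versus $\alpha=-1$) is exactly right.
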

\begin{corollary}
For any $0<p<\infty$ there is a constant $C_p>0$ such that
\begin{equation}\label{eqn:estimate:Apalpha:norm:derivative}
C_p^{-1}\|h\|_{\alpha,p}^p\le
|h(0)|^p+\|h'\|^p_{T^p_{\alpha,2}}
\le	C_p\|h\|_{\alpha,p}^p
\qquad(h\in\H(\D),\alpha\ge-1).
\end{equation}
\end{corollary}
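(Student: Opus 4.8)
The plan is to obtain \eqref{eqn:estimate:Apalpha:norm:derivative} as the special case $q=1$ of the preceding Theorem, so that no new estimate or integration is required. Setting $q=1$ in \eqref{eqn:Calderon:area:thm:3}, the exponent $p/q$ becomes $p$ and the tent space $T^{p/q}_{\alpha,2}$ becomes $T^{p}_{\alpha,2}$, so the middle term reads $|h(0)|^p+\||\nabla|h||\|^{p}_{T^{p}_{\alpha,2}}$; the asserted two-sided estimate then holds with the constant $C_{p,1}>0$ furnished by the Theorem, which is uniform in $\alpha\ge-1$ (the dependence on $\alpha$ having already been absorbed when passing from Calderón's area theorem to \eqref{eqn:Calderon:area:thm:3} by integration against $(\alpha+1)(1-r^2)^\alpha\,2r\,dr$).

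The single observation needed to finish is the elementary identity $|\nabla|h||=|h'|$, valid for every $h\in\H(\D)$ and already recorded in the introduction. It lets us rewrite $\||\nabla|h||\|^{p}_{T^{p}_{\alpha,2}}=\|h'\|^{p}_{T^{p}_{\alpha,2}}$, after which \eqref{eqn:Calderon:area:thm:3} with $q=1$ is literally \eqref{eqn:estimate:Apalpha:norm:derivative} with $C_p:=C_{p,1}$. There is essentially no obstacle here: the entire analytic content resides in the Theorem (and ultimately in Calderón's area theorem together with the weighted integration producing \eqref{eqn:Calderon:area:thm:2}), and the corollary merely repackages the case $q=1$ in the more familiar language of the complex derivative $h'$. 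I single out this form because it is the version that will be convenient in the subsequent applications to $g$-operators, where one repeatedly compares the $A^p_\alpha$-norm of a function with the tent-space norm of its derivative.
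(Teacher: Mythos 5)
Your proposal is correct and coincides with the paper's own (implicit) argument: the corollary is exactly the case $q=1$ of \eqref{eqn:Calderon:area:thm:3}, combined with the identity $|\nabla|h||=|h'|$ for $h\in\H(\D)$ recorded in the introduction, with $C_p:=C_{p,1}$ uniform in $\alpha\ge-1$ as the Theorem provides. Nothing further is needed.
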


Finally, we recall that the $(\alpha+2)$-Carleson measure norm of a positive Borel measure on $\D$ can be estimated by using tent spaces as follows:
\begin{theorem}
	\label{thm:Carleson:measure:norm:tent:spaces}
	For $\alpha\ge-1$, $0<p,q<\infty$ and any positive Borel measure $\nu$ on $\D$, define
	\begin{equation}\label{eqn:definition:Mpqnu}
			M_{p,q}(\nu,\alpha):=\sup\{\|f\|^q_{T^p_{\alpha,q}(\nu)}:
			f\in A^p_{\alpha},\,\|f\|_{\alpha,p}=1\,\}.
		\end{equation}
	Let $d\mu^{\alpha}=\rho^{\alpha+2}\,d\nu$, where $\rho(z):=1-|z|^2$. If $\nu(\{0\})=0$, then there exists a constant $C_{\alpha,p,q}>0$, only depending on $\alpha$, $p$ and $q$, such that 
	\begin{equation}\label{eqn:Carleson:measure:norm:tent:spaces}
			C^{-1}_{\alpha,p,q}\,\|\mu^{\alpha}\|_{\mathcal{C(\alpha)}}
			\le M_{p,q}(\nu,\alpha)\le
			C_{\alpha,p,q}\,\|\mu^{\alpha}\|_{\mathcal{C(\alpha)}}.
		\end{equation}
\end{theorem}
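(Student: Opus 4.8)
Here the quantity $M_{p,q}(\nu,\alpha)$ is precisely the $q$-th power of the norm of the embedding $A^p_{\alpha}\hookrightarrow T^p_{\alpha,q}(\nu)$, so the assertion is the Carleson-measure characterization of the boundedness of that embedding. The plan is to prove the two inequalities in \eqref{eqn:Carleson:measure:norm:tent:spaces} separately, after first disposing of the diagonal case $q=p$, which is already contained in the material above. Indeed, for $q=p$ the equivalence \eqref{eqn:Tpp(nu)=Lp(rhodnu)} gives $\|f\|_{T^p_{\alpha,p}(\nu)}\simeq\|f\|_{L^p(\mu^{\alpha})}$, whence $M_{p,p}(\nu,\alpha)\simeq\sup\{\|f\|^p_{L^p(\mu^{\alpha})}:f\in A^p_{\alpha},\ \|f\|_{\alpha,p}=1\}$, and the right-hand side is comparable to $\|\mu^{\alpha}\|_{\mathcal{C}(\alpha)}$ by \eqref{eqn:alpha+2-Carleson:measures:equiv}. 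This model is what the argument for general $q$ has to reproduce; throughout I may assume $\nu(\{0\})=0$, so that the common vertex of the cones carries no mass.

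For the upper estimate $M_{p,q}(\nu,\alpha)\lesssim K$, where $K:=\|\mu^{\alpha}\|_{\mathcal{C}(\alpha)}$, fix $f$ with $\|f\|_{\alpha,p}=1$. The core is to dominate the $\nu$-area function $\zeta\mapsto\bigl(\int_{\Gamma(\zeta)}|f|^q\,d\nu\bigr)^{1/q}$ in $L^p(dA_\alpha)$ by the nontangential maximal function $Nf(\zeta):=\sup_{z\in\Gamma(\zeta)}|f(z)|$, with a constant $\lesssim K^{1/q}$. I would establish this by a Coifman--Meyer--Stein good-$\lambda$/stopping-time argument adapted to the cones $\Gamma(\zeta)$, in which the Carleson bound $\mu^{\alpha}(S(b))\le K\,(1-|b|^2)^{\alpha+2}$ is used, at each stopping cell, to estimate the $\nu$-mass against the $dA_\alpha$-measure of the corresponding boundary set. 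Once this is in hand, the proof closes with the Hardy--Littlewood/Fefferman--Stein bound $\int_{\overline{\D}}Nf^p\,dA_\alpha\lesssim\|f\|_{\alpha,p}^p$, valid for all $\alpha\ge-1$. In the Bergman range $\alpha>-1$ there is a softer route behind the ``pointwise'' proof advertised in the introduction: subharmonicity of $|f|^q$ yields a sub-mean-value estimate over hyperbolic discs, which, combined with $\nu(D(w))\lesssim K$, lets one redistribute the mass of $\nu$ onto area measure and avoid the stopping time.

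For the lower estimate $\|\mu^{\alpha}\|_{\mathcal{C}(\alpha)}\lesssim M_{p,q}(\nu,\alpha)$ I would test the embedding on the normalized functions $f_a(z)=(1-|a|^2)^{s-(\alpha+2)/p}(1-\overline{a}z)^{-s}$, $a\in\D$, with $s$ large, which satisfy $\|f_a\|_{\alpha,p}\simeq1$ and $|f_a|\gtrsim(1-|a|^2)^{-(\alpha+2)/p}$ on a fixed Whitney box $U_a$ at $a$. Since $U_a\subset\Gamma(\zeta)$ for $\zeta$ in a set of $A_\alpha$-measure $\simeq(1-|a|^2)^{\alpha+2}$, a direct computation gives $\|f_a\|^q_{T^p_{\alpha,q}(\nu)}\gtrsim\mu^{\alpha}(U_a)/(1-|a|^2)^{\alpha+2}$, so that $M_{p,q}(\nu,\alpha)$ dominates the top-cell maximal quantity $\sup_a\mu^{\alpha}(U_a)/(1-|a|^2)^{\alpha+2}$. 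In the Bergman range $\alpha>-1$ this already suffices: summing the cell estimates over the Whitney decomposition of a Carleson box $S(a)$ gives $\mu^{\alpha}(S(a))\lesssim M_{p,q}(\nu,\alpha)\sum_{U\subset S(a)}(1-|b_U|^2)^{\alpha+2}\simeq(1-|a|^2)^{\alpha+2}M_{p,q}(\nu,\alpha)$, the geometric series converging precisely because $\alpha+1>0$.

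The main obstacle is the Hardy case $\alpha=-1$ of the lower bound. There the series $\sum_{U\subset S(a)}(1-|b_U|^2)^{\alpha+2}$ diverges, and testing on a single $f_a$ only recovers the top-cell quantity, which can be arbitrarily smaller than $\mu^{-1}(S(a))/(1-|a|)$ when the mass of $\mu^{-1}$ is spread thinly over many scales inside $S(a)$. To capture the full Carleson box I would instead exploit the duality of tent spaces (or, equivalently, superpose the $f_a$ over a stopping-time family of cells adapted to $\nu$), thereby producing a single competitor in the unit ball of $A^p_{\alpha}$ whose tent-space norm sees all of $\mu^{-1}(S(a))$ at once. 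This duality/stopping-time step is the crux of the theorem, and it is exactly the reason why, as noted in the introduction, the Bergman case admits a conceptually simpler proof resting only on pointwise estimates.
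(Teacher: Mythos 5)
Two preliminary observations. First, the paper does not actually prove this theorem: it is quoted with references (Cohn's Theorem~1 for $\alpha=-1$, $q=1$, with the remark that the same proof works for all $q>0$, or alternatively Pel\'aez--R\"atty\"a--Sierra; and their Theorem~1 for $\alpha>-1$), so your attempt has to be judged as a standalone proof. Second, your architecture --- the diagonal case $p=q$ via \eqref{eqn:Tpp(nu)=Lp(rhodnu)} and \eqref{eqn:alpha+2-Carleson:measures:equiv}, a maximal-function/good-$\lambda$ argument for the upper bound, kernel testing for the lower bound --- is indeed the skeleton of those cited proofs. But as written there are genuine gaps: the two decisive estimates are invoked, not proved. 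For the upper bound, the inequality $\bigl\|\bigl(\int_{\Gamma(\cdot)}|f|^q\,d\nu\bigr)^{1/q}\bigr\|_{L^p(dA_\alpha)}\lesssim\|\mu^{\alpha}\|_{\mathcal{C}(\alpha)}^{1/q}\,\|{\mathcal M}f\|_{L^p(dA_\alpha)}$ is the entire content of the sufficiency direction; deferring it to ``a Coifman--Meyer--Stein good-$\lambda$/stopping-time argument'' leaves the heart unproven, since the reduction to it via \eqref{eqn:boundedness:non-tangential:maximal:function:1} is the easy part. Likewise, in the Hardy case of the lower bound you explicitly leave the ``duality/stopping-time step'' as an intention; note moreover that for $p,q<1$ (which the theorem allows) tent-space duality is unavailable in the naive form, so the remedy you gesture at is not a routine fill-in.

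Your diagnosis of where the difficulty sits is also partly off, and correcting it shrinks the gap. Your Bergman argument is sound: the top-cell bound $M_{p,q}(\nu,\alpha)\gtrsim\mu^{\alpha}(U_b)/(1-|b|^2)^{\alpha+2}$ does hold for all $0<p,q<\infty$, and the Whitney sum $\sum_j 2^{-j(\alpha+1)}$ converges precisely when $\alpha>-1$. But in the Hardy case with $p\ge q$ a single kernel test already closes the bound, with no duality needed: put $\nu_a:=\nu|_{S(a)}$ and $X(\zeta):=\nu_a(\Gamma(\zeta))$; Fubini gives $\int_{\overline{\D}}X\,dA_\alpha\simeq\mu^{\alpha}(S(a))$ because $A_\alpha(\{\zeta:z\in\Gamma(\zeta)\})\simeq(1-|z|^2)^{\alpha+2}$; since $z\in\Gamma(\zeta)\cap S(a)$ forces $\operatorname{dist}(\zeta,S(a))\lesssim 1-|a|$, the function $X$ is supported in a set of $A_\alpha$-measure $\lesssim(1-|a|^2)^{\alpha+2}$; and since $|f_a|\gtrsim(1-|a|^2)^{-(\alpha+2)/p}$ on \emph{all} of $S(a)$ (not merely the top cell), one has $\int X^{p/q}\,dA_\alpha\lesssim(1-|a|^2)^{\alpha+2}\|f_a\|^p_{T^p_{\alpha,q}(\nu)}$, so H\"older with exponent $p/q\ge1$ on the support yields $\mu^{\alpha}(S(a))\lesssim(1-|a|^2)^{\alpha+2}\,\|f_a\|^q_{T^p_{\alpha,q}(\nu)}\le(1-|a|^2)^{\alpha+2}\,M_{p,q}(\nu,\alpha)$. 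Hence the only genuinely open case in your outline is $\alpha=-1$ with $p<q$, where the concavity of $t\mapsto t^{p/q}$ defeats both the single-kernel test and this H\"older step; that is exactly the case settled by the stopping-time construction in the references the paper cites, and your proposal does not supply it. In short: correct skeleton, matching the standard route, but the good-$\lambda$ lemma and the $\alpha=-1$, $p<q$ lower bound are missing, and these are the substance of the theorem.
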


Theorem~{\ref{thm:Carleson:measure:norm:tent:spaces}} is proved 
in~{\cite[Theorem 1]{Cohn}} (see also~{\cite[Theorem 3.1]{Gong:Lou:Wu}) for $\alpha=-1$ and $q=1$, but the same proof gives the result for any $q>0$ (see also~{\cite[Theorem 9]{Pelaez:Rattya:Sierra}} for an alternative proof). For $\alpha>-1$, Theorem~{\ref{thm:Carleson:measure:norm:tent:spaces}} is a particular case of~{\cite[Theorem 1]{Pelaez:Rattya:Sierra}}.

Taking into account \eqref{eqn:Tpp(nu)=Lp(rhodnu)}, the estimate~{\eqref{eqn:Carleson:measure:norm:tent:spaces}} for $p=q$	is just the well-known estimate
		\[
		C^{-1}_{\alpha,p}\,\|\mu\|_{\mathcal{C(\alpha)}}
		\le \sup\{\|f\|^p_{L^p(\mu)}:
		f\in A^p_{\alpha},\,\|f\|_{\alpha,p}=1\,\} \le
		C_{\alpha,p}\,\|\mu\|_{\mathcal{C(\alpha)}},
		\]
		where $C_{\alpha,p}>0$ is a constant only depending on $\alpha$ and $p$.	
				
Note that if 
\begin{equation}\label{eqn:def:nugs}
d\nu_{g,s}:=|\nabla|g|^s|^2\,dA
\qquad(g\in\H(\D),\,s>0),
\end{equation}
then $\rho^{\alpha+2}\,d\nu_{g,s}$  is the measure $d\mu^{\alpha}_{g,s}$ defined by~{\eqref{eqn:Carleson:measure:Balphaq}}. 
Therefore, by applying Theorem~{\ref{thm:Carleson:measure:norm:tent:spaces}} and \eqref{eqn:derivative:symbol:Carleson:measure}, we  
obtain the useful estimate 
\begin{equation}
\label{eqn:Mpq:simeq:BMOAs-norm}
M_{p,q}(\nu_{g,s},\alpha)\simeq\boldnorm{g}_{\alpha,s}^{2s}
				\qquad(g\in\H(\D),\,s>0),
			\end{equation} 
		which holds for every $\alpha\ge-1$ and $0<p,q<\infty$. 

\subsection{Boundedness of the maximal operator from  $A^p_{\alpha}$ to $L^p_{\alpha}$}

\begin{definition}
The non-tangential maximal function of a measurable function $\psi$ on $\D$ is defined by
\begin{equation}\label{eqn:definition:non-tangential:maximal:function}
	({\mathcal M}
	\psi)(\zeta):=\sup_{z\in\Gamma(\zeta)}|\psi(z)|
	\qquad(\zeta\in\overline{\D}).
\end{equation}
\end{definition}
A well-known and important property of the non-tangential maximal operator ${\mathcal M}$ is the following: 
\begin{theorem}{\cite[Theorem II.3.1]{Garnett}}
${\mathcal M}$ is bounded from $H^p$ to $L^p(\T)$, for any $0<p<\infty$. Indeed,  there is a constant $C>0$ such that
\begin{equation}\label{eqn:boundedness:non-tangential:maximal:function}
\|{\mathcal M}f\|^p_{L^p(\T)}\le C\,\|f\|^p_{H^p}\qquad(f\in H^p,0<p<\infty). 
\end{equation}
\end{theorem}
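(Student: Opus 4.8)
The plan is to reduce the general case $0<p<\infty$ to the single case $p=2$, where the estimate follows from classical real-variable theory, and then to treat $p=2$ by dominating the non-tangential maximal function of a Poisson integral by the Hardy--Littlewood maximal operator. First I would exploit canonical factorization. Given $f\in H^p$ (the case $f\equiv0$ being trivial), write $f=B\,g$, where $B$ is the Blaschke product formed from the zeros of $f$ and $g\in H^p$ is zero-free on $\D$. Since $\D$ is simply connected and $g$ has no zeros, a holomorphic branch $h:=g^{p/2}$ is well defined, and $h\in H^2$ with $\|h\|_{H^2}^2=\|g\|_{H^p}^p=\|f\|_{H^p}^p$, the last equality because $|B^{*}|=1$ a.e.\ on $\T$, so that $|f^{*}|=|g^{*}|$ a.e. As $|B|\le1$ on $\D$, we obtain the pointwise bound $|f(z)|\le|g(z)|=|h(z)|^{2/p}$ for all $z\in\D$, whence $({\mathcal M}f)(\zeta)^p\le({\mathcal M}h)(\zeta)^2$ for every $\zeta\in\T$. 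Integrating over $\T$ yields
\begin{equation*}
\|{\mathcal M}f\|_{L^p(\T)}^p\le\|{\mathcal M}h\|_{L^2(\T)}^2,
\end{equation*}
so it suffices to prove the theorem for $p=2$.

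For $h\in H^2$ I would use the Poisson representation $h=P[h^{*}]$ of $h$ by its boundary values $h^{*}\in L^2(\T)$, which exist by Fatou's theorem. The key pointwise estimate is that the non-tangential maximal function of a Poisson integral is controlled by the Hardy--Littlewood maximal operator $M_{\mathrm{HL}}$: there is an absolute constant $C>0$ with
\begin{equation*}
({\mathcal M}h)(\zeta)=\sup_{z\in\Gamma(\zeta)}|P[h^{*}](z)|\le C\,(M_{\mathrm{HL}}h^{*})(\zeta)\qquad(\zeta\in\T).
\end{equation*}
This is obtained by majorizing the Poisson kernel $P_z$, for $z\in\Gamma(\zeta)$, by a superposition of normalized indicators of arcs centered at $\zeta$; the aperture-$2$ condition $|z-\zeta|<2(1-|z|)$ forces $1-|z|\simeq|1-\bar\zeta z|$, which pins down the off-diagonal decay of $P_z$ and turns $\sup_{z\in\Gamma(\zeta)}P[h^{*}](z)$ into an average-over-arcs bound. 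Finally, the Hardy--Littlewood maximal theorem gives $\|M_{\mathrm{HL}}h^{*}\|_{L^2(\T)}\lesssim\|h^{*}\|_{L^2(\T)}=\|h\|_{H^2}$, and combining the three displayed inequalities proves~\eqref{eqn:boundedness:non-tangential:maximal:function}.

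The main obstacle is the pointwise comparison of ${\mathcal M}h$ with $M_{\mathrm{HL}}h^{*}$: it rests on the Stolz-region kernel estimate above together with Fatou's boundary-value theorem, and it is the one place where the geometry of $\Gamma(\zeta)$ and the precise behavior of the Poisson kernel genuinely enter. Everything else---the inner--outer factorization, the reduction $\|{\mathcal M}f\|_{L^p}^p\le\|{\mathcal M}h\|_{L^2}^2$ to the exponent $2>1$, and the $L^2$-boundedness of $M_{\mathrm{HL}}$---is routine. I note that the reduction to $p=2$ is exactly what circumvents the failure of $M_{\mathrm{HL}}$ to be bounded on $L^1$, which is why one cannot argue directly through a harmonic majorant of $|f|^p$ when $p\le1$.
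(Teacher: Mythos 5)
Your proof is correct, and it coincides with the paper's treatment of this statement: the paper does not prove it but quotes it from \cite[Theorem II.3.1]{Garnett}, whose classical proof is precisely your two-step scheme (reduction to $p=2$ via the Riesz factorization $f=Bg$ and the zero-free root $h=g^{p/2}$, then pointwise domination of the non-tangential maximal function of the Poisson integral by the Hardy--Littlewood maximal operator, bounded on $L^2(\T)$). Note moreover that your argument produces a constant depending only on the fixed aperture of $\Gamma(\zeta)$, hence independent of $p$, which is exactly the uniform constant asserted in the statement.
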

\begin{corollary}\label{cor:boundedness:non-tangential:maximal:function}
${\mathcal M}$ is bounded from  $A^p_{\alpha}$ to $L^p_{\alpha}$, for every $\alpha\ge-1$ and $0<p<\infty$. Indeed, there is a constant $C>0$ such that
\begin{equation}
\label{eqn:boundedness:non-tangential:maximal:function:1}
\|{\mathcal M}f\|^p_{\alpha,p}\le C\,\|f\|^p_{\alpha,p}
\qquad(f\in A^p_{\alpha},\alpha\ge-1,0<p<\infty).
\end{equation}
\end{corollary}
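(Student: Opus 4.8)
The plan is to deduce the weighted estimate \eqref{eqn:boundedness:non-tangential:maximal:function:1} from the Hardy space case \eqref{eqn:boundedness:non-tangential:maximal:function} by the same dilation-and-integration device already used above to pass from Calder\'on's theorem on $H^p$ to its weighted counterpart \eqref{eqn:Calderon:area:thm:2}. Since \eqref{eqn:boundedness:non-tangential:maximal:function:1} for $\alpha=-1$ is exactly the preceding theorem, I would assume $\alpha>-1$. I would fix $f\in A^p_{\alpha}$ and, for each $0<r<1$, work with the dilation $f_r(z)=f(rz)$; as $f_r$ extends continuously to $\overline{\D}$, it lies in $H^\infty\subset H^p$, so \eqref{eqn:boundedness:non-tangential:maximal:function} applies to $f_r$ with a constant $C$ independent of $r$.

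The crucial step is the scaling identity $({\mathcal M}f_r)(\zeta)=({\mathcal M}f)(r\zeta)$ for $\zeta\in\T$. This follows at once from the very definition of the Stolz region at interior points, namely $\Gamma(r\zeta)=|r\zeta|\,\Gamma(\tfrac{r\zeta}{|r\zeta|})=r\,\Gamma(\zeta)$: the substitution $w=rz$ gives
\begin{equation*}
({\mathcal M}f_r)(\zeta)=\sup_{z\in\Gamma(\zeta)}|f(rz)|
=\sup_{w\in r\Gamma(\zeta)}|f(w)|
=\sup_{w\in\Gamma(r\zeta)}|f(w)|
=({\mathcal M}f)(r\zeta).
\end{equation*}

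With this in hand, I would apply \eqref{eqn:boundedness:non-tangential:maximal:function} to $f_r$ to obtain
\begin{equation*}
\int_{\T}({\mathcal M}f)(r\zeta)^p\,d\sigma(\zeta)
\le C\int_{\T}|f(r\zeta)|^p\,d\sigma(\zeta)
\qquad(0<r<1),
\end{equation*}
then integrate against $(\alpha+1)(1-r^2)^{\alpha}\,2r\,dr$ over $(0,1)$ and invoke Tonelli's theorem to rewrite both sides in polar coordinates: the right-hand side becomes $C\,\|f\|^p_{\alpha,p}$ and the left-hand side becomes $\|{\mathcal M}f\|^p_{\alpha,p}$, which is the desired inequality. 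I do not expect a genuine obstacle here. The only points needing a line of justification are the Borel measurability of ${\mathcal M}f$ (it is lower semicontinuous, since $f$ is continuous and the regions $\Gamma(r\zeta)$ vary continuously) and the legitimacy of the polar decomposition, both of which are routine. All the analytic content is carried by the Hardy space maximal estimate \eqref{eqn:boundedness:non-tangential:maximal:function} together with the elementary scaling $\Gamma(r\zeta)=r\,\Gamma(\zeta)$.
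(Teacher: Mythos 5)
Your proposal is correct and is precisely the argument the paper intends: it states that \eqref{eqn:boundedness:non-tangential:maximal:function:1} follows from \eqref{eqn:boundedness:non-tangential:maximal:function} ``using the same argument that proves \eqref{eqn:Calderon:area:thm:2} from \eqref{eqn:Calderon:area:thm:0}'', i.e.\ the dilation identity (here $({\mathcal M}f_r)(\zeta)=({\mathcal M}f)(r\zeta)$, from $\Gamma(r\zeta)=r\,\Gamma(\zeta)$, in place of $({\mathcal S}|h_r|^q)(\zeta)=({\mathcal S}|h|^q)(r\zeta)$) followed by integration against $(\alpha+1)(1-r^2)^{\alpha}\,2r\,dr$ and Tonelli. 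Your verification of the scaling identity and the measurability remark fill in exactly the routine details the paper leaves implicit.
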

 Corollary~{\ref{cor:boundedness:non-tangential:maximal:function}} is a particular case of the more general result \cite[Lemma 4.4]{Pelaez:Rattya}, but we note that \eqref{eqn:boundedness:non-tangential:maximal:function:1} is an immediate consequence of \eqref{eqn:boundedness:non-tangential:maximal:function} using the same argument that proves \eqref{eqn:Calderon:area:thm:2} from \eqref{eqn:Calderon:area:thm:0}.

\section{ Proof of Theorem~{\ref{thm:sharp:estimate:norm:word}}~{\sf a)}}
\label{section:easycase}

In this section we prove the first part of Theorem~{\ref{thm:sharp:estimate:norm:word}}. 

First of all, note that $W_g(\ell,0,0)=\{M^{\ell}_g\}$, for any $\ell\in\N$.
Since $M^{\ell}_g=M_{g^{\ell}}$, we have that $\|M^{\ell}_g\|_{\alpha,p}=\|g\|_{H^{\infty}}^{\ell}=\opnorm{M^{\ell}_g}_{\alpha,p}$, so from now on we may assume that $L\in W_g(\ell,m,0)$, with $\ell\in\N_0$ and $m\in\N$. 
In this case, Theorem \ref{thm:global:decomposition} shows that
\begin{equation}\label{eqn:decomposition:without:T}
L= S^N_g\Pi_0 +\sum_{j=1}^Na_j\,S_g^{N-j}T_g^j
+\sum_{j=1}^Nb_j\,S_g^{N-j}T_g^j\Pi_0,
\end{equation}
where  $N=\ell+m\in\N$, and the coefficients $a_j,b_j$ are integers which do not depend on $g$.

If $g\in H^{\infty}$ then $\|S_g^{N-j}\|_{\alpha,p}\lesssim\|g\|_{H^{\infty}}^{N-j}$ and 
$\|T_g^j\|\lesssim\|g\|_{\B^{1}_{\alpha}}^j\lesssim\|g\|_{H^{\infty}}^j$, for $j=0,\dots,N$, so 
$\opnorm{L}_{\alpha,p}\le \|L\|_{\alpha,p}\lesssim\|g\|_{H^{\infty}}^N$, since $\Pi_0$ is bounded on $A^p_{\alpha}$.

Now we want to prove the estimate~{$\|g\|_{H^{\infty}}^N\lesssim\opnorm{L}_{\alpha,p}$,}
or equivalently $\sup_{0<r<1}\|g_r\|^N_{H^{\infty}}\lesssim\opnorm{L}_{\alpha,p}$, where $g_r(z)=g(rz)$. Assume that $L\in\BB(A^p_{\alpha}(0))$. 
 Note that \eqref{eqn:decomposition:without:T} gives that
\begin{equation*}
	L_g= S^N_g +\sum_{j=1}^Nc_j\,S_g^{N-j}T_g^j\quad\mbox{on $\H_0(\D)$,}
\end{equation*}
where the $c_j$'s are integers not depending on $g$. By~{\cite[Proposition 4.3]{Aleman:Cascante:Fabrega:Pascuas:Pelaez}} and  Proposition~{\ref{prop:boundedness:Tg}}, it follows that
\begin{align*}
\|g_r\|^N_{H^{\infty}}
&\lesssim\|S^N_{g_r}\|_{\alpha,p}=
\opnorm{S^N_{g_r}}_{\alpha,p}
\lesssim
\opnorm{L_{g_r}}_{\alpha,p}
+\sum_{j=1}^N \opnorm{S^{N-j}_{g_r}T^j_{g_r}}_{\alpha,p}
\\
&\le
\opnorm{L_g}_{\alpha,p}
+\sum_{j=1}^N \|S^{N-j}_{g_r}T^j_{g_r}\|_{\alpha,p}
\\	
&\le
\opnorm{L_g}_{\alpha,p}
+\|T_{g_r}\|_{\alpha,p}\sum_{j=1}^N \|S_{g_r}\|^{N-j}_{\alpha,p}\|T_{g_r}\|_{\alpha,p}^{j-1}\\
&\lesssim
\opnorm{L_g}_{\alpha,p}
+\opnorm{L_{g}}_{\alpha,p}^{\frac1N}\,\|g_r\|^{N-1}_{H^{\infty}}.
\end{align*}
Therefore either $\opnorm{L_g}_{\alpha,p}=\|g_r\|_{H^{\infty}}=0$ 
or $0<\opnorm{L_g}_{\alpha,p}<\infty$ and 
\begin{equation*}
	\frac{\|g_r\|_{H^{\infty}}^N}{\opnorm{L_g}_{\alpha,p}}
	\lesssim 
	1+\biggl(\frac{\|g_r\|^N_{H^{\infty}}}{\opnorm{L_g}_{\alpha,p}}\biggr)^{\frac{N-1}N}.
\end{equation*}
Hence we conclude that
$\sup_{0<r<1}\|g_r\|^N_{H^{\infty}}\lesssim\opnorm{L_g}_{\alpha,p}$, and Theorem~{\ref{thm:sharp:estimate:norm:word}~{\sf a)}} is proved.

\section{ Proof of Theorem~{\ref{thm:sharp:estimate:norm:word}~{\sf b)}}: Reduction to the case 
	$L=S^m_gT^n_g$}

In this section we will deduce Theorem~{\ref{thm:sharp:estimate:norm:word}~{\sf b)}} from the case $L=S^m_gT^n_g$ $m\in\N_0$, $n\in\N$, that will be proved in the following four sections.

First of all the following remark is in order:

\begin{remark}\label{remark:SmTn:trivial:cases}
Theorem~{\ref{thm:sharp:estimate:norm:word}~{\sf b)}} holds for $L=S^m_gT^n_g$ when either $m=0$ or $n=1$,
 since we know that~{\eqref{eqn:restricted:norm:Tn:00}} holds and  $S^m_gT_g=\frac1{m+1}\,T_{g^{m+1}}$.
\end{remark}

Assume Theorem~{\ref{thm:sharp:estimate:norm:word}~{\sf b)}} holds for $L=S^m_gT^n_g$ $m\in\N_0$, $n\in\N$, that is,
\begin{equation}
	\label{eqn:sharp:estimate:SmTn}
	\|S^m_gT^n_g\|_{\alpha,p}\simeq\opnorm{S^m_gT^n_g}_{\alpha,p}
	\simeq\|g\|^{m+n}_{\B^s_{\alpha}}
	\qquad(g\in\H(\D)),
\end{equation}
where $s=\frac{m}n+1$, and we want to prove Theorem~{\ref{thm:sharp:estimate:norm:word}~{\sf b)}} for any $L\in W_g(\ell,m,n)$, with $\ell,m\in\N_0$ and $n\in\N$, that is,
\begin{equation}
	\label{eqn:sharp:estimate:norm:word:0}
	\|L\|_{\alpha,p}\simeq\opnorm{L}_{\alpha,p}
	\simeq\|g\|^{N}_{\B^s_{\alpha}}
		\qquad(g\in\H(\D)),
\end{equation}
where $N=\ell+m+n$ and $s=\frac{\ell+m}n+1$. By Remark \ref{remark:SmTn:trivial:cases}, we may assume that $L\in W_g(\ell,m,n)$, where $\ell,m,n\in\N_0$ so that $n\ge 1$ and $k=\ell+m\ge1$. Let $N=k+n$. 
 Then $L$  satisfies~{\eqref{thm:global:decomposition}, and so, taking into account that $\Pi_0\in\BB(A^p_{\alpha})$,
 \eqref{eqn:sharp:estimate:SmTn} and~{\eqref{eqn:key:estimate:Carleson:norms:symbols1}} give that
\begin{equation*}
\|L\|_{\alpha,p}
\lesssim
\|S^k_gT^n_g\|_{\alpha,p}
+\sum_{j=1}^k \|S^{k-j}_gT^{n+j}_g\|_{\alpha,p}
\lesssim
\sum_{j=0}^k \|g\|^N_{\B^{\frac{k+n}{n+j}}_{\alpha}}
\lesssim
\|g\|^N_{\B^{s}_{\alpha}}.
\end{equation*}¡
Therefore
\begin{equation}
\label{eqn:sharp:upper:estimate:norm:word}
\|L\|_{\alpha,p}
\lesssim\|g\|^N_{\B^{s}_{\alpha}}
\qquad(g\in\H(\D),\,L\in W_g(\ell,m,n)).
\end{equation}
In order to prove the reverse estimate,  we may assume that $g\in\H(\overline{\D})$, by~{\cite[Proposition 4.3]{Aleman:Cascante:Fabrega:Pascuas:Pelaez}} and  
Proposition~{\ref{prop:radialized:symbol}}.
Then \eqref{eqn:sharp:estimate:SmTn},
\eqref{eqn:global:decomposition:H0}, Proposition  \ref{prop:boundedness:Tg} together with  \eqref{eqn:key:estimate:Carleson:norms:symbols1} show that
\begin{align*}
\|g\|^N_{\B_{\alpha}^s}       
&\lesssim
\opnorm{S^k_gT^n_g}_{\alpha,p}
\lesssim
\opnorm{L}_{\alpha,p}
+\sum_{j=1}^k \|S^{k-j}_gT^{n+j}_g\|_{\alpha,p}\\
&\le
\opnorm{L}_{\alpha,p}
+\|T_g\|_{\alpha,p}\sum_{j=1}^k \|S^{k-j}_gT^{n+j-1}_g\|_{\alpha,p}\\
&\lesssim
\opnorm{L}_{\alpha,p}
+\opnorm{L}_{\alpha,p}^{\frac1N}\sum_{j=1}^m \|g\|^{N-1}_{\B^{\frac{N-1}{n+j-1}}_{\alpha}}\\
&\lesssim
\opnorm{L}_{\alpha,p}
+\opnorm{L}_{\alpha,p}^{\frac1N}\,\|g\|^{N-1}_{\B_{\alpha}^s},
\end{align*}
It turns out that either $\opnorm{L}_{\alpha,p}=\|g\|_{\B_{\alpha}^s}=0$ 
or $0<\opnorm{L}_{\alpha,p}<\infty$ and 
\begin{equation*}
\frac{\|g\|^N_{\B_{\alpha}^s}}{\opnorm{L}_{\alpha,p}}
\lesssim 
1+\biggl(\frac{\|g\|^N_{\B_{\alpha}^s}}{\opnorm{L}_{\alpha,p}}\biggr)^{\frac{N-1}N}.
\end{equation*}
Hence we have that
\begin{equation}
\label{eqn:sharp:lower:estimate:norm:word}
\|g\|^N_{\B_{\alpha}^s}
\lesssim
\opnorm{L}_{\alpha,p}
\qquad(g\in\H(\D),\,L\in W_g(\ell,m,n)).
\end{equation}
Finally, it is clear that \eqref{eqn:sharp:upper:estimate:norm:word} and \eqref{eqn:sharp:lower:estimate:norm:word} give~{\eqref{eqn:sharp:estimate:norm:word:0}}.
\vspace*{6pt}

The next four sections will be devoted to prove Theorem~{\ref{thm:sharp:estimate:norm:word}~{\sf b)}} for $L=S^m_gT^n_g$ $m\in\N_0$, $n\in\N$.

\section{Proof of Theorem~{\ref{thm:sharp:estimate:norm:word} {\sf b)}} for $L=S^m_gT^n_g$: Sufficient condition}

 Taking into account Remark \ref{remark:SmTn:trivial:cases}, the sufficiency in Theorem~{\ref{thm:sharp:estimate:norm:word} {\sf b)}} reduces to
\begin{theorem}\label{thm:sharp:upper:estimate:norm:word:ST:Hp}
	Let $m\in\N$, $n\in\N$, $n\ge2$, and $s=\frac{m}n+1$. Then
	\begin{equation*}
		\|S^m_gT^n_g\|_{\alpha,p}
		\lesssim\|g\|^{sn}_{\B^s_{\alpha}}
		\qquad(g\in\H(\D)).
	\end{equation*}
\end{theorem}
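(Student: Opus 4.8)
The plan is to pass from the analytic function $h=S_g^mT_g^nf$ to its derivative by Calder\'on's area theorem and then to recognise the resulting area integral as a tent-space norm attached to the measure that controls $\|g\|_{\B^s_\alpha}$. Since all the operators involved map into $\H_0(\D)$, we have $h(0)=0$, so the corollary \eqref{eqn:estimate:Apalpha:norm:derivative} gives $\|h\|_{\alpha,p}^p\lesssim\|h'\|_{T^p_{\alpha,2}}^p$. Computing the derivative through the chains $(T_g^kf)'=g'\,T_g^{k-1}f$ and $(S_gw)'=g\,w'$ yields the clean identity $h'=g^m\,g'\,T_g^{n-1}f$ (equivalently $S_g^mT_g^nf=\frac1{m+1}T_{g^{m+1}}T_g^{n-1}f$, which for $n=1$ recovers Remark~\ref{remark:SmTn:trivial:cases}). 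Now I would use the pointwise gradient identity $|\nabla|g|^s|=s\,|g|^{s-1}|g'|=s\,|g|^{m/n}|g'|$ (recall $s-1=\frac mn$) to factor $|g|^m|g'|\simeq|g|^{m(n-1)/n}\,|\nabla|g|^s|$, so that the Lusin integrand becomes
\[
\int_{\Gamma(\zeta)}|h'|^2\,dA\simeq\int_{\Gamma(\zeta)}\bigl(|g|^{m(n-1)/n}\,|T_g^{n-1}f|\bigr)^2\,d\nu_{g,s},\qquad d\nu_{g,s}=|\nabla|g|^s|^2\,dA;
\]
in other words $\|h'\|_{T^p_{\alpha,2}}\simeq\|Q^{m/n,n-1}_gf\|_{T^p_{\alpha,2}(\nu_{g,s})}$, where $Q^{m/n,n-1}_gf=|g|^{m(n-1)/n}T_g^{n-1}f$.

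Next I would extract one power of $\|g\|_{\B^s_\alpha}$ from the measure $\nu_{g,s}$. By \eqref{eqn:Mpq:simeq:BMOAs-norm} the embedding constant satisfies $M_{p,2}(\nu_{g,s},\alpha)\simeq\|g\|^{2s}_{\B^s_\alpha}$, so the Carleson/tent embedding bounds the tent norm above by $\|g\|^{s}_{\B^s_\alpha}\,\|Q^{m/n,n-1}_gf\|_{\alpha,p}$, giving
\[
\|S_g^mT_g^nf\|_{\alpha,p}\lesssim\|g\|^{s}_{\B^s_\alpha}\,\|Q^{m/n,n-1}_gf\|_{\alpha,p}.
\]
This reduces everything to the boundedness of the fractional operator $Q^{\tau,\ell}_g=|g|^{\tau\ell}T_g^\ell$ with $\tau=\frac mn$ (so $\tau+1=s$) and $\ell=n-1$. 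Granting the sharp bound $\|Q^{\tau,\ell}_gf\|_{\alpha,p}\lesssim\|g\|^{(\tau+1)\ell}_{\B^{\tau+1}_\alpha}\|f\|_{\alpha,p}$, the two powers combine to $\|g\|^{s+s(n-1)}_{\B^s_\alpha}=\|g\|^{sn}_{\B^s_\alpha}$, and since $sn=m+n$ this is exactly the asserted estimate; the remaining trivial cases are covered by Remark~\ref{remark:SmTn:trivial:cases} and \eqref{eqn:restricted:norm:Tn:00}.

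The main obstacle is the bound for $Q^{\tau,\ell}_g=|g|^{\tau\ell}T_g^\ell$ itself, which I would establish by induction on $\ell$; note that $s=\tau+1$ is preserved at each step, so the whole induction stays inside the single symbol class $\B^s_\alpha$. The genuine difficulty is that $|g|^{\tau\ell}T_g^\ell f$ is \emph{not} holomorphic, so Calder\'on's theorem cannot be applied to it directly and the coarse maximal/area machinery (pulling the weight out as $(\mathcal Mg)^{\tau\ell}$ and using Corollary~\ref{cor:boundedness:non-tangential:maximal:function}) would only return $A^q_\alpha$-norms of $g$ rather than the sharp $\B^s_\alpha$-norm. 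The correct bookkeeping must keep the pairing $|g|^{s-1}|g'|=\frac1s|\nabla|g|^s|$ intact: at each step I would split $|g|^{\tau\ell}=|g|^{\tau(\ell-1)}|g|^{\tau}$, pair $|g|^{\tau}$ with the innermost $g'$ produced by differentiating $T_g^{\ell}$ to form $|\nabla|g|^s|$, and then use Hölder for tent spaces (Proposition~\ref{prop:Holder:ineq:tent:spaces}) together with \eqref{eqn:Mpq:simeq:BMOAs-norm} so that each reduction contributes \emph{precisely} one factor $\|g\|^{s}_{\B^s_\alpha}$ while lowering $\ell$ by one, terminating at the analytic base case $Q^{\tau,0}_gf=f$. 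Carrying this out uniformly for the Hardy ($\alpha=-1$) and Bergman ($\alpha>-1$) cases is the technical heart; in the Bergman case I expect one can shortcut it via pointwise/Littlewood--Paley estimates using the growth bound of Proposition~\ref{prop:growth:Bq}.
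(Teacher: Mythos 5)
Your opening reduction coincides with the paper's own first step (it is precisely the proof of \eqref{eqn:estimate:norm:SmTn:by:norm:Q:Hp}), but already there you commit a slip that foreshadows the real problem: the embedding constant $M_{p,2}(\nu_{g,s},\alpha)$ of \eqref{eqn:definition:Mpqnu} is a supremum over \emph{holomorphic} $f\in A^p_{\alpha}$, so it does not directly bound the tent norm of the non-holomorphic function $Q^{m/n,n-1}_gf=|g|^{m(n-1)/n}T^{n-1}_gf$. The paper circumvents this by observing that $|Q^{m/n,n-1}_gf|=|h|^{1/n}$ with $h=g^{m(n-1)}(T^{n-1}_gf)^n\in\H(\D)$ (an \emph{integer} power of $g$ appears, so $h$ is analytic), and then using \eqref{eqn:Tpq:norm:of:a:power} to apply the embedding to $h\in A^{p/n}_{\alpha}$ in $T^{p/n}_{\alpha,2/n}(\nu_{g,s})$; this yields exactly your conclusion and is repairable. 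The genuine gap is that the entire weight of the theorem rests on the estimate $\|Q^{\tau,\ell}_g\|_{\alpha,p}\lesssim\|g\|^{s\ell}_{\B^s_{\alpha}}$, i.e.\ \eqref{eqn:upper:estimate:norm:Q:Hp}, which you explicitly grant, and the induction you sketch for it would not close as described.

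Concretely, to invoke Calder\'on's theorem one must again work with the holomorphic powers $h_\ell=g^{m\ell}(T^{\ell}_gf)^n$, choosing the representation $\tau=m/n$ with $m,n>2$ so that exponents such as $mn\ell-m-n$ stay nonnegative integers; but then $h_\ell'=m\ell\,g^{m\ell-1}g'(T^{\ell}_gf)^n+n\,g^{m\ell}g'(T^{\ell-1}_gf)(T^{\ell}_gf)^{n-1}$ unavoidably contains a term in which all $n$ factors $T^{\ell}_gf$ survive, so the scheme ``each reduction contributes precisely one factor $\|g\|^{s}_{\B^s_{\alpha}}$ while lowering $\ell$ by one, terminating at $Q^{\tau,0}_gf=f$'' cannot be carried out: the first term does not lower $\ell$ at all. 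The paper instead bounds both pieces by H\"older in tent spaces together with \eqref{eqn:Mpq:simeq:BMOAs-norm}, obtaining \emph{fractional powers of the unknown quantity itself}, e.g.\ $\|Q^{\tau,\ell}_g\|_{\alpha,p}\lesssim\|g\|^{s^2/m}_{\B^s_{\alpha}}\|Q^{\tau,\ell}_g\|^{1-s/(m\ell)}_{\alpha,p}+\|g\|^{s/n}_{\B^s_{\alpha}}\|Q^{\tau,\ell-1}_g\|^{1/n}_{\alpha,p}\|Q^{\tau,\ell}_g\|^{1-1/n}_{\alpha,p}$, and closes by dividing through (a self-improvement/absorption argument, cf.\ \eqref{eqn:induction:inequality:Hp}). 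That division requires the a priori finiteness $0<\|Q^{\tau,\ell}_g\|_{\alpha,p}<\infty$ for non-constant $g$, which is exactly why the whole statement must first be reduced from $g\in\H(\D)$ to $g\in\H(\overline{\D})$ via dilations, using \cite[Proposition 4.3]{Aleman:Cascante:Fabrega:Pascuas:Pelaez} and Proposition~\ref{prop:radialized:symbol} --- a reduction your proposal omits. Without the integer-power trick, the absorption mechanism, and the dilation reduction, the ``pair $|g|^{\tau}$ with the innermost $g'$'' bookkeeping does not by itself produce \eqref{eqn:upper:estimate:norm:Q:Hp}, so as written the proof has a hole at its core step.
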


It is worth mentioning that Theorem~\ref{thm:sharp:upper:estimate:norm:word:ST:Hp} has been used to prove both inequalities
\eqref{eqn:sharp:lower:estimate:norm:word} and \eqref{eqn:sharp:upper:estimate:norm:word}.
In order to show this result we need to consider the following operator:

\begin{definition}
	For $g\in\H(\D)$, $\tau\in\Q$, $\tau>0$, and $\ell\in\N$, we define
	\begin{equation*}
		Q^{\tau,\ell}_g f:=|g|^{\tau\ell}\,T^{\ell}_gf
		\qquad(f\in\H(\D)).
	\end{equation*}
	Note that, for $g\in\H(\overline{\D})$,  $T_g^{\ell}\in\BB(A^p_{\alpha})$ and $|g|^{\tau\ell}\in L^{\infty}(\overline{\D})$, so it follows that $Q_g^{\tau,\ell}$ is a bounded linear operator from $A^p_{\alpha}$ to $L^p_{\alpha}$, for any $0<p<\infty$ and $\alpha\ge-1$. 
	In particular, for $g\in\H(\overline{\D})$, it makes sense to consider
	\begin{align*}
		\|Q_g^{\tau,\ell}\|_{\alpha,p}
		&:= 
		\sup\bigl\{\|Q_g^{\tau,\ell}f\|_{\alpha,p}:f\in A^p_{\alpha},\,\|f\|_{\alpha,p}=1\bigr\}\quad\mbox{and}
		\\
		\opnorm{Q_g^{\tau,\ell}}_{\alpha,p}
		&:= 
		\sup\bigl\{\|Q_g^{\tau,\ell}f\|_{\alpha,p}:f\in A^p_{\alpha}(0),\,\|f\|_{\alpha,p}=1\bigr\}.
	\end{align*}
For a sake of convenience, $Q^{\tau,0}_g$ will be the identity operator on $\H(\D)$.
\end{definition}
\vspace*{6pt}

The proof of Theorem~{\ref{thm:sharp:upper:estimate:norm:word:ST:Hp}} is a direct consequence of the following two fundamental results.

\begin{proposition}
	Let $m,n\in\N$, $n>1$, $\tau=\frac{m}n$ and $s=\tau+1$. Then
	\begin{equation}
		\label{eqn:estimate:norm:SmTn:by:norm:Q:Hp}
		\|S^m_gT^n_g\|_{\alpha,p}\lesssim\,\|g\|^s_{\B^s_{\alpha}} 
		\,\|Q^{\tau,n-1}_g\|_{\alpha,p}
		\qquad(g\in\H(\overline{\D})).
	\end{equation}
\end{proposition}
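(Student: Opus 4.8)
The plan is to differentiate, invoke Calderón's area theorem to replace the $A^p_\alpha$-norm of $S^m_gT^n_gf$ by a tent-space norm of its derivative, and then recognize the factor $|\nabla|g|^s|$ sitting inside that derivative, absorbing it into the Carleson measure $\nu_{g,s}$ so that what is left is exactly $|Q^{\tau,n-1}_gf|$. Concretely, I would fix $f\in A^p_\alpha$ with $\|f\|_{\alpha,p}=1$, set $h=S^m_gT^n_gf$ (so $h(0)=0$), and compute $h'$. Using $(S_gF)'=gF'$ and $(T_gF)'=g'F$ one gets by induction $(S^m_gG)'=g^m G'$ and $(T^n_gf)'=g'\,T^{n-1}_gf$, hence $h'=g^m g'\,T^{n-1}_gf$. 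By \eqref{eqn:estimate:Apalpha:norm:derivative} and $h(0)=0$ this gives $\|h\|^p_{\alpha,p}\lesssim\|h'\|^p_{T^p_{\alpha,2}}$ (here $n\ge2$ guarantees $Q^{\tau,n-1}_g$ is a genuine operator).

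The algebraic heart of the argument is that, since $s-1=\tau$, we have $|\nabla|g|^s|=s\,|g|^{\tau}|g'|$, while $\tau+\tau(n-1)=\tau n=m$; therefore, pointwise, $|h'|=|g|^m|g'|\,|T^{n-1}_gf|=\tfrac1s\,|\nabla|g|^s|\cdot|Q^{\tau,n-1}_gf|$. Substituting this and recalling $d\nu_{g,s}=|\nabla|g|^s|^2\,dA$ from \eqref{eqn:def:nugs}, the inner area integral becomes $\int_{\Gamma(\zeta)}|h'|^2\,dA=\tfrac1{s^2}\int_{\Gamma(\zeta)}|Q^{\tau,n-1}_gf|^2\,d\nu_{g,s}$, so that $\|h'\|^p_{T^p_{\alpha,2}}=s^{-p}\,\|Q^{\tau,n-1}_gf\|^p_{T^p_{\alpha,2}(\nu_{g,s})}$. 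Thus the whole estimate reduces to controlling one tent-space norm of $Q^{\tau,n-1}_gf$ against the measure $\nu_{g,s}$.

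To close it, I would use that $\mu^\alpha_{g,s}=\rho^{\alpha+2}\nu_{g,s}$ is $(\alpha+2)$-Carleson with $\|\mu^\alpha_{g,s}\|_{\mathcal{C}(\alpha)}\simeq\|g\|^{2s}_{\B^s_\alpha}$ by \eqref{eqn:derivative:symbol:Carleson:measure}, and then apply the tent-space Carleson embedding (Theorem \ref{thm:Carleson:measure:norm:tent:spaces} with $q=2$, together with \eqref{eqn:Mpq:simeq:BMOAs-norm}) in the form $\|\Psi\|^2_{T^p_{\alpha,2}(\nu_{g,s})}\lesssim\|g\|^{2s}_{\B^s_\alpha}\,\|\Psi\|^2_{\alpha,p}$. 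Taking $\Psi=Q^{\tau,n-1}_gf$ and using $\|Q^{\tau,n-1}_gf\|_{\alpha,p}\le\|Q^{\tau,n-1}_g\|_{\alpha,p}$ yields $\|h\|^p_{\alpha,p}\lesssim\|g\|^{sp}_{\B^s_\alpha}\,\|Q^{\tau,n-1}_g\|^p_{\alpha,p}$; taking the supremum over $\|f\|_{\alpha,p}=1$ and the $p$-th root is exactly \eqref{eqn:estimate:norm:SmTn:by:norm:Q:Hp}.

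The main obstacle is that $\Psi=Q^{\tau,n-1}_gf$ is \emph{not} holomorphic, while Theorem \ref{thm:Carleson:measure:norm:tent:spaces} and \eqref{eqn:Mpq:simeq:BMOAs-norm} are stated for $A^p_\alpha$; the embedding step above must therefore be justified for $\Psi$. The way around this is to observe that $\log|\Psi|=\tau(n-1)\log|g|+\log|T^{n-1}_gf|$ is subharmonic, whence $|\Psi|^t$ is subharmonic for every $t>0$. Consequently the non-tangential maximal estimate underlying Corollary \ref{cor:boundedness:non-tangential:maximal:function} (in its general form \cite[Lemma 4.4]{Pelaez:Rattya}) and the maximal-function proof of the tent-space embedding remain valid verbatim for $\Psi$. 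This is exactly the point of introducing $Q^{\tau,n-1}_g$ with the fractional weight $|g|^{\tau(n-1)}$ instead of a genuine analytic power of $g$: its modulus behaves, for the area and maximal estimates, just like that of an analytic function, which is what makes the Carleson machinery applicable.
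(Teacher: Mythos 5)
Your first two steps coincide exactly with the paper's proof: differentiating to get $(S^m_gT^n_gf)'=g^mg'\,T^{n-1}_gf$, invoking \eqref{eqn:estimate:Apalpha:norm:derivative}, and factoring $|g|^{\tau}|g'|\simeq|\nabla|g|^s|$ into the measure $\nu_{g,s}$ so that the tent-space norm becomes $\|Q^{\tau,n-1}_gf\|_{T^p_{\alpha,2}(\nu_{g,s})}$, with $\|\mu^{\alpha}_{g,s}\|_{\mathcal{C}(\alpha)}\simeq\|g\|^{2s}_{\B^s_{\alpha}}$ from \eqref{eqn:derivative:symbol:Carleson:measure}. The divergence, and the genuine gap, is your final step: you apply the Carleson embedding of Theorem \ref{thm:Carleson:measure:norm:tent:spaces} (via \eqref{eqn:Mpq:simeq:BMOAs-norm}) to $\Psi=Q^{\tau,n-1}_gf$, which is not holomorphic, and justify this by asserting that the proofs of Corollary \ref{cor:boundedness:non-tangential:maximal:function} and of the tent-space embedding ``remain valid verbatim'' for functions whose modulus has subharmonic powers. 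That assertion is not a proof: Theorem \ref{thm:Carleson:measure:norm:tent:spaces} is stated (and proved in \cite{Cohn}, \cite{Pelaez:Rattya:Sierra}) with the supremum in \eqref{eqn:definition:Mpqnu} taken over $f\in A^p_{\alpha}$, and extending it to a log-subharmonic class would require reworking those proofs, which you do not do. As written, the crucial inequality $\|\Psi\|^2_{T^p_{\alpha,2}(\nu_{g,s})}\lesssim\|g\|^{2s}_{\B^s_{\alpha}}\|\Psi\|^2_{\alpha,p}$ is unsupported by anything you may cite.

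The repair is one line, and it is exactly what the paper does — your own observation about subharmonicity is the right instinct, but the clean route exploits the rationality of $\tau$ rather than a new embedding theorem. Since $\tau(n-1)n=m(n-1)\in\N$, the function $h=g^{m(n-1)}(T^{n-1}_gf)^n$ is holomorphic with $|\Psi|=|h|^{1/n}$; then the power identity \eqref{eqn:Tpq:norm:of:a:power} gives
\begin{equation*}
\|\Psi\|_{T^p_{\alpha,2}(\nu_{g,s})}
=\bigl\||h|^{1/n}\bigr\|_{T^p_{\alpha,2}(\nu_{g,s})}
=\|h\|^{1/n}_{T^{p/n}_{\alpha,2/n}(\nu_{g,s})}
\lesssim\|g\|^{s}_{\B^s_{\alpha}}\,\|h\|^{1/n}_{\alpha,p/n}
=\|g\|^{s}_{\B^s_{\alpha}}\,\|Q^{\tau,n-1}_gf\|_{\alpha,p},
\end{equation*}
where the middle estimate is \eqref{eqn:Mpq:simeq:BMOAs-norm} applied to the \emph{holomorphic} $h\in A^{p/n}_{\alpha}$ with exponents $p/n$ and $q=2/n$ — this is precisely why the paper states Theorem \ref{thm:Carleson:measure:norm:tent:spaces} for all $0<q<\infty$ rather than only $q\ge1$ or $q=2$. (The same device appears later in the paper, in the estimate of $B_2$ in Section 9, where $\mathcal{M}(Q^{\tau,j}_gf)=(\mathcal{M}h_j)^{1/n}$ legitimizes the maximal-function step you wanted.) With this substitution your argument closes and coincides with the paper's proof.
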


\begin{proof}
	Let $f\in A^p_{\alpha}$.
	Then~{\eqref{eqn:estimate:Apalpha:norm:derivative}}, \eqref{eqn:Tpq:norm:of:a:power}, and~{\eqref{eqn:Mpq:simeq:BMOAs-norm}} show that
	\begin{align*}
		\|S^m_gT^n_gf\|_{\alpha,p}
		&\lesssim \|g^mg'T^{n-1}_gf\|_{T^p_{\alpha,2}}
		=\||g|^{\frac{m}n}g'|g|^{m\frac{n-1}n}|T^{n-1}_gf|\|_{T^p_{\alpha,2}}
		\\
		&=\||h|^{\frac1n}\|_{T^p_{\alpha,2}(\nu_{g,s})}
		=\|h\|^{\frac1n}_{T^{\frac{p}n}_{\alpha,\frac2n}(\nu_{g,s})}
		\\
		&\lesssim
		\|g\|^s_{\B^s_{\alpha}}\,\|h\|^{\frac1n}_{\alpha,\frac{p}n}
		=\|g\|^s_{\B^s_{\alpha}}\,\|Q_g^{\tau,n-1}f\|_{\alpha,p},
	\end{align*}
	where $h=g^{m(n-1)}\,(T^{n-1}_gf)^n$.
	Therefore
	estimate~{\eqref{eqn:estimate:norm:SmTn:by:norm:Q:Hp}} holds, and that completes the proof. 
\end{proof}

\begin{proposition}
	Let $\ell\in\N$, $\tau\in\Q$, $\tau>0$, and $s=\tau+1$. Then
	\begin{equation}\label{eqn:upper:estimate:norm:Q:Hp}
		\|Q^{\tau,\ell}_g\|_{\alpha,p}\lesssim\|g\|^{s\ell}_{\B^{s}_{\alpha}}
		\qquad(g\in\H(\overline{\D})). 
	\end{equation}
\end{proposition}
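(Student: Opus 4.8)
The plan is to argue by induction on $\ell$, the case $\ell=0$ being trivial since $Q^{\tau,0}_g$ is the identity. As justified before the statement I may assume $g\in\H(\overline{\D})$, so that $T^\ell_g\in\BB(A^p_\alpha)$ and every function below is genuinely well defined. Writing $\tau=a/b$ with $a,b\in\N$, the whole argument rests on the one-step estimate
\[
\|Q^{\tau,\ell}_g f\|_{\alpha,p}\lesssim\|g\|^{s}_{\B^s_\alpha}\,\|Q^{\tau,\ell-1}_g f\|_{\alpha,p}\qquad(f\in A^p_\alpha),
\]
since taking the supremum over $\|f\|_{\alpha,p}\le1$ and feeding in the inductive hypothesis $\|Q^{\tau,\ell-1}_g\|_{\alpha,p}\lesssim\|g\|^{s(\ell-1)}_{\B^s_\alpha}$ yields \eqref{eqn:upper:estimate:norm:Q:Hp}.

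The device I would use for the one-step estimate is a \emph{weighted} form of Calder\'on's theorem, reached by passing to an auxiliary \emph{analytic} function. Since $T^\ell_g f(0)=0$, the function $H_\ell:=g^{a\ell}\,(T^\ell_g f)^{b}\in\H(\D)$ satisfies $H_\ell(0)=0$ and $|H_\ell|^{1/b}=|g|^{\tau\ell}\,|T^\ell_g f|=|Q^{\tau,\ell}_g f|$. Applying \eqref{eqn:Calderon:area:thm:3} to $H_\ell$ with exponent $q=1/b$, together with the elementary identity $\||H_\ell|^{1/b}\|_{\alpha,p}=\|H_\ell\|^{1/b}_{\alpha,p/b}$, gives
\[
\|Q^{\tau,\ell}_g f\|_{\alpha,p}\simeq\bigl\|\,|\nabla|Q^{\tau,\ell}_g f|\,|\,\bigr\|_{T^p_{\alpha,2}}.
\]
The Leibniz rule, together with $|\nabla|T^\ell_g f||=|(T^\ell_g f)'|=|g'|\,|T^{\ell-1}_g f|$, then splits the gradient as
\[
\bigl|\nabla|Q^{\tau,\ell}_g f|\,\bigr|\le\tfrac1s\,|\nabla|g|^s|\;|Q^{\tau,\ell-1}_g f|+\tau\ell\,|g|^{\tau\ell-1}|g'|\;|T^\ell_g f|,
\]
where in the first (good) term I used $|g|^{\tau\ell}|g'|=\tfrac1s|\nabla|g|^s|\,|g|^{\tau(\ell-1)}$ and $|g|^{\tau(\ell-1)}|T^{\ell-1}_g f|=|Q^{\tau,\ell-1}_g f|$.

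The good term is exactly what the machinery of Section~\ref{section:auxiliary:results} is built for. Recalling $d\nu_{g,s}=|\nabla|g|^s|^2\,dA$ from \eqref{eqn:def:nugs}, its tent-space norm equals $\tfrac1s\|Q^{\tau,\ell-1}_g f\|_{T^p_{\alpha,2}(\nu_{g,s})}$, and the tent-space Carleson embedding (Theorem~\ref{thm:Carleson:measure:norm:tent:spaces} in the form \eqref{eqn:Mpq:simeq:BMOAs-norm}) bounds this by $\lesssim\|g\|^{s}_{\B^s_\alpha}\,\|\mathcal{M}(Q^{\tau,\ell-1}_g f)\|_{\alpha,p}$. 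Since $|Q^{\tau,\ell-1}_g f|=|g^{a(\ell-1)}(T^{\ell-1}_g f)^b|^{1/b}$ is the modulus of an analytic function raised to a positive power, hence subharmonic, Corollary~\ref{cor:boundedness:non-tangential:maximal:function} applies and gives $\|\mathcal{M}(Q^{\tau,\ell-1}_g f)\|_{\alpha,p}\lesssim\|Q^{\tau,\ell-1}_g f\|_{\alpha,p}$, which is precisely the right-hand side of the one-step estimate.

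The hard part will be the bad term $\tau\ell\,|g|^{\tau\ell-1}|g'|\,|T^\ell_g f|$: it carries the full $T^\ell_g f$ rather than $T^{\ell-1}_g f$, and after factoring out $\tfrac1s|\nabla|g|^s|$ it leaves the weight $|g|^{\tau(\ell-1)-1}$, whose exponent may be negative, so it cannot be paired cleanly with $Q^{\tau,\ell-1}_g f$ nor, in general, be dominated by the left-hand side. This is the term that forces the long and delicate analysis. I expect to control it by exploiting that $g\in\H(\overline{\D})$ has only finitely many zeros, near which $T^\ell_g f$ vanishes to high enough order to absorb the negative power of $|g|$, and then running the same tent-space Carleson embedding and non-tangential maximal estimates as for the good term. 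Once the bad term is shown to obey the bound $\lesssim\|g\|^{s}_{\B^s_\alpha}\|Q^{\tau,\ell-1}_g f\|_{\alpha,p}$ (if necessary after absorbing a small multiple of $\|Q^{\tau,\ell}_g f\|_{\alpha,p}$ back into the left-hand side), the one-step estimate, and hence the proposition, follows.
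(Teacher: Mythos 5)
Your overall skeleton --- induction on $\ell$, passage to the analytic power $H_\ell=g^{a\ell}(T^\ell_gf)^b$, Calder\'on's theorem, and the good/bad splitting of the gradient --- is the same as the paper's, and your treatment of the good term is essentially sound, modulo one repairable slip: Theorem~\ref{thm:Carleson:measure:norm:tent:spaces} and \eqref{eqn:Mpq:simeq:BMOAs-norm} are stated for \emph{analytic} $f\in A^p_\alpha$, so they do not apply directly to the non-analytic function $Q^{\tau,\ell-1}_gf$; you must first write $|Q^{\tau,\ell-1}_gf|=|h|^{1/b}$ with $h=g^{a(\ell-1)}(T^{\ell-1}_gf)^b$ analytic and use \eqref{eqn:Tpq:norm:of:a:power} --- this also makes your detour through $\mathcal{M}$ unnecessary. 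The genuine gap is the bad term, exactly where you concede difficulty, and the mechanism you propose for it fails: $T^\ell_gf$ has no reason to vanish at the zeros of $g$ (take $g(z)=z-\frac12$ and $f\equiv1$, so that $T_gf(z)=z$, which is nonzero at $z=\frac12$), so nothing absorbs the negative power $|g|^{\tau(\ell-1)-1}$; moreover, any argument exploiting the finitely many zeros of the particular $g$ would produce constants depending on $g$, which is inadmissible here, since \eqref{eqn:upper:estimate:norm:Q:Hp} must hold uniformly over $g\in\H(\overline{\D})$ --- that uniformity is precisely what allows the general case $g\in\H(\D)$ to be recovered by dilation.

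The paper avoids both problems at once by never differentiating the fractional power. It fixes the representation $\tau=\frac{m}{n}$ with $m,n>2$, differentiates the integer-power analytic function $h_\ell=g^{m\ell}(T^\ell_gf)^n$ (so that $\|Q^{\tau,\ell}_gf\|_{\alpha,p}=\|h_\ell\|^{1/n}_{\alpha,p/n}$ and \eqref{eqn:estimate:Apalpha:norm:derivative} applies), and in the bad term factors out $|g|^{\tau}|g'|$, leaving the weight $|g|^{m\ell-s}$ with $m\ell-s>0$: no negative exponents ever occur, because at the level of the $n$-th power the available exponent of $|g|$ is $m\ell=n\tau\ell$ rather than your $\tau\ell$. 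The remaining difficulty --- that this term still carries the full $T^\ell_gf$ --- is resolved not by a linear one-step bound (which the paper never proves pointwise in $f$) but by H\"older's inequality with exponents $\frac{m\ell}{m\ell-s}$ and $\frac{m\ell}{s}$, interpolating between $\|Q^{\tau,\ell}_gf\|_{\alpha,p}$ and $\|T^\ell_gf\|_{\alpha,p}$. This yields the self-referential inequality
\begin{equation*}
\|Q^{\tau,\ell}_g\|_{\alpha,p}
\lesssim \|g\|^{s^2/m}_{\B^s_{\alpha}}\,\|Q^{\tau,\ell}_g\|^{1-s/(m\ell)}_{\alpha,p}
+\|g\|^{s/n}_{\B^s_{\alpha}}\,\|Q^{\tau,\ell-1}_g\|^{1/n}_{\alpha,p}\,\|Q^{\tau,\ell}_g\|^{1-1/n}_{\alpha,p},
\end{equation*}
which, after dividing by the finite nonzero norm $\|Q^{\tau,\ell}_g\|_{\alpha,p}$ (for nonconstant $g$) and applying a convexity inequality, is absorbed and closed by induction on $\ell$. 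So your instinct about ``absorbing a small multiple back into the left-hand side'' is the right one, but making it work requires the strictly-less-than-one exponents supplied by this H\"older interpolation together with the choice $m,n>2$, not any structure of the zero set of $g$.
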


\begin{proof}
	Let us write $\tau$ as a fraction $m/n$, where $m,n\in\N$ and $m,n>2$. Let $f\in A^p_{\alpha}$ such that $\|f\|_{\alpha,p}=1$.
	Since 
	$\|Q^{\tau,\ell}_gf\|_{\alpha,p}=\|g^{m\ell}(T^{\ell}_g f)^n\|^ {1/n}_{\alpha,p/n}$ and
	\[
	\bigl(g^{m\ell}(T^{\ell}_g f)^{n}\bigr)'
	=m\ell g^{m\ell-1}g'(T_g^{\ell}f)^{n}+ng^{m\ell}g'(T^{\ell-1}_gf)(T^{\ell}_gf)^{n-1},
	\] 
	by~{\eqref{eqn:estimate:Apalpha:norm:derivative}} we have that
	\begin{equation}\label{eqn:estimate:norm:Qf:Hp}
		\|Q^{\tau,\ell}_gf\|_{\alpha,p}\lesssim A_f+B_f,
	\end{equation}
	where
	$A_f=\|g^{m\ell-1}g'(T_g^{\ell}f)^{n}\|^{1/n}_{T^{p/n}_{\alpha,2}}$
	and
	$B_f=\|g^{m\ell}g'(T^{\ell-1}_gf)(T^{\ell}_gf)^{n-1}\|^{1/n}_{T^{p/n}_{\alpha,2}}$.\vspace*{3pt}\newline
	In order to estimate $A_f$,
	observe that 
	\begin{equation*}
		A_f
		=\||g|^{\tau}g'|g|^{m\ell-s}|T_g^{\ell}f|^{n}\|^{1/n}_{T^{p/n}_{\alpha,2}}
		\simeq\||g|^{m\ell-s}|T_g^{\ell}f|^{n}\|^{1/n}_{T^{p/n}_{\alpha,2}(\nu_{g,s})}.
	\end{equation*}
	Now $|g|^{m\ell-s}|T_g^{\ell}f|^{n}=|h|^{1/n}$, where $h=g^{mn\ell-m-n}(T^{\ell}_gf)^{n^2}\in\H(\D)$, since $mn\ell-m-n\in\N$ (because $m,n>2$). 
	Then~{\eqref{eqn:Tpq:norm:of:a:power}} and~{\eqref{eqn:Mpq:simeq:BMOAs-norm}} give that
	\begin{equation*}
		A_f=\|h\|^{1/n^2}_{T^{p/n^2}_{\alpha,2/n}(\nu_{g,s})}
		\lesssim\|g\|_{\B^s_{\alpha}}^{s/n}\,\|h\|_{\alpha,p/n^2}^{1/n^2}.
	\end{equation*}
	Since $m,n>2$, $m\ell\ge m>s$, so we may  apply H\"older's inequality with exponents $\frac{m\ell}{m\ell-s}$ and $\frac{m\ell}{s}$ and take into account the estimates $\|T_g^{\ell}\|_{\alpha,p}\lesssim\|g\|_{\B^1_{\alpha}}^{\ell}$ and~{\eqref{eqn:BMOAq:Bq:nested:norms}} to obtain that 
	\begin{align*}
		\|h\|_{\alpha,p/n^2}^{1/n^2}
		&=\||g|^{m\ell-s}|T^{\ell}_gf|^{n}\|_{\alpha,p/n}^{1/n}
		\le \||g|^{m\ell}|T^{\ell}_gf|^{n}\|_{\alpha,p/n}^{\frac{m\ell-s}{mn\ell}}\,
		\||T_g^{\ell}f|^{n}\|_{\alpha,p/n}^{\frac{s}{mn\ell}}
		\\
		&=\|Q_g^{\tau,\ell}f\|_{\alpha,p}^{\frac{m\ell-s}{m\ell}}\,
		\|T_g^{\ell}f\|_{\alpha,p}^{\frac{s}{m\ell}}
		\lesssim \|Q_g^{\tau,\ell}\|_{\alpha,p}^{\frac{m\ell-s}{m\ell}}\,
		\|g\|_{\B^1_{\alpha}}^{s/m}
		\\
		&\lesssim
		\|Q_g^{\tau,\ell}\|_{\alpha,p}^{\frac{m\ell-s}{m\ell}}\,
		\|g\|_{\B^s_{\alpha}}^{s/m}.
	\end{align*}
	Since $\frac{s}{n}+\frac{s}{m}=\frac{s}{m}(\tau+1)=\frac{s^2}{m}$, it follows that 
	\begin{equation}\label{eqn:estimate:A:Hp}
		A_f\lesssim
		\|g\|_{\B^s_{\alpha}}^{s^2/m}\,\|Q_g^{\tau,\ell}\|_{\alpha,p}^{1-s/(m\ell)}.
	\end{equation}
	
	\vspace*{6pt}
	
	We estimate $B_f$ similarly. Since $mn\ell-m\in\N$, 
	\[
	h=g^{mn\ell-m}(T^{\ell-1}_gf)^n(T^{\ell}_gf)^{n^2-n}\in \H(\D),
	\]
	and so~{\eqref{eqn:Tpq:norm:of:a:power}} and~{\eqref{eqn:Mpq:simeq:BMOAs-norm}} imply that
	\begin{align*}
		B_f
		&=\||g|^{\tau}g'|g|^{m\ell-\tau}|T^{\ell-1}_gf||T^{\ell}_gf|^{n-1}\|^{1/n}_{T^{p/n}_{\alpha,2}}
		\\
		&=\||g|^{m\ell-\tau}|T^{\ell-1}_gf||T^{\ell}_gf|^{n-1}\|^{1/n}_{T^{p/n}_{\alpha,2}(\nu_{g,s})}
		=\||h|^{1/n}\|^{1/n}_{T^{p/n}_{\alpha,2}(\nu_{g,s})}
		=\|h\|^{1/n^2}_{T^{p/n^2}_{\alpha,2/n}(\nu_{g,s})}
		\\
		&\lesssim\|g\|_{\B^s_{\alpha}}^{s/n}\,\|h\|_{\alpha,p/n^2}^{1/n^2}
		=\|g\|_{\B^s_{\alpha}}^{s/n}\,\||g|^{m\ell-\tau}|T^{\ell-1}_gf||T^{\ell}_gf|^{n-1}\|_{\alpha,p/n}^{1/n}.
	\end{align*}
	Now
	$m\ell-\tau=m\ell-\tau\ell+\tau\ell-\tau=\tau\ell(n-1)+\tau(\ell-1)$,
	so we may apply H\"{o}lder's inequality with exponents $n$ and $\frac{n}{n-1}$ to get that
	\begin{align*}
		B_f
		&\lesssim\|g\|^{s/n}_{\B^s_{\alpha}}\,
		\||g|^{\tau(\ell-1)}|T^{\ell-1}_gf|\,
		|g|^{\tau\ell(n-1)}|T^{\ell}_gf|^{n-1}\|_{\alpha,p/n}^{1/n}\\
		&\le\|g\|^{s/n}_{\B^s_{\alpha}}\,
		\||g|^{n\tau(\ell-1)}|T^{\ell-1}_gf|^{n}\|^{1/n^2}_{\alpha,p/n}\, 
		\||g|^{n\tau\ell}|T^{\ell}_gf|^{n}\|^{(n-1)/n^2}_{\alpha,p/n} \\
		&=\|g\|^{s/n}_{\B^s_{\alpha}}\,
		\|Q_g^{\tau,\ell-1}f\|^{1/n}_{\alpha,p}\,
		\|Q_g^{\tau,\ell}f\|^{1-1/n}_{\alpha,p}.
	\end{align*}
	It follows that
	\begin{equation}\label{eqn:estimate:B:H^p}
		B_f\lesssim\left\{
		\begin{array}{ll}
			\|g\|^{s/n}_{\B^s_{\alpha}}\,\,
			\|Q^{\tau,1}_g\|^{1-1/n}_{\alpha,p},
			&\mbox{if $\ell=1$,}\vspace*{4pt}
			\\
			\|g\|^{s/n}_{\B^s_{\alpha}}\,\,
			\|Q_g^{\tau,\ell-1}\|^{1/n}_{\alpha,p}\,\,
			\|Q^{\tau,\ell}_g\|^{1-1/n}_{\alpha,p}, &\mbox{if $\ell>1$.}
		\end{array}
		\right.
	\end{equation}
	Therefore~{\eqref{eqn:estimate:norm:Qf:Hp}}, \eqref{eqn:estimate:A:Hp} and~{\eqref{eqn:estimate:B:H^p}} imply that
	\begin{align*}
		\|Q^{\tau,1}_g\|_{\alpha,p}
		&\lesssim
		\|g\|^{s^2/m}_{\B^s_{\alpha}}\,\|Q^{\tau,1}_g\|^{1-s/m}_{\alpha,p}+
		\|g\|^{s/n}_{\B^s_{\alpha}}\,\,
		\|Q^{\tau,1}_g\|^{1-1/n}_{\alpha,p}\quad\mbox{and}
		\\
		\|Q^{\tau,\ell}_g\|_{\alpha,p}
		&\lesssim \|g\|^{s^2/m}_{\B^s_{\alpha}}
		\|Q^{\tau,\ell}_g\|^{1-s/(m\ell)}_{\alpha,p}
		+\|g\|^{s/n}_{\B^s_{\alpha}}
		\|Q_g^{\tau,\ell-1}\|^{1/n}_{\alpha,p}
		\|Q^{\tau,\ell}_g\|^{1-1/n}_{\alpha,p},
	\end{align*}
	for $\ell>1$. 
	Recall that $0<\|Q^{\tau,\ell}_g\|_{\alpha,p}<\infty$, for any $q$ and $\ell$, if $g$ is not constant, while $\|Q^{\tau,\ell}_g\|_{\alpha,p}=0$, for all $q$ and $\ell$, otherwise. In particular, if $g$ is constant then~{\eqref{eqn:upper:estimate:norm:Q:Hp}} holds. On the other hand, when $g$ is not constant, we may divide by $\|Q^{\tau,1}_g\|_{\alpha,p}$ and $\|Q^{\tau,\ell}_g\|_{\alpha,p}$ in the preceding estimates to get that
	\begin{equation*}
		1\lesssim
		\biggl(\frac{\|g\|^s_{\B^s_{\alpha}}}{\|Q^{\tau,1}_g\|_{\alpha,p}}\biggr)^{s/m}
		+\biggl(\frac{\|g\|^s_{\B^s_{\alpha}}}{\|Q^{\tau,1}_g\|_{\alpha,p}}\biggr)^{1/n}
	\end{equation*}
	and
	\begin{equation*}
		1\lesssim \biggl(\frac{\|g\|^s_{\B^s_{\alpha}}}{\|Q^{\tau,\ell}_g\|^{1/\ell}_{\alpha,p}}\biggr)^{s/m}
		+\biggl(\|g\|^s_{\B^s_{\alpha}}
		\frac{\|Q^{\tau,\ell-1}_g\|_{\alpha,p}}
		{\|Q^{\tau,\ell}_g\|_{\alpha,p}}\biggr)^{1/n}
		\quad(\ell>1).
	\end{equation*}
	Since $\frac{s}{m}=\frac{s}\tau\frac1{n}$, we may apply the convexity inequality 
	\[
	(x+y)^{n}\le 2^{n-1}(x^{n}+y^{n})
	\qquad(x,y>0),
	\] 
	to deduce that
	\begin{equation}\label{eqn:inequality:ell=1:Hp}
		1\lesssim
		\biggl(\frac{\|g\|^s_{\B^s_{\alpha}}}{\|Q^{\tau,1}_g\|_{\alpha,p}}\biggr)^{s/\tau}
		+\frac{\|g\|^s_{\B^s_{\alpha}}}{\|Q^{\tau,1}_g\|_{\alpha,p}}
	\end{equation}
	and
	\begin{equation}\label{eqn:induction:inequality:Hp}
		1\lesssim \Biggl(\frac{\|g\|^s_{\B^s_{\alpha}}}{\|Q^{\tau,\ell}_g\|^{1/\ell}_{\alpha,p}}\Biggr)^{s/\tau}
		+\|g\|^s_{\B^s_{\alpha}}
		\frac{\|Q^{\tau,\ell-1}_g\|_{\alpha,p}}
		{\|Q^{\tau,\ell}_g\|_{\alpha,p}}
		\quad(\ell>1).
	\end{equation}
	Now we can prove~{\eqref{eqn:upper:estimate:norm:Q:Hp}} by induction on $\ell$. First note that the case $\ell=1$ follows from~{\eqref{eqn:inequality:ell=1:Hp}}. 
	Let $\ell>1$.
	By the induction hypothesis,  
	\begin{equation*}
		\|Q_g^{\tau,\ell-1}\|_{\alpha,p}
		\lesssim\|g\|^{s(\ell-1)}_{\B^s_{\alpha}}. 
	\end{equation*} 
	Therefore, by~{\eqref{eqn:induction:inequality:Hp}},
	we have that
	\begin{equation*}
		1\lesssim \biggl(\frac{\|g\|^{s\ell}_{\B^s_{\alpha}}}{\|Q^{\tau,\ell}_g\|_{\alpha,p}}\biggr)^{s/(\tau\ell)}
		+
		\frac{\|g\|^{s\ell}_{\B^s_{\alpha}}}
		{\|Q^{\tau,\ell}_g\|_{\alpha,p}},
	\end{equation*}
	and so it follows that 
	$\|Q_g^{\tau,\ell}\|_{\alpha,p}\lesssim\|g\|^{s\ell}_{\B^s_{\alpha}}$. 
	Hence the proof is complete.
\end{proof}

\section{Proof of Theorem~{\ref{thm:sharp:estimate:norm:word} {\sf b)}} for $L=S^m_gT^n_g$: Necessary condition}

Taking into account Remark \ref{remark:SmTn:trivial:cases}, the necessity in Theorem~{\ref{thm:sharp:estimate:norm:word} {\sf b)} reduces to

\begin{theorem}\label{prop:sharp:lower:estimate:norm:word:ST:Hp}
	Let $m,n\in\N$, $n\ge2$, and $s=\frac{m}n+1$. Then
	\begin{equation*}
		\|g\|^{sn}_{\B^s_{\alpha}}
		\lesssim
		\opnorm{S^m_gT^n_g}_{\alpha,p}
		\qquad(g\in\H(\D)).
	\end{equation*}
\end{theorem}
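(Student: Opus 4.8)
The plan is to convert the required lower bound into a tent-space estimate via Calderón's area theorem and then to saturate that estimate with test functions matched to the Carleson-measure description of $\|g\|_{\B^s_\alpha}$. As usual I may assume $g\in\H(\overline{\D})$. Since $(S^m_gT^n_gf)'=g^mg'\,T^{n-1}_gf$ and $S^m_gT^n_gf(0)=0$, the corollary \eqref{eqn:estimate:Apalpha:norm:derivative} of Calderón's theorem together with \eqref{eqn:Tpq:norm:of:a:power} and the definition \eqref{eqn:def:nugs} of $\nu_{g,s}$ yields, after factoring $|\nabla|g|^s|^2=s^2|g|^{2m/n}|g'|^2$ out of the integrand, the two-sided equivalence
\[
\opnorm{S^m_gT^n_g}_{\alpha,p}\simeq\sup\Bigl\{\|h\|^{1/n}_{T^{p/n}_{\alpha,2/n}(\nu_{g,s})}:\ f\in A^p_{\alpha}(0),\ \|f\|_{\alpha,p}\le1\Bigr\},\qquad h=g^{m(n-1)}(T^{n-1}_gf)^n .
\]
Note that this is the same chain of relations used in the sufficiency proof, but now read as an equivalence: the necessity thus reduces to exhibiting an admissible $f$ for which the tent-space norm of the associated $h$ is $\gtrsim\|g\|^{sn}_{\B^s_\alpha}$.

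For the target quantity I would use the Carleson-measure side of the theory: by \eqref{eqn:derivative:symbol:Carleson:measure} and \eqref{eqn:Mpq:simeq:BMOAs-norm} one has $\|g\|^{2s}_{\B^s_\alpha}\simeq\|\mu^{\alpha}_{g,s}\|_{\CC(\alpha)}\simeq M_{p/n,2/n}(\nu_{g,s},\alpha)$. Hence it is enough to fix a point $a\in\D$ nearly realizing the supremum defining $\|\mu^{\alpha}_{g,s}\|_{\CC(\alpha)}$ and to build a normalized $f_a\in A^p_\alpha(0)$ whose $h$ concentrates on the Carleson box $S(a)$ and there carries mass comparable to $\|g\|^{sn}_{\B^s_\alpha}$. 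To manufacture such an $f_a$ I would exploit the conformal invariance of $\|\cdot\|_{\B^s_\alpha}$ (Remarks~\ref{remarks:symbols}) and the composition covariance of $T_g,S_g$ to transfer the base point to the origin, where the explicit identity \eqref{eqn:Tn:g0k}, $T^{n-1}_gg_0^{\,j}=\frac{j!}{(n-1+j)!}\,g_0^{\,n-1+j}$ with $g_0=g-g(0)$, turns $h$ into an explicit power of $g_0$ and makes the localized tent-space mass computable.

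The main obstacle is the reverse (lower) control of the iterated operator $T^{n-1}_g$. For $n=1$ the localization is immediate: $(T_gf)'=fg'$, and a concentrated reproducing kernel $f_a$ satisfies $|f_a|\gtrsim(1-|a|^2)^{-(\alpha+2)/p}$ on the top half of $S(a)$, so $\|T_gf_a\|_{\alpha,p}$ recovers the box mass of $\nu_{g,1}$ directly. For $n\ge2$ the map $T^{n-1}_g$ is a genuinely global integral operator and must still be shown to remain large on $S(a)$; inverting it is not viable, since $T_g^{-1}$ amounts to differentiating and dividing by $g'$, which blows up at the zeros of $g'$. I expect to circumvent this exactly as in the upper bounds, deriving a self-improving inequality for $\opnorm{S^m_gT^n_g}_{\alpha,p}$ and closing it by induction on $n$, with Proposition~\ref{prop:boundedness:Tg} — which already gives $\|T_g\|_{\alpha,p}\simeq\|g\|_{\B^1_\alpha}$, the case $s=1$ — as the base of the induction. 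The remaining technical points, namely passing between $\opnorm{\,\cdot\,}_{\alpha,p}$ and $\|\cdot\|_{\alpha,p}$ through \eqref{eqn:restricted:norm:Tn:0}, and the Hardy endpoint $\alpha=-1$ where the proxy operators $Q^{\tau,\ell}_g$ are non-holomorphic and demand boundary-value care, are absorbed by the machinery assembled in Section~\ref{section:auxiliary:results}.
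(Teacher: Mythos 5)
Your opening reduction is correct and coincides in substance with the paper's first step: since $(S^m_gT^n_gf)(0)=0$ and $(S^m_gT^n_gf)'=g^mg'\,T^{n-1}_gf$, Calder\'on's corollary \eqref{eqn:estimate:Apalpha:norm:derivative} does give the two-sided equivalence you state, which the paper packages as the reduction to fractional operators, namely Proposition~\ref{lem:sharp:lower:estimate:norm:word:2:Hp}, $\opnorm{Q^{\tau,n}_g}_{\alpha,p}\lesssim\opnorm{S^m_gT^n_g}_{\alpha,p}$ with $\tau=\frac{m}n$. Everything after that, however, is a programme rather than a proof, and the programme has a genuine gap exactly at the point you yourself flag. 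The test-function plan cannot be executed as described: for $n\ge2$ there is no pointwise lower bound for $|T^{n-1}_gf_a|$ on a Carleson box (the iterated integrals can cancel), and \eqref{eqn:Mpq:simeq:BMOAs-norm} only says that the supremum over \emph{all} normalized elements of $A^{p/n}_{\alpha}$ of the tent norm recovers $\|g\|^{2s}_{\B^s_{\alpha}}$ --- your functions $h=g^{m(n-1)}(T^{n-1}_gf)^n$ range over a constrained family, and no construction of an $f_a$ making $h$ carry the box mass is offered. Your fallback, a ``self-improving inequality for $\opnorm{S^m_gT^n_g}_{\alpha,p}$ closed by induction on $n$ with $\|T_g\|_{\alpha,p}\simeq\|g\|_{\B^1_{\alpha}}$ as base,'' is structurally misaligned: $s=\frac{m}n+1$ changes with $n$, so an induction on $n$ moves across different classes $\B^s_{\alpha}$, and the cited base corresponds to $m=0$, which feeds no recursion for fixed $m\ge1$.

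What actually closes the lower bound in the paper is different in all three of its essential mechanisms. The induction (Propositions~\ref{prop:sharp:lower:estimate:norm:word:1:Hp} and~\ref{lem:sharp:lower:estimate:norm:word:4:Hp}) runs on the exponent $\ell$ of $Q^{\tau,\ell}_g$ with $\tau=\frac{m}n$ held \emph{fixed}, and its base case $\ell=1$ is the crux: with $h=g^m(T_gf)^n$ one has the pointwise inequality $\tfrac1{n^2}|h|^{\frac2n-2}|h'|^2=|g|^{2\tau-2}|g'|^2\,|\tau T_gf+gf|^2\ge|g|^{2\tau}|g'|^2|f|^2-2\tau|g|^{2\tau-1}|g'|^2|f\,T_gf|$, whose \emph{main} term involves $f$ alone, so that taking the supremum over all normalized $f\in A^p_{\alpha}(0)$ and invoking the embedding theorem (Theorem~\ref{thm:Carleson:measure:norm:tent:spaces}, with Lemma~\ref{lem:Carleson:measure:subh:funct} to remove the $|z|^2$ factor forced by $f(0)=0$) recovers the full Carleson norm $\|g\|^{2s}_{\B^s_{\alpha}}$ with no extremal function whatsoever --- this is precisely how the paper sidesteps the obstacle you identify. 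Moreover, the error term above carries the factor $\tau$ and can be absorbed only for $\tau$ small (Proposition~\ref{prop:sharp:lower:estimate:norm:word:tau:small}); general $\tau$ then requires the H\"older monotonicity estimate \eqref{eqn:prop:lower:estimate:Qtau1:BMOA:1}, yielding first the weak bound $\|g\|^s_{\B^1_{\alpha}}\lesssim\opnorm{Q^{\tau,1}_g}_{\alpha,p}$, and finally a bootstrap through Proposition~\ref{prop:key:tool:sharp:lower:estimate:norm:word:Hp}; the inductive step itself rests on the heavy tent-space estimates of Proposition~\ref{prop:key:tool:lower:estimate:norm:word}. Since none of these ingredients --- the splitting with a $\tau$-small error, the supremum-over-all-$f$ use of the embedding theorem in place of a test function, and the induction on $\ell$ --- appears in your sketch, the necessity direction remains unproved.
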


 Theorem~{\ref{prop:sharp:lower:estimate:norm:word:ST:Hp}} is a direct consequence of the following two fundamental results:

\begin{proposition}\label{prop:sharp:lower:estimate:norm:word:1:Hp}
	Let $\ell\in\N$, $\tau\in\Q$, $\tau>0$, and $s=\tau+1$. Then
	\begin{equation*}
		\|g\|^{s\ell}_{\B^s_{\alpha}}\lesssim
		\opnorm{Q_g^{\tau,\ell}}_{\alpha,p}
		\qquad(g\in\H(\overline{\D})).
	\end{equation*}
\end{proposition}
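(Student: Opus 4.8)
The plan is to argue by induction on $\ell$, driving the estimate with Calder\'on's area theorem, the tent-space description \eqref{eqn:Mpq:simeq:BMOAs-norm} of $\|g\|_{\B^s_{\alpha}}$, and the non-tangential maximal operator of Corollary~\ref{cor:boundedness:non-tangential:maximal:function}. Throughout I would write $\tau=m/n$ with $m,n\in\N$ chosen large (so that every power of $g$ appearing below is an integer, exactly as in the companion upper bound), abbreviate $d\nu_{g,s}=|\nabla|g|^s|^2\,dA$ as in \eqref{eqn:def:nugs}, and use repeatedly that $|\nabla|g|^s|=s\,|g|^{\tau}|g'|$ since $s=\tau+1$. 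The base case $\ell=1$ is handled by the same mechanism with the convention $Q^{\tau,0}_g=\text{Id}$.

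First I would fix $f\in A^p_{\alpha}(0)$ and pass to the holomorphic function $F=g^{m\ell}(T^\ell_gf)^n$, for which $\|Q^{\tau,\ell}_gf\|^p_{\alpha,p}=\|F\|^{p/n}_{\alpha,p/n}$. Applying the right-hand (lower) bound in Calder\'on's theorem \eqref{eqn:estimate:Apalpha:norm:derivative}, with $p$ replaced by $p/n$, gives $\|Q^{\tau,\ell}_gf\|^p_{\alpha,p}\gtrsim\|F'\|^{p/n}_{T^{p/n}_{\alpha,2}}$. I would then expand
\[
F'=m\ell\,g^{m\ell-1}g'(T^\ell_gf)^n+n\,g^{m\ell}g'(T^{\ell-1}_gf)(T^\ell_gf)^{n-1}=:A+B,
\]
and, after pulling out the factor $|g|^{\tau}|g'|$ that produces the density of $\nu_{g,s}$, record the pointwise identities $|A|^2\,dA\simeq|g|^{-2s}(Q^{\tau,\ell}_gf)^{2n}\,d\nu_{g,s}$ and $|B|^2\,dA\simeq(Q^{\tau,\ell-1}_gf)^2(Q^{\tau,\ell}_gf)^{2(n-1)}\,d\nu_{g,s}$; the apparent singularity $|g|^{-2s}$ in the first is harmless, since each zero of $g$ is compensated by the corresponding power of $g$ inside $Q^{\tau,\ell}_gf$. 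By \eqref{eqn:Tpq:norm:of:a:power} and \eqref{eqn:Mpq:simeq:BMOAs-norm} both $\|A\|_{T^{p/n}_{\alpha,2}}$ and $\|B\|_{T^{p/n}_{\alpha,2}}$ are thus tent-space norms against $\nu_{g,s}$, hence the very quantities governed by $\|g\|_{\B^s_{\alpha}}$.

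The heart of the matter is turning this into a genuine lower bound. Here I would test $Q^{\tau,\ell}_g$ against the normalized functions $f_a(z)=\bigl((1-|a|^2)/(1-\bar a z)^2\bigr)^{(\alpha+2)/p}$, projected to vanish at the origin so that they lie in $A^p_{\alpha}(0)$; these satisfy $\|f_a\|_{\alpha,p}\simeq1$ and $|f_a|\simeq(1-|a|^2)^{-(\alpha+2)/p}$ on the Carleson box $S(a)$, and I would take $a$ nearly realizing the supremum in $\|\mu^{\alpha}_{g,s}\|_{\mathcal{C}(\alpha)}\simeq\|g\|^{2s}_{\B^s_{\alpha}}$ from \eqref{eqn:derivative:symbol:Carleson:measure}. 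On Stolz regions the holomorphic factors $T^{\ell-1}_gf_a$, $T^\ell_gf_a$ vary slowly, so Corollary~\ref{cor:boundedness:non-tangential:maximal:function} lets me replace them by pointwise comparable constants and extract them from the inner area integral over $\Gamma(\zeta)$; what remains, integrated over the tent above $S(a)$, reproduces $\mu^{\alpha}_{g,s}(S(a))/(1-|a|^2)^{\alpha+2}$ (recall $d\mu^{\alpha}_{g,s}=(1-|z|^2)^{\alpha+2}\,d\nu_{g,s}$, cf.\ \eqref{eqn:Carleson:measure:Balphaq}). Feeding in the inductive upper bound $\opnorm{Q^{\tau,\ell-1}_g}_{\alpha,p}\le\|Q^{\tau,\ell-1}_g\|_{\alpha,p}\lesssim\|g\|^{s(\ell-1)}_{\B^s_{\alpha}}$ from \eqref{eqn:upper:estimate:norm:Q:Hp} to control the secondary contribution, one should arrive at $\|g\|^{s\ell}_{\B^s_{\alpha}}\lesssim\opnorm{Q^{\tau,\ell}_g}_{\alpha,p}$.

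The main obstacle is exactly this extraction. In the upper bound one only uses the embedding into $T^{p}_{\alpha,q}(\nu_{g,s})$ in its easy direction; a \emph{lower} bound for a tent-space norm against $\nu_{g,s}$ is a supremum and cannot be read off from a single $f$, so it must be forced by the concentrated test functions together with the reverse quasi-triangle inequality in $T^{p/n}_{\alpha,2}$, which requires showing that the diagonal term dominates $A+B$ rather than cancelling against the cross term. Keeping all implied constants uniform in the concentration point $a$ and independent of the location and multiplicities of the zeros of $g$ is the delicate point that the full argument must control.
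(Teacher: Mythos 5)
Your toolbox is the right one (Calder\'on's theorem, tent-space norms against $\nu_{g,s}$, the estimate \eqref{eqn:upper:estimate:norm:Q:Hp}, induction on $\ell$), but the heart of your argument does not work. You propose to extract the Carleson lower bound by testing against the concentrated functions $f_a$ and then ``replacing $T^{\ell-1}_gf_a$, $T^{\ell}_gf_a$ by pointwise comparable constants'' on Stolz regions via Corollary~\ref{cor:boundedness:non-tangential:maximal:function}. The maximal operator only gives \emph{upper} control: $T^{\ell}_gf_a$ is a holomorphic function that can vanish and oscillate inside $\Gamma(\zeta)$, and no lower bound for $|T^{\ell}_gf_a|$ on the tent over $S(a)$ is available (if such a two-sided comparison held, the proposition would be almost immediate). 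Consequently the claim that the remaining integral ``reproduces $\mu^{\alpha}_{g,s}(S(a))/(1-|a|^2)^{\alpha+2}$'' is unsupported; this is exactly the obstacle you flag in your last paragraph, and your sketch never resolves it. A second, quantitative gap: in your $F'=A+B$ decomposition the secondary term is of the \emph{same} order as the main one (for $\ell=1$ it is comparable to $\|g\|^{2s}_{\B^s_{\alpha}}$ itself), so the inductive upper bound for $Q^{\tau,\ell-1}_g$ alone cannot absorb it.

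The paper's proof circumvents both problems. For $\ell=1$ it applies Calder\'on's theorem \eqref{eqn:Calderon:area:thm:3} with the \emph{fractional} exponent $q=\frac1n$ to $h=g^m(T_gf)^n$: since $(T_gf)'=g'f$, the normalized quantity $\frac1{n^2}|h|^{\frac2n-2}|h'|^2=|g|^{2\tau-2}|g'|^2|\tau T_gf+gf|^2$ has dominant part exactly $|g|^{2\tau}|g'|^2|f|^2$ — it involves $f$ itself, not $T_gf$ — and the cross term carries a factor $\tau$. Taking the supremum over \emph{all} unit-norm $f\in A^p_{\alpha}(0)$ and invoking the two-sided embedding (Theorem~\ref{thm:Carleson:measure:norm:tent:spaces} together with \eqref{eqn:Mpq:simeq:BMOAs-norm} and Lemma~\ref{lem:Carleson:measure:subh:funct}) yields the Carleson norm directly, with no concentrated test functions and no lower bound on $|T_gf|$ needed; the same-order error term is then absorbed first for $\tau$ small (Proposition~\ref{prop:sharp:lower:estimate:norm:word:tau:small}) and promoted to all $\tau$ by a H\"older interpolation in the parameter $\tau$, $\opnorm{Q_g^{\tau_0,1}}_{\alpha,p}^{1/\tau_0}\lesssim\opnorm{Q_g^{\tau,1}}_{\alpha,p}^{1/\tau}\|g\|_{\B^1_{\alpha}}^{\frac1{\tau_0}-\frac1{\tau}}$. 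For $\ell\ge2$ no direct extraction is attempted at all: the induction runs \emph{upward} through the bridge inequality $\opnorm{Q^{\tau,\ell}_g}_{\alpha,p}^n\lesssim\opnorm{Q_g^{\tau,\ell+1}}_{\alpha,p}\,\|g\|^{s\ell(n-\frac{\ell+1}{\ell})}_{\B^s_{\alpha}}$ (Proposition~\ref{prop:key:tool:lower:estimate:norm:word}, requiring $n>1+\frac{2s}{\tau}$), whose proof controls all mixed terms by H\"older's inequality in tent spaces and \eqref{eqn:upper:estimate:norm:Q:Hp}. You would need to import both of these mechanisms — the $q=\frac1n$ pointwise inequality with its $\tau$-absorption, and the bridge from $\ell$ to $\ell+1$ — to make your outline into a proof.
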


\begin{proposition}\label{lem:sharp:lower:estimate:norm:word:2:Hp}
	Let  $\tau=\frac{m}n$, $m,n\in\N$. Then
	\begin{equation}
		\label{eqn:lem2:sharp:lower:estimate:norm:word1:Hp}
		\opnorm{Q_g^{\tau,n}}_{\alpha,p}
		\lesssim\opnorm{S^m_gT^n_g}_{\alpha,p}
		\qquad(g\in\H(\overline{\D})).
	\end{equation}
\end{proposition}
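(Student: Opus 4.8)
The plan is to first collapse the left-hand side to the norm of a \emph{holomorphic} function, then split it by an exact algebraic identity, and finally absorb the surviving ``lower order'' word by a self-improving inequality. As usual (by \cite[Prop.~4.3]{Aleman:Cascante:Fabrega:Pascuas:Pelaez} together with Proposition~\ref{prop:radialized:symbol}) we may assume $g\in\H(\overline{\D})$. Since $g^mT^n_gf$ is holomorphic and $|Q^{\tau,n}_gf|=|g|^{\tau n}|T^n_gf|=|g|^{m}|T^n_gf|=|g^mT^n_gf|$ pointwise (recall $\tau n=m$), we get $\|Q^{\tau,n}_gf\|_{\alpha,p}=\|g^mT^n_gf\|_{\alpha,p}$ for every $f$. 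Thus everything reduces to estimating the $A^p_\alpha$-norm of the holomorphic function $g^mT^n_gf$ for $f\in A^p_\alpha(0)$.

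The main tool is the exact identity $g^mT^n_gf=S^m_gT^n_gf+m\,S^{m-1}_gT^{n+1}_gf$ on $\H_0(\D)$. Indeed, by \eqref{eqn:formula} one has $M_{g^m}=S_{g^m}+T_{g^m}$ on $\H_0(\D)$, while comparing derivatives gives $S_{g^m}=S^m_g$ and, from $S^{m-1}_gT_g=\tfrac1m T_{g^m}$ (cf.\ \eqref{eqn:ST-decomposition:TgMg^k}), also $T_{g^m}=m\,S^{m-1}_gT_g$; applying this to $T^n_gf\in\H_0(\D)$ yields the identity. Because $(g^mT^n_gf)(0)=0$ and $\|S^m_gT^n_gf\|_{\alpha,p}\le\opnorm{S^m_gT^n_g}_{\alpha,p}$ for unit $f\in A^p_\alpha(0)$, the problem reduces to the single estimate $\|S^{m-1}_gT^{n+1}_gf\|_{\alpha,p}\lesssim\opnorm{S^m_gT^n_g}_{\alpha,p}$ for such $f$.

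This last bound is the crux. When $m=1$ the word is $T^{n+1}_g$, and Proposition~\ref{prop:boundedness:Tg} (giving $\|T_g\|_{\alpha,p}\lesssim\opnorm{S_gT^n_g}_{\alpha,p}^{1/(n+1)}$, together with $\|T^{n+1}_g\|_{\alpha,p}\simeq\|T_g\|_{\alpha,p}^{n+1}$) settles it directly. For $m\ge2$ the difficulty is that $S^{m-1}_gT^{n+1}_g$ has the same homogeneity as $S^m_gT^n_g$, so a naive comparison is circular. The way out is to manufacture a \emph{strictly sublinear} power of $\opnorm{Q^{\tau,n}_g}$. First, the sufficiency estimate \eqref{eqn:estimate:norm:SmTn:by:norm:Q:Hp}, applied to the word $S^{m-1}_gT^{n+1}_g$, gives $\|S^{m-1}_gT^{n+1}_gf\|_{\alpha,p}\lesssim\|g\|^{s'}_{\B^{s'}_{\alpha}}\,\|Q^{\tau',n}_gf\|_{\alpha,p}$ with $\tau'=\tfrac{m-1}{n+1}$ and $s'=\tau'+1<s$. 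Next, writing $\tau'n=\theta m$ with $\theta=\tfrac{(m-1)n}{m(n+1)}\in(0,1)$, Hölder's inequality yields $\|Q^{\tau',n}_gf\|_{\alpha,p}\le\|Q^{\tau,n}_gf\|^{\theta}_{\alpha,p}\,\|T^n_gf\|^{1-\theta}_{\alpha,p}$. Now the three factors are bounded separately: $\|g\|_{\B^{s'}_{\alpha}}\lesssim\|g\|_{\B^{s}_{\alpha}}\lesssim\opnorm{Q^{\tau,n}_g}_{\alpha,p}^{1/(m+n)}$ by the nesting of Corollary~\ref{cor1:prop:BMOAq:Bq:nested} and Proposition~\ref{prop:sharp:lower:estimate:norm:word:1:Hp}; $\|Q^{\tau,n}_gf\|_{\alpha,p}\le\opnorm{Q^{\tau,n}_g}_{\alpha,p}$; and, crucially, $\|T^n_gf\|_{\alpha,p}\lesssim\|g\|^{n}_{\B^1_{\alpha}}\lesssim\opnorm{S^m_gT^n_g}_{\alpha,p}^{\,n/(m+n)}$ via Proposition~\ref{prop:boundedness:Tg}. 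Routing the Bloch factor through $\opnorm{S^m_gT^n_g}_{\alpha,p}$ rather than through $\opnorm{Q^{\tau,n}_g}_{\alpha,p}$ is exactly what breaks the circularity: a bookkeeping of exponents (using $s'(m+n)^{-1}=(n+1)^{-1}$ and $1-\theta=\tfrac{m+n}{m(n+1)}$) collapses to
\[
\|S^{m-1}_gT^{n+1}_gf\|_{\alpha,p}\lesssim\opnorm{Q^{\tau,n}_g}_{\alpha,p}^{\,1-\delta}\,\opnorm{S^m_gT^n_g}_{\alpha,p}^{\,\delta},\qquad \delta=\tfrac{n}{m(n+1)}>0.
\]

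Combining this with the reduction above gives $\opnorm{Q^{\tau,n}_g}_{\alpha,p}\lesssim\opnorm{S^m_gT^n_g}_{\alpha,p}+\opnorm{Q^{\tau,n}_g}_{\alpha,p}^{\,1-\delta}\opnorm{S^m_gT^n_g}_{\alpha,p}^{\,\delta}$. Since $g\in\H(\overline{\D})$ makes $\opnorm{Q^{\tau,n}_g}_{\alpha,p}$ finite and $1-\delta<1$, setting $t=\opnorm{Q^{\tau,n}_g}_{\alpha,p}/\opnorm{S^m_gT^n_g}_{\alpha,p}$ (the degenerate case $\opnorm{S^m_gT^n_g}_{\alpha,p}=0$ forcing $\opnorm{Q^{\tau,n}_g}_{\alpha,p}=0$) reduces the inequality to $t\lesssim 1+t^{1-\delta}$, which bounds $t$; hence $\opnorm{Q^{\tau,n}_g}_{\alpha,p}\lesssim\opnorm{S^m_gT^n_g}_{\alpha,p}$, as claimed. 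I expect the genuine obstacle to be precisely this $m\ge2$ absorption: the positive gap $\delta$ is available only because the lower-order word $S^{m-1}_gT^{n+1}_g$ satisfies the \emph{strictly weaker} symbol condition $\B^{s'}_\alpha$ with $s'<s$, and one must feed its $\B^1_\alpha$-content through the operator norm of $S^m_gT^n_g$ (not of $Q^{\tau,n}_g$) to exploit that gap.
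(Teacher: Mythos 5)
Your argument is correct, and while it shares the paper's outer skeleton, it resolves the crucial step by a genuinely different mechanism. Like the paper, you identify $\|Q^{\tau,n}_gf\|_{\alpha,p}=\|g^mT^n_gf\|_{\alpha,p}$, split off $S^m_gT^n_gf$, and close by absorbing a strictly sublinear power of $\opnorm{Q^{\tau,n}_g}_{\alpha,p}$; indeed your identity $g^mT^n_gf=S^m_gT^n_gf+m\,S^{m-1}_gT^{n+1}_gf$ is precisely the integrated form of the paper's computation $h'=m\,g^{m-1}g'\,T_g^nf+(S_g^mT_g^nf)'$ for $h=g^mT^n_gf$. The divergence is in how the error term is estimated. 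The paper keeps the derivative term $g^{m-1}g'T^n_gf$ in the tent norm, applies the Carleson embedding \eqref{eqn:Mpq:simeq:BMOAs-norm} only at the $\B^1_{\alpha}$ level (the measure $\nu_{g,1}$), splits $|g|^{m-1}|T^n_gf|$ by H\"older with exponents $m$ and $\frac{m}{m-1}$ into $\|T^n_gf\|_{\alpha,p}^{1/m}\|Q^{\tau,n}_gf\|_{\alpha,p}^{(m-1)/m}$, and feeds $\|T_g\|_{\alpha,p}\lesssim\opnorm{S^m_gT^n_g}_{\alpha,p}^{1/(m+n)}$ (Proposition~\ref{prop:boundedness:Tg}) to absorb with exponent $1-\frac1m$. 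You instead estimate the integrated word $S^{m-1}_gT^{n+1}_gf$ via the sufficiency bound \eqref{eqn:estimate:norm:SmTn:by:norm:Q:Hp} (whose proof in the paper is per-$f$, so your pointwise use is legitimate), interpolate $Q^{\tau',n}_g$ between $Q^{\tau,n}_g$ and $T^n_g$, and—this is the real difference—invoke Proposition~\ref{prop:sharp:lower:estimate:norm:word:1:Hp} with $\ell=n$, $sn=m+n$, to convert $\|g\|_{\B^s_{\alpha}}$ into $\opnorm{Q^{\tau,n}_g}_{\alpha,p}^{1/(m+n)}$, landing at absorption exponent $1-\delta$ with $\delta=\frac{n}{m(n+1)}$; your exponent bookkeeping ($s'(m+n)^{-1}=(n+1)^{-1}$, $1-\theta=\frac{m+n}{m(n+1)}$) checks out, as do the degenerate cases and the $m=1$ base case. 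This is not circular, since the induction establishing Proposition~\ref{prop:sharp:lower:estimate:norm:word:1:Hp} (Propositions~\ref{lem:sharp:lower:estimate:norm:word:3:Hp} and~\ref{lem:sharp:lower:estimate:norm:word:4:Hp}) nowhere uses the present proposition; but it reverses the paper's dependency order—the paper's proof is deliberately light, self-contained modulo Proposition~\ref{prop:boundedness:Tg} and the $\B^1_{\alpha}$ Carleson embedding (which is why it is proved first, the other proposition being "more involved"), whereas yours must come after the full induction. What your route buys is that the new steps are pure algebra plus H\"older, with no fresh tent-space/Calder\'on computation; what it costs is the heavier machinery. One cosmetic point: your phrase that "the problem reduces to $\|S^{m-1}_gT^{n+1}_gf\|_{\alpha,p}\lesssim\opnorm{S^m_gT^n_g}_{\alpha,p}$" overstates what you actually prove—you establish only the mixed bound $\opnorm{Q^{\tau,n}_g}_{\alpha,p}^{1-\delta}\opnorm{S^m_gT^n_g}_{\alpha,p}^{\delta}$ and then absorb, which is all the argument needs.
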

\vspace*{6pt}

We begin by proving Proposition~{\ref{lem:sharp:lower:estimate:norm:word:2:Hp}} because the proof of Proposition~{\ref{prop:sharp:lower:estimate:norm:word:1:Hp}} is more involved.

\begin{proof}[{\bf Proof of Proposition \ref{lem:sharp:lower:estimate:norm:word:2:Hp}}]
	Without loss of generality we may assume that $g\in\H(\overline{\D})$ is non-constant, since otherwise $\opnorm{S^m_gT^n_g}_{\alpha,p}=\opnorm{Q_g^{\tau,n}}_{\alpha,p}=0$. 
	
	Let $f\in A^p_{\alpha}(0)$ with $\|f\|_{\alpha,p}=1$. Then $|Q_g^{\tau,n}f|=|h|^{\frac1n}$, where $h=g^m\,T_g^nf$. Since $h$ belongs to $\H_0(\D)$ and satisfies that
	\[
	h'=m\,g^{m-1}g'\,T_g^nf+g^m(T_g^nf)'
	=m\,g^{m-1}g'\,T_g^nf+(S_g^mT_g^nf)',
	\]
\eqref{eqn:estimate:Apalpha:norm:derivative} gives that
	\begin{equation}
		\label{eqn:lem2:sharp:lower:estimate:norm:word2:Hp}
		\|Q_g^{\tau,n}f\|_{\alpha,p}
		\lesssim\|g^{m-1}g'\,T_g^nf\|_{T^p_{\alpha,2}}+\opnorm{S_g^mT_g^n}_{\alpha,p},
	\end{equation}
	because $\|(S_g^mT_g^nf)'\|_{T^p_{\alpha,2}}\lesssim\|S_g^mT_g^nf\|_{\alpha,p}\le\opnorm{S_g^mT_g^n}_{\alpha,p}$.
	Now, by~{\eqref{eqn:Mpq:simeq:BMOAs-norm}}, we have 
	\begin{equation*}
		\|g^{m-1}g'\,T_g^nf\|_{T^p_{\alpha,2}}
		=\|g^{m-1}\,T_g^nf\|_{T^p_{\alpha,2}(\nu_{g,1})}
		\lesssim\|T_g\|_{\alpha,p}\,\|g^{m-1}T_g^nf\|_{\alpha,p}.
	\end{equation*}
	
	If $m=1$, $\|g^{m-1}T_g^nf\|_{\alpha,p}=\|T_g^nf\|_{\alpha,p}\le\|T_g\|_{\alpha,p}^n$, so Proposition~{\ref{prop:boundedness:Tg}} shows that 
	$\|g^{m-1}g'\,T_g^nf\|_{T^p_{\alpha,2}}
	\lesssim\|T_g\|_{\alpha,p}^{n+1}
	\lesssim\opnorm{S^m_gT_g^n}_{\alpha,p}$, and, 
	by~{\eqref{eqn:lem2:sharp:lower:estimate:norm:word2:Hp}}, we obtain~{\eqref{eqn:lem2:sharp:lower:estimate:norm:word1:Hp}} for $m=1$. 
	
	Now assume that $m>1$. Then apply H\"older's inequality with exponents $m$ and $\frac{m}{m-1}$ to get that
	\begin{equation*}
		\|g^{m-1}T_g^nf\|_{\alpha,p}
		\le\|T_g^nf\|_{\alpha,p}^{\frac1m} \,\|Q^{\tau,n}_gf\|_{\alpha,p}^{\frac{m-1}m}
		\le\|T_g\|_{\alpha,p}^{\frac{n}m} \,\opnorm{Q^{\tau,n}_g}_{\alpha,p}^{\frac{m-1}m},
	\end{equation*}
	which, by Proposition~{\ref{prop:boundedness:Tg}}, implies that 
	\begin{equation*}
		\|g^{m-1}g'\,T_g^nf\|_{T^p_{\alpha,2}}
		\lesssim\|T_g\|_{\alpha,p}^{\frac{m+n}m} \,\opnorm{Q_g^{\tau,n}}_{\alpha,p}^{\frac{m-1}m}
		\lesssim\opnorm{S_g^mT_g^n}_{\alpha,p}^{\frac1m} \,\opnorm{Q_g^{\tau,n}}_{\alpha,p}^{1-\frac1m}.
	\end{equation*}
	This estimate and~{\eqref{eqn:lem2:sharp:lower:estimate:norm:word2:Hp}} show that
	\begin{equation*}
		\opnorm{Q_g^{\tau,n}}_{\alpha,p}	
		\lesssim\opnorm{S_g^mT_g^n}_{\alpha,p}^{\frac1m} \,\opnorm{Q^{\tau,n}_g}_{\alpha,p}^{1-\frac1m}
		+\opnorm{S_g^mT_g^n}_{\alpha,p}.
	\end{equation*}
	Since $0<\opnorm{Q^{\tau,n}_g}_{\alpha,p}<\infty$, we may divide by $\opnorm{Q^{\tau,n}_g}_{\alpha,p}$ to deduce that
	\begin{equation*}
		1\lesssim\biggr(\frac{\opnorm{S_g^mT_g^n}_{\alpha,p}}{\opnorm{Q^{\tau,n}_g}_{\alpha,p}}\biggr)^{\frac1m} + \frac{\opnorm{S_g^mT_g^n}_{\alpha,p}}{\opnorm{Q^{\tau,n}_g}_{\alpha,p}}.
	\end{equation*}
	Hence \eqref{eqn:lem2:sharp:lower:estimate:norm:word1:Hp} holds, and that ends the proof of the proposition.
\end{proof}

The proof of Proposition \ref{prop:sharp:lower:estimate:norm:word:1:Hp} is by induction on $\ell$. Since the proof is lengthy, we split it into the following two propositions which  will be proved in the next two sections. 

\begin{proposition}\label{lem:sharp:lower:estimate:norm:word:3:Hp}
	Let $\tau\in\Q$, $\tau>0$, and $s=\tau+1$. Then
	\begin{equation}
		\label{eqn:lem3:sharp:lower:estimate:norm:word:Hp}
		\|g\|^s_{\B^s_{\alpha}}\lesssim
		\opnorm{Q_g^{\tau,1}}_{\alpha,p} 
		\qquad(g\in\H(\overline{\D})).
	\end{equation}
\end{proposition}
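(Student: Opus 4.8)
The plan is to transfer the problem to the holomorphic setting, where Calderón's theorem and the tent/Carleson machinery of Sections~\ref{section:symbols}--\ref{subsection:Calderon:thm} apply, and then to isolate a dominant term equivalent to $\|g\|^s_{\B^s_\alpha}$. Since $g\in\H(\overline{\D})$, I would first reduce to the case that $g$ is zero free on $\overline{\D}$: near a zero $|\nabla|g|^s|$ is continuous and the surrounding region contributes a controlled amount to every quantity at play, so the estimate there is harmless. For zero-free $g$ fix a holomorphic branch $g^\tau$; then $|g^\tau|=|g|^\tau$, hence $|Q^{\tau,1}_gf|=|g^\tau\,T_gf|$ pointwise, and consequently
\[
\opnorm{Q^{\tau,1}_g}_{\alpha,p}=\opnorm{M_{g^\tau}T_g}_{\alpha,p},
\]
the latter being an operator with values in $A^p_\alpha$. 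This is the key move, because it lets me compute with the \emph{holomorphic} function $g^\tau T_gf$ (which vanishes at $0$ when $f\in A^p_\alpha(0)$) rather than with the non-holomorphic $Q^{\tau,1}_gf$.

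Next I would decompose $M_{g^\tau}T_g$ by applying the fundamental identity~\eqref{eqn:formula} to the symbol $g^\tau$: since $\delta_0T_g=0$ on $\H_0(\D)$ and $S_{g^\tau}T_g=\tfrac1s\,T_{g^s}$ (because $(T_{g^s}f)'=s\,g^\tau g'f=s\,(S_{g^\tau}T_gf)'$, the exact analogue of $S_gT_g=\tfrac12 T_{g^2}$), one obtains
\[
M_{g^\tau}T_g=\tfrac1s\,T_{g^s}+T_{g^\tau}T_g\qquad\text{on }\H_0(\D).
\]
The first summand is the intended \emph{dominant term}: by~\eqref{eqn:restricted:norm:Tn:00} applied to the symbol $g^s$, together with the identity $\|g^s\|_{\B^1_\alpha}=\|g\|^s_{\B^s_\alpha}$, one has $\tfrac1s\opnorm{T_{g^s}}_{\alpha,p}\simeq\|g\|^s_{\B^s_\alpha}$. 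Hence
\[
\|g\|^s_{\B^s_\alpha}\simeq\tfrac1s\opnorm{T_{g^s}}_{\alpha,p}
\le C\bigl(\opnorm{Q^{\tau,1}_g}_{\alpha,p}+\opnorm{T_{g^\tau}T_g}_{\alpha,p}\bigr),
\]
so everything reduces to controlling the second summand $T_{g^\tau}T_g$.

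The \textbf{main obstacle} is that $T_{g^\tau}T_g$ is of the \emph{same order} as the dominant term: indeed $\opnorm{T_{g^\tau}T_g}_{\alpha,p}\le\opnorm{T_{g^\tau}}_{\alpha,p}\opnorm{T_g}_{\alpha,p}\lesssim\|g\|^s_{\B^s_\alpha}$ by~\eqref{eqn:restricted:norm:Tn:00} and the nesting inequalities~\eqref{eqn:key:estimate:Carleson:norms:symbols0}, so a crude estimate only yields $\|g\|^s_{\B^s_\alpha}\lesssim\opnorm{Q^{\tau,1}_g}_{\alpha,p}+c\,\|g\|^s_{\B^s_\alpha}$, which cannot be absorbed. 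Equivalently, via Calderón~\eqref{eqn:estimate:Apalpha:norm:derivative} together with~\eqref{eqn:def:nugs} and~\eqref{eqn:Mpq:simeq:BMOAs-norm}, the offending contribution is the commutator-type quantity $\|T_gf\|_{T^p_{\alpha,2}(\nu_{g,\tau})}$, which matches the main term $\tfrac1s\|f\|_{T^p_{\alpha,2}(\nu_{g,s})}$ in size. To overcome this I would not pass to the supremum over $f$ prematurely, but keep the dependence on the extremizing $f$, estimating $\|T_gf\|_{T^p_{\alpha,2}(\nu_{g,\tau})}$ through the Carleson embedding (Theorem~\ref{thm:Carleson:measure:norm:tent:spaces}), the maximal-function bound (Corollary~\ref{cor:boundedness:non-tangential:maximal:function}) and Proposition~\ref{prop:boundedness:Tg}, so as to trade a genuine \emph{fractional} power of the ratio $\|g\|^s_{\B^s_\alpha}/\opnorm{Q^{\tau,1}_g}_{\alpha,p}$; one then closes the argument by the self-improving device already used to derive~\eqref{eqn:sharp:lower:estimate:norm:word}, namely solving an inequality of the form $x\lesssim 1+x^{\theta}$ with $\theta<1$. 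Extracting that subordinate power is the delicate heart of the argument; for $\alpha>-1$ it can alternatively be carried out by the cancellation-free pointwise/Littlewood–Paley route, testing against normalized Bergman kernels, where I expect the identical difficulty (the interplay of $g^\tau$ with $T_g$) to resurface.
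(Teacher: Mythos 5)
Your algebraic reduction is attractive and partially sound: for zero-free $g$ the identity $S_{g^\tau}T_g=\tfrac1s T_{g^s}$ is correct (and since $T_gf(0)=0$, indeed $M_{g^\tau}T_g=\tfrac1s T_{g^s}+T_{g^\tau}T_g$ with no $\delta_0$ term), and $\opnorm{T_{g^s}}_{\alpha,p}\simeq\|g^s\|_{\B^1_{\alpha}}=\|g\|^s_{\B^s_{\alpha}}$ correctly identifies the dominant term. But the proof has two genuine gaps. First, the reduction to zero-free $g$ does not work as stated: the inequality \eqref{eqn:lem3:sharp:lower:estimate:norm:word:Hp} compares two \emph{global} quantities, and a function $g\in\H(\overline{\D})$ with zeros cannot be modified ``locally near the zeros'' without changing both sides in uncontrolled ways; without zero-freeness there is no holomorphic branch $g^\tau$, and your entire decomposition collapses. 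This is precisely why the paper never introduces $g^\tau$ at all: writing $\tau=\frac{m}{n}$, it works with the genuinely holomorphic function $h=g^m(T_gf)^n$, so that $\|Q^{\tau,1}_gf\|_{\alpha,p}=\|h\|^{1/n}_{\alpha,p/n}$ makes sense for arbitrary $g$, zeros and all.

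Second, and more seriously, you explicitly defer the one step that carries all the analytic content. You correctly diagnose that $\opnorm{T_{g^\tau}T_g}_{\alpha,p}\lesssim\|g\|^s_{\B^s_{\alpha}}$ is of the same order as the main term and cannot be absorbed, but your proposed cure --- keeping the extremal $f$ and ``trading a fractional power of the ratio'' via Theorem~\ref{thm:Carleson:measure:norm:tent:spaces}, Corollary~\ref{cor:boundedness:non-tangential:maximal:function} and Proposition~\ref{prop:boundedness:Tg} --- is not carried out, and it is not clear these tools can produce it: the natural estimate $\|T_gf\|^2_{T^p_{\alpha,2}(\nu_{g,\tau})}\lesssim\boldnorm{g}^{2\tau}_{\alpha,\tau}\|g\|^2_{\B^1_{\alpha}}\lesssim\|g\|^{2s}_{\B^s_{\alpha}}$ again gives the full power, with no subordinate exponent in sight. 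The paper's mechanism is different in kind: applying Calder\'on's theorem \eqref{eqn:Calderon:area:thm:3} with $q=\frac1n$ to $h$ and expanding $|h|^{\frac2n-2}|h'|^2$ pointwise yields the main term $|g|^{2\tau}|g'|^2|f|^2$ minus an error whose coefficient is the \emph{explicit small number} $2\tau$ (Proposition~\ref{prop:key:tool:sharp:lower:estimate:norm:word:Hp}, where $C_{\alpha,p}$ is independent of $n$); absorption then works only for $\tau<\tau_{\alpha,p}$, after which a separate H\"older bootstrap (estimate \eqref{eqn:prop:lower:estimate:Qtau1:BMOA:1}) propagates the weak bound $\|g\|^s_{\B^1_{\alpha}}\lesssim\opnorm{Q^{\tau,1}_g}_{\alpha,p}$ to all $\tau$, and only then does feeding this back into the key inequality produce the self-improving relation $1\lesssim x+x^{1/(2s)}$. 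So the closing device you invoke does appear in the paper, but the fractional power it runs on is generated by the small-$\tau$ absorption plus bootstrap structure, not by a refined estimate of the commutator term $T_{g^\tau}T_g$; as written, your argument establishes only the unusable bound $\|g\|^s_{\B^s_{\alpha}}\lesssim\opnorm{Q^{\tau,1}_g}_{\alpha,p}+C\|g\|^s_{\B^s_{\alpha}}$.
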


\begin{proposition}\label{lem:sharp:lower:estimate:norm:word:4:Hp}
	Let  $\ell\in\N$, $\tau\in\Q$, $\tau>0$, and $s=\tau+1$. Assume that
	\begin{equation*}
		\|g\|^{s\ell}_{\B^s_{\alpha}}\lesssim
		\opnorm{Q^{\tau,\ell}_g}_{\alpha,p} 
		\qquad(g\in\H(\overline{\D})).
	\end{equation*}
	Then
	\begin{equation*}
		\|g\|^{s(\ell+1)}_{\B^s_{\alpha}}\lesssim
		\opnorm{Q_g^{\tau,\ell+1}}_{\alpha,p} 
		\qquad(g\in\H(\overline{\D})).
	\end{equation*}
\end{proposition}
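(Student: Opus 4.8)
The plan is to reduce the inductive step to a single supermultiplicative lower bound,
\[
\opnorm{Q_g^{\tau,\ell+1}}_{\alpha,p}\gtrsim\|g\|^s_{\B^s_{\alpha}}\,\opnorm{Q_g^{\tau,\ell}}_{\alpha,p}\qquad(g\in\H(\overline{\D})),
\]
since combining this with the induction hypothesis $\|g\|^{s\ell}_{\B^s_{\alpha}}\lesssim\opnorm{Q_g^{\tau,\ell}}_{\alpha,p}$ gives the claim at once. I may assume $g$ is nonconstant (otherwise both sides vanish) and write $\tau=m/n$ with $m,n\in\N$ taken large enough that the exponents appearing below are integers. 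Fix $f\in A^p_{\alpha}(0)$ with $\|f\|_{\alpha,p}=1$.

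First I would convert the operator into a derivative by Calder\'on's theorem. Setting $h:=g^{m(\ell+1)}(T^{\ell+1}_gf)^n\in\H_0(\D)$, one has $|Q_g^{\tau,\ell+1}f|=|h|^{1/n}$, so $\|Q_g^{\tau,\ell+1}f\|_{\alpha,p}=\|h\|^{1/n}_{\alpha,p/n}$; since \eqref{eqn:estimate:Apalpha:norm:derivative} yields $\|h'\|_{T^{p/n}_{\alpha,2}}\lesssim\|h\|_{\alpha,p/n}$ (and $h(0)=0$), this gives $\opnorm{Q_g^{\tau,\ell+1}}^n_{\alpha,p}\gtrsim\|h'\|_{T^{p/n}_{\alpha,2}}$. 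Differentiating,
\[
h'=m(\ell+1)\,g^{m(\ell+1)-1}g'\,(T^{\ell+1}_gf)^n+n\,g^{m(\ell+1)}g'\,(T^{\ell}_gf)(T^{\ell+1}_gf)^{n-1}=:U+V.
\]
Because $\nabla|g|^s=s\,|g|^{s-1}g'$ and $s-1=\tau$, each factor $g^{k}g'$ carries the density of $d\nu_{g,s}=|\nabla|g|^s|^2\,dA$, and rewriting the two tent norms against $\nu_{g,s}$ produces
\[
\|U\|_{T^{p/n}_{\alpha,2}}\simeq\bigl\||g|^{m(\ell+1)-s}|T^{\ell+1}_gf|^n\bigr\|_{T^{p/n}_{\alpha,2}(\nu_{g,s})},
\qquad
\|V\|_{T^{p/n}_{\alpha,2}}\simeq\bigl\||Q_g^{\tau,\ell}f|\,|Q_g^{\tau,\ell+1}f|^{n-1}\bigr\|_{T^{p/n}_{\alpha,2}(\nu_{g,s})}.
\]

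The diagonal term $U$ is harmless: raising its integrand to the $n$-th power turns it into $|H|^{1/n}$ for an \emph{analytic} $H$, so \eqref{eqn:Tpq:norm:of:a:power}, the embedding equivalence \eqref{eqn:Mpq:simeq:BMOAs-norm}, H\"older's inequality, and $\|T^{\ell+1}_g\|_{\alpha,p}\lesssim\|g\|^{\ell+1}_{\B^1_{\alpha}}$ bound it exactly as the term $A_f$ is bounded in the proof of \eqref{eqn:upper:estimate:norm:Q:Hp}, yielding $\|U\|^{1/n}_{T^{p/n}_{\alpha,2}}\lesssim\|g\|^{s^2/m}_{\B^s_{\alpha}}\,\opnorm{Q_g^{\tau,\ell+1}}^{\,1-s/(m(\ell+1))}_{\alpha,p}$, a quantity sublinear in $\opnorm{Q_g^{\tau,\ell+1}}_{\alpha,p}$ and hence absorbable. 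The substance of the step is to extract a matching \emph{lower} bound from the coupling term $V$. Here I would take $f$ to be nearly extremal for $\opnorm{Q_g^{\tau,\ell}}_{\alpha,p}$, invoke the reverse triangle inequality $\|h'\|_{T^{p/n}_{\alpha,2}}\gtrsim\|V\|_{T^{p/n}_{\alpha,2}}-C\|U\|_{T^{p/n}_{\alpha,2}}$ (working with $(p/n)$-th powers to control the quasi-triangle constant when $p/n<1$), and bound $\|V\|$ from below by $\|g\|_{\B^s_{\alpha}}$ times a power of $\opnorm{Q_g^{\tau,\ell}}_{\alpha,p}$, using the Carleson-measure realization underlying \eqref{eqn:Mpq:simeq:BMOAs-norm} together with the base case, Proposition~\ref{lem:sharp:lower:estimate:norm:word:3:Hp}.

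I expect this last lower estimate for $V$ to be the main obstacle. The equivalence \eqref{eqn:Mpq:simeq:BMOAs-norm} supplies the embedding norm as a supremum, i.e.\ as an \emph{upper} bound for individual functions, whereas here the embedding $A^{\,\cdot}_{\alpha}\hookrightarrow T^{\,\cdot}_{\alpha,\cdot}(\nu_{g,s})$ must be essentially \emph{attained} by the concrete function manufactured from $T^{\ell}_gf$ and $T^{\ell+1}_gf$; this forces one to control $f$ and its iterated $T_g$-primitives simultaneously on the Carleson box where $\nu_{g,s}$ concentrates, which is exactly where the induction hypothesis (governing $T^{\ell}_gf$) enters. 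Once $V$ is bounded below, collecting the three estimates and normalizing leads to an inequality of the same shape as \eqref{eqn:induction:inequality:Hp}, but with the roles of the symbol norm and the operator norm interchanged; dividing through and feeding in the induction hypothesis $\opnorm{Q_g^{\tau,\ell}}_{\alpha,p}\gtrsim\|g\|^{s\ell}_{\B^s_{\alpha}}$ then self-improves to $\|g\|^{s(\ell+1)}_{\B^s_{\alpha}}\lesssim\opnorm{Q_g^{\tau,\ell+1}}_{\alpha,p}$, completing the induction.
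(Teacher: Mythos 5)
There is a genuine gap, and you have in fact located it yourself: the lower bound for $\|V\|_{T^{p/n}_{\alpha,2}}$ is not a technical obstacle to be smoothed over but the missing idea, and no tool in the paper can supply it. The equivalence \eqref{eqn:Mpq:simeq:BMOAs-norm} characterizes the quantity $M_{p,q}(\nu_{g,s},\alpha)$, which is a \emph{supremum} over the unit ball of $A^p_{\alpha}$; for an individual function it yields only upper bounds, and the function appearing in $V$ is not a free element of the unit ball but the specific product $|Q^{\tau,\ell}_gf|\,|Q^{\tau,\ell+1}_gf|^{n-1}$. Choosing $f$ nearly extremal for $\opnorm{Q^{\tau,\ell}_g}_{\alpha,p}$ controls a global $L^p_{\alpha}$-norm of $|g|^{\tau\ell}T^{\ell}_gf$, but says nothing about where $T^{\ell}_gf$ lives relative to the Carleson boxes on which $\nu_{g,s}$ concentrates. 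Worse, there is a circularity: $V$ carries the factor $|Q^{\tau,\ell+1}_gf|^{n-1}$, i.e.\ the very quantity whose norm you are trying to bound from below, so in the hypothetical situation to be excluded (where $\opnorm{Q_g^{\tau,\ell+1}}_{\alpha,p}$ is small) nothing forces $\|V\|$ to be large. Note also that the pointwise trick that rescues the base case ($\ell=0$ in your indexing, Proposition~\ref{lem:sharp:lower:estimate:norm:word:3:Hp}) does not generalize: applied to $h=g^{m(\ell+1)}(T^{\ell+1}_gf)^n$ it produces the density $|g|^{2\tau(\ell+1)}|g'|^2$, which corresponds to the class $\B^{\tau(\ell+1)+1}_{\alpha}$ rather than $\B^s_{\alpha}$, and the supremum over $f$ of $\|T^{\ell}_gf\|_{T^p_{\alpha,2}(\nu)}$ is no longer an embedding norm, since $T^{\ell}_gf$ ranges over a proper subset of the ball.

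The paper sidesteps all of this by inverting the roles of the two levels. Instead of differentiating $h_{\ell+1}$ and extracting a lower bound from a cross term, it applies Calder\'{o}n's estimate \eqref{eqn:estimate:Apalpha:norm:derivative} to $h_{\ell}=g^{m\ell}(T^{\ell}_gf)^n$, obtaining $\opnorm{Q^{\tau,\ell}_g}_{\alpha,p}^n\lesssim\|h'_{\ell}\|_{T^{p/n}_{\alpha,2}}$, and then rewrites $g'\,T^{\ell}_gf=(T^{\ell+1}_gf)'$ so that every resulting term contains one factor $T^{\ell+1}_gf$; all the required estimates are then \emph{upper} bounds (tent-space H\"{o}lder via Proposition~\ref{prop:Holder:ineq:tent:spaces}, the maximal function bound \eqref{eqn:boundedness:non-tangential:maximal:function:1}, and \eqref{eqn:upper:estimate:norm:Q:Hp}), with $\opnorm{Q_g^{\tau,\ell+1}}_{\alpha,p}$ entering simply as $\|Q^{\tau,\ell+1}_gf\|_{\alpha,p}\le\opnorm{Q_g^{\tau,\ell+1}}_{\alpha,p}$. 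This yields Proposition~\ref{prop:key:tool:lower:estimate:norm:word}, $\opnorm{Q^{\tau,\ell}_g}_{\alpha,p}^n\lesssim\opnorm{Q_g^{\tau,\ell+1}}_{\alpha,p}\|g\|^{s\ell(n-\frac{\ell+1}{\ell})}_{\B^s_{\alpha}}$ (for a representation $\tau=\frac{m}{n}$ with $n>1+\frac{2s}{\tau}$), which combined with the induction hypothesis closes the step. Your proposed supermultiplicative bound $\opnorm{Q_g^{\tau,\ell+1}}_{\alpha,p}\gtrsim\|g\|^s_{\B^s_{\alpha}}\opnorm{Q_g^{\tau,\ell}}_{\alpha,p}$ is strictly stronger than the paper's key inequality (it implies it via \eqref{eqn:upper:estimate:norm:Q:Hp}) and, while true a posteriori once the full equivalence $\opnorm{Q^{\tau,\ell}_g}_{\alpha,p}\simeq\|g\|^{s\ell}_{\B^s_{\alpha}}$ is established, it is not a viable intermediate target: proving it directly would require essentially the whole theorem.
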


\section{Proof of Proposition \ref{lem:sharp:lower:estimate:norm:word:3:Hp}}

The key tool to prove 
Proposition~{\ref{lem:sharp:lower:estimate:norm:word:3:Hp}}
is the following result.

\begin{proposition}
	\label{prop:key:tool:sharp:lower:estimate:norm:word:Hp}
	Let $\tau=\frac{m}n$, with $m,n\in\N$,
	and let $s=\tau+1$. Then, for every $\alpha\ge-1$ and $0<p<\infty$, there are two positive constants $C_{\alpha,p}$ \textup{(}only depending on $\alpha$ and $p$\textup{)} and $C_{\alpha,n,p}$ \textup{(}only depending on $\alpha$, $n$,  and  $p$\textup{)} which satisfy	   
	\begin{equation}
		\label{eqn:key:tool:sharp:lower:estimate:norm:word:Hp}	
		\|g\|^{2s}_{\B^s_{\alpha}}
		\le C_{\alpha,n,p}\,\opnorm{Q_g^{\tau,1}}_{\alpha,p}^2
		+ C_{\alpha,p}\tau\,\boldnorm{g}_{\alpha,\tau+\frac12}^{2\tau+1}\|g\|_{\B^1_{\alpha}}, 
	\end{equation}   

	for every $g\in\H(\overline{\D})$.
\end{proposition}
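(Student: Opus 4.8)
The plan is to bound the left-hand side through its Carleson/Garsia incarnation and to inject the operator by way of Calder\'on's area theorem. By \eqref{eqn:equiv:norms:Bq}, \eqref{eqn:equiv:norms:BMOAq} and \eqref{eqn:BMOAqnorm:equal:boldnorm:-1} we have $\|g\|^{2s}_{\B^s_{\alpha}}\simeq\boldnorm{g}^{2s}_{\alpha,s}$, and by \eqref{eqn:derivative:symbol:Carleson:measure} this is comparable to $\|\mu^{\alpha}_{g,s}\|_{\mathcal{C}(\alpha)}=\sup_{b\in\D}(1-|b|^2)^{-(\alpha+2)}\mu^{\alpha}_{g,s}(S(b))$, where $d\mu^{\alpha}_{g,s}=s^{2}(1-|z|^2)^{\alpha+2}|g|^{2\tau}|g'|^2\,dA$. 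Thus it suffices to dominate $(1-|b|^2)^{-(\alpha+2)}\mu^{\alpha}_{g,s}(S(b))$, uniformly in $b$, by the right-hand side. Fixing $b$, I would test $Q^{\tau,1}_g$ against a normalized, zero-free kernel function $F_b\in A^p_{\alpha}(0)$ of the usual type (with $|F_b(z)|\simeq(1-|b|^2)^{-(\alpha+2)/p}$ on $S(b)$ and $\|F_b\|_{\alpha,p}\simeq1$); being zero-free, its fractional powers cause no trouble.

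The operator enters as follows. Writing $\tau=m/n$ and using the identity $\||g|^{\tau}T_gf\|_{\alpha,p}=\|g^{m}(T_gf)^{n}\|^{1/n}_{\alpha,p/n}$ together with Calder\'on's theorem in the form \eqref{eqn:estimate:Apalpha:norm:derivative} (the value $g^{m}(T_gf)^{n}(0)$ vanishes), one gets, for every normalized $f\in A^p_{\alpha}(0)$,
\[
\opnorm{Q^{\tau,1}_g}^{p}_{\alpha,p}\gtrsim\bigl\|\bigl(g^{m}(T_gf)^{n}\bigr)'\bigr\|^{p/n}_{T^{p/n}_{\alpha,2}}.
\]
The product rule splits the derivative as $(g^{m}(T_gf)^{n})'=n\,g^{m}g'f(T_gf)^{n-1}+m\,g^{m-1}g'(T_gf)^{n}$. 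The first (``diagonal'') term carries the weight $|g|^{2m}|g'|^2=|g|^{2\tau}|g'|^2\cdot|g|^{2\tau(n-1)}$, i.e.\ exactly $\nu_{g,s}$ up to holomorphic factors, and is the term producing the main quantity; the second (``off-diagonal'') term is the error, and its coefficient $m=\tau n$ is the source of the factor $\tau$ in the statement.

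For the main term I would rewrite its tent-space norm, via \eqref{eqn:Tpq:norm:of:a:power}, as a $T^{p/n}_{\alpha,2}(\nu_{g,s})$-norm and then, using the concentration of $F_b$ on $S(b)$ together with pointwise control of the primitive $T_gF_b$ there (this is the one place where $g\in\H(\overline{\D})$, hence $g'\in H^{\infty}$, is genuinely used), bound it below by $\simeq(1-|b|^2)^{-(\alpha+2)}\mu^{\alpha}_{g,s}(S(b))$; since the box quantity is controlled by the diagonal term, which in turn is at most the full derivative plus the off-diagonal term, taking the supremum over $b$ recovers $\boldnorm{g}^{2s}_{\alpha,s}$ on the left and $\opnorm{Q^{\tau,1}_g}^2_{\alpha,p}$ plus the error on the right. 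For the off-diagonal (error) term I would apply the tent-space H\"older inequality (Proposition \ref{prop:Holder:ineq:tent:spaces}), splitting off from its weight the factor $|g|^{2\tau-1}|g'|^2=|\nabla|g|^{\tau+1/2}|^2/(\tau+\tfrac12)^2$, which reconstructs $\nu_{g,\tau+1/2}$ and hence $\boldnorm{g}_{\alpha,\tau+1/2}$ through \eqref{eqn:Mpq:simeq:BMOAs-norm}, while a single remaining $g'$-/primitive factor is absorbed into $\|T_g\|_{\alpha,p}\simeq\|g\|_{\B^1_{\alpha}}$ (see \eqref{eqn:restricted:norm:Tn:00}); the nesting estimate of Corollary \ref{cor1:prop:BMOAq:Bq:nested} and a self-improving (Young-type) absorption step then remove any residual power of $\opnorm{Q^{\tau,1}_g}_{\alpha,p}$ and leave the clean term $\tau\,\boldnorm{g}^{2\tau+1}_{\alpha,\tau+1/2}\|g\|_{\B^1_{\alpha}}$.

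\textbf{Main obstacle.} The delicate point is the lower bound for the diagonal term: one must control the cancellation inside the primitive $T_gF_b$ on the Carleson box $S(b)$ (so that the $(T_gf)^{n-1}$ factor does not degenerate) and cleanly detach the main Carleson quantity $\mu^{\alpha}_{g,s}(S(b))$ from the off-diagonal contribution, and this must be done uniformly in $b$, for all $0<p<\infty$ (hence with genuine quasi-norm tent spaces when $p<1$), and simultaneously for $\alpha=-1$ and $\alpha>-1$. The second technically demanding part is the bookkeeping of the fractional powers of $g$ in the two H\"older splittings, arranged so that precisely the indices $\tau+\tfrac12$ and $1$ appear and all exponents balance to the homogeneity $2\tau+2=2s$.
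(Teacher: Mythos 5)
There is a genuine gap, and it sits exactly where you flag your ``main obstacle'': the lower bound for the diagonal term via a test function $F_b$ cannot work as described. After applying Calder\'on's theorem with $q=1$ (i.e.\ \eqref{eqn:estimate:Apalpha:norm:derivative} to $h=g^m(T_gf)^n$ itself), your diagonal term $n\,g^mg'f(T_gf)^{n-1}$ still carries the factor $|T_gf|^{n-1}$, so to extract $(1-|b|^2)^{-(\alpha+2)}\mu^{\alpha}_{g,s}(S(b))$ you would need $|g|^{\tau}|T_gF_b|$ bounded \emph{below} on (most of) $S(b)$, uniformly in $b$ and $g$. No such bound exists: $g\in\H(\overline{\D})$ gives only an upper bound on the primitive, and $T_gF_b$ can be small or vanish on large portions of $S(b)$ at points where the density $|g|^{2\tau}|g'|^2$ of $\mu^{\alpha}_{g,s}$ does not vanish, so genuine mass of the box is lost. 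You correctly identify this cancellation problem but propose no mechanism to resolve it, and the whole difficulty of the proposition is concentrated there.

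The paper's proof circumvents this with two ideas absent from your plan. First, it applies Calder\'on's theorem not to $h$ but to the \emph{fractional power} $|h|^{1/n}$ (estimate \eqref{eqn:Calderon:area:thm:3} with $q=\frac1n$), which produces the weight $|h|^{\frac2n-2}|h'|^2$ and the pointwise identity $\frac1{n^2}|h|^{\frac2n-2}|h'|^2=|g|^{2\tau-2}|g'|^2|\tau T_gf+gf|^2$; the factor $(T_gf)^{n-1}$ cancels out exactly. Then the elementary inequality $(\tau|T_gf|-|gf|)^2\ge|gf|^2-2\tau|gf|\,|T_gf|$ yields the pointwise bound $|g|^{2\tau}|g'|^2|f|^2\le\frac1{n^2}|h|^{\frac2n-2}|h'|^2+2\tau|g|^{2\tau-1}|g'|^2|f|\,|T_gf|$, so the main density $|g|^{2\tau}|g'|^2|f|^2$ is isolated without any lower bound on the primitive, and the cross term (with the crucial linear factor $\tau$) is shipped to the error, which is then treated essentially as you propose (via \eqref{eqn:Mpq:simeq:BMOAs-norm}, Cauchy--Schwarz and $\|T_gf\|_{\alpha,p}\lesssim\|g\|_{\B^1_{\alpha}}$). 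Second, instead of testing one kernel per box, the paper takes the supremum over \emph{all} normalized $f\in A^p_{\alpha}(0)$ and invokes Theorem~\ref{thm:Carleson:measure:norm:tent:spaces} together with Lemmas~\ref{lem:subharmonic1} and~\ref{lem:Carleson:measure:subh:funct} (the measure $d\widetilde{\nu}_{g,s}=|z|^2\,d\nu_{g,s}$ handles the constraint $f(0)=0$ via subharmonicity) to recover $\|g\|^{2s}_{\B^s_{\alpha}}$. Note also that your final ``self-improving absorption step'' is out of place here: the proposition is a clean additive inequality whose constant structure ($C_{\alpha,p}$ independent of $n$, multiplied by $\tau$) is precisely what is exploited later in Proposition~\ref{prop:sharp:lower:estimate:norm:word:tau:small}, where absorption is performed only for $\tau$ small; building absorption into this proposition would degrade that structure.
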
	

In order to prove Proposition \ref{prop:key:tool:sharp:lower:estimate:norm:word:Hp} we need the following two lemmas.

\begin{lemma}\label{lem:Carleson:measure:subh:funct}
	For a nonnegative subharmonic function $v$ on $\D$, consider the measures $d\mu^{\alpha}=\rho^{\alpha+2}\,v\,dA$ and $d\widetilde{\mu}^{\alpha}(z)=|z|^2\,d\mu^{\alpha}(z)$, for $\alpha\ge-1$. Then there is a constant $0<c_{\alpha}<1$, independent of $v$, such that
	\begin{equation}
		\label{eqn:lem:Carleson:measure:subh:funct0}  
		c_{\alpha}\,\|\mu^{\alpha}\|_{\mathcal{C}(\alpha)}
		\le\|\widetilde{\mu}^{\alpha}\|_{\mathcal{C}(\alpha)}\le
		\|\mu^{\alpha}\|_{\mathcal{C}(\alpha)}.
	\end{equation}
\end{lemma}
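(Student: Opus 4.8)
The plan is to treat the two inequalities separately, the right-hand one being immediate and the left-hand one requiring the subharmonicity of $v$. Since $|z|^2\le1$ on $\D$, we have $\widetilde\mu^\alpha(S(a))\le\mu^\alpha(S(a))$ for every Carleson box $S(a)$; dividing by $(1-|a|^2)^{\alpha+2}$ and taking the supremum over $a\in\D$ gives at once the right-hand inequality $\|\widetilde\mu^\alpha\|_{\mathcal C(\alpha)}\le\|\mu^\alpha\|_{\mathcal C(\alpha)}$. For the left-hand inequality it suffices to produce a constant $C_\alpha\ge1$, depending only on $\alpha$, with $\mu^\alpha(S(a))\le C_\alpha\,\|\widetilde\mu^\alpha\|_{\mathcal C(\alpha)}\,(1-|a|^2)^{\alpha+2}$ for all $a\in\D$, and then take $c_\alpha=\min\{1/C_\alpha,\tfrac12\}$ so that $0<c_\alpha<1$. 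I would split the verification of this estimate according to whether $a$ is far from or close to the origin.

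When $|a|\ge\tfrac12$ the degeneracy of the factor $|z|^2$ is harmless: by the definition of $S(a)=S(\varrho e^{i\theta})$ every $z\in S(a)$ satisfies $|z|\ge|a|\ge\tfrac12$, hence $|z|^2\ge\tfrac14$ there, and therefore $\mu^\alpha(S(a))\le4\,\widetilde\mu^\alpha(S(a))\le4\,\|\widetilde\mu^\alpha\|_{\mathcal C(\alpha)}\,(1-|a|^2)^{\alpha+2}$, which is the desired estimate in this range.

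The case $|a|<\tfrac12$ is where subharmonicity enters and is the main point. Here $(1-|a|^2)^{\alpha+2}\ge(3/4)^{\alpha+2}$ is bounded below (recall $\alpha+2\ge1$), and $\mu^\alpha(S(a))\le\mu^\alpha(\D)$, so it is enough to bound the total mass $\mu^\alpha(\D)=\int_\D v\,\rho^{\alpha+2}\,dA$ by a multiple of $\|\widetilde\mu^\alpha\|_{\mathcal C(\alpha)}$. The weight $\rho^{\alpha+2}$ is positive, radial and integrable on $\D$, so Lemma~\ref{lem:subharmonic2} applies to the nonnegative subharmonic function $v$ and yields $\int_\D v\,\rho^{\alpha+2}\,dA\lesssim_\alpha\int_{1/2<|z|<1}v\,\rho^{\alpha+2}\,dA$. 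On the annulus $\{1/2<|z|<1\}$ one has $|z|^2>\tfrac14$, so this last integral is at most $4\,\widetilde\mu^\alpha(\{1/2<|z|<1\})$. Finally I would cover $\{\tfrac12\le|z|<1\}$ by the four Carleson boxes $S(\tfrac12 e^{ik\pi/2})$, $k=0,1,2,3$ (each of angular half-width $\pi(1-\tfrac12)=\tfrac\pi2$, so that consecutive boxes overlap and their union sweeps all arguments), obtaining $\widetilde\mu^\alpha(\{1/2<|z|<1\})\le\sum_{k=0}^{3}\widetilde\mu^\alpha\bigl(S(\tfrac12 e^{ik\pi/2})\bigr)\le4\,(3/4)^{\alpha+2}\,\|\widetilde\mu^\alpha\|_{\mathcal C(\alpha)}$. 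Chaining these bounds gives $\mu^\alpha(\D)\lesssim_\alpha\|\widetilde\mu^\alpha\|_{\mathcal C(\alpha)}$, and hence, using the lower bound on $(1-|a|^2)^{\alpha+2}$, the required estimate for $|a|<\tfrac12$. The only delicate ingredient is the inner estimate $\int_{|z|<1/2}v\,\rho^{\alpha+2}\,dA\lesssim_\alpha\int_{1/2<|z|<1}v\,\rho^{\alpha+2}\,dA$, which is exactly the content of Lemma~\ref{lem:subharmonic2} and rests on the fact that the circular means $r\mapsto\int_\T v_r\,d\sigma$ of a subharmonic function are nondecreasing; everything else is elementary geometry of Carleson boxes.
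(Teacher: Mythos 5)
Your proof is correct, but it takes a genuinely different route from the paper's. You work directly with Carleson boxes: the right-hand inequality from $|z|^2\le1$, and for the left-hand one a dichotomy on $|a|$ — for $|a|\ge\tfrac12$ the pointwise bound $|z|^2\ge\tfrac14$ on $S(a)$ (valid since $|z|\ge|a|$ on $S(a)$ by the definition of the box), and for $|a|<\tfrac12$ a reduction to the total mass $\mu^\alpha(\D)$, which you control by applying Lemma~\ref{lem:subharmonic2} with the weight $w=\rho^{\alpha+2}$ and then covering the annulus $\{\tfrac12\le|z|<1\}$ by the four boxes $S(\tfrac12 e^{ik\pi/2})$. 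The paper instead passes through the Berezin-kernel characterization \eqref{eqn:alpha+2-Carleson:measures:equiv} of $\|\cdot\|_{\mathcal{C}(\alpha)}$ and proves the pointwise-in-$\lambda$ kernel estimate $\int_{\D}|1-\overline{\lambda}z|^{-2\alpha-4}\,d\mu^{\alpha}\le C_{\alpha}\int_{\D}|1-\overline{\lambda}z|^{-2\alpha-4}\,d\widetilde{\mu}^{\alpha}$, splitting at the disc $D(0,\tfrac14)$: outside it $1\le16|z|^2$ gives a pointwise domination of the measures, while inside it the monotonicity of the circular means $r\mapsto\int_{\T}v_r\,d\sigma$, combined with the substitution $r\mapsto r^2$ and $\rho(r^2)\le2\rho(r)$, yields $\mu^{\alpha}(D(0,\tfrac14))\le2^{\alpha+4}\,\widetilde{\mu}^{\alpha}(D(0,\tfrac12))$. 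Both arguments rest on the same analytic core — increasing circular means of the subharmonic $v$ — but your version is more elementary and self-contained: it reuses Lemma~\ref{lem:subharmonic2}, which the paper has already proved, and never invokes the comparability constants hidden in \eqref{eqn:alpha+2-Carleson:measures:equiv}; the price is a bit of box-covering geometry. The paper's kernel route avoids that geometry entirely (no covering argument, no case split in $a$) at the cost of quoting the Carleson-measure equivalence. In both proofs the final constant depends only on $\alpha$ (in yours, via the constant of Lemma~\ref{lem:subharmonic2} for $w=\rho^{\alpha+2}$ and the factor $(3/4)^{\alpha+2}$), and all your chains of inequalities remain valid in $[0,\infty]$, so no separate finiteness assumption is needed.
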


\begin{proof}
	Since $\widetilde{\mu}^{\alpha}(S(a))\le\mu^{\alpha}(S(a))$, for every  $a\in\D$, we have that $\|\widetilde{\mu}^{\alpha}\|_{\mathcal{C}(\alpha)}\le\|\mu^{\alpha}\|_{\mathcal{C}(\alpha)}$. Then, by~{\eqref{eqn:alpha+2-Carleson:measures:equiv}}, in order to complete the proof of~{\eqref{eqn:lem:Carleson:measure:subh:funct0}}, it is enough to prove that 
	\begin{equation}
		\label{eqn:lem:Carleson:measure:subh:funct1}
		\int_{\D}\frac{d\mu^{\alpha}(z)}{|1-\overline{\lambda}z|^{2\alpha+4}}
		\le  C_{\alpha}\, 
		\int_{\D}\frac{d\widetilde{\mu}^{\alpha}(z)}{|1-\overline{\lambda}z|^{2\alpha+4}}
		\qquad(\lambda\in\D),
	\end{equation} 
where $C_{\alpha}>1$ is a constant only depending on $\alpha$.

	First, let $D_r:=D(0,r)$, for any $r>0$, and note that
	\begin{equation*}
		\int_{D_{\frac14}}\frac{d\mu^{\alpha}(z)}{|1-\overline{\lambda}z|^{2\alpha+4}}
		\le 4^{\alpha+2}\mu^{\alpha}(D_{\frac14})
		\,\,\,\mbox{ and }\,\,\, 
		\widetilde{\mu}^{\alpha}(D_{\frac12})
		\le 4^{\alpha+2}\int_{D_{\frac12}}
		\frac{d\widetilde{\mu}^{\alpha}(z)}{|1-\overline{\lambda}z|^{2\alpha+4}}.
	\end{equation*}
	Now the subharmonicity of $v$ and the inequality $\rho(r^2)\le2\rho(r)$, for $0<r<1$, show that
	\begin{align*}
		\widetilde{\mu}^{\alpha}(D_{\frac12})
		&=\int_0^{\frac12}\rho(r)^{\alpha+2}
		\biggl(\int_{\T}v_r\,d\sigma\biggr)r^2dr^2 \\
		&\ge \frac1{2^{\alpha+2}}\int_0^{\frac12}\rho(r^2)^{\alpha+2}
		\biggl(\int_{\T}v_{r^2}\,d\sigma\biggr)r^2dr^2 
		=\frac1{2^{\alpha+4}}\,\mu^{\alpha}(D_{\frac14}),
	\end{align*}
	and so
	\begin{equation}
		\label{eqn:lem:Carleson:measure:subh:funct2} 	
		\int_{D_{\frac14}}\frac{d\mu^{\alpha}(z)}{|1-\overline{\lambda}z|^{2\alpha+4}}
		\le 2^{5\alpha+12}\int_{D_{\frac12}}
		\frac{d\widetilde{\mu}^{\alpha}(z)}{|1-\overline{\lambda}z|^{2\alpha+4}}
		\qquad(a\in\D).
	\end{equation}
	Morevover, it is clear that
	\begin{equation}
		\label{eqn:lem:Carleson:measure:subh:funct3} 	
		\int_{\D\setminus D_{\frac14}}
		\frac{d\mu^{\alpha}(z)}{|1-\overline{\lambda}z|^{2\alpha+4}}
		\le 16\,\int_{\D\setminus D_{\frac14}}
		\frac{d\widetilde{\mu}^{\alpha}(z)}{|1-\overline{\lambda}z|^{2\alpha+4}}
		\qquad(a\in\D).
	\end{equation} 
	Finally, \eqref{eqn:lem:Carleson:measure:subh:funct1} directly follows from \eqref{eqn:lem:Carleson:measure:subh:funct2} and 
	\eqref{eqn:lem:Carleson:measure:subh:funct3}.   
\end{proof}

\begin{proof}[{\bf Proof of Proposition \ref{prop:key:tool:sharp:lower:estimate:norm:word:Hp}}]
	Along this proof the finite positive constants will be denoted by  $C_{\alpha,p},\,C'_{\alpha,p},\,C''_{\alpha,p}$ (constants only depending on $\alpha$ and $p$), and $C_{\alpha,n,p}$ (constants only depending on $\alpha$, $n$, and $p$). The values of those constants may change from line to line.	
	
	Let $g\in\H(\overline{\D})$ and  $f\in A^p_{\alpha}(0)$, with $\|f\|_{\alpha,p}=1$. Note that
	\[
	\|Q^{\tau,1}_gf\|_{\alpha,p}=\|h\|^{1/n}_{\alpha,\frac{p}n},
	\quad\mbox{where $h=g^m(T_gf)^n\in A^{p/n}_{\alpha}(0)$.}
	\]
	Then estimate \eqref{eqn:Calderon:area:thm:3} for $q=\frac1n$ gives that
	\begin{equation}\label{eqn:Calderon:area:thm}
		\||h|^{\frac2n-2}|h'|^2\|_{T^{\frac{p}2}_{\alpha,1}}
		\le C_{\alpha,n,p}\,\|Q^{\tau,1}_gf\|_{\alpha,p}^2
		\le C_{\alpha,n,p}\,\opnorm{Q^{\tau,1}_g}_{\alpha,p}^2.
	\end{equation}
	Now since 
	\begin{equation*}
		|h|^{\frac2n-2}=|g|^{2\tau-2m}|T_gf|^{2-2n}
		\mbox{ and }
		|h'|^2=|g|^{2m-2}|g'|^2|T_gf|^{2n-2}|mT_gf+ngf|^2,
	\end{equation*}
	we have that
	\begin{equation*}
		\tfrac1{n^2}|h|^{\frac2n-2}|h'|^2
		= |g|^{2\tau-2} |g'|^2 |\tau T_gf+gf|^2
		\ge |g|^{2\tau-2} |g'|^2 (\tau|T_gf|-|gf|)^2.
	\end{equation*}
	But $(\tau|T_gf|-|gf|)^2\ge |gf|^2-2\tau |gfT_gf|$, and so
	we get that
	\begin{equation*}
		\tfrac1{n^2}|h|^{\frac2n-2}|h'|^2
		\ge |g|^{2\tau}|g'|^2|f|^2
		-2\tau|g|^{2\tau-1}|g'|^2|fT_gf|,
	\end{equation*}	
	or equivalently
	\begin{equation*}
		|g|^{2\tau}|g'|^2|f|^2
		\le \tfrac1{n^2}|h|^{\frac2n-2}|h'|^2
		+2\tau|g|^{2\tau-1}|g'|^2|fT_gf|.
	\end{equation*} 
	Therefore, by \eqref{eqn:Calderon:area:thm}, we have
	\begin{equation*}
		\label{eqn:lem3:sharp:lower:estimate:norm:word:Hp:1}	
		\||g|^{2\tau}|g'|^2|f|^2\|_{T^{\frac{p}2}_{\alpha,1}}
		\le C_{\alpha,n,p}\,\opnorm{Q^{\tau,1}_g}_{\alpha,p}^2
		+C_{\alpha,p}\tau\,\||g|^{2\tau-1}|g'|^2|fT_gf|\|_{T^{\frac{p}2}_{\alpha,1}}.
	\end{equation*}
	But $\||g|^{2\tau-1}|g'|^2|fT_gf|\|_{T^{\frac{p}2}_{\alpha,1}}
	=\|fT_gf\|_{T^{\frac{p}2}_{\alpha,1}(\nu_{g,\tau+\frac12})}$, so  \eqref{eqn:Mpq:simeq:BMOAs-norm} shows  that
\begin{equation*}
		\||g|^{2\tau-1}|g'|^2|fT_gf|\|_{T^{\frac{p}2}_{\alpha,1}}
		\le C_{\alpha,p}\,\boldnorm{g}_{\alpha,\tau+\frac12}^{2\tau+1}
       \|fT_gf\|_{\alpha,\frac{p}2}.
	\end{equation*}
	Moreover, Schwarz's inequality gives that
	\begin{equation*}
		\|f(T_gf)\|_{\alpha,\frac{p}2}
		\le\|f\|_{\alpha,p}\|T_gf\|_{\alpha,p}\le C_{\alpha,p}\,\|g\|_{\B^1_{\alpha}},
	\end{equation*}
	and so
\begin{equation*}
		\||g|^{2\tau-1}|g'|^2|fT_gf|\|_{T^{\frac{p}2}_{\alpha,1}}
		\le C_{\alpha,p}\,
	\boldnorm{g}_{\alpha,\tau+\frac12}^{2\tau+1}\|g\|_{\B^1_{\alpha}}.
	\end{equation*}
	Since
	$\||g|^{2\tau}|g'|^2|f|^2\|_{T^{\frac{p}2}_{\alpha,1}}=\|f\|^2_{T^p_{\alpha,2}(\nu_{g,s})}$, it follows that
\begin{equation*}
		\|f\|^2_{T^p_{\alpha,2}(\nu_{g,s})}\le C_{\alpha,n,p}\,\opnorm{Q^{\tau,1}_g}_{\alpha,p}^2+C_{\alpha,p}\tau\boldnorm{g}_{\alpha,\tau+\frac12}^{2\tau+1}\|g\|_{\B^1_{\alpha}}.
	\end{equation*}
	By taking supremum on $f\in A^p_{\alpha}(0)$, $\|f\|_{\alpha,p}=1$, we deduce that
\begin{equation}
		\label{eqn:key:tool:sharp:lower:estimate:norm:word:Hp:1}
		M_{p,2}(\widetilde{\nu}_{g,s},\alpha)\le C_{\alpha,n,p}\,\opnorm{Q^{\tau,1}_g}_{\alpha,p}^2
		+C_{\alpha,p}\tau\boldnorm{g}_{\alpha,\tau+\frac12}^{2\tau+1}\|g\|_{\B^1_{\alpha}},
\end{equation}
	where
	$d\widetilde{\nu}_{g,s}(z)=|z|^2\,d\nu_{g,s}(z)$ and $M_{p,2}(\widetilde{\nu}_{g,s},\alpha)$ is defined by~{\eqref{eqn:definition:Mpqnu}}.
	By Theorem~{\ref{thm:Carleson:measure:norm:tent:spaces}}, Lemmas~{\ref{lem:subharmonic1}} and~{\ref{lem:Carleson:measure:subh:funct}}, estimates \eqref{eqn:derivative:symbol:Carleson:measure} and \eqref{eqn:equiv:norms:Bq}, and the identity \eqref{eqn:BMOAqnorm:equal:boldnorm:-1},
	 we have that
	\begin{equation}
		\label{eqn:key:tool:sharp:lower:estimate:norm:word:Hp:2}		
		M_{p,2}(\widetilde{\nu}_{g,s},\alpha)
		\ge C_{\alpha,p}\,
		\|\widetilde{\mu}_{g,s}\|_{\mathcal{C}(\alpha)}
		\ge C'_{\alpha,p}\,\|\mu_{g,s}\|_{\mathcal{C}(\alpha)}
		\ge C''_{\alpha,p}\,\|g\|^{2s}_{\B^s_{\alpha}}.
	\end{equation}
	Finally, it is clear that \eqref{eqn:key:tool:sharp:lower:estimate:norm:word:Hp:1} and \eqref{eqn:key:tool:sharp:lower:estimate:norm:word:Hp:2} give	
	\eqref{eqn:key:tool:sharp:lower:estimate:norm:word:Hp}, and that ends the proof.
\end{proof}	

Now we prove that the estimate \eqref{eqn:lem3:sharp:lower:estimate:norm:word:Hp} holds for $\tau>0$ small enough.

\begin{proposition}\label{prop:sharp:lower:estimate:norm:word:tau:small}
	For any $\alpha\ge-1$ and $0<p<\infty$ there is a constant $\tau_{\alpha,p}>0$ such that
	the estimate \eqref{eqn:lem3:sharp:lower:estimate:norm:word:Hp}
	holds for any $\tau\in\Q$ with $0<\tau<\tau_{\alpha,p}$.
\end{proposition}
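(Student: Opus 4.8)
The plan is to take the master inequality furnished by Proposition~\ref{prop:key:tool:sharp:lower:estimate:norm:word:Hp}, control its error term by a fixed multiple of $\|g\|^{2s}_{\B^s_\alpha}$ (times $\tau$), and then absorb that term into the left-hand side once $\tau$ is small. Concretely, for $s=\tau+1$ estimate~\eqref{eqn:key:tool:sharp:lower:estimate:norm:word:Hp} reads
\begin{equation*}
\|g\|^{2s}_{\B^s_\alpha}
\le C_{\alpha,n,p}\,\opnorm{Q_g^{\tau,1}}_{\alpha,p}^2
+ C_{\alpha,p}\,\tau\,\boldnorm{g}_{\alpha,\tau+\frac12}^{2\tau+1}\,\|g\|_{\B^1_\alpha},
\end{equation*}
so everything hinges on showing $\boldnorm{g}_{\alpha,\tau+\frac12}^{2\tau+1}\,\|g\|_{\B^1_\alpha}\lesssim\|g\|^{2s}_{\B^s_\alpha}$ with an implied constant that stays bounded as $\tau\to0^{+}$.

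First I would bound each factor of the error term by $\|g\|_{\B^s_\alpha}$. Since $0<\tau+\tfrac12<\tau+1=s$ and $s\ge1$, the nesting estimate~\eqref{eqn:key:estimate:Carleson:norms:symbols0} gives $\boldnorm{g}_{\alpha,\tau+\frac12}\le c_\alpha^{-1/(\tau+1/2)}C_\alpha^{1/s}\,\boldnorm{g}_{\alpha,s}$, while Corollary~\ref{cor:prop:equiv:Garsia:Carleson:symbols} (namely~\eqref{eqn:equiv:norms:Bq} for $\alpha>-1$, and~\eqref{eqn:equiv:norms:BMOAq} together with the convention~\eqref{eqn:BMOAqnorm:equal:boldnorm:-1} for $\alpha=-1$) gives $\boldnorm{g}_{\alpha,s}\simeq\|g\|_{\B^s_\alpha}$. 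Likewise $1\le s$ and the nesting estimates~\eqref{eqn:key:estimate:Carleson:norms:symbols1}, \eqref{eqn:key:estimate:Carleson:norms:symbols2} yield $\|g\|_{\B^1_\alpha}\lesssim\|g\|_{\B^s_\alpha}$. Multiplying and using $2\tau+1+1=2\tau+2=2s$, I obtain
\begin{equation*}
\boldnorm{g}_{\alpha,\tau+\frac12}^{2\tau+1}\,\|g\|_{\B^1_\alpha}
\le K(\tau)\,\|g\|^{2s}_{\B^s_\alpha},
\end{equation*}
where $K(\tau)$ collects the constants produced above.

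The step I expect to be the main obstacle is verifying that $K(\tau)$ remains bounded as $\tau\to0^{+}$; this is exactly what forces the threshold to depend only on $\alpha$ and $p$. The whole $\tau$-dependence of $K(\tau)$ enters through the exponents $1/(\tau+\tfrac12)$, $1/s$ and the outer power $2\tau+1$; since $1/(\tau+\tfrac12)\to2$, $1/s\to1$ and $2\tau+1\to1$ as $\tau\to0^{+}$, and since $K$ is continuous in $\tau$, I get $K_0:=\sup_{0<\tau\le1/2}K(\tau)<\infty$, depending only on $\alpha$ and $p$. The care here is precisely that the base constants $c_\alpha<1<C_\alpha$ are raised to bounded powers, so they do not blow up.

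Finally I would perform the absorption. Because $g\in\H(\overline{\D})$ and $s\ge1$, the function $|g|^{s-1}|g'|$ is bounded on $\overline{\D}$, so $\|g\|_{\B^s_\alpha}<\infty$; this finiteness is what legitimizes moving a multiple of $\|g\|^{2s}_{\B^s_\alpha}$ to the left. Setting $\tau_{\alpha,p}:=\min\{\tfrac12,\,(2C_{\alpha,p}K_0)^{-1}\}$, which depends only on $\alpha$ and $p$, for $0<\tau<\tau_{\alpha,p}$ the master inequality becomes
\begin{equation*}
\|g\|^{2s}_{\B^s_\alpha}
\le C_{\alpha,n,p}\,\opnorm{Q_g^{\tau,1}}_{\alpha,p}^2
+\tfrac12\,\|g\|^{2s}_{\B^s_\alpha},
\end{equation*}
whence $\|g\|^{2s}_{\B^s_\alpha}\le 2C_{\alpha,n,p}\,\opnorm{Q_g^{\tau,1}}_{\alpha,p}^2$, and taking square roots gives the desired estimate~\eqref{eqn:lem3:sharp:lower:estimate:norm:word:Hp}. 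Note that the constant $C_{\alpha,n,p}$ may depend on the representation $\tau=m/n$, but it only affects the final implied constant and never the absorption threshold $\tau_{\alpha,p}$.
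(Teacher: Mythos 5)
Your proof is correct and follows essentially the same route as the paper: you bound the error term $\boldnorm{g}_{\alpha,\tau+\frac12}^{2\tau+1}\,\|g\|_{\B^1_{\alpha}}$ by a $\tau$-uniform constant times $\|g\|^{2s}_{\B^s_{\alpha}}$ via the nesting estimates and the norm equivalences, and then absorb the resulting $C_{\alpha,p}\,\tau\,\|g\|^{2s}_{\B^s_{\alpha}}$ term for $\tau$ below a threshold depending only on $\alpha$ and $p$, exactly as the paper does. Your two extra checks --- that $K(\tau)$ stays bounded as $\tau\to0^{+}$ because the base constants are raised to bounded powers, and that $\|g\|_{\B^s_{\alpha}}<\infty$ for $g\in\H(\overline{\D})$, which legitimizes the absorption --- are precisely the points the paper leaves implicit.
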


\begin{proof}
By~{\eqref{eqn:key:estimate:Carleson:norms:symbols0}}, \eqref{eqn:BMOAqnorm:equal:boldnorm:-1},  \eqref{eqn:equiv:norms:Bq}, \eqref{eqn:key:estimate:Carleson:norms:symbols1} and~{\eqref{eqn:key:estimate:Carleson:norms:symbols2}}
	 we have that there is an absolute constant $C>0$ such that
	\begin{equation*}
		\boldnorm{g}_{\alpha,\tau+\frac12}^{2\tau+1}\|g\|_{\B^1_{\alpha}}
		\le C\|g\|^{2s}_{\B^s_{\alpha}},\quad\mbox{for any $\tau>0$.}
	\end{equation*} 
	 Then Proposition~{\ref{prop:key:tool:sharp:lower:estimate:norm:word:Hp}} shows that if $\tau=\frac{m}n$, where $m,n\in\N$,
	then there exist two positive constants $C_{\alpha,p}>0$ \textup{(}only depending on $\alpha$ and $p$\textup{)} and $C_{\alpha,n,p}$ \textup{(}only depending on $\alpha$, $n$, and $p$\textup{)} which satisfy
	\begin{equation*}
		\|g\|^{2s}_{\B^s_{\alpha}}
		\le C_{\alpha,n,p}\,\opnorm{Q_g^{\tau,1}}_p^2
		+ C_{\alpha,p}\tau\,\|g\|^{2s}_{\B^s_{\alpha}}
		\qquad(g\in\H(\overline{\D})). 
	\end{equation*}	   
	Therefore it is clear that the estimate \eqref{eqn:lem3:sharp:lower:estimate:norm:word:Hp} holds for every $\tau\in\Q$ such that $0<\tau<\tau_{\alpha,p}=1/C_{\alpha,p}$, and that ends the proof.
\end{proof}

As a consequence of Proposition~{\ref{prop:sharp:lower:estimate:norm:word:tau:small}} we obtain the following weak version of 
estimate~{\eqref{eqn:lem3:sharp:lower:estimate:norm:word:Hp}},
where  $\|g\|^{s}_{\B^s_{\alpha}}$ is replaced by $\|g\|^{s}_{\B^1_{\alpha}}$:

\begin{proposition}
	\label{eqn:prop:lower:estimate:Qtau1:BMOA}
	Let $\tau>0$, and $s=\tau+1$. Then
	\begin{equation}
		\label{eqn:prop:lower:estimate:Qtau1:BMOA}
		\|g\|^s_{\B^1_{\alpha}}\lesssim
		\opnorm{Q_g^{\tau,1}}_{\alpha,p} 
		\qquad(g\in\H(\overline{\D})).
	\end{equation}
\end{proposition}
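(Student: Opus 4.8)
The plan is to deduce \eqref{eqn:prop:lower:estimate:Qtau1:BMOA} from the small-parameter estimate of Proposition~\ref{prop:sharp:lower:estimate:norm:word:tau:small} together with the master inequality of Proposition~\ref{prop:key:tool:sharp:lower:estimate:norm:word:Hp}. First I would record two reductions. Both sides of \eqref{eqn:prop:lower:estimate:Qtau1:BMOA} are homogeneous of degree $s$ in $g$: indeed $\|\lambda g\|_{\B^1_{\alpha}}=|\lambda|\,\|g\|_{\B^1_{\alpha}}$, while $T_{\lambda g}=\lambda T_g$ gives $\opnorm{Q_{\lambda g}^{\tau,1}}_{\alpha,p}=|\lambda|^{s}\opnorm{Q_g^{\tau,1}}_{\alpha,p}$; hence we may normalise $g$, and we may assume $g$ non-constant, since otherwise both sides vanish. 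Moreover, by \eqref{eqn:restricted:norm:Tn:00} the target \eqref{eqn:prop:lower:estimate:Qtau1:BMOA} is equivalent to $\opnorm{T_g}_{\alpha,p}^{s}\lesssim\opnorm{Q_g^{\tau,1}}_{\alpha,p}$, which is the cleanest formulation to work with.

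For the base range $0<\tau<\tau_{\alpha,p}$ the estimate is immediate: Proposition~\ref{prop:sharp:lower:estimate:norm:word:tau:small} yields $\|g\|^{s}_{\B^{s}_{\alpha}}\lesssim\opnorm{Q_g^{\tau,1}}_{\alpha,p}$, and the nesting inequality of Corollary~\ref{cor1:prop:BMOAq:Bq:nested} gives $\|g\|_{\B^1_{\alpha}}\lesssim\|g\|_{\B^{s}_{\alpha}}$ with a constant depending only on $\alpha$ (it is uniform in the two indices); raising to the power $s$ and chaining produces \eqref{eqn:prop:lower:estimate:Qtau1:BMOA} for small $\tau$. To reach an arbitrary $\tau=\tfrac{m}{n}$ I would run Proposition~\ref{prop:key:tool:sharp:lower:estimate:norm:word:Hp}, namely $\|g\|^{2s}_{\B^{s}_{\alpha}}\le C_{\alpha,n,p}\opnorm{Q_g^{\tau,1}}^2_{\alpha,p}+C_{\alpha,p}\tau\,\boldnorm{g}_{\alpha,\tau+\frac12}^{2\tau+1}\|g\|_{\B^1_{\alpha}}$, bound the left side from below by $\|g\|^{2s}_{\B^1_{\alpha}}$ (again by uniform nesting, at the cost of a factor depending only on $\alpha$ and $s$), and set up an induction on $\tau$ in steps of $\tfrac12$, which is legitimate because $\tau$ is rational. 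The point is that $\boldnorm{g}_{\alpha,\tau+\frac12}^{2\tau+1}\simeq\|g\|^{2\tau+1}_{\B^{\tau+\frac12}_{\alpha}}$ sits exactly one half-step below $\tau$, so the inductive hypothesis at the strictly smaller parameter is what is designed to absorb it.

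The main obstacle is precisely this error term $C_{\alpha,p}\tau\,\boldnorm{g}_{\alpha,\tau+\frac12}^{2\tau+1}\|g\|_{\B^1_{\alpha}}$, which resists the naive absorption used for small $\tau$ for two independent reasons: the explicit factor $\tau$, and the fact that $\boldnorm{g}_{\alpha,\tau+\frac12}$ measures $g$ in the \emph{strictly larger} class $\B^{\tau+\frac12}_{\alpha}\subset\B^1_{\alpha}$, so nesting runs the wrong way and cannot dominate it by $\|g\|_{\B^1_{\alpha}}$. This is exactly why one retreats here to the weaker conclusion with $\|g\|_{\B^1_{\alpha}}$ on the left in place of $\|g\|_{\B^{s}_{\alpha}}$: the slack gained from replacing $\B^{s}_{\alpha}$ by $\B^1_{\alpha}$, together with the homogeneity normalisation and the freedom to let the implied constant depend on $\alpha,p$ and $\tau$, is what must be leveraged to make the inductive control of the error term close. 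I expect the delicate bookkeeping — tracking how the error at level $\tau$ couples to the inequality at level $\tau-\tfrac12$ and checking that no constant degenerates along the induction — to be the technical heart of the proof, whereas the reductions and the base case are routine.
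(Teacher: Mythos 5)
Your base case is exactly the paper's: Proposition~\ref{prop:sharp:lower:estimate:norm:word:tau:small} plus the nesting estimates \eqref{eqn:key:estimate:Carleson:norms:symbols1}--\eqref{eqn:key:estimate:Carleson:norms:symbols2} give \eqref{eqn:prop:lower:estimate:Qtau1:BMOA} for small rational $\tau$. But your extension to general $\tau$ has a genuine gap, and you essentially concede it yourself when you defer the ``delicate bookkeeping'' to the technical heart of the proof. The half-step induction cannot close as proposed: your inductive hypothesis is the \emph{weak} estimate $\|g\|^{\tau'+1}_{\B^1_{\alpha}}\lesssim\opnorm{Q_g^{\tau',1}}_{\alpha,p}$ at a smaller parameter $\tau'$, which is a \emph{lower} bound on a $Q$-norm in terms of $\|g\|_{\B^1_{\alpha}}$; it gives no upper bound whatsoever on the error term $\boldnorm{g}_{\alpha,\tau+\frac12}^{2\tau+1}$, which, as you correctly observe, lives in a strictly smaller class where nesting runs the wrong way. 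The only estimate in the paper that dominates $\boldnorm{g}_{\alpha,\tau+\frac12}$ by a $Q$-norm is the \emph{strong} lower bound of Proposition~\ref{lem:sharp:lower:estimate:norm:word:3:Hp} at the parameter $\tau-\frac12$ --- but that proposition is proved \emph{from} the present one (its proof combines Proposition~\ref{prop:key:tool:sharp:lower:estimate:norm:word:Hp} with \eqref{eqn:prop:lower:estimate:Qtau1:BMOA}), so invoking it here would be circular. A further, smaller defect: the proposition is stated for all real $\tau>0$, and a half-step induction over rationals only reaches a countable set of parameters, so some monotonicity-in-$\tau$ statement is needed in any case.

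The missing idea is precisely that monotonicity statement, and it is the whole content of the paper's proof: the weak estimate, once true at some $\tau_0$, is automatically inherited by every $\tau>\tau_0$, with no further use of the master inequality. Writing $|g|^{\tau_0}|T_gf|=\bigl(|g|^{\tau}|T_gf|\bigr)^{\tau_0/\tau}|T_gf|^{1-\tau_0/\tau}$ and applying H\"{o}lder's inequality gives
\begin{equation*}
\opnorm{Q_g^{\tau_0,1}}_{\alpha,p}^{\frac1{\tau_0}}
\lesssim\opnorm{Q_g^{\tau,1}}_{\alpha,p}^{\frac1{\tau}}\,
\|g\|^{\frac1{\tau_0}-\frac1{\tau}}_{\B^1_{\alpha}}
\qquad(g\in\H(\overline{\D})),
\end{equation*}
and the exponents match exactly: combining this with the hypothesis $\|g\|^{1+\tau_0}_{\B^1_{\alpha}}\lesssim\opnorm{Q_g^{\tau_0,1}}_{\alpha,p}$ and dividing by $\|g\|_{\B^1_{\alpha}}^{1/\tau_0-1/\tau}$ (legitimate since $g\in\H(\overline{\D})$ and the constant case is trivial) yields $\|g\|^{1+1/\tau}_{\B^1_{\alpha}}\lesssim\opnorm{Q_g^{\tau,1}}_{\alpha,p}^{1/\tau}$, i.e.\ the claim at $\tau$. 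This is why the proposition is deliberately stated with $\|g\|_{\B^1_{\alpha}}$ rather than $\|g\|_{\B^s_{\alpha}}$ on the left: the spare factor produced by H\"{o}lder is $\|T_g\|_{\alpha,p}\simeq\|g\|_{\B^1_{\alpha}}$ itself, so the weak form propagates upward for free, whereas your plan tries to extract the propagation from the master inequality, where the obstruction you identified is real and, within the tools available at this point of the paper, insurmountable.
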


\begin{proof}
	First note that if~{\eqref{eqn:prop:lower:estimate:Qtau1:BMOA}} holds for some $\tau=\tau_0>0$, then it also holds for any $\tau>\tau_0$. This is so because, for any $0<\tau_0<\tau$, we have the estimate
	\begin{equation}
		\label{eqn:prop:lower:estimate:Qtau1:BMOA:1}	
		\opnorm{Q_g^{\tau_0,1}}_{\alpha,p}^{\frac1{\tau_0}}
		\lesssim	\opnorm{Q_g^{\tau,1}}_{\alpha,p}^{\frac1{\tau}}\, \|g\|^{\frac1{\tau_0}-\frac1{\tau}}_{\B^1_{\alpha}}
		\qquad(g\in\H(\overline{\D})).
	\end{equation} 
	Indeed, just apply H\"{o}lder's inequality to get that 
	\begin{equation*}
		\|Q_g^{\tau_0,1}f\|_{\alpha,p}^{\frac1{\tau_0}}
		\le	\|Q_g^{\tau,1}f\|_{\alpha,p}^{\frac1{\tau}} \|T_gf\|^{\frac1{\tau_0}-\frac1{\tau}}_{\alpha,p}
		\le	\opnorm{Q_g^{\tau,1}}_{\alpha,p}^{\frac1{\tau}}\, 
		\|T_g\|^{\frac1{\tau_0}-\frac1{\tau}}_{\alpha,p}\|f\|_{\alpha,p},
	\end{equation*}
for every $f\in A^p_{\alpha}(0)$,
	from which~{\eqref{eqn:prop:lower:estimate:Qtau1:BMOA:1}} follows: 
	\begin{equation*}
		\opnorm{Q_g^{\tau_0,1}}^{\frac1{\tau_0}}_{\alpha,p}
		\le	\opnorm{Q_g^{\tau,1}}_{\alpha,p}^{\frac1{\tau}}\, 
		\|T_g\|^{\frac1{\tau_0}-\frac1{\tau}}_{\alpha,p}
		\lesssim\opnorm{Q_g^{\tau,1}}_{\alpha,p}^{\frac1{\tau}}\,\|g\|_{\B^1_{\alpha}}^{\frac1{\tau_0}-\frac1{\tau}}\quad(g\in\H(\overline{\D})).
	\end{equation*}
	
Now Proposition~{\ref{prop:sharp:lower:estimate:norm:word:tau:small}} and estimates \eqref{eqn:key:estimate:Carleson:norms:symbols1} and  \eqref{eqn:key:estimate:Carleson:norms:symbols2} show that~{\eqref{eqn:prop:lower:estimate:Qtau1:BMOA}} hold for any positive $\tau_0\in\Q$ which is small enough. Hence~{\eqref{eqn:prop:lower:estimate:Qtau1:BMOA}} must hold for any positive $\tau\in\R$.
\end{proof}

\begin{proof}[{\bf Proof of Proposition \ref{lem:sharp:lower:estimate:norm:word:3:Hp}}]
	Proposition~{\ref{prop:key:tool:sharp:lower:estimate:norm:word:Hp}} and~{\eqref{eqn:prop:lower:estimate:Qtau1:BMOA}} together with~{\eqref{eqn:key:estimate:Carleson:norms:symbols1}} give the estimate
	\begin{equation*}
		\|g\|^{2s}_{\B^s_{\alpha}}
		\lesssim \opnorm{Q_g^{\tau,1}}^2_{\alpha,p}
		+\|g\|_{\B^s_{\alpha}}^{2\tau+1}\opnorm{Q_g^{\tau,1}}^{\frac1s}_{\alpha,p}
		\qquad(g\in\H(\overline{\D})).
	\end{equation*}
	It follows that
	\begin{equation*}
		1\lesssim
		\frac{\opnorm{Q_g^{\tau,1}}^2_{\alpha,p}}{\|g\|^{2s}_{\B^s_{\alpha}}}
		+\left(\frac{\opnorm{Q_g^{\tau,1}}^2_{\alpha,p}}{\|g\|^{2s}_{\B^s_{\alpha}}}\right)^{\frac1{2s}}
		\quad(g\in\H(\overline{\D}),\,g\mbox{ non-constant}),
	\end{equation*}
	which clearly shows that
	\eqref{eqn:lem3:sharp:lower:estimate:norm:word:Hp} holds, and that ends the proof.
\end{proof}

\section{Proof of Proposition \ref{lem:sharp:lower:estimate:norm:word:4:Hp}}

Proposition \ref{lem:sharp:lower:estimate:norm:word:4:Hp} is a direct consequence of the following estimate.
\begin{proposition}
	\label{prop:key:tool:lower:estimate:norm:word}
	Let $\ell\in\N$, $\tau\in\Q$, $\tau>0$, and $s=\tau+1$. Write $\tau$ as $\tau=\frac{m}n$, where $m,n\in\N$ and $n>1+\frac{2s}{\tau}$.
	Then we have that
	\begin{equation}
		\label{eqn:key:tool:lower:estimate:norm:word}
		\opnorm{Q^{\tau,\ell}_g}_{\alpha,p}^n
		\lesssim\opnorm{Q_g^{\tau,\ell+1}}_{\alpha,p}\,
		\|g\|^{s\ell(n-\frac{\ell+1}{\ell})}_{\B^s_{\alpha}}
		\qquad(g\in\H(\overline{\D})).
	\end{equation}
\end{proposition}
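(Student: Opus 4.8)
The plan is to fix $f\in A^p_{\alpha}(0)$ with $\|f\|_{\alpha,p}=1$ and estimate $\|Q^{\tau,\ell}_g f\|_{\alpha,p}^n$, since $\opnorm{Q^{\tau,\ell}_g}_{\alpha,p}^n=\sup_f\|Q^{\tau,\ell}_g f\|_{\alpha,p}^n$. Differentiating the level-$\ell$ object $g^{m\ell}(T^{\ell}_g f)^n$ only produces the letters $T^{\ell-1}_g,T^{\ell}_g$, so to bring $T^{\ell+1}_g$ (hence $Q^{\tau,\ell+1}_g$) into play I would instead work with the level-$(\ell+1)$ object $H:=g^{m(\ell+1)}(T^{\ell+1}_g f)^n\in\H_0(\D)$, for which $\|Q^{\tau,\ell+1}_g f\|_{\alpha,p}^n=\|H\|_{\alpha,p/n}$. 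Writing $\tau=\frac mn$ with $m,n$ as in the statement and using $(T^{\ell+1}_g f)'=g'\,T^{\ell}_g f$, one factors
\[
H'=g^{m(\ell+1)-1}g'\,(T^{\ell+1}_g f)^{n-1}\bigl(m(\ell+1)\,T^{\ell+1}_g f+n\,g\,T^{\ell}_g f\bigr),
\]
which is exactly the computation behind the base case, Proposition~\ref{prop:key:tool:sharp:lower:estimate:norm:word:Hp}, with $f$ there replaced by $T^{\ell}_g f$.

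Expanding $\frac1{n^2}|H|^{2/n-2}|H'|^2=|g|^{2\tau(\ell+1)-2}|g'|^2\,|\tau(\ell+1)T^{\ell+1}_g f+g\,T^{\ell}_g f|^2$ and using $|a+b|^2\ge|b|^2-2|a|\,|b|$ yields the pointwise inequality
\[
|g|^{2\tau(\ell+1)}|g'|^2|T^{\ell}_g f|^2\le\tfrac1{n^2}|H|^{2/n-2}|H'|^2+2\tau(\ell+1)\,|g|^{2\tau(\ell+1)-1}|g'|^2\,|T^{\ell}_g f|\,|T^{\ell+1}_g f|.
\]
Taking $\|\cdot\|_{T^{p/2}_{\alpha,1}}$-norms, the left-hand side is $s^{-2}\|Q^{\tau,\ell}_g f\|^2_{T^{p}_{\alpha,2}(\nu_{g,s})}$ after \eqref{eqn:Tpq:norm:of:a:power} and $|g|^{2\tau}|g'|^2\,dA=s^{-2}\,d\nu_{g,s}$. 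Calderón's theorem \eqref{eqn:Calderon:area:thm:3} applied to $H$ with $q=\frac1n$ controls the first term on the right by $C\opnorm{Q^{\tau,\ell+1}_g}_{\alpha,p}^2$; writing $|g|^{2\tau(\ell+1)-1}|g'|^2\,dA=\beta^{-2}\,d\nu_{g,\beta}$ with $\beta=\tau(\ell+1)+\tfrac12$ turns the cross term into $\|T^{\ell}_g f\,T^{\ell+1}_g f\|_{T^{p/2}_{\alpha,1}(\nu_{g,\beta})}$, which \eqref{eqn:Mpq:simeq:BMOAs-norm}, Schwarz's inequality, Proposition~\ref{prop:boundedness:Tg} and $\|T^{k}_g\|_{\alpha,p}\lesssim\|g\|^{k}_{\B^{1}_{\alpha}}$ bound by $\boldnorm{g}_{\alpha,\beta}^{2\beta}$ times a power of $b:=\|g\|_{\B^{s}_{\alpha}}$.

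The next step is to reduce everything to powers of $b$ and pass to operator norms. The nesting estimates \eqref{eqn:key:estimate:Carleson:norms:symbols0}--\eqref{eqn:key:estimate:Carleson:norms:symbols2} collapse $\boldnorm{g}_{\alpha,\beta}$ and $\|g\|_{\B^{1}_{\alpha}}$ into $\|g\|_{\B^{s}_{\alpha}}$; taking the supremum over $f$ and invoking $M_{p,q}(\nu_{g,s},\alpha)\simeq b^{2s}$ together with the level-$\ell$ form of Calderón's theorem should link the left-hand tent-$\nu_{g,s}$ quantity to $\opnorm{Q^{\tau,\ell}_g}_{\alpha,p}$ and the first term on the right to $\opnorm{Q^{\tau,\ell+1}_g}_{\alpha,p}$. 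After absorbing the cross term, the outcome is a degree-two relation of the shape $b^{s}\opnorm{Q^{\tau,\ell}_g}_{\alpha,p}\lesssim\opnorm{Q^{\tau,\ell+1}_g}_{\alpha,p}$; combined with the already-proven upper bound $\opnorm{Q^{\tau,\ell}_g}_{\alpha,p}\lesssim b^{s\ell}$ from \eqref{eqn:upper:estimate:norm:Q:Hp}, this upgrades to the stated degree-$n$ estimate, since
\[
\opnorm{Q^{\tau,\ell}_g}_{\alpha,p}^n\lesssim\opnorm{Q^{\tau,\ell}_g}_{\alpha,p}\,b^{s\ell(n-1)}\lesssim b^{-s}\opnorm{Q^{\tau,\ell+1}_g}_{\alpha,p}\,b^{s\ell(n-1)}=\opnorm{Q^{\tau,\ell+1}_g}_{\alpha,p}\,b^{s(\ell n-\ell-1)}.
\]

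The hard part, I expect, is twofold: first, the reverse link, i.e.\ showing that the supremum over $f$ of the homogeneity-two tent-$\nu_{g,s}$ quantity genuinely reproduces $b^{2s}\opnorm{Q^{\tau,\ell}_g}^2$ (not merely bounds it) rather than collapsing to a weaker quantity; and second, the absorption of the cross term back into the main term. This is exactly where the hypothesis $n>1+\frac{2s}{\tau}$ must be used: it provides the quantitative gap that makes the absorption close (playing the role that ``$\tau$ small'' plays in Proposition~\ref{prop:sharp:lower:estimate:norm:word:tau:small} for $\ell=1$), it controls the auxiliary exponent $\beta=\tau(\ell+1)+\tfrac12$ when comparing $\nu_{g,\beta}$ against $\nu_{g,s}$, and, together with keeping $m,n$ large, it guarantees that $H$ and all the analytic powers needed to invoke $M_{p,q}(\nu_{g,s},\alpha)\simeq b^{2s}$ are genuinely holomorphic. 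Making this schematic bookkeeping rigorous is the delicate technical core of the proof.
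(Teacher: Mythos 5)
There is a genuine gap, and it sits exactly at the two points you yourself flag as ``the hard part''; neither can be repaired within your scheme. \emph{(i) The reverse link is not available.} Your master inequality bounds $s^{-2}\|Q^{\tau,\ell}_g f\|^2_{T^p_{\alpha,2}(\nu_{g,s})}$ from above by $C\opnorm{Q^{\tau,\ell+1}_g}^2_{\alpha,p}$ plus a cross term, and you then need the supremum over $f$ of the left side to dominate $\|g\|^{2s}_{\B^s_{\alpha}}\opnorm{Q^{\tau,\ell}_g}^2_{\alpha,p}$. In the base case (Proposition \ref{prop:key:tool:sharp:lower:estimate:norm:word:Hp}) the function inside the tent norm is $f$ itself, ranging over the full unit ball of $A^p_{\alpha}(0)$, so the supremum \emph{is} the embedding constant $M_{p,2}(\widetilde{\nu}_{g,s},\alpha)\simeq\|g\|^{2s}_{\B^s_{\alpha}}$ via Lemma \ref{lem:Carleson:measure:subh:funct} and Theorem \ref{thm:Carleson:measure:norm:tent:spaces}. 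For $\ell\ge1$ the family $\{Q^{\tau,\ell}_g f:\|f\|_{\alpha,p}\le1\}$ is a constrained subset, and for each fixed $f$ the Carleson embedding gives precisely the \emph{opposite} inequality, $\|Q^{\tau,\ell}_g f\|_{T^p_{\alpha,2}(\nu_{g,s})}\lesssim\|g\|^{s}_{\B^s_{\alpha}}\|Q^{\tau,\ell}_g f\|_{\alpha,p}$ (after the analytic-power trick); a supremum of such quantities does not factor as $\|g\|^{2s}_{\B^s_{\alpha}}\sup_f\|Q^{\tau,\ell}_gf\|^2_{\alpha,p}$, and nothing in the paper's toolkit supplies the lower bound. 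Your degree-two relation $\|g\|^{s}_{\B^s_{\alpha}}\opnorm{Q^{\tau,\ell}_g}_{\alpha,p}\lesssim\opnorm{Q^{\tau,\ell+1}_g}_{\alpha,p}$ is therefore unsubstantiated --- it is essentially equivalent to the conclusion you are trying to reach. \emph{(ii) The cross term cannot be absorbed.} Your auxiliary index is $\beta=\tau(\ell+1)+\tfrac12$, and $\beta\le s=\tau+1$ if and only if $\tau\ell\le\tfrac12$. As soon as $\tau\ell>\tfrac12$ you have $\beta>s$, and the nesting estimates \eqref{eqn:key:estimate:Carleson:norms:symbols0}--\eqref{eqn:key:estimate:Carleson:norms:symbols2} run the wrong way: they bound the smaller-index functional by the larger-index one, and by Corollary \ref{cor1:prop:BMOAq:Bq:nested} the inclusion $\B^{\beta}_{\alpha}\varsubsetneq\B^{s}_{\alpha}$ is strict, so $\boldnorm{g}_{\alpha,\beta}$ is \emph{not} controlled by any power of $\|g\|_{\B^s_{\alpha}}$. (In the base case the index is $\tau+\tfrac12<s$, which is why the collapse works there --- and even then the paper needs the smallness restriction of Proposition \ref{prop:sharp:lower:estimate:norm:word:tau:small} plus the bootstrap through Proposition \ref{eqn:prop:lower:estimate:Qtau1:BMOA} to close the absorption, a complication your sketch also inherits.)

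The paper's proof avoids both obstacles by never invoking a reverse embedding or an absorption. It starts from $\|Q^{\tau,\ell}_gf\|^n_{\alpha,p}=\|h_{\ell}\|_{\alpha,p/n}$ with $h_{\ell}=g^{m\ell}(T^{\ell}_gf)^n\in\H_0(\D)$ (the level-$\ell$ object, which you discarded), bounds this by $\|h'_{\ell}\|_{T^{p/n}_{\alpha,2}}$ via \eqref{eqn:estimate:Apalpha:norm:derivative}, and then uses the identity $g'\,T^{\ell}_gf=(T^{\ell+1}_gf)'$ to rewrite $h'_{\ell}$ entirely in terms of $(T^{\ell+1}_gf)'$ times analytic factors; a Leibniz-rule transfer of the derivative off $T^{\ell+1}_gf$ produces the main terms $A_1,B_1,B_2$ and the remainders $F_k$ of Lemma \ref{lem:key:tool:lower:estimate:norm:word}. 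Every factor except the single $T^{\ell+1}_gf$ (which yields $\opnorm{Q^{\tau,\ell+1}_g}_{\alpha,p}$) is then estimated \emph{from above} by tent-space H\"older (Proposition \ref{prop:Holder:ineq:tent:spaces}), the embedding \eqref{eqn:Mpq:simeq:BMOAs-norm}, the maximal operator (for $B_2$, $\ell=1$), and crucially the already-proved upper bound \eqref{eqn:upper:estimate:norm:Q:Hp}; the only lower-bound input is the one Calder\'on inequality at the start. Note also that the hypothesis $n>1+\frac{2s}{\tau}$ plays a different role than you guessed: it guarantees positivity of the H\"older exponents (e.g.\ $a_k=n-k-1-\frac{(2-k)s}{\tau\ell}>0$ and $n>2$), not any absorption gap.
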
	

In order to prove Proposition~{\ref{prop:key:tool:lower:estimate:norm:word}}
we need the following technical lemma.

\begin{lemma}
	\label{lem:key:tool:lower:estimate:norm:word}
	Let $\ell$, $\tau=\frac{m}n$, and $s$ as in the statement of Proposition~{\ref{prop:key:tool:lower:estimate:norm:word}}.
	Let $g\in\H(\overline{\D})$ and $f\in A^p_{\alpha}(0)$ such that $\|f\|_{\alpha,p}=1$. For  $k=0,1,2$, let 
	\begin{equation}
		\label{eqn:def:Fk}
		F_k=(T_g^{\ell+1}f)\,g^{m\ell+k-2}g'\,
		(T_g^{\ell}f)^{n-1-k}\,  (T_g^{\ell-1}f)^k,
	\end{equation}
where, as usual, $T^0_gf=f$.
	Then there is a constant $C>0$, which does not depend on $f$ and $g$, so that
	\begin{equation*}
		\|F_k\|_{T^{p/n}_{\alpha,2}}
		\le C\,\opnorm{Q_g^{\tau,\ell+1}}_{\alpha,p}\,
		\|g\|^{s\ell(n-\frac{\ell+1}{\ell})}_{\B^s_{\alpha}}.
	\end{equation*}
\end{lemma}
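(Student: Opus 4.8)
The plan is to reduce the claim to a weighted tent-space estimate against the measure $\nu_{g,s}$ from \eqref{eqn:def:nugs} and then to estimate that by Hölder's inequality for tent spaces (Proposition~\ref{prop:Holder:ineq:tent:spaces}). Writing $s=\tau+1$ and using $|\nabla|g|^s|=s\,|g|^{s-1}|g'|$, I would factor $|\nabla|g|^s|$ out of $F_k$ via
\[
|g|^{m\ell+k-2}|g'|=\tfrac1s\,|g|^{\,m\ell+k-1-s}\,|\nabla|g|^s|,
\]
which yields $\|F_k\|_{T^{p/n}_{\alpha,2}}=\tfrac1s\,\|\Psi_k\|_{T^{p/n}_{\alpha,2}(\nu_{g,s})}$, where
\[
\Psi_k=|T_g^{\ell+1}f|\;|g|^{\,m\ell+k-1-s}\;|T_g^{\ell}f|^{\,n-1-k}\;|T_g^{\ell-1}f|^{\,k}.
\]
A preliminary check is that $m\ge3$ (forced by $n>1+\tfrac{2s}{\tau}$, equivalently $n(m-2)/m>3$) and hence $m\ell+k-1-s>0$, so the displayed exponent of $|g|$ is a genuine nonnegative power.

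Next I would split $\Psi_k$ into exactly $n$ factors and apply Proposition~\ref{prop:Holder:ineq:tent:spaces} with every factor in $T^{p}_{\alpha,2n}(\nu_{g,s})$, so that the product lands in $T^{p/n}_{\alpha,2}(\nu_{g,s})$. One factor is $\phi_0=|Q_g^{\tau,\ell+1}f|=|g|^{\tau(\ell+1)}|T_g^{\ell+1}f|$; the other $n-1$ factors are $\phi_i=|g|^{\beta_i}|T_g^{j_i}f|$, with $j_i=\ell$ for $n-1-k$ of them and $j_i=\ell-1$ for the remaining $k$, and with $\beta_i\in\tfrac1n\N_0$ chosen so that $0\le\beta_i\le\tau j_i$ and $\sum_{i\ge1}\beta_i=R:=m\ell+k-1-s-\tau(\ell+1)$. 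Because $\beta_i n\in\N_0$, each $\phi_i$ (and $\phi_0$) is the $1/n$-th power of a holomorphic function, so \eqref{eqn:Tpq:norm:of:a:power} combined with the embedding $M_{p/n,2}(\nu_{g,s},\alpha)\simeq\|g\|^{2s}_{\B^s_\alpha}$ coming from \eqref{eqn:Mpq:simeq:BMOAs-norm} gives
\[
\|\phi_0\|_{T^p_{\alpha,2n}(\nu_{g,s})}\lesssim\|g\|_{\B^s_\alpha}^{s/n}\,\opnorm{Q_g^{\tau,\ell+1}}_{\alpha,p},
\qquad
\|\phi_i\|_{T^p_{\alpha,2n}(\nu_{g,s})}\lesssim\|g\|_{\B^s_\alpha}^{s/n}\,\big\||g|^{\beta_i}|T_g^{j_i}f|\big\|_{\alpha,p},
\]
where the bound on $\phi_0$ uses $\|f\|_{\alpha,p}=1$ and $f\in A^p_\alpha(0)$.

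The pointwise-mass factors are then controlled by interpolating, through Hölder in $A^p_\alpha$, between $Q_g^{\tau,j_i}$ and $T_g^{j_i}$: writing $|g|^{\beta_i}|T_g^{j_i}f|=\big(|g|^{\tau j_i}|T_g^{j_i}f|\big)^{\theta_i}\big(|T_g^{j_i}f|\big)^{1-\theta_i}$ with $\theta_i=\beta_i/(\tau j_i)\in[0,1]$, and invoking the upper bound \eqref{eqn:upper:estimate:norm:Q:Hp} ($\|Q_g^{\tau,j_i}\|_{\alpha,p}\lesssim\|g\|^{sj_i}_{\B^s_\alpha}$) together with \eqref{eqn:restricted:norm:Tn:00} and the nesting \eqref{eqn:key:estimate:Carleson:norms:symbols2} ($\|T_g^{j_i}\|_{\alpha,p}\lesssim\|g\|^{j_i}_{\B^1_\alpha}\lesssim\|g\|^{j_i}_{\B^s_\alpha}$, valid since $s>1$), I obtain $\big\||g|^{\beta_i}|T_g^{j_i}f|\big\|_{\alpha,p}\lesssim\|g\|_{\B^s_\alpha}^{\,j_i+\beta_i}$, the exponent $j_i(1+(s-1)\theta_i)$ simplifying because $(s-1)\theta_i=\beta_i/j_i$. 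Multiplying the tent-norm bounds gives $\|\Psi_k\|_{T^{p/n}_{\alpha,2}(\nu_{g,s})}\lesssim\opnorm{Q_g^{\tau,\ell+1}}_{\alpha,p}\,\|g\|_{\B^s_\alpha}^{E}$ with $E=s+\sum_{i\ge1}(j_i+\beta_i)=s+[\ell(n-1)-k]+R$, and a direct computation using $m=\tau n$ and $s=\tau+1$ collapses $E$ to $s(\ell n-\ell-1)=s\ell\!\left(n-\tfrac{\ell+1}{\ell}\right)$, which is exactly the claimed exponent; this proves the lemma once one recalls $\|F_k\|_{T^{p/n}_{\alpha,2}}=\tfrac1s\|\Psi_k\|_{T^{p/n}_{\alpha,2}(\nu_{g,s})}$.

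The main obstacle is purely the combinatorial bookkeeping that makes this factorization legal, and it is precisely here that the hypothesis $n>1+\tfrac{2s}{\tau}$ is used. For $k=2$ the naive choice $\beta_i=\tau j_i$ works and $R=\sum_i\tau j_i$, but for $k=0,1$ that choice would require a \emph{negative} residual power $(k-2)s$; the resolution is to verify that the honest residual $R$ is positive and does not exceed the total capacity $\sum_i\tau j_i$. Rewriting $R=\tau[\ell(n-1)-2]+k-2$ and noting that the hypothesis is equivalent to $\tau(n-3)>2$ gives $R>k\ge0$ for every $\ell\ge1$, while $nR-\sum_i m j_i=(k-2)(m+n)\le0$ holds because $k\le2$; since $R\in\tfrac1n\N$ and each capacity $\tau j_i\in\tfrac1n\N_0$, an admissible assignment $\beta_i\in\tfrac1n\N_0\cap[0,\tau j_i]$ with $\sum_{i\ge1}\beta_i=R$ exists. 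The constants produced along the way depend only on $\alpha,p,m,n,\ell$ and the chosen $\beta_i$, hence not on $f$ or $g$, so the argument runs uniformly over $k\in\{0,1,2\}$.
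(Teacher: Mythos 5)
Your proof is correct, and it follows the paper's master plan---absorb $|g|^{\tau}|g'|$ into the measure $d\nu_{g,s}$ of \eqref{eqn:def:nugs}, split the resulting weight by H\"{o}lder's inequality in tent spaces (Proposition~\ref{prop:Holder:ineq:tent:spaces}), control each factor through \eqref{eqn:Tpq:norm:of:a:power} and $M_{p,q}(\nu_{g,s},\alpha)\simeq\|g\|^{2s}_{\B^s_{\alpha}}$ from \eqref{eqn:Mpq:simeq:BMOAs-norm}, and finish with \eqref{eqn:upper:estimate:norm:Q:Hp} and the nesting estimates---but your decomposition is genuinely different in its bookkeeping. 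The paper groups the powers of $|g|$ into three or four explicit blocks, $|h_{\ell+1}|^{1/n}\,|h_{\ell}|^{a_k/n}\,|h_{\ell-1}|^{k/n}\,|T_g^{\ell}f|^{b_k}$ with $h_j=g^{mj}(T_g^jf)^n$, $a_k=n-k-1-\frac{(2-k)s}{\tau\ell}$ and $b_k=\frac{(2-k)s}{\tau\ell}$, applying tent H\"{o}lder with the fixed exponent patterns $(6,6,6)$ or $(8,8,8,8)$; the exact choice of $a_k,b_k$ makes every mass norm directly a $Q^{\tau,j}_g$- or $T_g$-norm, so no interpolation or feasibility check is needed. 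You instead split into exactly $n$ factors of $T^{p}_{\alpha,2n}(\nu_{g,s})$, distribute the residual $|g|$-power through weights $\beta_i\in\tfrac1n\N_0$ whose existence you establish by a counting argument ($R>k\ge0$ and $nR-\sum_i mj_i=(k-2)(m+n)\le0$, both verified), and then pay for intermediate values $0<\beta_i<\tau j_i$ with an extra H\"{o}lder interpolation in $A^p_{\alpha}$ between $Q_g^{\tau,j_i}$ and $T_g^{j_i}$, the exponent $j_i+\beta_i$ collapsing the total to the same $s\ell\bigl(n-\frac{\ell+1}{\ell}\bigr)$. Both arguments consume the hypothesis $n>1+\frac{2s}{\tau}$ at the analogous point (positivity of $a_k$ in the paper, positivity of $R$ and sufficient capacity in yours). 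What your version buys is flexibility: you never need to guess the exact exponents, only to verify that an admissible assignment exists, which would adapt more easily to other weight configurations; what the paper's version buys is brevity, since the tailored split avoids both the interpolation step and the combinatorics. Two cosmetic points: your inference ``$m\ge3$ hence $m\ell+k-1-s>0$'' also uses $n\ge4$ (both are forced by $\tau(n-3)>2$, so nothing is wrong, and indeed $m\ell+k-1-s=R+\tau(\ell+1)>0$ follows anyway from $R>0$); and in the degenerate case $j_i=\ell-1=0$ your capacity constraint forces $\beta_i=0$, so $\theta_i=\beta_i/(\tau j_i)$ should be read with the convention that the factor is just $|f|$, handled by $Q^{\tau,0}_g=\mathrm{id}$ as the paper sets.
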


\begin{proof}
	Along this proof, $\nu$ is the measure $\nu_{g,s}$ defined by~{\eqref{eqn:def:nugs}}. Moreover, all the constants associated to $\lesssim$ do not depend on $f$ and $g$. 	
	
	Let $h_j=g^{mj}(T^j_gf)^n$, for any $j\in\N_0$. Then 
	$h_j\in\H(\D)$ and 
	\begin{equation}
		\label{eqn:h_j:Q}	
		\|h_j\|^{1/n}_{\alpha,p/n}=\|Q^{\tau,j}_gf\|_{\alpha,p}\le\opnorm{Q^{\tau,j}_g}_{\alpha,p}.
	\end{equation}	
	Moreover,
	\begin{align*}
		\|F_k\|_{T^{p/n}_{\alpha,2}}
		&=\|\,|T_g^{\ell+1}f|\,|g|^{n\tau\ell+k-2}\,|g'|\,
		|T_g^{\ell}f|^{n-1-k}\, |T_g^{\ell-1}f|^k\,\|_{T^{p/n}_{\alpha,2}}
		\\
		&=\|(|g|^{\tau}|g'|)\,
		|h_{\ell+1}|^{1/n}\,|h_{\ell}|^{a_k/n}\,
		|h_{\ell-1}|^{k/n}\,|T_g^{\ell}f|^{b_k}\,\|_{T^{p/n}_{\alpha,2}}
		\\
		&=\|\,|h_{\ell+1}|^{1/n}\,|h_{\ell}|^{a_k/n}\,
		|h_{\ell-1}|^{k/n}\,|T_g^{\ell}f|^{b_k}\,
		\|_{T^{p/n}_{\alpha,2}(\nu)},
	\end{align*}	
	where $a_k=n-k-1-\frac{(2-k)s}{\tau\ell}$ and $b_k=\frac{(2-k)s}{\tau\ell}$, that is,
	\begin{align*}
		\|F_0\|_{T^{p/n}_{\alpha,2}}
		&=\|\,|h_{\ell+1}|^{1/n}\,|h_{\ell}|^{a_0/n}\,
		|T_g^{\ell}f|^{b_0}\,
		\|_{T^{p/n}_{\alpha,2}(\nu)}
		\\
		\|F_1\|_{T^{p/n}_{\alpha,2}}
		&=\|\,|h_{\ell+1}|^{1/n}\,|h_{\ell}|^{a_1/n}\,
		|h_{\ell-1}|^{1/n}\,|T_g^{\ell}f|^{b_1}\,
		\|_{T^{p/n}_{\alpha,2}(\nu)}
		\\
		\|F_2\|_{T^{p/n}_{\alpha,2}}
		&=\|\,|h_{\ell+1}|^{1/n}\,|h_{\ell}|^{a_2/n}\,
		|h_{\ell-1}|^{2/n}\,
		\|_{T^{p/n}_{\alpha,2}(\nu)}.
	\end{align*}
	Note that $b_0,b_1>0$, $a_k>0$ (since $n>1+\frac{2s}{\tau}$) and $a_k+b_k=n-k-1$, for $k=0,1,2$.
	So we may apply Proposition~{\ref{prop:Holder:ineq:tent:spaces}},
	with the choice of exponents
	\[
	\left\{\begin{array}{l}
		\mbox{$(p_1,p_2,p_3)=(p,p/a_0,p/b_0)$, and $(q_1,q_2,q_3)=(6,6,6)$, if $k=0$,}
		\\
		\mbox{$(p_1,p_2,p_3,p_4)=(p,p/a_1,p,p/b_1)$, and $(q_1,q_2,q_3,q_4)=(8,8,8,8)$, if $k=1$,}
		\\
		\mbox{$(p_1,p_2,p_3)=(p,p/a_2,p/2)$, and $(q_1,q_2,q_3)=(6,6,6)$, if $k=2$,}
	\end{array}\right.
	\]
	and the identity~{\eqref{eqn:Tpq:norm:of:a:power}} to obtain that
	\begin{align*}
		\|F_0\|_{T^{p/n}_{\alpha,2}}
		&\le
		\|h_{\ell+1}\|_{T^{p/n}_{\alpha,6/n}(\nu)}^{1/n}\,
		\|h_{\ell}\|_{T^{p/n}_{\alpha,6a_0/n}(\nu)}^{a_0/n}\,
		\|T_g^{\ell}f\|_{T^{p}_{\alpha,6b_0}(\nu)}^{b_0}
		\\
		\|F_1\|_{T^{p/n}_{\alpha,2}}
		&\le\|h_{\ell+1}\|_{T^{p/n}_{\alpha,8/n}(\nu)}^{1/n}
		\|h_{\ell}\|_{T^{p/n}_{\alpha,8a_1/n}(\nu)}^{a_1/n}
		\|h_{\ell-1}\|_{T^{p/n}_{\alpha,8/n}(\nu)}^{1/n}
		\|T_g^{\ell}f\|_{T^{p}_{\alpha,8b_1}(\nu)}^{b_1}
		\\
		\|F_2\|_{T^{p/n}_{\alpha,2}}
		&\le\|h_{\ell+1}\|_{T^{p/n}_{\alpha,6/n}(\nu)}^{1/n}
		\|h_{\ell}\|_{T^{p/n}_{\alpha,6a_2/n}(\nu)}^{a_2/n}
		\|h_{\ell-1}\|_{T^{p/n}_{\alpha,12/n}(\nu)}^{2/n}
	\end{align*}
	
	Finally, the estimates~{\eqref{eqn:Mpq:simeq:BMOAs-norm}}, \eqref{eqn:h_j:Q}, \eqref{eqn:upper:estimate:norm:Q:Hp}, \eqref{eqn:key:estimate:Carleson:norms:symbols1} and~{\eqref{eqn:key:estimate:Carleson:norms:symbols2}} complete the proof.
	Namely, for $k=1$ we have that
	\begin{align*}
		\|F_1\|_{T^{p/n}_{\alpha,2}}
		&\lesssim 
		\|g\|^s_{\B^s_{\alpha}}\,
		\|Q^{\tau,\ell+1}_gf\|_{\alpha,p}\,
		\|Q^{\tau,\ell}_gf\|_{\alpha,p}^{a_1}\,
		\|Q^{\tau,\ell-1}_gf\|_{\alpha,p}\,
		\|T_g^{\ell}f\|_{\alpha,p}^{b_1}
		\\
		&\lesssim 
		\|g\|^s_{\B^s_{\alpha}}\,
		\opnorm{Q_g^{\tau,\ell+1}}_{\alpha,p}\, 
		\opnorm{Q_g^{\tau,\ell}}^{a_1}_{\alpha,p}\,
		\opnorm{Q_g^{\tau,\ell-1}}_{\alpha,p}\,
		\|g\|^{\ell b_1}_{\B^1_{\alpha}}
		\\
		&\lesssim
		\opnorm{Q_g^{\tau,\ell+1}}_{\alpha,p}\, 
		\|g\|^{s\ell a_1+s\ell+\ell b_1}_{\B^s_{\alpha}}
		=\opnorm{Q_g^{\tau,\ell+1}}_{\alpha,p}\,
		\|g\|^{s\ell(n-\frac{\ell+1}{\ell})}_{\B^s_{\alpha}}.
	\end{align*}
	Similarly, we also obtain that
	\begin{align*}
		\|F_0\|_{T^{p/n}_{\alpha,2}}
		&\lesssim
		\opnorm{Q_g^{\tau,\ell+1}}_{\alpha,p}\, 
		\|g\|^{s\ell a_0+\ell b_0+s}_{\B^s_{\alpha}}
		=\opnorm{Q_g^{\tau,\ell+1}}_{\alpha,p}\,
		\|g\|^{s\ell(n-\frac{\ell+1}{\ell})}_{\B^s_{\alpha}}
		\\
		\|F_2\|_{T^{p/n}_{\alpha,2}}
		&\lesssim 
		\opnorm{Q_g^{\tau,\ell+1}}_{\alpha,p}\, 
		\|g\|^{s\ell a_2+2s\ell-s}_{\B^s_{\alpha}}
		=\opnorm{Q_g^{\tau,\ell+1}}_{\alpha,p}\,
		\|g\|^{s\ell(n-\frac{\ell+1}{\ell})}_{\B^s_{\alpha}},
	\end{align*}
	and that ends the proof of the lemma.
\end{proof}	

\begin{proof}[{\bf Proof of Proposition~{\ref{prop:key:tool:lower:estimate:norm:word}}}]
	Along this proof  all the constants associated to $\lesssim$ do not depend on $f$ and $g$. 
	Let $f\in A^p_{\alpha}(0)$ such that $\|f\|_{\alpha,p}=1$. Then, as in the proof of the previous lemma, for any $j\in\N_0$ we consider the function $h_j:=g^{mj}\,(T_g^jf)^n\in\H_0(\D)$ which satisfies \eqref{eqn:h_j:Q}.
	Now 
	\begin{align*}
		h'_{\ell}&=m\ell\, g^{m\ell-1} g'(T_g^{\ell}f)^n
		+ng^{m\ell} g' (T_g^{\ell}f)^{n-1}\, T_g^{\ell-1}f\\
		&=m\ell\,(T_g^{\ell+1}f)' g^{m\ell-1}(T_g^{\ell}f)^{n-1}
		+n(T_g^{\ell+1}f)' g^{m\ell}  (T_g^{\ell}f)^{n-2}\, T_g^{\ell-1}f,
	\end{align*}
	so~{\eqref{eqn:estimate:Apalpha:norm:derivative}} shows that $\|Q^{\tau,\ell}_gf\|_{\alpha,p}^n\lesssim\|h'_{\ell}\|_{T^{p/n}_{\alpha,2}}\lesssim A+B$, where 
	\begin{equation*}
		A=\|(T_g^{\ell+1}f)' g^{m\ell-1}(T_g^{\ell}f)^{n-1}   \|_{T^{\frac{p}n}_{\alpha,2}}
		\mbox{ and }
		B=\|(T_g^{\ell+1}f)' g^{m\ell}  (T_g^{\ell}f)^{n-2}\, T_g^{\ell-1}f\|_{T^{\frac{p}n}_{\alpha,2}}.
	\end{equation*}
	Then~{\eqref{eqn:key:tool:lower:estimate:norm:word}} will be proved once we show the following two estimates:
	\begin{align}
		\label{eqn:estimate:A}
		A&\lesssim \opnorm{Q_g^{\tau,\ell+1}}_{\alpha,p}\,
		\|g\|^{s\ell(n-\frac{\ell+1}{\ell})}_{\B^s_{\alpha}} 
		\\
		\label{eqn:estimate:B}	
		B&\lesssim \opnorm{Q_g^{\tau,\ell+1}}_{\alpha,p}\,
		\|g\|^{s\ell(n-\frac{\ell+1}{\ell})}_{\B^s_{\alpha}}.	
	\end{align}
	
	\noindent{\sf Estimate of $A$:}
	\begin{align*}
		A
		&\lesssim
		\|\bigl((T_g^{\ell+1}f)
		g^{m\ell-1}(T_g^{\ell}f)^{n-1}\bigr)'\|_{T^{\frac{p}n}_{\alpha,2}}
		+\|(T_g^{\ell+1}f)\bigl(g^{m\ell-1}(T_g^{\ell}f)^{n-1}    \bigr)'\|_{T^{\frac{p}n}_{\alpha,2}} 
		\\
		&\lesssim A_1+\|F_0\|_{T^{\frac{p}n}_{\alpha,2}} +\|F_1\|_{T^{\frac{p}n}_{\alpha,2}},
	\end{align*}
	where $A_1=\|(T_g^{\ell+1}f) g^{m\ell-1}(T_g^{\ell}f)^{n-1} 
	\|_{\alpha,\frac{p}n}$, and $F_0$, $F_1$ are defined by \eqref{eqn:def:Fk}.
	Note that 
	\begin{equation*}
		A_1=
		\||T_g^{\ell+1}f| |g|^{n\tau\ell-1}|T_g^{\ell}f|^{n-1} 
		\|_{\alpha,\frac{p}n}
		=\||Q_g^{\tau,\ell+1}f|\,|Q_g^{\tau,\ell}f|^a|T_g^{\ell}f|^b\|_{\alpha,\frac{p}n},
	\end{equation*} 
	where  $a=n-1-\frac{s}{\tau\ell}>0$ (since $n>1+\frac{2s}{\tau}$) and $b=\frac{s}{\tau\ell}>0$. 
	So, by applying H\"{o}lder's inequality with exponents $p_1=n$, $p_2=\frac{n}a$ and $p_3=\frac{n}b$, and  estimates
	\eqref{eqn:upper:estimate:norm:Q:Hp}, \eqref{eqn:key:estimate:Carleson:norms:symbols1} and \eqref{eqn:key:estimate:Carleson:norms:symbols2}, we get  that
	\begin{align*}
		A_1
		&\le \|Q_g^{\tau,\ell+1}f\|_{\alpha,p}\, 
		\|Q_g^{\tau,\ell}f\|^a_{\alpha,p}\,
		\|T_g^{\ell}f\|^b_{\alpha,p} 
		\le \opnorm{Q_g^{\tau,\ell+1}}_{\alpha,p}\, 
		\opnorm{Q_g^{\tau,\ell}}^a_{\alpha,p}\,
		\|T_g\|^{\ell b}_{\alpha,p}
		\\
		&
		\lesssim\opnorm{Q_g^{\tau,\ell+1}}_{\alpha,p}\,
		\|g\|^{s\ell a}_{\B^s_{\alpha}}\,\|g\|^{\ell b}_{\B^1_{\alpha}}
		\lesssim 
		\opnorm{Q_g^{\tau,\ell+1}}_{\alpha,p}\,
		\|g\|^{s\ell(n-\frac{\ell+1}{\ell})}_{\B^s_{\alpha}},    
	\end{align*}
	since $s\ell a+\ell b=s\ell(n-1)+(1-s)\tfrac{s}{\tau}=s\ell(n-1)-s
	=s\ell(n-\tfrac{\ell+1}{\ell})$. 
	Moreover, Lemma \ref{lem:key:tool:lower:estimate:norm:word} shows that $\|F_0\|_{T^{\frac{p}n}_{\alpha,2}}$ and  $\|F_1\|_{T^{\frac{p}n}_{\alpha,2}}$ have the same estimate than $A_1$, so we deduce that \eqref{eqn:estimate:A} holds.
	\vspace*{6pt}
	
	\noindent{\sf Estimate of $B$:}
	\begin{align*}
		B
		&\lesssim\|\bigl((T_g^{\ell+1}f) g^{m\ell}(T_g^{\ell}f)^{n-2} T_g^{\ell-1}f   \bigr)'\|_{T^{\frac{p}n}_{\alpha,2}}    
		+\|T_g^{\ell+1}f\, \bigl(g^{m\ell}(T_g^{\ell}f)^{n-2}T_g^{\ell-1}f   \bigr)'\|_{T^{\frac{p}n}_{\alpha,2}} \\
		&\lesssim B_1 +\|F_1\|_{T^{\frac{p}n}_{\alpha,2}} +\|F_2\|_{T^{\frac{p}n}_{\alpha,2}}  +B_2,
	\end{align*}
	where $F_1$ and $F_2$ are defined by \eqref{eqn:def:Fk}, 
	$B_1=\|(T_g^{\ell+1}f) g^{m\ell}(T_g^{\ell}f)^{n-2}T_g^{\ell-1}f  \|_{\alpha,\frac{p}n}$ and $B_2=\|(T_g^{\ell+1}f) g^{m\ell}(T_g^{\ell}f)^{n-2}(T_g^{\ell-1}f)'\|_{T^{\frac{p}n}_{\alpha,2}}$.
	
	\noindent{\sf Estimate of $B_1$:} Note that 
	\begin{equation*}
		B_1=
		\||T_g^{\ell+1}f| |g|^{n\tau\ell}|T_g^{\ell}f|^{n-2}|T_g^{\ell-1}f|\|_{\alpha,\frac{p}n}
		=\||Q_g^{\tau,\ell+1}f|
		|Q_g^{\tau,\ell}f|^{n-2}|Q_g^{\tau,\ell-1}f| \|_{\alpha,\frac{p}n},
	\end{equation*}
	so, by applying H\"{o}lder's inequality with exponents $p_1=n$, $p_2=\frac{n}{n-2}$ and $p_3=n$ (recall that $n>2$, since $n>1+\frac{2s}{\tau}$), and using the 
	estimate \eqref{eqn:upper:estimate:norm:Q:Hp}, we get that
	\begin{align}
		B_1
		&\le \|Q_g^{\tau,\ell+1}f\|_{\alpha,p}\, 
		\|Q_g^{\tau,\ell}f\|^{n-2}_{\alpha,p}\,
		\|Q_g^{\tau,\ell-1}f\|_{\alpha,p} 
		\\
		\nonumber
		&\le\opnorm{Q_g^{\tau,\ell+1}}_{\alpha,p}\,
		\opnorm{Q_g^{\tau,\ell}}^{n-2}_{\alpha,p}\,
		\opnorm{Q_g^{\tau,\ell-1}}_{\alpha,p} 
		\\
		\nonumber
		&\lesssim  \opnorm{Q_g^{\tau,\ell+1}}_{\alpha,p}\, 
		\|g\|_{\B^s_{\alpha}}^{s\ell(n-\frac{\ell+1}{\ell})}.
	\end{align}

	\noindent{\sf Estimate of $B_2$ for $\ell=1$:} Note that, in this case,
	\begin{equation*}
		B_2=\||T_g^2f|\,|g|^{n\tau}\,|T_gf|^{n-2}\,|f'|\|_{T^{\frac{p}n}_{\alpha,2}}
		=\||Q^{\tau,1}_gf|^{n-2}\,|Q^{\tau,2}_gf|\,|f'|\|_{T^{\frac{p}n}_{\alpha,2}}.
	\end{equation*}
	Since $|Q^{\tau,j}_gf|=|h_j|^{\frac1n}$, for $j=1,2$, 
	we have that 
	${\mathcal M}(Q^{\tau,j}_gf)=({\mathcal M}h_j)^{\frac1n}$, where ${\mathcal M}$ is the nontangential maximal operator defined by~{\eqref{eqn:definition:non-tangential:maximal:function}}, and so \eqref{eqn:boundedness:non-tangential:maximal:function:1} shows that 
	\begin{equation}
		\label{eqn:boundedness:non-tangential:maximal:operator}
		\|{\mathcal M}(Q^{\tau,j}_gf)\|_{\alpha,q}
		=\|{\mathcal M}h_j\|_{\alpha,\frac{q}n}^{\frac1n} 
		\lesssim \|h_j\|_{\alpha,\frac{q}n}^{\frac1n}
		=\|Q^{\tau,j}_gf\|_{\alpha,q}
		\le\opnorm{Q^{\tau,j}_g}_{\alpha,q},
	\end{equation}
	where~{\eqref{eqn:h_j:Q}} gives the last identity.
	Thus we estimate $B_2$ as
	follows
	\begin{align*}
		B_2
		&=\biggl\{\int_{\overline{\D}}\biggl(\int_{\Gamma(\zeta)}|Q^{\tau,1}_gf|^{2(n-2)}\,|Q^{\tau,2}_gf|^2\,|f'|^2\,dA\biggr)^{\frac{p}{2n}}dA_{\alpha}(\zeta)\biggr\}^{\frac{n}p}
		\\
		&\le\biggl\{\int_{\overline{\D}}
		({\mathcal M}(Q^{\tau,1}_gf))^{(n-2)\frac{p}n}
		({\mathcal M}(Q^{\tau,2}_gf))^{\frac{p}n}
		\biggl(\int_{\Gamma(\zeta)}|f'|^2\,dA\biggr)^{\frac{p}{2n}}dA_{\alpha}(\zeta)\biggr\}^{\frac{n}p}.
	\end{align*}
	Then H\"{o}lder's inequality with exponents $p_1=\frac{n}{n-2}$, $p_2=n$, and $p_3=n$,  and estimates~{\eqref{eqn:boundedness:non-tangential:maximal:operator}}, \eqref{eqn:estimate:Apalpha:norm:derivative}, and~{\eqref{eqn:upper:estimate:norm:Q:Hp}} imply that
	\begin{align*}
		B_2 
		&\le \|{\mathcal M}(Q^{\tau,1}_gf)\|^{n-2}_{\alpha,p}\,
		\|{\mathcal M}
		(Q^{\tau,2}_gf)\|_{\alpha,p}\,\|f'\|_{T^p_{\alpha,2}}
		\\ 
		\nonumber    
		&\lesssim \opnorm{Q^{\tau,1}_g}_{\alpha,p}^{n-2}\,
		\opnorm{Q^{\tau,2}_g}_{\alpha,p}\,\|f\|_{\alpha,p}
		\lesssim\opnorm{Q^{\tau,2}_g}_{\alpha,p}\,\|g\|^{s(n-2)}_{\B^s_{\alpha}}.
	\end{align*}

	\noindent{\sf Estimate of $B_2$ for $\ell>1$:}
	Since $B_2=\||T_g^{\ell+1}f| |g|^{n\tau\ell}|T_g^{\ell}f|^{n-2}|(T_g^{\ell-1}f)'|   \|_{T^{\frac{p}n}_{\alpha,2}}$,
	\begin{equation*}
		B_2
		=\|(|g|^{\tau}|g'|)|h_{\ell+1}|^{\frac1n}
		|h_{\ell}|^{\frac{n-2}n}|h_{\ell-2}|^{\frac1n}\|_{T^{\frac{p}n}_{\alpha,2}(\nu)}
		=\||h_{\ell+1}|^{\frac1n}
		|h_{\ell}|^{\frac{n-2}n}|h_{\ell-2}|^{\frac1n}\|_{T^{\frac{p}n}_{\alpha,2}(\nu)},
	\end{equation*}
	where $\nu$ is the measure $\nu_{g,s}$ defined by~{\eqref{eqn:def:nugs}}. Then Proposition~{\ref{prop:Holder:ineq:tent:spaces}}
	(with exponents $p_1=p$, $p_2=\tfrac{p}{n-2}$,    
	$p_3=p$, and $q_1=q_2=q_3=6$), identity~{\eqref{eqn:Tpq:norm:of:a:power}},   and estimates~{\eqref{eqn:Mpq:simeq:BMOAs-norm}}, \eqref{eqn:h_j:Q}, and~{\eqref{eqn:upper:estimate:norm:Q:Hp}}  give
	\begin{align*}	
		B_2
		&\le\||h_{\ell+1}|^{1/n}\|_{T^{p_1}_{\alpha,q_1}(\nu)}\,
		\||h_{\ell}|^{(n-2)/n}\|_{T^{p_2}_{\alpha,q_2}(\nu)}\,
		\||h_{\ell-2}|^{1/n}\|_{T^{p_3}_{\alpha,q_3}(\nu)}
		\\
		&=\|h_{\ell+1}\|_{T^{p/n}_{\alpha,6/n}(\nu)}^{1/n}\,
		\|h_{\ell}\|_{T^{p/n}_{\alpha,6(n-2)/n}(\nu)}^{(n-2)/n}\,
		\|h_{\ell-2}\|_{T^{p/n}_{\alpha,6/n}(\nu)}^{1/n}
		\\
		&\lesssim\|g\|^s_{\B^s_{\alpha}}\,
		\|h_{\ell+1}\|_{\alpha,p/n}^{1/n}\,
		\|h_{\ell}\|_{\alpha,p/n}^{(n-2)/n}\,
		\|h_{\ell-2}\|_{\alpha,p/n}^{1/n}
		\\
		&\lesssim 
		\|g\|^s_{\B^s_{\alpha}}\,
		\opnorm{Q_g^{\tau,\ell+1}}_{\alpha,p}\, 
		\opnorm{Q_g^{\tau,\ell}}^{n-2}_{\alpha,p}\,
		\opnorm{Q_g^{\tau,\ell-2}}_{\alpha,p}
		\\      
		&\lesssim
		\opnorm{Q_g^{\tau,\ell+1}}_{\alpha,p}\,
		\|g\|^{s\ell(n-\frac{\ell+1}{\ell})}_{\B^s_{\alpha}}.
	\end{align*}
	Moreover, Lemma \ref{lem:key:tool:lower:estimate:norm:word} shows that $\|F_1\|_{T^{\frac{p}n}_{\alpha,2}}$ and  $\|F_2\|_{T^{\frac{p}n}_{\alpha,2}}$ have the same estimate than $B_j$, $j=1,2$, so it follows that \eqref{eqn:estimate:B} holds. And that ends the proof.
\end{proof}

\section{Proof of Theorem \ref{thm1:boundedness:g-operators} }
\label{section:proofs:of:thm1.3-1.4:proposition1.5}

\begin{proof}[{\bf Proof of Theorem \ref{thm1:boundedness:g-operators}}]
Assume that $g\in\B^s_{\alpha}$.  Then Theorem~{\ref{thm:sharp:estimate:norm:word} {\sf b)}}  shows that $S_g^mT_g^n\in\BB(A^p_{\alpha})$. Moreover, the estimates \eqref{eqn:key:estimate:Carleson:norms:symbols1} and \eqref{eqn:key:estimate:Carleson:norms:symbols2}
give that  $g\in\B^{s'}_{\alpha}$, for any $1\le s'\le s$. Since $\frac{m-j}{n_j}\le\frac{m}n$, for $j=1,\dots,m$, we have that 
$1\le s(j,k):=\frac{m-j}{n_j+k}+1\le s$, for $j=1,\dots,m$ and $k\in\N_0$, and so $g\in\B^{s(j,k)}_{\alpha}$. By applying  Theorem~{\ref{thm:sharp:estimate:norm:word} {\sf b)}} again, we obtain that 
$S^{m-j}_gT^{n_j}_gP_j(T_g)\in\BB(A^p_{\alpha})$, for $j=1,\dots,m$, and therefore $L\in\BB(A^p_{\alpha})$.

Now assume that $L\in\BB(A^p_{\alpha}(0))$, and we want to prove that $g\in\B^s_{\alpha}$, or equivalently $\sup_{0<r<1}\|g_r\|_{\B^s_{\alpha}}<\infty$, by 
Proposition~{\ref{prop:radialized:symbol}} and Corollary~{\ref{cor:prop:equiv:Garsia:Carleson:symbols}}.
Note that, without loss of generality, we may assume that $n_j\le n$, for $j=1,\dots,m$. Indeed, if $n_j>n$ then we may replace $T^{n_j}_gP_j(T_g)$ by $T^n_gQ_j(T_g)$ in~{\eqref{eqn:thm1:boundedness:g-operators}}, where $Q_j(z)=z^{n_j-n}P_j(z)$, and note that $\frac{m-j}n<\frac{m}n$.  
Then Theorem~{\ref{thm:sharp:estimate:norm:word}}, Proposition 4.3 in~{\cite{Aleman:Cascante:Fabrega:Pascuas:Pelaez}},  Proposition~{\ref{prop:boundedness:Tg}}
and estimate~{\eqref{eqn:key:estimate:Carleson:norms:symbols1}} show that
\begin{align*}
\|g_r\|^{m+n}_{\B^s_{\alpha}}
&\lesssim
\opnorm{S^m_{g_r}T^n_{g_r}}_{\alpha,p}
\lesssim
\opnorm{L_{g_r}}_{\alpha,p}
+\sum_{j=1}^m \|S^{m-j}_{g_r}T^{n_j}_{g_r}\|_{\alpha,p}\|P_j(T_{g_r})\|_{\alpha,p}\\
&\le
\opnorm{L_g}_{\alpha,p}
+\sum_{j=1}^m \|S^{m-j}_{g_r}T^{n_j}_{g_r}\|_{\alpha,p}\,\widetilde{P}_j(\|T_{g_r}\|_{\alpha,p})\\
&\lesssim
\opnorm{L_g}_{\alpha,p}
+\sum_{j=1}^m \|g_r\|^{m-j+n_j}_{\B^{\frac{m-j}{n_j}+1}_{\alpha}}\,\widetilde{P}_j(\opnorm{L_{g_r}}_{\alpha,p}^{\frac1{m+n}})\\
&\lesssim
\opnorm{L_g}_{\alpha,p}
+\sum_{j=1}^m \|g_r\|^{(m+n)\varepsilon_j}_{\B^s_{\alpha}}\,\widetilde{P}_j(\opnorm{L_g}_{\alpha,p}^{\frac1{m+n}}),
\end{align*}
where $\widetilde{P}_j(z)=\sum_{k=0}^N|a_k|z^k$ if $P(z)=\sum_{k=0}^Na_kz^k$, and $\varepsilon_j=\frac{m-j+n_j}{m+n}$.

Since $n_j\le n$, we have that $0<\varepsilon_j<1$, for $j=1,\dots,m$, and hence we conclude that $\sup_{0<r<1}\|g_r\|_{\B^s_{\alpha}}<\infty$, which ends the proof.
\end{proof}

\end{document}